\documentclass[reqno,12pt]{amsart}

\usepackage[T1]{fontenc}
\usepackage{lmodern}
\usepackage[protrusion=true,expansion=false]{microtype}
\usepackage{amsmath,amssymb,amsthm,mathtools,mathrsfs,esint}
\usepackage{aliascnt}
\usepackage{enumitem}
\usepackage{hyperref,cleveref,tikz}

\newtheorem{theorem}{Theorem}[section]

\newaliascnt{lemma}{theorem}
\newtheorem{lemma}[lemma]{Lemma}
\aliascntresetthe{lemma}

\newaliascnt{proposition}{theorem}
\newtheorem{proposition}[proposition]{Proposition}
\aliascntresetthe{proposition}

\newaliascnt{corollary}{theorem}
\newtheorem{corollary}[corollary]{Corollary}
\aliascntresetthe{corollary}

\newaliascnt{claim}{theorem}

\aliascntresetthe{claim}

\theoremstyle{definition}
\newaliascnt{definition}{theorem}
\newtheorem{definition}[definition]{Definition}
\aliascntresetthe{definition}

\theoremstyle{remark}
\newaliascnt{remark}{theorem}
\newtheorem{remark}[remark]{Remark}
\aliascntresetthe{remark}

\newaliascnt{example}{theorem}

\aliascntresetthe{example}

\crefname{theorem}{theorem}{theorems}
\Crefname{theorem}{Theorem}{Theorems}
\crefname{lemma}{lemma}{lemmas}
\Crefname{lemma}{Lemma}{Lemmas}
\crefname{proposition}{proposition}{propositions}
\Crefname{proposition}{Proposition}{Propositions}
\crefname{corollary}{corollary}{corollaries}
\Crefname{corollary}{Corollary}{Corollaries}
\crefname{definition}{definition}{definitions}
\Crefname{definition}{Definition}{Definitions}
\crefname{claim}{claim}{claims}
\Crefname{claim}{Claim}{Claims}
\crefname{remark}{remark}{remarks}
\Crefname{remark}{Remark}{Remarks}
\crefname{example}{example}{examples}
\Crefname{example}{Example}{Examples}

\numberwithin{equation}{section}

\DeclareMathOperator{\dist}{dist}
\DeclareMathOperator{\diam}{diam}
\DeclareMathOperator{\tr}{tr}
\DeclareMathOperator{\sym}{sym}
\DeclareMathOperator{\dev}{dev}

\DeclareMathOperator{\interior}{int}
\DeclareMathOperator{\Mob}{M\ddot ob}

\DeclareMathOperator{\GL}{GL}

\newcommand{\R}{\mathbb R}
\newcommand{\Sn}{\mathbb S^n}
\newcommand{\Id}{I_n}
\newcommand{\Edev}{\mathcal E^{\mathrm D}}
\newcommand{\Kconf}{\mathcal K_n}
\newcommand{\Qzero}{\mathcal Q_0}

\newcommand{\whR}{\widehat{\mathbb R}^{\,n}}

\newcommand{\avnorm}[3]{\left\|#1\right\|_{L^{#2}_{\mathrm{av}}(#3)}}

\title[Liouville stability for quasiregular mappings]{A sharp stability inequality of Liouville's theorem for quasiregular mappings on bounded domains}
\author{Yi Ru-Ya Zhang}
\address{State Key Laboratory of Mathematical Sciences, Academy of Mathematics and Systems Science, Chinese Academy of Sciences, Beijing 100190, China}
\address{Institute of Mathematics, Academy of Mathematics and Systems Science, Chinese Academy of Sciences, Beijing 100190, China}
\email{yzhang@amss.ac.cn}

\thanks{The author was supported by the National Key R\&D Program of China (Grant Nos. 2025YFA1018400 and 2021YFA1003100), the NSFC (Grant Nos. 12288201 and 12571128), the Chinese Academy of Sciences, and the CAS Project for Young Scientists in Basic Research (Grant No. YSBR-031).}
 
\subjclass[2020]{30C65, 46E35}
\keywords{quasiregular mapping, Liouville theorem, stability, John domain, Whitney decomposition, Boman chain, conformal Killing field}

\begin{document}

\begin{abstract}
Let $n \ge 3$ and fix $p>n$. We establish a sharp quantitative stability result in $W^{1,p}$ for Liouville's theorem for nonconstant, sense-preserving quasiregular mappings whose outer distortion $K$ is sufficiently close to one. We first reorganize Reshetnyak's classical local argument \cite{R1976}, drawing systematically on modern techniques such as those developed in \cite{FZ2022}.
We then give two globalization schemes on bounded, connected domains. For bounded John domains, instead of Reshetnyak's original construction \cite{R19762}, which glues M\"obius transformations on adjacent Whitney cubes, we use Whitney chains and Bojarski's enlarged-cube estimate to obtain a quantitatively sharp Euclidean stability result.  
Finally, we extend this idea to general bounded, connected open subsets of $\mathbb{R}^n$ via the Lorentzian representation of the M\"obius group, yielding a compactified, weighted stability theorem whose underlying measure is absolutely continuous with respect to Lebesgue measure and has density bounded above by one.
\end{abstract}

\maketitle
\tableofcontents

\section{Introduction}

Let $n\ge3$. Liouville's rigidity theorem states that a nonconstant conformal mapping between domains in $\R^n$ is the restriction of a M\"obius transformation.
Equivalently, if a nonconstant sense-preserving quasiregular mapping has distortion coefficient $K = 1$, then it must be a M\"obius transformation. The corresponding stability problem concerns   whether, and with respect to a suitable norm, a mapping satisfying 
$ K - 1 \ll 1$
is necessarily close to a M\"obius transformation.

The natural analytic class for this question is the class of mappings with bounded distortion in the sense of Reshetnyak. In modern terminology these are nonconstant sense-preserving quasiregular mappings. Quasiconformal mappings are the
homeomorphic quasiregular mappings. We recall the precise definition in \Cref{def:quasiregular}; see also Rickman~\cite[Chapter~I]{R1993} and Gehring--Martin--Palka~\cite[Chapters~2 and~6]{GMP2017}. Reshetnyak proved a
local quantitative stability theorem and a global theorem on the geometric class $J(d,D)$ of domains. In a local form, his result gives a M\"obius transformation
$\varphi$ such that
$$
 g:=\varphi^{-1}\circ f
$$
satisfies the scale-invariant estimate
$$
 \left(\fint_Q |Dg-\Id|^q\,dx\right)^{1/q}
 +\frac1{\ell(Q)}  \left(\fint_Q |g-\operatorname{id}|^q\,dx\right)^{1/q}
 \le C(n,q)\,[K-1]
$$
for every admissible exponent
$$
 n\le q<\frac{C(n)}{K-1}.
$$
This was proven in \cite{R1976}; see also \cite[Chapter~4, Theorem~3.2]{R1994}. The corresponding global theorem on $J(d,D)$-domains was proven in \cite{R19762}; see also \cite[Chapter~4, Theorem~4.1]{R1994}. For a comprehensive theoretical foundation of mappings with bounded distortion, we refer the reader to, for instance,~\cite{R1989,R1993,GMP2017}. 
In addition, we refer the reader to~\cite{FZ2005} for a seminal result concerning the rigidity properties of conformal matrices, to \cite{LZ2022} for stability estimates  in  $W^{1,2}$ for Sobolev maps from $\mathbb S^{n-1}$ to $\mathbb R^n$ which are sharp when $n=3$, and to~\cite{GLZ2025,GLZ2024} for more recent advances on the stability properties of M\"obius transformations acting on spheres. In particular, the stability estimate in \cite{GLZ2024} is given with respect to the $W^{1,n-1}$-norm, which provides an optimal distance across all dimensions.

\subsection{Main results}
The present paper has two aims. The first is expository and local: For a fixed exponent $p>n$, we rewrite both Reshetnyak's local proof and his proof for John domains in a more transparent form. The second is to extend the stability estimate from John domains to arbitrary open sets, with a natural weighted formulation.

Let us first state the main results. The precise definition of quasiregularity is given in \Cref{def:quasiregular} below. For a cube $Q$, let $x_Q$ denote its center and
$\ell(Q)$ its sidelength. We write
$$
 S_Q(x):=\frac{x-x_Q}{\ell(Q)}.
$$
If $M$ is a M\"obius transformation of $\widehat{\R}^{\,n}$, then 
$$
 \widehat M:=\sigma\circ M\circ\sigma^{-1}
$$
denotes its action on $\mathbb S^n$, where $\sigma$ is the inverse stereographic projection defined in \eqref{eq:stereographic}.

The first theorem is a local result for the fixed exponent $p>n$ used throughout the paper. It is weaker than Reshetnyak's original local result \cite{R1976} since we do not explicitly estimate the admissible range of $p$.

\begin{theorem} \label{thm:local-main}
Let $n\ge 3$ and $p>n$. There exist constants
$$
\delta_{\mathrm{loc}}=\delta_{\mathrm{loc}}(n,p)>0,
\qquad C_{\mathrm{loc}}=C_{\mathrm{loc}}(n,p)>0,
\qquad \zeta_0=\zeta_0(n)\in(0,1),
$$
with the following property. Let $Q=Q(x_0,r)$ be an open cube of sidelength $r$, assume that $2Q\subset\subset \Omega$, and let
$  f:2Q\to\R^n$
be a nonconstant sense-preserving $K$-quasiregular mapping satisfying
$$
 0< K-1\le\delta_{\mathrm{loc}}.
$$
Then there exists a sense-preserving M\"obius transformation $\varphi_Q$ which is finite on $\frac32Q$, such that
$$
 g_Q:=\varphi_Q^{-1}\circ f=\operatorname{id}+u_Q \quad \text{on }Q, \qquad  \Pi_{\zeta_0,Q}u_Q=0,
$$
where $\Pi_{\zeta_0,Q}$ is the projection defined in \eqref{eq:unit-projection},
and
$$
 \left(\fint_Q|Dg_Q-\Id|^p\,dx\right)^{1/p}
+\frac1r\left(\fint_Q|g_Q-\operatorname{id}|^p\,dx\right)^{1/p}   
+\frac1r\|g_Q-\operatorname{id}\|_{L^\infty(Q)}
\le C_{\mathrm{loc}}(K-1).
$$
\end{theorem}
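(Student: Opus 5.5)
By scaling and translation ($x\mapsto x_0+rx$, post-composed with a dilation) we may assume $Q$ is the unit cube centered at the origin. The plan follows Reshetnyak's linearization scheme. The quasiregularity condition says $Df$ is close to the conformal matrices when $K-1$ is small. Combined with a compactness/normal family argument, this makes $f$ close to a M\"obius map. One then linearizes around that map, which reduces matters to an estimate for the Ahlfors operator (the conformal Killing operator, i.e.\ the trace-free symmetric part of the derivative). We work at the fixed exponent $p>n$ throughout.

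\textbf{Step 1 (qualitative closeness).} Normalize $f$ by post-composing with a M\"obius map (affine plus possibly an inversion) so that three reference points in $Q$, or a suitable average of $Df$, are fixed; this uses nonconstancy. Then, by the equicontinuity and compactness of normalized $K$-quasiregular families with $K\le 1+\delta$ (Reshetnyak's theorem: locally uniform limits as $K\to1$ are M\"obius, by Liouville), we obtain for any $\varepsilon>0$ a $\delta$ and a M\"obius $\psi$, finite on $\frac32 Q$, such that
\[
\|\psi^{-1}\circ f-\mathrm{id}\|_{W^{1,p}(\frac{7}{4}Q)}+\|\psi^{-1}\circ f-\mathrm{id}\|_{L^\infty}<\varepsilon .
\]
For this we use higher integrability of $Df$ (Gehring/Bojarski--Iwaniec, with $p$ below the critical exponent once $\delta$ is small) and Caccioppoli estimates, upgrading uniform convergence to $W^{1,p}$ convergence on compact subcubes. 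The map $h=\psi^{-1}\circ f$ is again $K$-quasiregular.

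\textbf{Step 2 (linear estimate).} Write $h=\mathrm{id}+v$. Pointwise we have $|\dev\sym Dv|\lesssim (K-1)|Dh| + |Dv|^2$, because $Dh$ is within $(K-1)$ of $\mathrm{CO}_+(n)$ in the normalized sense and the tangent space of $\mathrm{CO}_+(n)$ at $\Id$ is $\{\lambda\Id+A:\ A\ \text{skew}\}$. The Korn-type inequality for the Ahlfors operator (valid for $n\ge3$, with kernel the finite-dimensional space $\Kconf$ of conformal Killing fields) gives
\[
\|Dv-D\Pi v\|_{L^p(\frac32Q)}\le C\|\dev\sym Dv\|_{L^p(\frac74Q)} + C\|v-\Pi v\|_{L^p(\frac74 Q)},
\]
where $\Pi=\Pi_{\zeta_0,Q}$ is the projection onto $\Kconf$ from \eqref{eq:unit-projection}. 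Combining the two,
\[
\|v-\Pi v\|_{W^{1,p}}\le C(K-1)+C\|Dv\|_{L^{2p}}^2 .
\]

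\textbf{Step 3 (correcting the kernel and absorbing).} The quadratic term is the main obstacle. I would handle it by bounding $\|Dv\|_{L^{2p}}$ through higher integrability, which gives control at a slightly larger exponent on a smaller cube. Alternatively, following \cite{FZ2022}, one can use the sharper pointwise bound $|\dev\sym Dv|\lesssim (K-1)+|Dv|\,|\dev\sym Dv|+|Dv-D\Pi v|^2$ type structure, so that the smallness $\varepsilon$ lets the right-hand side be absorbed.

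Next, replace $\psi$ by $\varphi_Q=\psi\circ\exp(\Pi v)$, i.e.\ $\psi$ composed with the M\"obius map generated by the conformal Killing field $\Pi v$. The resulting $u_Q$ then has $\Pi u_Q$ of quadratic size. Iterating this correction (a contraction/implicit function argument on the finite-dimensional M\"obius group near the identity) achieves $\Pi_{\zeta_0,Q}u_Q=0$ exactly. At that point the estimate closes to $\|u_Q\|_{W^{1,p}(Q)}\le C(K-1)$, provided $\varepsilon$ (hence $\delta_{\mathrm{loc}}$) is small.

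Finally, the $L^\infty$ bound follows from Morrey's embedding since $p>n$, and Poincar\'e gives the $L^p$ term. Undoing the scaling produces the stated factors of $r$. Finiteness of $\varphi_Q$ on $\frac32Q$ is preserved because the correction is close to the identity and $\psi$ was chosen finite on a neighborhood of $\frac32 Q$ in Step 1.
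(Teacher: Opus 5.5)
The gap is in Step 3, exactly at the step you flag as the main obstacle. Your Step 1 is a qualitative version of the preliminary normalization the paper imports from Reshetnyak (\Cref{prop:preliminary-gauge}), and your finite-dimensional correction matches the moment-modulation lemma. At the point of absorption, though, you have only two things. One is qualitative smallness, $\|Dv\|_{L^s(Q)}\le\varepsilon(\delta)$ for each fixed $s$. The other is absolute Gehring/Bojarski--Iwaniec higher integrability, which bounds $\|Dh\|_{L^{2p}}$ on a smaller cube by $\|Dh\|_{L^n}$ on a larger one, i.e.\ only at size $O(1)$.

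Neither controls the \emph{deviation} $Dv=Dh-\Id$ at a higher exponent by its own $L^p$ norm, and absorption needs exactly that. To get a bound linear in $\delta$ from $\|Dv\|_{L^p(Q)}\le C\delta+C\|Dv\|_{L^{2p}(Q)}^2$, you need $\|Dv\|^2_{L^{2p}(Q)}\le o(1)\,\|Dv\|_{L^p(Q)}$. Interpolating against the qualitative bound only gives
\[
\|Dv\|_{L^{2p}(Q)}^2\le\|Dv\|_{L^p(Q)}^{\frac{s-2p}{s-p}}\,\|Dv\|_{L^s(Q)}^{\frac{s}{s-p}},\qquad s>2p .
\]
The exponent $\frac{s-2p}{s-p}$ on $\|Dv\|_{L^p}$ is strictly less than $1$, so you only get $\|Dv\|_{L^p}\le C\delta+o(1)$, not $C\delta$, and the sharp power of $K-1$ is lost. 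Gaining integrability ``on a smaller cube'' does not help, because the quadratic term lives on the same cube on which the Korn inequality is applied. The alternative modeled on \cite{FZ2022} has the same defect. Absorbing $|Dv|\,|\dev\sym Dv|$ in $L^p$ needs $\|Dv\|_{L^\infty}$ small, which quasiregular maps do not provide. The remaining quadratic term has the same exponent mismatch.

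The missing ingredient is a relative reverse H\"older inequality for the deviation on the cube itself:
\[
\int_Q|Du|^{p+\epsilon}\,dx\le\nu(\delta)\int_Q|Du|^p\,dx,\qquad \nu(\delta)\to0
\]
(\Cref{prop:specialized-bso-gain}). The paper obtains it from Reshetnyak's bounded-specific-oscillation mechanism, in three steps.
\begin{enumerate}
\item For every dyadic subcube $R\subset Q$, apply the preliminary normalization to $f|_{2R}$ (legitimate since $2R\subset 2Q$). This gives a M\"obius $\varphi_R$ with $\fint_R|Df-D\varphi_R|^n\le w(\delta)^n\fint_R|D\varphi_R|^n$.
\item M\"obius derivatives form an admissible comparison family. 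So the dyadic stopping-time iteration of \Cref{thm:relative-gurov} upgrades this to a relative $L^p$ bound, and then an $L^{p+\epsilon}$ bound, for $Df-D\varphi$ on $Q$.
\item \Cref{lem:mobius-comparison} transfers that bound to $Du$.
\end{enumerate}
The saturated pointwise bound of \Cref{prop:saturated-strain}, $|\Edev u|\le C[\delta(1+|Du|)+|Du|^2/(1+|Du|)]$, combined with $(t^2/(1+t))^p\le t^{p+\epsilon}$, means only this $\epsilon$-gain is needed. A relative estimate at exponent $2p$ would also rescue your cruder $|Dv|^2$ bound.

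A smaller, fixable point is in Step 2. Your Korn inequality carries a lower-order term on the larger cube $\frac74Q$, so it does not imply your combined display. Use instead the projected inequality on a single cube (\Cref{thm:projected-korn}):
\[
\ell(Q)^{-1}\|w\|_{L^p(Q)}+\|Dw\|_{L^p(Q)}\le C\|\Edev w\|_{L^p(Q)}\quad\text{whenever }\Pi_{\zeta_0,Q}w=0 .
\]
Fix the gauge so that $\Pi_{\zeta_0,Q}u=0$ before running the quantitative estimate.
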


Our argument is almost entirely self-contained. We rely on Reshetnyak's original normalization argument for Proposition~\ref{prop:preliminary-gauge}. The proof of the trace-free Korn-type inequality in Theorem~\ref{thm:projected-korn}, interpreted below as a spectral-gap estimate for the conformal Killing operator (also known as the Ahlfors operator \cite{A1974,A1976}), uses the negative-norm estimate of Lewintan--Neff~\cite[Lemma~3.1]{LN2021}.

The exponent $1$ on $K-1$ in Theorem~\ref{thm:local-main} is optimal, as can be seen by taking $f$ to be a fixed anisotropic linear perturbation of the identity. In particular, $K-1$ cannot be replaced by $(K-1)^\alpha$ for any $\alpha>1$. The same example also applies for all the stability theorems below. 
The constant $\delta_{\rm loc}$ is determined implicitly within the proof, and for an explicit computation, we refer the reader to \cite{R2005}. Moreover,   explicit estimates on the constant $C_{\rm loc}$ can be found in \cite{S1976,S1987,S1988}.

\begin{definition}[John domain]\label{def:john}
Let $J\ge1$. A bounded domain $\Omega\subset\R^n$ is called a
$J$-John domain if there exists a point $x_\ast\in\Omega$ such that, for every
$x\in\Omega$, there are $L>0$ and an arclength-parametrized rectifiable curve
$$
 \gamma:[0,L]\to\Omega,
 \qquad \gamma(0)=x,
 \qquad \gamma(L)=x_\ast,
$$
satisfying
$$
 t\le J\,\dist(\gamma(t),\partial\Omega)
 \qquad\text{for every }t\in[0,L].
$$
The point $x_\ast$ is called a John center.
\end{definition}

The next result was originally proven by Reshetnyak in \cite{R19762}. 

\begin{theorem}\label{thm:john-main}
Let $n\ge3$, $p>n$, and $J\ge1$. There exist constants
$$
 \delta_J=\delta_J(n,p,J)>0,
 \qquad  C_J=C_J(n,p,J)>0,
$$
with the following property. Let $\Omega\subset\R^n$ be a bounded
$J$-John domain, and let $ f:\Omega\to\R^n$
be a nonconstant sense-preserving $K$-quasiregular mapping satisfying
$$
 0< K-1\le\delta_J.
$$
Then there exists a sense-preserving M\"obius transformation $\Phi$ such that
$\Phi^{-1}\circ f$ is finite throughout $\Omega$ and
$$
\frac{\|\Phi^{-1}\circ f-\operatorname{id}\|_{L^\infty(\Omega)}}{\diam\Omega}
+|\Omega|^{-1/p}\|D(\Phi^{-1}\circ f)-\Id\|_{L^p(\Omega)}
\le C_J(K-1).
$$
\end{theorem}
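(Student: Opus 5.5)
We globalize \Cref{thm:local-main} by chaining Whitney cubes toward a central cube. Take a Whitney decomposition $\mathcal W$ of $\Omega$ with the cubes scaled so that $2Q\subset\subset\Omega$ for every $Q\in\mathcal W$. Let $Q_0$ be a Whitney cube containing the John center $x_\ast$; its sidelength is comparable to $\diam\Omega$ with constants depending on $n,J$. On each $Q\in\mathcal W$, \Cref{thm:local-main} gives a M\"obius map $\varphi_Q$ with $g_Q=\varphi_Q^{-1}\circ f$. We set $\Phi:=\varphi_{Q_0}$ and $g:=\Phi^{-1}\circ f$. The task is to compare $\varphi_Q$ with $\varphi_{Q_0}$ and control the total error.

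\textbf{Step 1 (adjacent cubes).} If $Q,Q'\in\mathcal W$ touch, let $\widetilde Q$ be an enlarged cube containing $Q\cup Q'$ with $2\widetilde Q\subset\subset\Omega$. Apply the local theorem on $\widetilde Q$ as well. On $Q$ we then have two representations: $f=\varphi_Q\circ g_Q$ and $f=\varphi_{\widetilde Q}\circ g_{\widetilde Q}$, both with $g_\ast-\operatorname{id}$ of size $C(K-1)\ell(Q)$ in $L^\infty$ and of size $C(K-1)$ in averaged $W^{1,p}$. Hence $\psi:=\varphi_{\widetilde Q}^{-1}\circ\varphi_Q$ satisfies $\|\psi-\operatorname{id}\|_{L^\infty(Q)}\le C(K-1)\ell(Q)$. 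Since the M\"obius group is finite dimensional, $\psi$ is determined up to comparable norms by its values on a cube. It follows that $\psi$ is $C(K-1)$-close to the identity in the rescaled $C^1$ sense on a fixed enlargement of $Q$; this is where Bojarski's enlarged-cube estimate enters. The same holds on $Q'$, so $\varphi_{Q'}^{-1}\circ\varphi_Q$ is $C(K-1)$-close to $\operatorname{id}$ on $Q\cup Q'$ at scale $\ell(Q)$.

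\textbf{Step 2 (chains).} By the John condition, each $Q\in\mathcal W$ is joined to $Q_0$ by a Boman chain $Q=Q_k,Q_{k-1},\dots,Q_0$ of adjacent Whitney cubes. Along this chain the sidelengths grow geometrically, in the sense that $\ell(Q_j)\gtrsim_J \dist(Q,Q_j)$, and $Q\subset C(J)Q_j$ for every $j$. Write $\varphi_{Q_0}^{-1}\circ\varphi_Q$ as the composition of the transitions $\varphi_{Q_{j}}^{-1}\circ\varphi_{Q_{j+1}}$. On $Q$, the transition at step $j$ moves points by at most $C(K-1)\ell(Q_j)$, with derivative error $C(K-1)$. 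We use a linearized estimate, namely that Lie algebra elements add up to quadratic errors in the smallness. This gives
\begin{align*}
\|\varphi_{Q_0}^{-1}\circ\varphi_Q-\operatorname{id}\|_{L^\infty(Q)}\le C(K-1)\sum_j \ell(Q_j)\lesssim_J (K-1)\diam\Omega ,
\end{align*}
where the geometric growth of the $\ell(Q_j)$ makes the sum bounded by the top term.

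The derivative bound needs more care: a naive bound on $D(\varphi_{Q_0}^{-1}\circ\varphi_Q)-\Id$ would be $C(K-1)k$, which grows logarithmically. To avoid this, we exploit the structure of the conformal Killing fields. An infinitesimal M\"obius field $v$ that is $\varepsilon\ell(Q_j)$-small on $Q_j$ has $|Dv|\lesssim\varepsilon$ at distance about $\ell(Q_j)$. Its derivative on the much smaller cube $Q$ is then $\lesssim\varepsilon$ times a factor decaying in $\ell(Q)/\ell(Q_j)$, since the dilation and rotation parts are bounded while the special conformal part contributes $|x-x_{Q_j}|/\ell(Q_j)$. I expect the correct bookkeeping to be a weighted sum whose total is bounded by $C(J)(K-1)$. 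If this fails, a fallback is to accept a $\log$ factor pointwise and recover the bound in $L^p$ through the Boman chain condition $\sum_{Q'\in\text{chain}}\chi_{Q'}\lesssim$ bounded overlap. That condition gives $\|\sum_Q a_Q\chi_{C Q}\|_{L^p}\lesssim\|\sum_Q a_Q\chi_Q\|_{L^p}$ for $p>1$. This step, keeping the constant uniform over long chains, is the main obstacle.

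\textbf{Step 3 (assembly).} On each $Q$ we have $g=(\Phi^{-1}\circ\varphi_Q)\circ g_Q$, so
\begin{align*}
|Dg-\Id|\le |D(\Phi^{-1}\circ\varphi_Q)-\Id|\circ g_Q\,|Dg_Q|+|Dg_Q-\Id| .
\end{align*}
The second term is controlled by \Cref{thm:local-main}, summed over $Q$, giving $\int_\Omega\le C(K-1)^p|\Omega|$. The first term is controlled via Step 2 together with the bounded-overlap argument. Finiteness of $g$ on $\Omega$ follows from $\varphi_Q$ being finite on $\tfrac32Q$ and from the $L^\infty$ closeness, provided $\delta_J$ is small enough that no pole can enter. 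The $L^\infty$ bound $\|g-\operatorname{id}\|_{L^\infty(\Omega)}\le C_J(K-1)\diam\Omega$ comes from Step 2 combined with $\|g_Q-\operatorname{id}\|_\infty\le C(K-1)\ell(Q)$.
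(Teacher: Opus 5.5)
\paragraph{The gap: the derivative estimate in Step 2.} Your uniform estimate is essentially the paper's. It should be run as a backward orbit $z_m=g_Q(x)$, $z_{j-1}=\Theta_j(z_j)$ with $\Theta_j=\varphi_{Q_{j-1}}^{-1}\circ\varphi_{Q_j}$. The bounds $\sum_{k\ge j}\ell(Q_k)\le C\ell(Q_j)$ and $\overline{2Q}\subset\Lambda_JQ_j$ keep each $z_j$ in a ball where $\Theta_j$ is pole-free and $C^1$-close to the identity. A Lie-algebra linearization with quadratic errors is not needed, and it would not be uniform over chains of unbounded length.

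The real problem is the derivative. Your primary plan is a pointwise bound $|D(\Phi^{-1}\circ\varphi_Q)-\Id|\le C(J)(K-1)$ obtained from decay of conformal Killing fields, and this bound is false. As your own description shows, the rotation and dilation parts of each transition contribute an $O(\delta)$ derivative error that does not decay in $\ell(Q)/\ell(Q_j)$, and these errors genuinely accumulate.

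A concrete counterexample is $f(x)=|x|^{\alpha}x$ on the John domain $B(\mathbf e_1,1)$. Here $K-1=O(|\alpha|)$. Yet for every M\"obius $\Phi$, $\|D(\Phi^{-1}\circ f)(x)\|_{\mathrm{op}}$ tends to $0$ or to $\infty$ as $x\to0$. Hence $|D(\Phi^{-1}\circ f)-\Id|$ is not $O(|\alpha|)$ near the boundary point $0$. Along a chain of depth $N\approx\log(1/|x|)$ it behaves roughly like $|e^{\pm|\alpha|N}-1|$.

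The correct pointwise bound comes from the chain rule $D(\Phi^{-1}\circ f)=A_1\cdots A_m\,Dg_Q$, where $A_j=D\Theta_j(z_j)$ and $|A_j-\Id|\le a:=C_{\mathrm{edge}}\delta$. It reads $|D(\Phi^{-1}\circ f)-\Id|\le e^{aN(Q)}|Dg_Q-\Id|+aN(Q)e^{aN(Q)}$. This is exponential in the chain depth, not logarithmic.

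\paragraph{Why the fallback does not close, and what does.} Your fallback is an enlarged-cube estimate at the fixed exponent $p$. It controls $\int_\Omega N^p$, but you must control $\int_\Omega N^pe^{paN}$, so it does not suffice. The missing ingredient is exponential integrability of the chain depth, $\int_\Omega e^{\tau_JN}\,dx\le C(n,J)|\Omega|$. The paper obtains it in the following steps.
\begin{enumerate}
\item The shadow property $\overline{2Q}\subset\Lambda_JQ_j$ gives $N(x)+1\le\sum_{R\in\mathscr W}\mathbf 1_{\Lambda_JR}(x)$. This, not bounded overlap of the chain cubes, is the relevant chain condition.
\item Apply Bojarski's lemma with its explicit constant $\Lambda_J^nC_M(n,q')$. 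The Hardy--Littlewood constant satisfies $C_M(n,q')\le C(n)q$ for $q\ge2$.
\item Disjointness of the Whitney cubes then gives $\|N+1\|_{L^q(\Omega)}\le C(n,J)\,q\,|\Omega|^{1/q}$ for every $q\ge2$.
\item Summing the exponential series with $k!\ge(k/e)^k$ yields the exponential bound.
\end{enumerate}
Only then can you fix $\delta_J\le\tau_J/(2pC_{\mathrm{edge}})$, so that $N^pe^{paN}\le C(p,\tau_J)e^{\tau_JN}$. You then integrate the pointwise bound cube by cube, using $\int_Q|Dg_Q-\Id|^p\,dx\le C\delta^p|Q|$.

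\paragraph{A misattribution in Step 1.} Bojarski's estimate plays no role in your Step 1. Upgrading closeness of a transition on a bridge region to $C^1$-closeness and absence of poles on a fixed enlargement is a finite-dimensional rigidity fact about the M\"obius group. The paper proves it through the Lorentz representation and a finite null frame. Before that, a Brouwer fixed-point surjectivity step is needed, because you only know $\psi\approx\operatorname{id}$ on $g_Q(Q)$, not on $Q$ itself.
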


Although the local result Theorem~\ref{thm:local-main} and the John-domain version Theorem~\ref{thm:john-main} were previously obtained by Reshetnyak, our contribution lies in providing a more transparent proof and a unified absorption method for the former, as well as a more flexible proof based on Boman chains  for the latter. To the author's knowledge, these approaches are new. Furthermore, the strategy used to establish Theorem~\ref{thm:john-main} can be extended to arbitrary domains, yielding a weighted compactified version of the theorem.

Let us be more specific and formulate the general theorem in the framework of weighted Sobolev spaces. 
Let $\Omega\subset\mathbb R^n$ be a bounded domain and, for a fixed translated dyadic grid, 
let $\mathscr W$ be the family of maximal dyadic cubes $Q$ satisfying
$$
64\,\overline Q\subset\Omega.
$$
Choose $Q_\ast\in\mathscr W$ with maximal sidelength and set
$\ell_\ast:=\ell(Q_\ast).$
We say that two cubes $P,Q\in\mathscr W$ are adjacent if
$$
\operatorname{int}(2P)\cap\operatorname{int}(2Q)\ne\varnothing.
$$
The resulting adjacency graph is connected and has vertex degree bounded by a constant depending only on $n$. 
Equip this graph with a rooted spanning tree $T$ having root $Q_\ast$, 
and let $d_T(Q)$ denote the tree distance from $Q_\ast$ to $Q\in\mathscr W$.

\begin{theorem}\label{thm:weighted-main}
Let $n\ge3$ and $p>n$. There exist constants
$$
\delta_\ast=\delta_\ast(n,p)>0,  \qquad
\theta=\theta(n,p)\in(0,1),  \qquad  C_\ast=C_\ast(n,p)>0,
$$
with the following property. Let $\Omega\subset\R^n$ be bounded, connected and open,
construct $Q_\ast$, $\mathscr W$, and $T$ as above, and define
\begin{equation}\label{eq:defn-w}
w(x):=\theta^{d_T(Q)} \left(\frac{\ell(Q)}{\ell_\ast}\right)^p
 \quad\text{for almost every }x\in Q,  \qquad  d\mu(x):=w(x)\,dx.
\end{equation}
Let $f:\Omega\to\R^n$
be a nonconstant sense-preserving $K$-quasiregular mapping satisfying
$$
  0< K-1\le\delta_\ast.
$$
Then there exists a sense-preserving M\"obius transformation $\Phi$ such that, with
$$
  S_\ast:=S_{Q_\ast},  \qquad
 F:=\widehat{S_\ast\circ\Phi^{-1}}\circ\sigma\circ f, \qquad
  F_0:=\sigma\circ S_\ast,
$$
one has
$  F,F_0\in W^{1,p}_{\mathrm{loc}}(\Omega;\mathbb S^{n}),$
and
$$
\int_\Omega \left(|F-F_0|^p+\ell_\ast^p|DF-DF_0|^p\right)d\mu
\le C_\ast(K-1)^p\mu(\Omega).
$$
Moreover,
$$
  0<w(x)\le 1  \quad\text{for almost every }x\in\Omega,\qquad
  \ell_\ast^n\le\mu(\Omega)\le C(n,p)\ell_\ast^n,
$$
and $\mu$ has full support in $\Omega$. In particular, for each $Q\in\mathscr W$,
$$
\mu(Q)=\theta^{d_T(Q)} \frac{\ell(Q)^{n+p}}{\ell_\ast^p}.
$$
\end{theorem}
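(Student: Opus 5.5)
The plan is to apply \Cref{thm:local-main} on every Whitney cube, compare Möbius transformations between adjacent cubes, and then propagate along the spanning tree $T$, working in compactified coordinates on $\mathbb S^n$ so that nothing can blow up. The geometric decay $\theta^{d_T(Q)}$ in the weight is what absorbs the errors that accumulate along long tree paths.

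\emph{Step 1 (local gauges).} For each $Q\in\mathscr W$ we have $64\overline Q\subset\Omega$, so $2Q\subset\subset\Omega$ and \Cref{thm:local-main} applies. It gives a sense-preserving Möbius $\varphi_Q$ with
\[
\|\varphi_Q^{-1}\circ f-\operatorname{id}\|_{L^\infty(Q)}\le C\ell(Q)(K-1)
\]
and the matching $L^p$ bound on $D(\varphi_Q^{-1}\circ f)-\Id$.

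\emph{Step 2 (adjacent comparison).} Let $P,Q$ be adjacent. Since $\operatorname{int}(2P)\cap\operatorname{int}(2Q)\neq\varnothing$, their sidelengths are comparable, and both sit inside a slightly enlarged cube $\widetilde Q$ that still satisfies the hypotheses of \Cref{thm:local-main}; this is the point where Bojarski's enlarged-cube estimate enters. On a set of measure $\gtrsim\ell(Q)^n$, both $\varphi_P^{-1}\circ f$ and $\varphi_Q^{-1}\circ f$ are $C\ell(Q)(K-1)$-close to the identity. Hence $\psi:=\varphi_Q^{-1}\circ\varphi_P$ is a Möbius map that is $C\ell(Q)(K-1)$-close to the identity on a ball of radius $\sim\ell(Q)$.

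A rigidity lemma for the Möbius group then says the following. After conjugating by $S_Q$, the element $S_Q\psi S_Q^{-1}$ lies within $C(K-1)$ of the identity in the Lie group, say in the Lorentzian matrix representation $O^+(n+1,1)$. As a consequence it is $C(K-1)$-close to the identity in $C^1$ on any fixed compact region of the rescaled variable.

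\emph{Step 3 (propagation and compactification).} Set $\Phi:=\varphi_{Q_\ast}$. For $Q$ at tree distance $k$, compose the comparison elements along the tree path from $Q_\ast$ to $Q$. The obstacle is that rescaling between cubes of different sizes distorts these group elements; in Euclidean coordinates that distortion is unbounded, and this is exactly why the John hypothesis was needed for \Cref{thm:john-main}.

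To avoid it, measure everything through the chordal map $\widehat{S_\ast\circ\Phi^{-1}}\circ\sigma$ on $\mathbb S^n$. The Lorentz-norm of each composed element grows at most by a factor $C(n)$ per step, because adjacent sidelengths differ by bounded ratios. Summing a telescoping series of small perturbations then gives
\[
|F-F_0|\le C^k(K-1)\quad\text{on }Q,
\]
where we use $|\widehat M x-x|\le C\|M-I\|$ together with spherical compactness, so that $|F-F_0|\le 2$ always. For the derivative, $DF_0=D\sigma\cdot\ell_\ast^{-1}$ has size at most $\ell_\ast^{-1}$, and on $Q$ the errors scale like $\ell_\ast^{-1}C^k(K-1)$ times factors of the form $(\ell_\ast/\ell(Q))$. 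The factor $(\ell(Q)/\ell_\ast)^p$ in $w$ compensates those factors.

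Choosing $\theta<C^{-p}$, the sum $\sum_Q\theta^{d_T(Q)}C^{pd_T(Q)}(\ell(Q)/\ell_\ast)^p\ell(Q)^n$ is controlled by $\mu(\Omega)$ up to a constant. The main point to check here is that the number of cubes at each tree depth is bounded, which I would handle using the bounded vertex degree. That gives the integral estimate.

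\emph{Step 4 (properties of $\mu$).} The facts $0<w\le1$ and $\mu(Q)=\theta^{d_T(Q)}\ell(Q)^{n+p}\ell_\ast^{-p}$ follow directly from the definition and the maximality of $\ell_\ast$. The lower bound $\mu(\Omega)\ge\mu(Q_\ast)=\ell_\ast^n$ is immediate. For the upper bound, the number of tree nodes at depth $k$ is at most $D(n)^k$, so choosing $\theta D(n)<1$ gives
\[
\mu(\Omega)\le\ell_\ast^n\sum_k(\theta D)^k\le C\ell_\ast^n.
\]

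\textbf{Main obstacle.} I expect the hardest step to be the uniform Lorentzian rigidity estimate in Step 2 and its compatibility with rescaling across cubes of different sizes in Step 3, that is, proving the per-step multiplicative growth bound $C(n)$ independently of the geometry of $\Omega$. My first attempt would be to normalize each comparison at unit scale via $S_Q$, and to use that $\ell(P)/\ell(Q)\in[2^{-m},2^m]$ together with the fact that $x_P-x_Q$ is at most a bounded multiple of $\ell(Q)$. With these, the conjugating dilation-translation $S_PS_Q^{-1}$ ranges over a compact subset of the group, so conjugation by it changes Lie-group distances by at most a factor $C(n)$.
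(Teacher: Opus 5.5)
Your architecture is the paper's: local gauges on Whitney cubes, a Lorentz-matrix rigidity estimate for the normalized transition between adjacent cubes, and propagation along the breadth-first tree with the conjugating similarities $S_P\circ S_Q^{-1}$ confined to a compact set (the paper's recurrence $H_Q=\mathfrak S_Q^{-1}\circ H_P\circ\mathfrak S_Q\circ E_Q$). It then compactifies via $\sigma$ and uses a weight whose factor $\theta^{d_T(Q)}$ absorbs exponential growth in depth while $(\ell(Q)/\ell_\ast)^p$ cancels the $\ell(Q)^{-1}$ derivative scaling.

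\textbf{Step 2 fails as written.} With the gauges from your Step 1, $\varphi_Q^{-1}\circ f$ is controlled only on $Q$ and $\varphi_P^{-1}\circ f$ only on $P$. Since $P\cap Q$ is Lebesgue-null, there is no common set on which both are close to the identity. Enclosing $P\cup Q$ in a larger admissible cube $\widetilde Q$ only produces a third gauge $\varphi_{\widetilde Q}$; it says nothing directly about $\varphi_P$ versus $\varphi_Q$.

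Bojarski's lemma plays no role here. \Cref{lem:Bojarski-dilation} is a maximal-function bound for $\sum_\alpha a_\alpha\mathbf 1_{\Lambda R_\alpha}$, used only to get exponential integrability of chain depth in John domains. Its unavailability for general domains is precisely why the weight is introduced.

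The fix is to apply \Cref{thm:local-main} to $8Q$ rather than $Q$. This is allowed because $16Q\subset 64\overline Q\subset\Omega$, and it makes $g_Q=\varphi_Q^{-1}\circ f$ $C\delta\ell(Q)$-close to the identity on all of $8Q$. Then (W5) of \Cref{lem:deep-whitney} gives a ball $\overline{B(a,4r)}\subset 8P\cap 8Q$ with $r\simeq\ell(Q)$. A Brouwer surjectivity step, $B(a,3r)\subset g_Q(\overline{B(a,4r)})$, turns closeness of $g_P,g_Q$ into $\sup_{B(a,3r)}|\varphi_P^{-1}\circ\varphi_Q(y)-y|\le C\delta r$. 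That is the input for the finite-frame Lorentz estimate. A set of measure $\gtrsim\ell(Q)^n$ alone, without this surjectivity step, does not justify your jump to closeness of the transition map on a ball.

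\textbf{The choice of $\theta$ is also wrong.} After the weight cancels $\ell(Q)^{-p}$, a cube at depth $m$ contributes $\lesssim\delta^p(\theta C^p)^m\ell(Q)^n\le\delta^p(\theta C^p)^m\ell_\ast^n$ to the derivative term, with no leftover factor $(\ell(Q)/\ell_\ast)^p$. There can be up to $D(n)^m$ cubes at depth $m$, and this count can genuinely grow exponentially in $m$. So you need the joint condition $\theta\,D(n)\,C^p<1$. The separate conditions $\theta<C^{-p}$ and $\theta D<1$ do not make $\sum_m(\theta D C^p)^m$ converge; the paper takes $\theta\le\min\{\tfrac12,(2N_WG^p)^{-1}\}$.

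\textbf{The derivative bound needs more than you state.} A $C^0$ estimate like $|\widehat M\xi-\xi|\le C\|M-I\|$ does not suffice for the derivative term. The derivative of the composed sphere map is evaluated at $\sigma(h_Q(y))$ for $F$ and at $\sigma(y)$ for $F_0$. You therefore need $C^1$ closeness of projective actions and a second-derivative bound, both polynomial in the Lorentz norms. These are the paper's $\|M^\#-N^\#\|_{C^1(\mathbb S^n)}\le C\|M-N\|_{\mathrm{op}}L^3$ and $\|D^2\widetilde M^\#\|\le 5\|M\|_{\mathrm{op}}^6$ on the closed unit ball. Only with them does your claimed derivative error of order $C^{d_T(Q)}\delta/\ell(Q)$ on $Q$ follow.
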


The measure appearing in \Cref{thm:weighted-main} depends on the domain $\Omega$, on the choice of dyadic grid, on the root cube, on the associated rooted tree, and on the exponent $p$, but it is independent of the underlying function $f$. It constitutes a natural choice for the chaining argument employed in the proof. Moreover, in order to cut down certain dependence, one may replace $d_T(Q)$ in  \eqref{eq:defn-w} by the quasihyperbolic distance from $x$ to a fixed point $x_0 \in \Omega$, in which case Theorem~\ref{thm:weighted-main} continues to hold up to a multiplicative constant depending on the dimension $n$ (and on the choice of the base point $x_0$). This variant can be treated elsewhere, and we concentrate on the proof based on the Whitney decomposition  in the present manuscript.

Furthermore, although \Cref{thm:weighted-main} is also formulated for John domains, it does not, in that setting, immediately yield the conclusion of \Cref{thm:john-main}. 
The reason is that John geometry gives two additional crucial properties, namely \eqref{eq:john-terminal-shadow} and \eqref{eq:john-suffix-sum} below. The first enables the Bojarski-type enlarged-cube summation in \Cref{lem:Bojarski-dilation} with respect to Lebesgue measure, whereas the second ensures that the associated M\"obius transformation remains in pole-free regions along the entire chain; see \Cref{prop:john-backward-orbit}. In the general non-John setting, it is unclear how to establish analogues of these two properties even with the aid of the weight.

\subsection{Strategy of the proofs}
We first explain the local reorganization. Reshetnyak's proof of the local theorem has several layers. The proof begins with a compactness argument which produces a preliminary M\"obius normalizer. The normalized map is then adjusted by a finite-dimensional M\"obius correction so that its error is orthogonal to the infinitesimal M\"obius fields. Next, Reshetnyak proves comparison estimates between 
$$D(\varphi^{-1}\circ f)-\Id\quad  \text{ and } \quad Df-D\varphi,$$
and he applies a bounded-specific-oscillation argument, a version of the John--Nirenberg inequality for BMO functions, to obtain an infinitesimal gain of integrability:
For some $\varepsilon>0$ and some function $\nu(\delta)\to0$,
$$
 \int_Q |D(\varphi^{-1}\circ f)-\Id|^{p+\varepsilon}\,dx
 \le \nu(\delta) \int_Q |D(\varphi^{-1}\circ f)-\Id|^p\,dx,
  \qquad \delta=K-1.
$$
This is the estimate used in ~\cite[Chapter~4, Section~3.3]{R1994}. The last step in Reshetnyak's proof then estimates the trace-free symmetric part of $D(\varphi^{-1}\circ f)-\Id$ and splits the domain into small-gradient and large-gradient regions.

Our first contribution is to isolate the finite-dimensional normalization and the last nonlinear absorption. Let
$$
\mathcal E^{\mathrm D}u := \operatorname{sym}Du-\frac{\operatorname{div}u}{n}\Id
$$
be the trace-free symmetric gradient. The kernel of $\mathcal E^{\mathrm D}$ is the finite-dimensional space
$$
\mathcal K_n =\left\{\mathbf a+Ax+\lambda x+\bigl(2(\mathbf c\cdot x)x-|x|^2\mathbf c\bigr): \mathbf a,\mathbf c\in\R^n,\ A^T=-A,\ \lambda\in\R \right\}.
$$
We impose the normalization
$$
  \Pi_{\zeta,Q}(g-\operatorname{id})=0
$$
by minimizing a finite-dimensional moment functional over a small local chart of the M\"obius group. This is analogous in spirit to \cite[Lemma~4.1]{FZ2022}: One chooses a nearby point on a
finite-dimensional extremal manifold and obtains exact cancellation of tangent modes. The functional used here is different. The functional used here is different: It is one half of the squared norm of the moment map
$$
a\ \mapsto\ \Pi_{\zeta,Q}\bigl(\Psi(a)\circ g-\operatorname{id}\bigr).
$$
where $\Psi(a)$ is a local M\"obius chart. The derivative of this moment map at $(a,g)=(0,\operatorname{id})$ is the identity matrix, and hence the minimizer is unique in a small parameter ball.

The nonlinear estimate used in the absorption is the following matrix inequality, which  corresponds to the idea of \cite[Lemma 2.1]{FZ2022}. If $I_n+A\in\GL^+(n)$ and
$$
  K_O(I_n+A):= \frac{\|I_n+A\|_{\mathrm{op}}^n}{\det(I_n+A)} \le 1+\delta,
$$
then
$$
\left|\operatorname{dev}(\operatorname{sym}A)
\right|\le C_n\left[\delta(1+|A|)+\frac{|A|^2}{1+|A|}\right];
$$
see \eqref{eq:defn-sym-dev} for the notation. 
The second term is chosen deliberately:
$$
\frac{|A|^2}{1+|A|}\sim |A|^2\quad \text{ when } \ |A|\le 1,
\qquad  \frac{|A|^2}{1+|A|}  \sim |A|\quad\text{ when } \ |A|\ge 1.
$$
Now for $p\ge 1$, in conjunction with
$$
\left(\frac{t^2}{1+t}\right)^p\le t^{p+\varepsilon}
\qquad \text{ for any} \ t\ge 0  \text{ and }  0<\epsilon\le p,
$$
this yields a single unified estimate that replaces Reshetnyak’s original decomposition into small-gradient and large-gradient regions. Thus the final local absorption becomes
$$
\begin{aligned}
\|Dg-\Id\|_{L^p(Q)}
&\le C\|\mathcal E^{\mathrm D} (g-\operatorname{id}) \|_{L^p(Q)}
\\
&\le C\delta |Q|^{1/p} + C\delta\|Dg-\Id\|_{L^p(Q)}
+ C\nu(\delta)^{1/p}\|Dg-\Id\|_{L^p(Q)}.
\end{aligned}
$$
For $K-1$ small, the last two terms are absorbed. The coercive estimate for
$\mathcal E^{\mathrm D}$ modulo $\mathcal K_n$ is ultimately a finite-kernel
estimate of Reshetnyak~\cite{R1970}; related conformal Korn inequalities can be found, for example, in Dain~\cite{D2006} and Lewintan--Neff~\cite{LN2021}.

The global part of the paper has two different mechanisms. 
On bounded John domains, we do not use the original argument of Reshetnyak by gluing M\"obius transformations on neighboring Whitney cubes. 
Instead, we utilize the idea of Bojarski \cite{B1988}. Given a John center $x_\ast$, we construct Whitney chains (also referred to as Boman chains) by tracing John curves from the terminal cubes to a distinguished root cube that contains $x_\ast$. The resulting chain
$$
  Q_0,\ldots,Q_m=Q
$$
has two key properties: For some constants $\Lambda_J, C_J>0$,
$$
Q\subset \Lambda_J Q_j, \qquad \sum_{k=j}^m \ell(Q_k)\le C_J\ell(Q_j).
$$
The second estimate ensures that all backward M\"obius transformations along the chain remain confined to regions that are free of poles; see \Cref{prop:john-backward-orbit}.
The first estimate gives
$$
  N(x)+1  \le  \sum_{R\in\mathscr W}\chi_{\Lambda_JR}(x),
$$
where $N(x)$ is the length of the selected chain ending at the Whitney cube containing $x$. Bojarski's enlarged-cube lemma then yields
$$
 \|N+1\|_{L^q(\Omega)}  \le C(n,J)q\,|\Omega|^{1/q},  \qquad q\ge2,
$$
and consequently
$$
  \int_\Omega e^{\tau_JN(x)}\,dx  \le C(n,J)|\Omega|;
$$
see also \cite{KR1997}. 
Bojarski's chain condition and enlarged-cube estimate can be found in
\cite{B1988}; his formulation allows chain lengths depending on the terminal cube.

For general connected open sets, the construction of Boman chains is not available. In particular, an extended corridor composed of Whitney cubes of mutually comparable side length can generate an arbitrarily large number of M\"obius transitions. Consequently, we employ a rooted Whitney tree $T$ and incorporate a factor depending on the tree depth into the measure. If $Q_\ast$ is the root cube and $d_T(Q)$ is the tree distance from
$Q_\ast$ to $Q$, then
$$
  w(x)  = \theta^{d_T(Q)} \left(\frac{\ell(Q)}{\ell(Q_\ast)}\right)^p \quad\text{for a.e. }x\in Q,
  \qquad  d\mu=w\,dx.
$$
The power $\ell(Q)^p$ exactly cancels the first-order scaling in the derivative
term, while $\theta^{d_T(Q)}$ compensates for exponential growth along the
rooted tree. The resulting measure satisfies
$$
  0<w\le1,  \qquad  \mu(E)\le |E|,
$$
and, on each Whitney cube,
$$
 \mu(Q) =  \theta^{d_T(Q)}  \frac{\ell(Q)^{n+p}}{\ell(Q_\ast)^p}.
$$
Thus the measure is absolutely continuous with respect to Lebesgue measure and is independent of the mapping $f$.

The arbitrary-domain estimate is stated after compactification. This is necessary: A single root M\"obius normalizer $\Phi$ may have the property that $\Phi^{-1}\circ f$ takes the value $\infty$ on a remote part of the domain. We therefore use the inverse stereographic projection
$$
 \sigma:\widehat{\R}^{\,n}:=\R^n\cup\{\infty\}\to\mathbb S^n
$$
and prove an estimate for sphere-valued maps. The M\"obius group is propagated through its future-cone-preserving Lorentz representation, following the standard matrix realization of higher-dimensional M\"obius transformations; see Gehring--Martin--Palka~\cite[Section~3.7]{GMP2017} and Reshetnyak~\cite[Chapter~2]{R1994}. A Euclidean weighted estimate can be recovered only under a quantitative separation from the stereographic pole.

\subsection*{Organization of the paper}

\Cref{sec:preliminaries} introduces the notation and fundamental analytical framework employed throughout the manuscript, including the treatment of cubes, matrices, quasiregular mappings, M\"obius space, stereographic projection, and basic Sobolev-type estimates. \Cref{sec:conformal-korn} is dedicated to a detailed study of the trace-free symmetric gradient and to the derivation of the projected conformal Korn inequality. 

The proof of \Cref{thm:local-main} is carried out in \Cref{sec:local-stability}. \Cref{sec:whitney-edge} develops the Whitney decomposition and establishes quantitative estimates for neighboring Whitney cubes, whereas \Cref{sec:lorentz} formulates the Lorentz representation and derives quantitative bounds for M\"obius transformations. The proof of \Cref{thm:john-main} is presented in \Cref{sec:john}, and, finally, \Cref{sec:weighted} provides the proof of \Cref{thm:weighted-main}.

\medskip
\noindent{\bf AI disclaimer}: During the preparation of this manuscript, the author used ChatGPT Pro (version 5.6) and Overleaf AI to refine the text and assist in drafting technical and tedious proofs, especially those in the Appendix. The core concepts, theoretical insights, and interpretations were developed and articulated solely by the author. Following the use of these tools, the author independently reviewed, edited, and verified the content as necessary, and assumes full responsibility for all statements and conclusions presented in the final manuscript.




\section{Preliminary facts}\label{sec:preliminaries}

\subsection{Notation}\label{sec:notation}

A \emph{domain} is a nonempty connected open subset of $\R^n$. For a set $E\subset\R^n$, $|E|$ denotes its Lebesgue measure, $\overline E$ its closure, and $\interior E$ its interior. For nonempty sets $E,F\subset\R^n$, set
$$
\dist(E,F):=\inf\{|x-y|:x\in E,\ y\in F\},\qquad 
\diam (E):=\sup\{|x-y|:x,y\in E\},
$$
and we abbreviate $\dist(x,F):=\dist(\{x\},F)$.

The open Euclidean ball of center $a$ and radius $r>0$ is
$$
B(a,r):=\{x\in\R^n:|x-a|<r\}.
$$
An open cube $Q=Q(a,r)$ has center $a$ and sidelength $r$:
$$
Q(a,r):=a+\left(-\frac r2,\frac r2\right)^n.
$$
We write $x_Q:=a$ and $\ell(Q):=r.$
For $\lambda>0$, $\lambda Q$ denotes the cube with center $x_Q$ and sidelength $\lambda\ell(Q)$. We fix the normalized cube
$$
\Qzero:=\left(-\frac12,\frac12\right)^n.
$$
Then the affine normalization of $Q$ and its inverse are
\begin{equation}\label{eq:defn-SQ}
    S_Q(x):=\frac{x-x_Q}{\ell(Q)}, \qquad S_Q^{-1}(y)=x_Q+\ell(Q)y.
\end{equation}
Thus $S_Q(Q)=\Qzero$.

If $h\in L^1(E)$ and $0<|E|<\infty$, then
$$
\fint_Eh\,dx:=\frac1{|E|}\int_Eh\,dx.
$$

Throughout the manuscript, $C$ denotes a  positive constant whose value may vary from one occurrence to the next. When pertinent, its dependence on parameters is indicated explicitly by writing $C(n,p,J)$. In contrast, a constant equipped with a specific label, such as $C_{\mathrm{loc}}$, is fixed once chosen and subsequently retains this value for the remainder of the discussion.

We write $O(n)$ for the group of all $n\times n$ orthogonal matrices under matrix multiplication, $GL^+(n)$ for the group of all $n\times n$ real matrices with strictly positive determinant, and
$$SO(n)=O(n)\cap GL^+(n).$$

Fix a vector $\xi\in\R^n$.  The translated dyadic grid $\mathscr D_\xi$ consists of the cubes
$$
Q=\xi+2^{-k}\bigl(m+[0,1]^n\bigr),
\qquad k\in\mathbb Z, \quad
m\in\mathbb Z^n.
$$
Each $Q\in\mathscr D_\xi$ has a unique \emph{dyadic parent} $Q^+\in\mathscr D_\xi$ of sidelength
$$
\ell(Q^+)=2\ell(Q).
$$
Two dyadic cubes have disjoint interiors unless one contains the other.  At a fixed generation their half-open representatives form a partition of $\R^n$.  We work with the open interiors and ignore their boundaries, which have Lebesgue measure zero.  Whenever coverage at boundary points is relevant, we use the closures of the cubes or their doubled dilates.

We next introduce some  notation on matrices. For two matrices $A,B\in\R^{n\times n}$, set
$$
A:B:=\tr(A^TB),
\qquad
|A|:=(A:A)^{1/2}.
$$
The operator norm is
$$
\|A\|_{\mathrm{op}}:=\sup_{|\xi|=1}|A\xi|.
$$
These two norms are not identified with one another; instead, they satisfy the inequality
$$
\|A\|_{\mathrm{op}} \leq |A| \leq \sqrt{n}\,\|A\|_{\mathrm{op}}.
$$
Define 
\begin{equation}\label{eq:defn-sym-dev}
\sym A:=\frac{A+A^T}{2}, \qquad \dev A:=A-\frac{\tr A}{n}\Id.
\end{equation}

For a matrix $A\in\GL^+(n):=\{A\in\R^{n\times n}:\det A>0\}$, its (outer) distortion is
$$
K_O(A):=\frac{\|A\|_{\mathrm{op}}^n}{\det A}.
$$
We use $K_O(A)$ only when $A\in\GL^+(n)$, and the zero matrix is treated separately whenever it occurs as the derivative of a quasiregular mapping.

\subsection{Quasiregular mappings and Sobolev inequalities}\label{sec:QC}
We now introduce the foundational terminology associated with quasiregular and quasiconformal mappings.

\begin{definition}[Quasiregular and quasiconformal mappings]\label{def:quasiregular}
Let $\Omega\subset\R^n$ be a domain and let $K\ge1$. A mapping
$$
f:\Omega\to\R^n
$$
is called \emph{nonconstant sense-preserving $K$-quasiregular} if
$$
f\in C(\Omega;\R^n)\cap W^{1,n}_{\mathrm{loc}}(\Omega;\R^n),
\qquad
f\not\equiv\text{constant},
$$
its Jacobian $J_f(x):=\det Df(x)$
is nonnegative for almost every $x\in\Omega$, and
$$
\|Df(x)\|_{\mathrm{op}}^n\le KJ_f(x)
\quad\text{for almost every }x\in\Omega.
$$
A quasiregular mapping that is a homeomorphism onto its image is called \emph{quasiconformal}.
\end{definition}

This convention agrees with the modern terminology in e.g. Rickman~\cite[Chapter~I]{R1993}, while Reshetnyak uses the phrase \emph{mapping with bounded distortion}; see~\cite[Chapter~I, Section~4]{R1989}. We record the following elementary observation. 

\begin{lemma}\label{lem:zero-jacobian}
Let $f$ be as in \Cref{def:quasiregular}. Then
$$
J_f(x)=0\quad\to\quad Df(x)=0
$$
for almost every $x\in\Omega$ at which the distortion inequality holds.
\end{lemma}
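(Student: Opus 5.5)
The proof is a pointwise linear-algebra argument at a single point, with no analysis involved. Fix $x\in\Omega$ at which $Df(x)$ exists and at which the distortion inequality
$$
\|Df(x)\|_{\mathrm{op}}^n\le K\,J_f(x)
$$
holds; by \Cref{def:quasiregular} this is true for almost every $x$. Suppose in addition that $J_f(x)=\det Df(x)=0$.

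The right-hand side of the distortion inequality is then $K\cdot 0=0$. Hence
$$
0\le \|Df(x)\|_{\mathrm{op}}^n\le 0,
$$
so $\|Df(x)\|_{\mathrm{op}}=0$. Since the operator norm is a norm on $\R^{n\times n}$, it vanishes only at the zero matrix, and therefore $Df(x)=0$. Equivalently, by the comparison $|A|\le\sqrt n\,\|A\|_{\mathrm{op}}$ recorded in \Cref{sec:notation}, the Frobenius norm also vanishes, which gives the same conclusion.

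The only point that needs care is the phrase ``almost every''. The set of points where $Df$ is defined and the distortion inequality holds has full measure. This is because $f\in W^{1,n}_{\mathrm{loc}}$ has an a.e.-defined weak derivative, and the distortion inequality holds a.e. by definition. The implication above is then valid at every point of that set. I do not expect any real obstacle; the lemma exists only to justify treating the zero matrix separately in later uses of $K_O$, which is defined only on $\GL^+(n)$.
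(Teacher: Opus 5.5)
Your proof is correct and is the same as the paper's: at a point where the distortion inequality holds and $J_f(x)=0$, you get $0\le\|Df(x)\|_{\mathrm{op}}^n\le K\cdot 0=0$, hence $Df(x)=0$. Your remark that the relevant points form a full-measure set simply makes explicit what the paper leaves implicit.
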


\begin{proof}
If $J_f(x)=0$, then
$$
0\le\|Df(x)\|_{\mathrm{op}}^n\le KJ_f(x)=0.
$$
Hence $\|Df(x)\|_{\mathrm{op}}=0$, which is equivalent to $Df(x)=0$.
\end{proof}

At almost every point where $Df(x)\ne0$, \Cref{lem:zero-jacobian} gives $J_f(x)>0$, and therefore
$$
K_O(Df(x))\le K.
$$
We only use $K_O(Df(x))$ at the set where $Df(x)\neq 0.$

We also recall the fundamental openness and discreteness theorem: Every nonconstant quasiregular mapping is open and discrete.  In particular, its restriction to every nonempty subdomain is nonconstant.  This result is due to Reshetnyak; see Rickman~\cite[Chapter~I, Section~4]{R1993} and Reshetnyak~\cite[Chapter~II, Section~6]{R1989}.

The M\"obius space is the one-point compactification
$$
\whR:=\R^n\cup\{\infty\}.
$$
We identify it with the unit sphere
$$
\Sn:=\{\xi\in\R^{n+1}:|\xi|=1\}
$$
through the map 
\begin{equation}\label{eq:stereographic}
\sigma(x) = \left(\frac{2x}{1+|x|^2}, \frac{|x|^2-1}{1+|x|^2}\right) \quad\quad x\in\R^n, \quad
\sigma(\infty)=\mathbf{e}_{n+1}.
\end{equation}
Then for $\xi=(\xi',\xi_{n+1})\in\Sn\setminus\{\mathbf{e}_{n+1}\}$,
$$
\sigma^{-1}(\xi)=\frac{\xi'}{1-\xi_{n+1}}.
$$
These formulas and the induced chordal metric are recorded in~\cite[Section~2.2]{GMP2017}.

A M\"obius transformation of $\whR$ is a finite composition of reflections in spheres and hyperplanes. Its restriction to the complement of its pole is smooth and conformal. We denote the full M\"obius group by $\Mob(n)$ and its sense-preserving subgroup by $\Mob^+(n)$. If $M\in\Mob(n)$, its smooth action on the sphere is
\begin{equation}\label{eq:hat-operation}
 \widehat M:=\sigma\circ M\circ\sigma^{-1}:\Sn\to\Sn. 
\end{equation}

A M\"obius transformation fixes $\infty$ if and only if its restriction to $\R^n$ is a similarity. In particular, a general M\"obius transformation is not affine; see~\cite[Section~3.3]{GMP2017}  and ~\cite[Chapter~2, Theorem~1.3]{R1994}.

Now we recall the Poincar\'e and Morrey estimates on cubes. 
\begin{lemma}[{\cite[Section 4.5.2, Theorem 2]{EG1992}}]\label{lem:cube-poincare}
Let $1\le p<\infty$, let $Q\subset\R^n$ be an open cube, and let $u\in W^{1,p}(Q;\R^m)$. With
$$
 u_Q:=\fint_Q u\,dx,
$$
one has
$$
 \|u-u_Q\|_{L^p(Q)}
 \le C(n,p)\ell(Q)\|Du\|_{L^p(Q)}.
$$
\end{lemma}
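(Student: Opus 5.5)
The plan is to reduce to the normalized cube by scaling and then apply a compactness argument on $\Qzero$. The inequality is invariant under the affine change $x\mapsto S_Q^{-1}(x)=x_Q+\ell(Q)x$. Setting $v:=u\circ S_Q^{-1}$, we have $Dv=\ell(Q)\,(Du)\circ S_Q^{-1}$, and the change of variables contributes the factor $|Q|=\ell(Q)^n$ to both $L^p$ norms. Hence $\|u-u_Q\|_{L^p(Q)}=\ell(Q)^{n/p}\|v-v_{\Qzero}\|_{L^p(\Qzero)}$ and $\|Du\|_{L^p(Q)}=\ell(Q)^{n/p-1}\|Dv\|_{L^p(\Qzero)}$. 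The powers of $\ell(Q)$ therefore balance exactly to the factor $\ell(Q)$, and it suffices to prove $\|v-v_{\Qzero}\|_{L^p(\Qzero)}\le C(n,p)\|Dv\|_{L^p(\Qzero)}$. The vector-valued case ($m$ components) follows componentwise, at the cost of a constant depending only on $m$ and $p$; one can also avoid this dependence by running the argument below directly on vector-valued maps.

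On $\Qzero$ the cleanest route avoids compactness altogether and yields an explicit constant. Since $\Qzero$ is convex, by density of smooth functions up to the boundary (cubes are Lipschitz domains) we may assume $v\in C^1(\overline{\Qzero})$. For $x,y\in\Qzero$ we write
$$v(x)-v(y)=\int_0^1 Dv(y+t(x-y))(x-y)\,dt,$$
and bound $|x-y|\le\sqrt n$. By Jensen, $|v(x)-v_{\Qzero}|^p\le\int_{\Qzero}|v(x)-v(y)|^p\,dy$. Integrating in $x$ and applying Hölder in $t$ gives
$$\int\!\!\int|v(x)-v(y)|^p\,dx\,dy\le n^{p/2}\int_0^1\!\!\int\!\!\int|Dv(y+t(x-y))|^p\,dx\,dy\,dt.$$

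The only step requiring care is controlling the triple integral by $\|Dv\|_{L^p}^p$, because the substitution degenerates near $t=0$ and $t=1$. To handle this, split at $t=1/2$ and use the symmetry $x\leftrightarrow y$ to treat only $t\ge1/2$. For fixed $y$ and $t\ge 1/2$, the substitution $z=y+t(x-y)$ maps $\Qzero$ into $\Qzero$ by convexity, with Jacobian $t^{n}\ge 2^{-n}$. Hence the inner integral is at most $2^n\|Dv\|_{L^p(\Qzero)}^p$, and
$$\|v-v_{\Qzero}\|_{L^p(\Qzero)}\le 2\cdot 2^{n/p}\sqrt n\,\|Dv\|_{L^p(\Qzero)}.$$

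Finally, we pass from smooth $v$ to general $v\in W^{1,p}$ by approximation, using continuity of both sides in $W^{1,p}$. Undoing the scaling gives the stated estimate with $C(n,p)=2^{1+n/p}\sqrt n$.
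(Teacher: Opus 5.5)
Your proof is correct, and it takes a different route from the paper. The paper gives no proof of this lemma; it only cites Evans--Gariepy. There the inequality is stated on balls, and the $L^p$--$L^p$ estimate is obtained, as is standard, by a contradiction argument based on Rellich compactness followed by scaling.

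You instead work directly on the cube:
\begin{itemize}
\item you rescale to $\Qzero$ (your bookkeeping of the powers of $\ell(Q)$ is correct) and use density of $C^1(\overline{\Qzero})$;
\item you combine convexity, the segment representation of $v(x)-v(y)$ and Jensen;
\item you change variables via $z=(1-t)y+tx$, whose Jacobian is $t^n\ge 2^{-n}$ on $t\ge\frac12$, and you handle $t\le\frac12$ through the symmetry $(x,y,t)\mapsto(y,x,1-t)$.
\end{itemize}

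This buys several things.
\begin{itemize}
\item The constant is explicit. In fact $2^{n/p}\sqrt n$ already suffices, because the interval $[\frac12,1]$ has length $\frac12$; your $2^{1+n/p}\sqrt n$ is a valid overestimate.
\item The constant does not depend on $m$, since you use Euclidean norms throughout.
\item The argument works verbatim on any convex domain.
\item It avoids the small mismatch that the cited theorem is formulated on balls, whereas the lemma is needed on a cube with $u$ defined only on $Q$.
\end{itemize}
The compactness route is shorter to quote and adapts to general bounded connected Lipschitz domains, but its constant is non-explicit.

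The paper itself also contains a third elementary proof, perhaps without noticing. In the proof of the projected Korn inequality it shows $\|v-A_hv\|_{L^p(\Qzero)}\le C(n,p)h\|Dv\|_{L^p(\Qzero)}$. Taking $h=1$ gives $A_1v=v_{\Qzero}$, so this is exactly the present lemma on $\Qzero$. It is proved there with one-dimensional averages $M_i$ and the telescoping identity for $I-M_1\cdots M_n$. That argument exploits the product structure of the cube rather than convexity.

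Your opening sentence announces a compactness argument that is never used; it can simply be dropped.
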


\begin{lemma}[{\cite[Section~4.5.3, Theorem~3]{EG1992}}]\label{lem:morrey-cube}
Let $p>n$, let $Q\subset\R^n$ be a cube, and let $u\in W^{1,p}(Q;\R^m)$.  If
$$
u_Q:=\fint_Qu\,dx,
$$
then $u$ has a continuous representative on $Q$ and
$$
\|u-u_Q\|_{L^\infty(Q)}
\le C(n,p)\ell(Q)^{1-n/p}\|Du\|_{L^p(Q)}.
$$
\end{lemma}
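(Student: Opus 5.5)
The Morrey estimate is scale invariant, so the plan is to reduce to the unit cube $\Qzero$ and then combine the Sobolev embedding $W^{1,p}\hookrightarrow C^{0,1-n/p}$ with the Poincaré inequality of \Cref{lem:cube-poincare}.

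\textbf{Step 1: reduction to $\Qzero$.} Set $v:=u\circ S_Q^{-1}$, so that $v\in W^{1,p}(\Qzero;\R^m)$. Under this change of variables,
$$
\|Dv\|_{L^p(\Qzero)}=\ell(Q)^{1-n/p}\|Du\|_{L^p(Q)},
\qquad
v_{\Qzero}=u_Q,
$$
and the $L^\infty$ norm is unchanged. It therefore suffices to prove
$$
\|v-v_{\Qzero}\|_{L^\infty(\Qzero)}\le C(n,p)\|Dv\|_{L^p(\Qzero)}.
$$

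\textbf{Step 2: the oscillation estimate.} I would establish the pointwise bound
$$
|v(x)-v(y)|\le C\,|x-y|^{1-n/p}\|Dv\|_{L^p(\Qzero)}
$$
for Lebesgue points $x,y$, following the classical Morrey argument. For each $r\le1$, consider the cube $Q_{x,r}$ of sidelength $r$ that contains $x$ and lies inside $\Qzero$; it can be taken as a corner-type subcube, which exists because $\Qzero$ is convex. For smooth $v$, integrate $v(x)-v(z)=\int_0^1 Dv(x+t(z-x))(z-x)\,dt$ over $z\in Q_{x,r}$. The standard potential bound then gives
$$
\Bigl|v(x)-\fint_{Q_{x,r}}v\Bigr|\le C\int_{Q_{x,r}}\frac{|Dv(z)|}{|x-z|^{n-1}}\,dz.
$$
Applying Hölder's inequality with the integrability of $|x-z|^{-(n-1)p'}$, which holds exactly because $p>n$, bounds the right-hand side by
$$
C\,r^{1-n/p}\|Dv\|_{L^p}.
$$
Taking $r\approx|x-y|$ and comparing the averages over $Q_{x,r}$ and $Q_{y,r}$ through a common cube of comparable size yields the Hölder bound. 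Density of smooth functions in $W^{1,p}(\Qzero)$, which holds since cubes are Lipschitz domains, then gives the continuous representative.

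\textbf{Step 3: conclusion.} From Step 2 applied with $r=1$,
$$
|v(x)-v_{\Qzero}|\le\fint_{\Qzero}|v(x)-v(y)|\,dy\le C\|Dv\|_{L^p(\Qzero)},
$$
because $|x-y|\le\sqrt n$ on $\Qzero$. Undoing the scaling from Step 1 gives the stated estimate.

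The main obstacle is Step 2, namely handling points near $\partial\Qzero$ so that the constant stays uniform. Choosing the subcubes $Q_{x,r}$ inside $\Qzero$, which convexity allows, resolves this. Alternatively, one may simply cite the result from \cite{EG1992}.
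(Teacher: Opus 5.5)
Your proposal is correct and is essentially the classical Morrey argument behind the result that the paper simply cites from Evans--Gariepy. The ingredients are the same: a potential estimate on a convex set, H\"older's inequality (using that $|x-z|^{-(n-1)p'}$ is integrable exactly when $p>n$), comparison through a common set, and density; you have transplanted them from balls to cubes via the scaling $S_Q$ and the convexity of $\Qzero$. The paper gives no argument beyond that citation, so your sketch supplies the standard details. One small attribution should be adjusted: the subcube $Q_{x,r}\ni x$ of side $r\le 1$ inside $\Qzero$ exists because $\Qzero$ is a product of unit intervals, whereas convexity is what keeps the segments in the potential estimate inside the cube.
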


The following inner version is an immediate consequence of \Cref{lem:cube-poincare}. 
\begin{lemma}\label{lem:inner-poincare}
Let $1\le p<\infty$, let $Q\subset\R^n$ be a cube, and let $\zeta\in(0,1)$. If $u\in W^{1,p}(Q;\R^m)$ satisfies
$$
\fint_{\zeta Q}u\,dx=0,
$$
then
$$
\|u\|_{L^p(Q)}
\le C(n,p,\zeta)\ell(Q)\|Du\|_{L^p(Q)}.
$$
\end{lemma}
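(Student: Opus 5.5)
The plan is to reduce \Cref{lem:inner-poincare} to \Cref{lem:cube-poincare} by comparing the two averages $u_Q$ and $u_{\zeta Q}$. Write
$$
u = (u-u_Q) + u_Q .
$$
The first term is controlled in $L^p(Q)$ by $C(n,p)\ell(Q)\|Du\|_{L^p(Q)}$, directly from \Cref{lem:cube-poincare}. It therefore remains to bound the constant vector $u_Q$.

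For that, I will use the hypothesis $\fint_{\zeta Q}u\,dx=0$ to write
$$
u_Q = u_Q-\fint_{\zeta Q}u\,dx=\fint_{\zeta Q}(u_Q-u)\,dx .
$$
Jensen's (or H\"older's) inequality on $\zeta Q$, together with $|\zeta Q|=\zeta^n|Q|$, then gives
$$
|u_Q|\le \Bigl(\fint_{\zeta Q}|u-u_Q|^p\,dx\Bigr)^{1/p}\le \zeta^{-n/p}|Q|^{-1/p}\|u-u_Q\|_{L^p(Q)}.
$$
Consequently
$$
\|u_Q\|_{L^p(Q)}=|u_Q|\,|Q|^{1/p}\le \zeta^{-n/p}\|u-u_Q\|_{L^p(Q)}.
$$

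Combining the two pieces by the triangle inequality and applying \Cref{lem:cube-poincare} once more yields
$$
\|u\|_{L^p(Q)}\le (1+\zeta^{-n/p})\,C(n,p)\,\ell(Q)\|Du\|_{L^p(Q)},
$$
which is the claim with $C(n,p,\zeta)=(1+\zeta^{-n/p})C(n,p)$.

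There is no real obstacle here. The only point needing care is that the constant depends on $\zeta$ only through the volume ratio $|Q|/|\zeta Q|=\zeta^{-n}$. For vector-valued $u$, the $p$-th power estimate above is applied with the Euclidean norm of the vector, and Jensen's inequality holds for that convex function, so the argument goes through unchanged.
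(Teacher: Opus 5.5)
Your proposal is correct and matches the paper's proof essentially step for step. Both split $u=(u-u_Q)+u_Q$, control $u-u_Q$ by \Cref{lem:cube-poincare}, and bound $|u_Q|=\bigl|\fint_{\zeta Q}(u_Q-u)\,dx\bigr|$ by H\"older on $\zeta Q\subset Q$. The resulting constant is the same, $(1+\zeta^{-n/p})C(n,p)$.
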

\begin{proof}
Let
$$
u_Q:=\fint_Qu\,dx.
$$
Then \Cref{lem:cube-poincare} yields
\begin{equation}\label{eq:poincare-u}
\|u-u_Q\|_{L^p(Q)} \le C(n,p)\ell(Q)\|Du\|_{L^p(Q)}.
\end{equation}

Since the average of $u$ over $\zeta Q$ vanishes,
$$
u_Q = \fint_{\zeta Q}(u_Q-u(x))\,dx.
$$
By H\"older's inequality,
\begin{align}
|u_Q|&\le |\zeta Q|^{-1}\int_{\zeta Q}|u-u_Q|\,dx\notag \\
&\le |\zeta Q|^{-1/p}\|u-u_Q\|_{L^p(\zeta Q)}\le \zeta^{-n/p}|Q|^{-1/p}\|u-u_Q\|_{L^p(Q)}. \label{eq:average-u}
\end{align}
Consequently, combining \eqref{eq:poincare-u} and \eqref{eq:average-u} yields
\begin{align*}
\|u\|_{L^p(Q)}
&\le \|u-u_Q\|_{L^p(Q)}+|Q|^{1/p}|u_Q|\\
&\le \bigl(1+\zeta^{-n/p}\bigr)\|u-u_Q\|_{L^p(Q)}\le C(n,p,\zeta)\ell(Q)\|Du\|_{L^p(Q)}.
\end{align*}
This proves the lemma. 
\end{proof}

\subsection{The conformal Killing operator and its projected coercivity}\label{sec:conformal-korn}
We first recall the following fact on distributions.  
\begin{lemma} \label{lem:vanishing-hessian}
Let $U\subset\R^n$ be a domain.
\begin{enumerate}[label=\textup{(\roman*)},leftmargin=2.3em]
\item If $T\in\mathscr D'(U)$ and $\partial_iT=0$ in $\mathscr D'(U)$ for every $i$, then $T$ is a constant distribution on $U$.
\item If $h\in\mathscr D'(U)$ and $\partial_i\partial_jh=0$ in $\mathscr D'(U)$ for all $i,j$, then there exist $\alpha\in\R$ and $\beta\in\R^n$ such that $h=\alpha+\beta\cdot x$ in $\mathscr D'(U)$.
\item If $v\in\mathscr D'(U;\R^m)$ and $D^2v=0$, then $v(x)=a+Bx$ in distributions for some $a\in\R^m$ and $B\in\R^{m\times n}$.
\end{enumerate}
\end{lemma}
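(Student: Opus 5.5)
The plan is to prove (i) by mollification and then derive (ii) and (iii) from it by two successive integrations.

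For (i), fix a connected open $V\subset\subset U$ and a standard mollifier $\rho_\varepsilon$ with $\varepsilon<\dist(V,\partial U)$. The function $T_\varepsilon:=T*\rho_\varepsilon$ is smooth on $V$, and $\partial_iT_\varepsilon=(\partial_iT)*\rho_\varepsilon=0$ for every $i$. A smooth function with vanishing gradient on a connected open set is constant, so $T_\varepsilon\equiv c_\varepsilon$ on $V$. Since $T_\varepsilon\to T$ in $\mathscr D'(V)$, the constants $c_\varepsilon$ converge (test against one $\phi$ with $\int\phi=1$). Hence $T|_V$ is a constant $c_V$.

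To pass to all of $U$, exhaust $U$ by an increasing sequence of connected open sets $V_k\subset\subset U$. This is possible because $U$ is connected: take the connected component of $\{x:\dist(x,\partial U)>1/k,\ |x|<k\}$ containing a fixed base point. The constants $c_{V_k}$ agree on overlaps, so they are all equal. This is the only delicate point, and it is routine. Alternatively, note that the set of points having a neighborhood on which $T$ equals a given constant $c$ is open and closed, and apply connectedness directly.

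For (ii), apply (i) to $T=\partial_jh$ for each $j$, which gives $\partial_jh=\beta_j\in\R$. Then $S:=h-\beta\cdot x$ satisfies $\partial_jS=0$ for all $j$, so (i) gives $S=\alpha$.

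For (iii), apply (ii) to each component $v^k$. This yields $v^k=a_k+B_k\cdot x$, that is, $v=a+Bx$, where $B$ is the matrix whose $k$-th row is $B_k$.
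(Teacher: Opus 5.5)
Your proposal is correct and follows essentially the paper's proof. Both arguments mollify to get local constancy and then use connectedness to globalize; you do this with an exhaustion by connected open sets $V_k\subset\subset U$ (or an open--closed argument), where the paper chains overlapping balls along paths, and your exhaustion has the small advantage that each test function is supported in a single $V_k$, so passing from local to global constancy is immediate. Parts (ii) and (iii) are handled exactly as in the paper.
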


\begin{proof}
We first prove \textup{(i)}. Let $B\subset\subset U$ be a ball and let $\rho\in C_c^\infty(B(0,1))$ be a nonnegative mollifier with integral one. For sufficiently small $\epsilon>0$, the convolution
$$
 T_\epsilon:=T*\rho_\epsilon
$$
is smooth on a neighborhood of $\overline B$. Since distributional differentiation commutes with convolution, then
$$
 \nabla T_\epsilon=(\nabla T)*\rho_\epsilon=0
 \quad\text{on } B.
$$
Hence $T_\epsilon$ is a constant $c_\epsilon$ on $B$. Choose $\phi\in C_c^\infty(B)$ with $\int_B\phi\,dx=1$. Since $T_\epsilon\to T$ in $\mathscr D'(B)$,
$$
 c_\epsilon=\langle T_\epsilon,\phi\rangle
 \to\langle T,\phi\rangle=:c_B.
$$
Thus $T=c_B$ in $\mathscr D'(B)$. 

If two such balls overlap, their constants agree after testing on a nonzero function supported in the overlap. Recall that every connected open subset of $\R^n$ is path connected.Thus, any two points can  joined by a compact path, whose image can be covered by a finite chain of overlapping balls compactly contained in $U$. Therefore all local constants agree, and $T$ is constant on $U$.

For \textup{(ii)}, apply \textup{(i)} to each first derivative $\partial_i h$. There are constants $\beta_i$ such that $\partial_i h=\beta_i$. The distribution $h-\beta\cdot x$ has zero gradient, and hence another application of \textup{(i)} gives $h=\alpha+\beta\cdot x$. For \textup{(iii)}, apply \textup{(ii)} separately to each scalar component of $v$.  The resulting constants assemble into a vector $a\in\R^m$ and the resulting first-order coefficients assemble into a matrix $B\in\R^{m\times n}$, thus $v(x)=a+Bx$ in distributions.
\end{proof}

For $u\in W^{1,1}_{\mathrm{loc}}(\Omega;\R^n)$, define
\begin{equation}\label{eq:conformal-killing-operator}
\Edev u := \dev(\sym Du)
= \frac{Du+Du^T}{2} -\frac{\operatorname{div}u}{n}\Id.
\end{equation}
This is usually called the \emph{conformal Killing operator} applied to $u$. For $n\ge3$, its kernel is the finite-dimensional space of conformal Killing fields. We give the classification of conformal Killing fields below, and postpone its proof into the appendix; see also \cite[Theorem 3.3]{L2013}. 

\begin{proposition}\label{prop:conformal-killing-fields}
Let $U\subset\R^n$ be a domain, let $n\ge3$, and let $u\in W^{1,1}_{\mathrm{loc}}(U;\R^n)$. Then
$$
\Edev u=0 \quad\text{in }\mathscr D'(U)
$$
if and only if there exist vectors $\mathbf{a},\mathbf{c}\in\R^n$, a skew-symmetric matrix $A\in\R^{n\times n}$, and a scalar $\lambda\in\R$ such that
\begin{equation}\label{eq:ck-field}
u(x)
= \mathbf{a}+Ax+\lambda x+2(\mathbf{c}\cdot x)x-|x|^2\mathbf{c}
\quad\text{for almost every }x\in U.
\end{equation}
\end{proposition}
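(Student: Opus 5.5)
The plan has two parts: a direct computation for the "if" direction, and, for the converse, a mollification argument that reduces to the classical smooth derivation of Liouville's theorem. Lemma~\ref{lem:vanishing-hessian} supplies the distributional rigidity at each stage.

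\emph{Sufficiency.} I check the four families in \eqref{eq:ck-field} one at a time.
\begin{itemize}
\item For $u=\mathbf a+Ax+\lambda x$ one has $Du=A+\lambda\Id$. Its symmetric part is $\lambda\Id$, which is pure trace, so $\Edev u=0$.
\item For $u=2(\mathbf c\cdot x)x-|x|^2\mathbf c$ one computes
$$
Du=2x\otimes\mathbf c+2(\mathbf c\cdot x)\Id-2\mathbf c\otimes x .
$$
The two rank-one terms form a skew-symmetric matrix. Hence $\sym Du=2(\mathbf c\cdot x)\Id$, and again $\Edev u=0$.
\end{itemize}

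\emph{Necessity.} I would first reduce to smooth fields. Let $u_\epsilon:=u*\rho_\epsilon$ on $U_\epsilon:=\{x\in U:\dist(x,\partial U)>\epsilon\}$. Convolution commutes with $\Edev$, so $u_\epsilon$ is smooth with $\Edev u_\epsilon=0$ on $U_\epsilon$.

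For a smooth field with $\sym Du=\phi\,\Id$, where $\phi=\operatorname{div}u/n$, write $\varepsilon_{ij}:=\tfrac12(\partial_iu_j+\partial_ju_i)=\phi\,\delta_{ij}$. I use the standard identity
$$
\partial_k\partial_ju_i=\partial_k\varepsilon_{ij}+\partial_j\varepsilon_{ik}-\partial_i\varepsilon_{jk}.
$$
This expresses $D^2u$ linearly through $\nabla\phi$:
$$
\partial_j\partial_ku_i=\delta_{ij}\partial_k\phi+\delta_{ik}\partial_j\phi-\delta_{jk}\partial_i\phi .
$$
Differentiating once more and imposing symmetry of third derivatives (equivalently, the Saint-Venant compatibility conditions for $\varepsilon=\phi\Id$) gives
$$
(n-2)\,\partial_i\partial_j\phi+\delta_{ij}\Delta\phi=0 .
$$
Taking the trace yields $(2n-2)\Delta\phi=0$, hence $\Delta\phi=0$. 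Since $n\ge3$, it follows that $\partial_i\partial_j\phi=0$. This is the one place where $n\ge3$ enters.

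By Lemma~\ref{lem:vanishing-hessian}(ii), $\phi=\lambda+2\mathbf c\cdot x$ on each component of $U_\epsilon$. The second-derivative formula then shows that $D^2\bigl(u-u_{\lambda,\mathbf c}\bigr)=0$, where $u_{\lambda,\mathbf c}:=\lambda x+2(\mathbf c\cdot x)x-|x|^2\mathbf c$. By Lemma~\ref{lem:vanishing-hessian}(iii), $u-u_{\lambda,\mathbf c}$ is affine, $\mathbf a+Bx$. Its symmetric gradient has zero deviatoric part and, by construction, zero trace contribution beyond $\lambda$, so $\sym B=0$ and $B=A$ is skew.

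It remains to pass back to $u$. Each $u_\epsilon$ lies in the fixed finite-dimensional space $\Kconf$, restricted to a fixed ball $B\subset\subset U$. On that space all norms are equivalent, and $u_\epsilon\to u$ in $L^1(B)$. Hence the coefficients of $u_\epsilon$ converge, and $u\in\Kconf$ a.e. on $B$.

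Finally I globalize over the connected set $U$ by the ball-chain argument from the proof of Lemma~\ref{lem:vanishing-hessian}(i). A conformal Killing polynomial that vanishes on an open set has all coefficients zero, so the representations on overlapping balls coincide.

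The main obstacle is the index computation leading to $(n-2)\partial_i\partial_j\phi+\delta_{ij}\Delta\phi=0$. It has to be done carefully, with the trace argument used to remove $\Delta\phi$, since that is exactly where the hypothesis $n\ge3$ is used.
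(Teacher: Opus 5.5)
Your proposal is correct and follows the paper's argument: the same formula expressing $D^2u$ through $\nabla\phi$, the same identity $(n-2)D^2\phi+\Delta\phi\,\Id=0$ with the trace trick, where $n\ge3$ is used, and the same subtraction of $\lambda x+2(\mathbf c\cdot x)x-|x|^2\mathbf c$ leaving a skew affine field. The only real difference is your mollification step. The paper avoids it by carrying out the same manipulations directly in $\mathscr D'(U)$ and letting Lemma~\ref{lem:vanishing-hessian} handle regularity and connectivity, since that lemma already applies to distributions on a connected domain. Your limit in $\Kconf$ and the ball-chain globalization are therefore correct but unnecessary.
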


We denote the vector space of the fields in \eqref{eq:ck-field} by $\Kconf,$
whose dimension is
$$
\dim\Kconf =n+\frac{n(n-1)}2+1+n =\frac{(n+1)(n+2)}2.
$$
The same kernel is described in Reshetnyak~\cite{R1970} and Dain~\cite{D2006}.

Now we introduce the moment projection. Let $\zeta\in(0,1)$. On the normalized cube $\Qzero$, we equip $\Kconf$ with the inner product
$$
\langle X,Y\rangle_{\zeta}
:=
\fint_{\zeta\Qzero}X(y)\cdot Y(y)\,dy.
$$
Choose an orthonormal basis
$$
X_1,\ldots,X_N,
\qquad
N:=\frac{(n+1)(n+2)}2.
$$
For $v\in L^1(\Qzero;\R^n)$, define
\begin{equation}\label{eq:unit-projection}
\Pi_{\zeta,\Qzero}v
:=
\sum_{\alpha=1}^N
\left(
\fint_{\zeta\Qzero}v(y)\cdot X_\alpha(y)\,dy
\right)X_\alpha.
\end{equation}
This definition is independent of the chosen orthonormal basis. It is the orthogonal projection onto $\Kconf$ with respect to the normalized $L^2(\zeta\Qzero)$ inner product whenever $v\in L^2(\zeta\Qzero)$.

For a general cube $Q$, define the vector-field scaling
$$
(\mathcal T_Qu)(y):=\frac{u(x_Q+\ell(Q)y)}{\ell(Q)}, \qquad y\in\Qzero.
$$
The transformation $\mathcal T_Q$ maps $\Kconf$ bijectively onto itself. Indeed, let
$$
X(x)=a+Bx+\lambda x+2(c\cdot x)x-|x|^2c, \qquad B^T=-B.
$$
Writing $x=x_Q+\ell(Q)y$, a direct expansion gives
$$
(\mathcal T_QX)(y) =a_Q+B_Qy+\lambda_Qy +2(c_Q\cdot y)y-|y|^2c_Q,
$$
where
$$
c_Q=\ell(Q)c, \qquad B_Q=B+2(x_Qc^T-cx_Q^T), \qquad \lambda_Q=\lambda+2c\cdot x_Q,
$$
and
$$
a_Q=\ell(Q)^{-1} \bigl(a+Bx_Q+\lambda x_Q +2(c\cdot x_Q)x_Q-|x_Q|^2c\bigr).
$$
Note that the matrix $B_Q$ is skew-symmetric. Hence
$\mathcal T_QX\in\mathcal K_n$. Applying the same argument to
$\mathcal T_Q^{-1}$ proves that $\mathcal T_Q$ maps
$\mathcal K_n$ bijectively onto itself.

We define the corresponding projection on $Q$ as
\begin{equation}\label{eq:scaled-projection}
\Pi_{\zeta,Q}:= \mathcal T_Q^{-1} \circ\Pi_{\zeta,\Qzero}\circ\mathcal T_Q.
\end{equation}
In particular,
$$
\Pi_{\zeta,Q}X=X \qquad\text{for every }X\in\Kconf.
$$
We show such a projection is bounded. 
\begin{lemma}\label{lem:projection-bounded}
Let $1<p<\infty$ and $\zeta\in(0,1)$. There exists $C=C(n,p,\zeta)$ such that, for every cube $Q$ and every $u\in L^p(Q;\R^n)$,
\begin{equation}\label{eq:projection-bound}
\|\Pi_{\zeta,Q}u\|_{L^p(Q)} +\ell(Q)\|D(\Pi_{\zeta,Q}u)\|_{L^p(Q)} \le C\|u\|_{L^p(Q)}.
\end{equation}
Consequently, $\Pi_{\zeta,Q}$ is a bounded projection from $W^{1,p}(Q;\R^n)$ onto $\Kconf$.
\end{lemma}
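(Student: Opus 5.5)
The plan is to reduce to the normalized cube $\Qzero$ by scaling and then exploit the finite dimensionality of $\Kconf$. Set $v:=\mathcal T_Qu$, so $v(y)=u(x_Q+\ell(Q)y)/\ell(Q)$ on $\Qzero$. A change of variables gives
$$
\|v\|_{L^p(\Qzero)}=\ell(Q)^{-1-n/p}\|u\|_{L^p(Q)},\qquad
\|Dv\|_{L^p(\Qzero)}=\ell(Q)^{-n/p}\|Du\|_{L^p(Q)}.
$$
For $X=\Pi_{\zeta,\Qzero}v$ we have $\Pi_{\zeta,Q}u=\mathcal T_Q^{-1}X$, i.e. $(\Pi_{\zeta,Q}u)(x)=\ell(Q)X(S_Q(x))$. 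Hence
$$
\|\Pi_{\zeta,Q}u\|_{L^p(Q)}=\ell(Q)^{1+n/p}\|X\|_{L^p(\Qzero)},\qquad
\ell(Q)\|D\Pi_{\zeta,Q}u\|_{L^p(Q)}=\ell(Q)^{1+n/p}\|DX\|_{L^p(\Qzero)}.
$$
The powers of $\ell(Q)$ then match on both sides of \eqref{eq:projection-bound}. It therefore suffices to prove
$$
\|X\|_{L^p(\Qzero)}+\|DX\|_{L^p(\Qzero)}\le C(n,p,\zeta)\|v\|_{L^p(\Qzero)}.
$$

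On the unit cube, each coefficient satisfies, by H\"older,
$$
\Bigl|\fint_{\zeta\Qzero}v\cdot X_\alpha\,dy\Bigr|\le \zeta^{-n}\|v\|_{L^1(\Qzero)}\|X_\alpha\|_{L^\infty(\Qzero)}\le \zeta^{-n}\|v\|_{L^p(\Qzero)}\|X_\alpha\|_{L^\infty(\Qzero)},
$$
using $|\Qzero|=1$. Each $X_\alpha$ is a fixed polynomial field of degree at most two, determined by $n$ and $\zeta$ alone, since the Gram–Schmidt procedure is applied to a fixed basis with the inner product $\langle\cdot,\cdot\rangle_\zeta$. Consequently $\|X_\alpha\|_{W^{1,\infty}(\Qzero)}\le C(n,\zeta)$. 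Summing over the $N$ basis elements gives the desired bound with $C=C(n,\zeta)N$, which in particular depends only on $(n,p,\zeta)$. Note that $\Pi_{\zeta,\Qzero}$ is well defined on $L^1$, and hence on $L^p$.

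The only point needing some care is the claim that the constant does not depend on $Q$. This follows because $\mathcal T_Q$ preserves $\Kconf$, as shown above, so the definition \eqref{eq:scaled-projection} is a genuine conjugation. Since the unit-cube basis is fixed, no uniformity over translations or dilations of the basis is required. Finally, for $u\in W^{1,p}(Q)$ the estimate controls $\|\Pi_{\zeta,Q}u\|_{W^{1,p}(Q)}$ by $C(1+\ell(Q)^{-1})\|u\|_{L^p(Q)}$, so the operator is bounded on $W^{1,p}$. Its image lies in $\Kconf$, and it fixes $\Kconf$. It is therefore a bounded projection onto $\Kconf$. I expect no real obstacle; the scaling bookkeeping is the main thing to check.
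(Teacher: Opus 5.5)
Your proof is correct and follows the paper's argument: reduce to $\Qzero$ via the scaling $\mathcal T_Q$, bound the finitely many moment coefficients by H\"older's inequality against the fixed basis $X_\alpha$, then rescale by $\ell(Q)^{1+n/p}$. The only cosmetic difference is that you pair $L^1$ with $L^\infty$ (using $|\Qzero|=1$) rather than $L^p$ with $L^{p'}$ as the paper does, which makes your constant independent of $p$.
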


\begin{proof}
It is enough to prove the estimate on $\Qzero$. By \eqref{eq:unit-projection} and H\"older's inequality, for each $X_\alpha$
\begin{align*}
\left| \fint_{\zeta\Qzero}v\cdot X_\alpha\,dy \right|
&\le
|\zeta\Qzero|^{-1} \|v\|_{L^p(\zeta\Qzero)} \|X_\alpha\|_{L^{p'}(\zeta\Qzero)}\\
&\le C(n,p,\zeta)\|v\|_{L^p(\Qzero)},
\end{align*}
where $p'=p/(p-1)$. Since the family $\{X_\alpha\}_{\alpha=1}^N$ is fixed and finite,
\begin{equation}\label{eq:unit-cube-estimate} 
  \|\Pi_{\zeta,\Qzero}v\|_{W^{1,p}(\Qzero)} \le C(n,p,\zeta)\|v\|_{L^p(\Qzero)}.  
\end{equation}

Apply this estimate to $v=\mathcal T_Qu$. Since the change of variables $x=x_Q+\ell(Q)y$ gives
$$
\|\mathcal T_Qu\|_{L^p(\Qzero)} =\ell(Q)^{-1-n/p}\|u\|_{L^p(Q)},
$$
$$
\|\mathcal T_Q(\Pi_{\zeta,Q}u)\|_{L^p(\Qzero)} =\ell(Q)^{-1-n/p}\|\Pi_{\zeta,Q}u\|_{L^p(Q)},
$$
and
$$
\|D(\mathcal T_Q(\Pi_{\zeta,Q}u))\|_{L^p(\Qzero)}
=\ell(Q)^{-n/p}\|D(\Pi_{\zeta,Q}u)\|_{L^p(Q)},
$$
multiplying \eqref{eq:unit-cube-estimate} by $\ell(Q)^{1+n/p}$ yields \eqref{eq:projection-bound}.
\end{proof}

We briefly specify the notation in the negative-norm estimate. For a vector-valued distribution
$$
a=(a_1,\ldots,a_n)    \in \mathscr D'(\Qzero;\R^n),
$$
its generalized curl is the distribution
$$
\operatorname{curl}_n a\in \mathscr D'\!\left( \Qzero;\R^{\binom{n}{2}}\right)
$$
defined by
$$
(\operatorname{curl}_n a)_{ij} :=\partial_i a_j-\partial_j a_i,
\qquad 1\le i<j\le n.
$$
For a matrix-valued distribution
$$
P=(P_{ij})_{i,j=1}^n \in \mathscr D'(\Qzero;\R^{n\times n}),
$$
we define its symmetric and skew-symmetric parts by
$$
\operatorname{sym}P:=\frac12(P+P^{T}),
\qquad
\operatorname{skew}P:=\frac12(P-P^{T}),
$$
where $(P^{T})_{ij}:=P_{ji}$. Thus,
$$
(\operatorname{skew}P)_{ij} =\frac12(P_{ij}-P_{ji}).
$$

The generalized matrix curl $\operatorname{Curl}_nP$ is obtained by applying $\operatorname{curl}_n$ to each row of $P$. More precisely,
if
$$
P_{k\bullet}:=(P_{k1},\ldots,P_{kn}) \in \mathscr D'(\Qzero;\R^n)
$$
denotes the $k$-th row, then
$$
\operatorname{Curl}_nP
:=
\begin{pmatrix}
    \operatorname{curl}_nP_{1\bullet}\\
    \vdots\\
    \operatorname{curl}_nP_{n\bullet}
    \end{pmatrix}
    \in
    \mathscr D'\!\left(
        \Qzero;
        \R^{n\times\binom{n}{2}}
    \right).
$$
More precisely, its components are
\begin{equation}\label{eq:generalized-matrix-curl}
(\operatorname{Curl}_nP)_{k,ij}
    :=\partial_iP_{kj}-\partial_jP_{ki},
\qquad 1\le k\le n,\quad 1\le i<j\le n.
\end{equation}
All derivatives in \eqref{eq:generalized-matrix-curl} are understood in the distributional sense. Thus, for every
$\varphi\in C_c^\infty(\Qzero)$,
$$\left\langle
(\operatorname{Curl}_nP)_{k,ij},\varphi
\right\rangle
=-\left\langle P_{kj},\partial_i\varphi\right\rangle
+\left\langle P_{ki},\partial_j\varphi\right\rangle.
$$
In particular, if $u\in W^{1,p}(\Qzero;\R^n)$ and $P=Du$, then
the commutation of distributional derivatives gives
$$
(\operatorname{Curl}_nDu)_{k,ij}
=\partial_i\partial_j u_k -\partial_j\partial_i u_k =0,
$$
and hence
$$
\operatorname{Curl}_n(Du)=0
\qquad\text{in }\mathscr D'\!\left(
\Qzero;\R^{n\times\binom{n}{2}}\right).
$$

 We are now prepared to present the following projected trace-free Korn inequality, which gives the precise coercive estimate employed in the local stability analysis. The original result is due to Reshetnyak; one can  obtain it by combining \Cref{lem:projection-bounded} with  \cite[Theorem~2]{R1970}; see also \cite[Chapter~4, Lemma~2.2]{R1994}. The proof below introduces its spectral-gap mechanism by combining a negative-norm  estimate with compactness.

\begin{proposition}
\label{thm:projected-korn}
Let $n\ge3$, $1<p<\infty$, and $\zeta\in(0,1)$. There exists $C=C(n,p,\zeta)$ such that, for every cube $Q\subset\R^n$ and every $u\in W^{1,p}(Q;\R^n)$ satisfying
\begin{equation}\label{eq:orthogonal-condition}
    \Pi_{\zeta,Q}u=0,
\end{equation}
one has
\begin{equation}\label{eq:projected-korn}
\frac1{\ell(Q)}\|u\|_{L^p(Q)}
+\|Du\|_{L^p(Q)}
\le
C\|\Edev u\|_{L^p(Q)}.
\end{equation}
\end{proposition}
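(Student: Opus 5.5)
We first reduce to the normalized cube $\Qzero$ by scaling. If $v:=\mathcal T_Qu$, then $Dv(y)=Du(x_Q+\ell(Q)y)$, so $\Edev v(y)=\Edev u(x_Q+\ell(Q)y)$. Since $\Pi_{\zeta,Q}=\mathcal T_Q^{-1}\Pi_{\zeta,\Qzero}\mathcal T_Q$, the condition \eqref{eq:orthogonal-condition} is equivalent to $\Pi_{\zeta,\Qzero}v=0$. The three quantities in \eqref{eq:projected-korn} transform with the same factor $\ell(Q)^{n/p}$, as in the proof of \Cref{lem:projection-bounded}. It therefore suffices to prove
$$\|v\|_{W^{1,p}(\Qzero)}\le C\|\Edev v\|_{L^p(\Qzero)}\qquad\text{whenever }\Pi_{\zeta,\Qzero}v=0.$$

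\emph{Step 1 (estimate without projection).} We first show the Korn-type inequality
$$\|Dv\|_{L^p(\Qzero)}\le C\bigl(\|\Edev v\|_{L^p(\Qzero)}+\|v\|_{L^p(\Qzero)}\bigr).$$
The key algebraic identity, valid for $n\ge3$, expresses every second derivative $\partial_j\partial_k v_i$ as a linear combination of first derivatives of the entries of $\Edev v$. This is the classical computation behind the classification in \Cref{prop:conformal-killing-fields}, where the trace part $\operatorname{div}v$ is recovered by differentiating twice; the dimension restriction enters exactly here. Concretely, one shows that $D(Dv)$ and $D\operatorname{div}v$ lie in $W^{-1,p}$ with norms controlled by $\|\Edev v\|_{L^p}$. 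We then apply the negative-norm (Lions/Nečas) estimate
$$\|h\|_{L^p}\le C\bigl(\|h\|_{W^{-1,p}}+\|Dh\|_{W^{-1,p}}\bigr),$$
as in Lewintan--Neff~\cite[Lemma~3.1]{LN2021}. We apply it first to $h=\operatorname{div}v$, then to $\operatorname{skew}Dv$; the latter is controlled because $\operatorname{Curl}_n$ of the symmetric part determines the derivatives of the skew part. Using $\|Dv\|_{W^{-1,p}}\le C\|v\|_{L^p}$, we obtain the displayed bound with the lower-order term $\|v\|_{L^p}$.

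\emph{Step 2 (compactness).} Suppose the reduced estimate fails. Then there are $v_k$ with $\Pi_{\zeta,\Qzero}v_k=0$, $\|v_k\|_{W^{1,p}}=1$, and $\|\Edev v_k\|_{L^p}\to0$. By Rellich compactness on the Lipschitz domain $\Qzero$, after passing to a subsequence, $v_k\to v$ strongly in $L^p$. Step 1 applied to $v_k-v_l$ shows that $(v_k)$ is Cauchy in $W^{1,p}$, so $v_k\to v$ in $W^{1,p}$ with $\|v\|_{W^{1,p}}=1$ and $\Edev v=0$. By \Cref{prop:conformal-killing-fields}, $v\in\Kconf$. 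Since $\Pi_{\zeta,\Qzero}$ is continuous on $L^p$ by \Cref{lem:projection-bounded}, we get $\Pi_{\zeta,\Qzero}v=0$. But $\Pi_{\zeta,\Qzero}$ restricts to the identity on $\Kconf$, so $v=0$, contradicting $\|v\|_{W^{1,p}}=1$.

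\emph{Main obstacle.} The hard part is Step 1: deriving the second-order identity for $D^2v$ in terms of $D\,\Edev v$ in the distributional setting, and then correctly chaining the negative-norm lemma to recover first the divergence and then the skew part. A secondary point is checking that the compactness argument gives constants depending only on $(n,p,\zeta)$; this holds because the whole argument takes place on the fixed cube $\Qzero$ with a fixed finite-dimensional kernel.
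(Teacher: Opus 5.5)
\textbf{Review of the proposal.} Your overall architecture matches the paper's proof. It has the same four parts: reduction to $\Qzero$ by scaling, a Korn-type inequality with the lower-order term $\|v\|_{L^p}$, a contradiction/compactness argument, and the classification of $\ker\Edev=\Kconf$ combined with $\Pi_{\zeta,\Qzero}=\mathrm{id}$ on $\Kconf$. Your Step~2 and the scaling are fine. Using Rellich instead of the paper's direct averaging argument on the cube makes no real difference.

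\textbf{The gap.} The mechanism you give for Step~1 is off by one order of differentiation, and it fails as written.
\begin{itemize}
\item It is not true that $\partial_j\partial_k v_i$ is a linear combination of first derivatives of $\Edev v$.
\item It is also not true that $\|D(Dv)\|_{W^{-1,p}}+\|D\operatorname{div}v\|_{W^{-1,p}}\le C\|\Edev v\|_{L^p}$.
\end{itemize}
The special conformal field $X_{\mathbf c}(x)=2(\mathbf c\cdot x)x-|x|^2\mathbf c$ refutes both. It has $\Edev X_{\mathbf c}=0$, yet $\nabla\operatorname{div}X_{\mathbf c}=2n\mathbf c\ne0$ and its Hessian is nonzero. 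Homogeneity then shows that no constant-coefficient first-order identity, even with lower-order terms in $v$ added, can yield the input the Lions--Ne\v{c}as lemma needs for $h=\operatorname{div}v$. That input is $\|\nabla\operatorname{div}v\|_{W^{-1,p}}\lesssim\|\Edev v\|_{L^p}+\|v\|_{L^p}$, which is true only a posteriori.

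This breaks your skew-part argument too. Writing $d=\operatorname{div}v$, $\varepsilon=\Edev v$, $\omega=\operatorname{skew}Dv$, one has
\[
\partial_k\omega_{ij}=\partial_j\varepsilon_{ik}-\partial_i\varepsilon_{jk}+\tfrac1n\bigl(\delta_{ik}\partial_jd-\delta_{jk}\partial_id\bigr).
\]
So controlling $\omega$ requires $\|d\|_{L^p}$ first.

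\textbf{The repair.} What does hold is the \emph{third}-order identity from the classification proof:
\[
(n-2)\partial_j\partial_k d+\delta_{jk}\Delta d=n\sum_{i}\bigl(\partial_i\partial_j\varepsilon_{ik}+\partial_i\partial_k\varepsilon_{ij}-\partial_i\partial_i\varepsilon_{jk}\bigr).
\]
For $n\ge3$ it gives $\|D^2d\|_{W^{-2,p}}\le C\|\varepsilon\|_{L^p}$. You then apply the negative-norm lemma twice:
\begin{enumerate}
\item from $W^{-2,p}$ to $W^{-1,p}$ for $\nabla d$, using $\|\nabla d\|_{W^{-2,p}}\le C\|v\|_{L^p}$;
\item from $W^{-1,p}$ to $L^p$ for $d$.
\end{enumerate}
Only after this does your Korn argument for $\omega$ close. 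The first application needs the Ne\v{c}as lemma at negative order on the cube, which is an additional input.

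This two-level argument is what \cite[Lemma~3.1]{LN2021} packages. The paper does not redo it: it applies that lemma directly to $P=Du$. There $\operatorname{Curl}_nDu=0$ and $\|\operatorname{skew}Du+\frac{\operatorname{div}u}{n}\Id\|_{W^{-1,p}}\le C\|u\|_{L^p}$, which give \eqref{eq:ahlfors-garding} immediately. If you invoke the lemma in that form instead of your first-order sketch, your proof is complete.
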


\begin{proof}
We first work on the normalized cube $\Qzero$, and use the negative-norm estimate given by Lewintan--Neff~\cite[Lemma~3.1]{LN2021}. On $\Qzero$, this estimate can be interpreted as, for $P\in\mathscr D'(\Qzero;\R^{n\times n})$ with
$$\|\dev(\sym P)\|_{L^p(\Qzero)} + \|\operatorname{Curl}_nP\|_{W^{-1,p}(\Qzero)}<\infty, $$
one has
\begin{align}
\|P\|_{L^p(\Qzero)} \le C(n,p)\bigg(&
\left\|\operatorname{skew}P+\frac{\tr P}{n}\Id\right\|_{W^{-1,p}(\Qzero)}
+\|\dev(\sym P)\|_{L^p(\Qzero)} \notag\\
&+\|\operatorname{Curl}_nP\|_{W^{-1,p}(\Qzero)}
\bigg).
\label{eq:ahlfors-negative-norm}
\end{align}
Set $P=Du$. Since distributional derivatives commute, $\operatorname{Curl}_n(Du)=0$. Moreover, componentwise one has
$$
\left(\operatorname{skew}Du+\frac{\operatorname{div}u}{n}\Id\right)_{ij}
=\frac12(\partial_j u_i-\partial_i u_j)
+\frac{\delta_{ij}}{n}\sum_{k=1}^n\partial_k u_k.
$$
Since for $\phi\in W^{1,p'}_0(\Qzero)$, where $p'=p/(p-1)$,
$$
|\langle\partial_j u_i,\phi\rangle|
=\left|\int_{\Qzero}u_i\,\partial_j\phi\,dx\right|
\le \|u_i\|_{L^p(\Qzero)}\|\phi\|_{W^{1,p'}_0(\Qzero)},
$$
consequently we conclude
$$
\left\|\operatorname{skew}Du+\frac{\operatorname{div}u}{n}\Id\right\|_{W^{-1,p}(\Qzero)}
\le C(n,p)\|u\|_{L^p(\Qzero)}.
$$
Since $\dev(\sym Du)=\Edev u$, \eqref{eq:ahlfors-negative-norm} yields the following estimate
\begin{equation}\label{eq:ahlfors-garding}
\|Du\|_{L^p(\Qzero)}
\le C(n,p)\bigl(
\|u\|_{L^p(\Qzero)}+\|\Edev u\|_{L^p(\Qzero)}
\bigr).
\end{equation}

We claim that, under the orthogonal condition \eqref{eq:orthogonal-condition},
the lower-order term in \eqref{eq:ahlfors-garding} can be removed. Suppose otherwise. By homogeneity there are $u_j\in W^{1,p}(\Qzero;\R^n)$ such that
\begin{equation}\label{eq:ahlfors-null-sequence}
\Pi_{\zeta,\Qzero}u_j=0,
\qquad
\|u_j\|_{L^p(\Qzero)}+\|Du_j\|_{L^p(\Qzero)}=1,
\qquad
\|\Edev u_j\|_{L^p(\Qzero)}\to 0.
\end{equation}
Since each $u_j$ is uniformly bounded in $W^{1,p}(\Qzero;\R^n)$, after passing to a subsequence, there exists $u\in L^p(\Qzero;\R^n)$ such that
$$
u_j\to  u\quad\text{in }L^p(\Qzero;\R^n).
$$
This compactness can be seen directly on the cube. Partition $\Qzero$ into dyadic subcubes $R$ of sidelength $h$ and let $A_hv$ equal the average of $v$ on each $R$. On one such cube, write $M_i$ for averaging in the $i$-th coordinate. Then
$$
I-M_1\cdots M_n =\sum_{i=1}^n M_1\cdots M_{i-1}(I-M_i).
$$
Each $M_i$ is an $L^p$-contraction, while the one-dimensional fundamental theorem of calculus and H\"older's inequality give
$$
\|(I-M_i)w\|_{L^p(R)} \le C(p)h\|\partial_iw\|_{L^p(R)}.
$$
Summing over the subcubes therefore yields
$$
\|v-A_hv\|_{L^p(\Qzero)} \le C(n,p)h\|Dv\|_{L^p(\Qzero)}.
$$
For fixed $h$, the range of $A_h$ is of finite dimension; hence bounded subsets of $W^{1,p}(\Qzero)$ are precompact in $L^p(\Qzero)$.

Apply \eqref{eq:ahlfors-garding} to $u_j-u_k$. Together with
\eqref{eq:ahlfors-null-sequence}, this gives
$$
\|D(u_j-u_k)\|_{L^p(\Qzero)}
\le C(n,p)\bigl(
\|u_j-u_k\|_{L^p(\Qzero)}
+\|\Edev u_j-\Edev u_k\|_{L^p(\Qzero)}
\bigr)\to 0.
$$
Thus $(u_j)$ is a Cauchy sequence in $W^{1,p}(\Qzero;\R^n)$ and converges strongly to $u$. By \Cref{lem:projection-bounded},
$\Pi_{\zeta,\Qzero}u=0$, while the last limit in \eqref{eq:ahlfors-null-sequence} gives $\Edev u=0$. Hence the conformal-Killing classification proved above gives $u\in\Kconf$. Since $\Pi_{\zeta,\Qzero}$ is the identity on $\Kconf$, we obtain $u=0$. This contradicts the normalization in \eqref{eq:ahlfors-null-sequence}. We have therefore proved
\begin{equation}\label{eq:ahlfors-unit-gap}
\|v\|_{L^p(\Qzero)}+\|Dv\|_{L^p(\Qzero)}
\le C(n,p,\zeta)\|\Edev v\|_{L^p(\Qzero)}
\end{equation}
whenever $\Pi_{\zeta,\Qzero}v=0$.

Finally, let $Q$ be an arbitrary cube and set $v=\mathcal T_Qu$. By
\eqref{eq:scaled-projection},
$\Pi_{\zeta,\Qzero}v=0$. The change of variables
$x=x_Q+\ell(Q)y$ gives
$$
\|v\|_{L^p(\Qzero)} =\ell(Q)^{-1-n/p}\|u\|_{L^p(Q)},
$$
and
$$
\|Dv\|_{L^p(\Qzero)}=\ell(Q)^{-n/p}\|Du\|_{L^p(Q)},
\qquad \|\Edev v\|_{L^p(\Qzero)}=\ell(Q)^{-n/p}\|\Edev u\|_{L^p(Q)}.
$$
Plugging these identities into \eqref{eq:ahlfors-unit-gap} and multiplying by $\ell(Q)^{n/p}$ prove \eqref{eq:projected-korn}.
\end{proof}

\begin{remark}
When $p=2$, let $P_{\Kconf}$ be the $L^2(\Qzero)$-orthogonal projection onto $\Kconf$ and set
$$
\lambda_A:=\inf\left\{
\|\Edev v\|_{L^2(\Qzero)}^2:
 v\in H^1(\Qzero;\R^n),\ P_{\Kconf}v=0,\ \|v\|_{L^2(\Qzero)}=1
\right\}.
$$
The same compactness argument used in the proof, together with
\eqref{eq:ahlfors-garding} at $p=2$ and the identity
$\ker\Edev=\Kconf$, shows that $\lambda_A>0$. This is the intrinsic spectral gap of the Ahlfors quadratic form. Its formal Euler--Lagrange operator is
$$
\mathscr L_Av=-\operatorname{div}(\Edev v)
=-\frac12\Delta v-\frac{n-2}{2n}\nabla\operatorname{div}v.
$$
The intrinsic gap transfers to the moment complement used here. Indeed, if
$v=v_\perp+k$, where $k=P_{\Kconf}v$, then
$\Pi_{\zeta,\Qzero}v=0$ gives
$k=-\Pi_{\zeta,\Qzero}v_\perp$. The boundedness in
\Cref{lem:projection-bounded}, the definition of $\lambda_A$, and
\eqref{eq:ahlfors-garding} control both $v_\perp$ and $k$ by
$\Edev v$.

We point out that, no boundary condition is imposed in \Cref{thm:projected-korn}. In the variational realization of $\mathscr L_A$, the natural boundary condition is
$(\Edev v)\nu=0$ wherever a classical conormal trace is available. Thus the $p=2$ case can be read as a spectral-gap estimate for the Ahlfors Laplacian defined above. For general $1<p<\infty$, the proof gives the corresponding closed-range, or coercivity-gap, estimate on the complement $\ker\Pi_{\zeta,Q}$.
\end{remark}

\begin{remark}
Dain~\cite[Theorem~1.1]{D2006} established the corresponding $L^2$-conformal Korn inequality on Lipschitz domains and observed that its kernel is precisely the space of conformal Killing vector fields. Subsequently, Lewintan and Neff~\cite{LN2021} derived a stronger trace-free Korn-type inequality for incompatible tensor fields, incorporating a generalized curl operator together with tangential boundary conditions, which is used in the above proof. Negative-norm coercivity techniques, which play a central role in many contemporary proofs of Korn-type inequalities, can be found, for example, in \cite{DL1976} and \cite{N2012}. 

However, among all references known to the author, Reshetnyak's theorem~\cite{R1970} is a most direct tool for obtaining the present projected estimate on cubes in $\R^n$. 
\end{remark}

\section{The local stability argument}\label{sec:local-stability}

Throughout this section, $n\ge3$ and $p>n$ are fixed.  Moreover, whenever a $K$-quasiregular mapping is under consideration, we write
$$
\delta:=K-1.
$$
The constants in this section are allowed to depend on $n$ and $p$, but not on the cube, the mapping, or $\delta$, unless another dependence is displayed explicitly.

\subsection{The orthogonal selection}
We first present the qualitative part of Reshetnyak's construction.  The notation $\Pi_{\zeta,Q}$ is defined by  \eqref{eq:scaled-projection}.

\begin{proposition}[{\cite[Chapter~4, Lemmas~2.12--2.13]{R1994}}] \label{prop:preliminary-gauge}
There exist a number
$$
\delta_{\mathrm{pre}}=\delta_{\mathrm{pre}}(n)>0,
$$
a number $\zeta_0=\zeta_0(n)\in(0,1)$, and two moduli of continuity
$$
\omega_0,\ \omega_1:[0,\delta_{\mathrm{pre}}]\to[0,\infty), 
$$
i.e. $\omega_0$ and $\omega_1$ are nondecreasing and
$$
\lim_{t\to 0}\omega_0(t)= \lim_{t\to 0}\omega_1(t)=0,
$$
so that the following statement holds: Let $Q=Q(x_0,r)$ be a cube with $2Q\subset\subset\Omega$, and $f:2Q\to\R^n$
be a nonconstant sense-preserving $K$-quasiregular mapping with $0<\delta\le\delta_{\mathrm{pre}}$.  Then there is a sense-preserving M\"obius transformation $\varphi$ such that
$$
g:=\varphi^{-1}\circ f
$$
is finite on $2Q$,
\begin{equation}\label{eq:preliminary-projection}
\Pi_{\zeta_0,Q}(g-\operatorname{id})=0,
\end{equation}
\begin{equation}\label{eq:preliminary-uniform}
\|g-\operatorname{id}\|_{L^\infty(2Q)}
\le r\,\omega_0(\delta),
\end{equation}
and
\begin{equation}\label{eq:preliminary-Ln}
\left(\fint_Q|Dg-\Id|^n\,dx\right)^{1/n}
\le\omega_1(\delta).
\end{equation}
In particular, up to decreasing $\delta_{\mathrm{pre}}(n)$ if necessary, the transformation $\varphi$ has no pole in $\frac32Q$.
\end{proposition}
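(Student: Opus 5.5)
By scaling and translation (pre- and post-composing with similarities, which are M\"obius and preserve $K$), we may assume $Q=\Qzero$, so $2Q=2\Qzero$. Composing $f$ on the left with a similarity also lets us normalize $f$: fix two interior points and use the openness/discreteness of nonconstant quasiregular maps to arrange, for instance, $f(0)=0$ and $\min_{|x|=1/4}|f(x)|=1$. All estimates are then to be proved on this normalized configuration, and the moduli $\omega_0,\omega_1$ will come from a contradiction/compactness argument rather than explicit bounds.

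\textbf{Step 1: qualitative closeness to a M\"obius map.} We argue by contradiction. Take $K_j\to1$ and normalized $K_j$-quasiregular $f_j$ on $2\Qzero$. The normalization rules out collapse to a constant, and by the standard equicontinuity of normalized $K$-quasiregular families (H\"older bounds from the modulus of path families, valid as maps into $\whR$), a subsequence converges locally uniformly in the chordal metric, away from a possible pole. The limit is $1$-quasiregular and nonconstant, hence M\"obius by Liouville's theorem; call it $\psi$. To obtain convergence on all of $2\Qzero$ we need a slightly larger domain, or we work on $(2-\epsilon)\Qzero$; the hypothesis $2Q\subset\subset\Omega$ gives a little room. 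Then $\psi_j^{-1}\circ f_j\to\operatorname{id}$ uniformly. Reshetnyak's lemmas also give weak convergence in $W^{1,n}$, and, since $K_j\to1$, strong convergence of derivatives in $L^n$. This uses the lower semicontinuity of the distortion functional together with the identity $\int J=\int|D|^n/n^{n/2}$ asymptotically; concretely, $\int|Dg_j|^n\le K_j\int J_{g_j}$ and $\int J_{g_j}\to\int J_{\operatorname{id}}$ by degree and uniform convergence. Uniform convexity of $L^n$ then upgrades weak convergence to strong. This yields moduli $\omega_0,\omega_1$ for a preliminary normalizer $\psi$.

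\textbf{Step 2: orthogonal correction.} Let $\Psi(a)$, for $a\in\R^N$ in a small ball, be a local chart of $\Mob^+(n)$ near the identity with $\partial_a\Psi(a)|_{a=0}=$ the basis $X_\alpha$ of $\Kconf$. For $h:=\psi^{-1}\circ f$, consider
\[
F(a,h):=\Pi_{\zeta_0,\Qzero}\bigl(\Psi(a)\circ h-\operatorname{id}\bigr).
\]
This is $C^1$ in $a$ uniformly for $h$ uniformly close to $\operatorname{id}$, and $\partial_aF(0,\operatorname{id})=I_N$. By the quantitative inverse function theorem, if $\|h-\operatorname{id}\|_\infty\le\omega_0$ is small there is a unique $a(h)$ with $|a|\le C\omega_0$ and $F(a,h)=0$. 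Set $\varphi^{-1}:=\Psi(a)\circ\psi^{-1}$. Since $\Psi(a)$ is $C^1$-close to the identity on bounded sets, the bounds \eqref{eq:preliminary-uniform} and \eqref{eq:preliminary-Ln} persist with enlarged moduli. The constant $\zeta_0$ is fixed here, any value in $(0,1)$ works, say $\zeta_0=1/2$. For the pole claim: $g$ is finite and uniformly close to $\operatorname{id}$ on $2Q$, and $\varphi^{-1}$ has its pole at $\varphi(\infty)$. If $\varphi$ had a pole $y\in\frac32Q$, then $\varphi^{-1}(f(x))$ would be near $x$ while $\varphi(\infty)$ must avoid $f(2Q)$, and the conformal factor of $\varphi$ blows up near $y$. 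Openness of $f$ and the uniform closeness then give a contradiction once $\omega_0(\delta)<1/8$.

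\textbf{Main obstacle.} The hardest part is Step 1: extracting convergence up to $2Q$ together with strong $L^n$ convergence of derivatives, and controlling the pole of $\psi_j$ uniformly. I would first try shrinking to a concentric cube slightly smaller than $2Q$ and deducing the $2Q$ statement by applying the argument to $f$ on a cube $Q'$ with $Q\subset Q'$ and $2Q\subset\subset 2Q'\subset\Omega$, at the cost of constants depending only on $n$.
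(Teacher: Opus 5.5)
Your Step~2 is fine; it is essentially the paper's Lemma~\ref{lem:moment-modulation}. Your $L^n$ upgrade is also sound: weak convergence, $\limsup_j\int_Q|Dg_j|^n\le n^{n/2}|Q|=\int_Q|\Id|^n$, and uniform convexity. It only needs $\int_QJ_{g_j}\to|Q|$ to be justified, for instance by weak continuity of Jacobians tested against a cutoff. The paper does not reprove Step~1. It imports Reshetnyak's Lemmas~2.12--2.13 (and Lemma~2.6 for the pole), and adds only the rescaling and a direct proof that $\varphi$ preserves orientation.

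The genuine gap in your reconstruction is the conclusion $\|g-\operatorname{id}\|_{L^\infty(2Q)}\le r\,\omega_0(\delta)$, with $\omega_0$ depending only on $n$. The same gap affects finiteness of $g$ on all of $2Q$. Compactness on $2\Qzero$ only gives locally uniform convergence, that is, smallness on $(2-\epsilon)\Qzero$ with a modulus depending on $\epsilon$. Nothing in the limit procedure controls $f_j$ in the boundary layer of $2\Qzero$.

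Your proposed fix does not close this. The hypothesis $2Q\subset\subset\Omega$ only yields $(2+\eta)Q\subset\Omega$ with $\eta$ depending on $\dist(2Q,\partial\Omega)$, which can be arbitrarily small. Any admissible $Q'$ then has $\ell(Q')/\ell(Q)$ arbitrarily close to $1$, so your moduli would depend on $\Omega$, not only on $n$. Moreover, in the statement $f$ is only given on $2Q$, so there is no room at all.

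The missing idea is a chaining step:
\begin{enumerate}[label=\textup{(\alph*)},leftmargin=2.2em]
\item Apply the compactness estimate only on the Whitney cubes $Q'$ of $2Q$, where there is room at a fixed ratio ($16Q'\subset 2Q$).
\item Propagate the transition maps between adjacent cubes with Proposition~\ref{prop:Mobius-propagation}.
\item Use that a cube is a John domain, so the errors along Boman chains sum to $C\,\eta(\delta)\sum_k\ell(Q_k)\le C\,\eta(\delta)\,r$, exactly as in Proposition~\ref{prop:john-backward-orbit}. That argument works with any modulus $\eta(\delta)\to0$.
\item Only then apply your Step~2 to the resulting root normalizer.
\end{enumerate}

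Three smaller points in Step~1 also need attention.

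First, the equicontinuity you call standard is false for general normalized quasiregular families. For $n=2$, $K=1$, the maps $z\mapsto4e^{j/4}ze^{jz}$ satisfy $f(0)=0$ and $\min_{|z|=1/4}|f|=1$, yet they are not normal at $z=-1/4$. For $n\ge3$ and $K_j\to1$ you need the local homeomorphism property and a uniform injectivity radius (Martio--Rickman--V\"ais\"al\"a; cf.\ \cite{R2005}) to reduce to normalized quasiconformal families. This is deep input, and it is part of what Reshetnyak's lemmas package.

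Second, sense preservation comes for free in your route, but you should say so. The limit is $1$-quasiregular and $\Psi(a)\in\Mob^+(n)$. The paper instead proves it from the fact that $\det A\le0$ forces $|A-\Id|\ge1$, which contradicts the $L^n$ smallness.

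Third, the pole claim is simpler than your sketch. Since $f$ is finite, $g$ never takes the value $\varphi^{-1}(\infty)$. Lemma~\ref{lem:brouwer-image} gives $\frac32Q\subset g(2Q)$ once $\omega_0(\delta)<\frac14$, so the pole is not in $\frac32Q$.
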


\begin{proof}
This is the scaled content of Reshetnyak's preliminary normalization {\cite[Chapter~4, Lemmas~2.12--2.13]{R1994}}.  We explain the correspondence of the domains and the estimates.

Reshetnyak writes the inner cube as $Q(x_0,r)$ and the outer cube as $Q(x_0,2r)$.  His Lemma~2.13 produces a M\"obius transformation $\varphi$ for which $g=\varphi^{-1}\circ f$ satisfies the orthogonal condition \eqref{eq:preliminary-projection} on the inner cube, a uniform estimate on the outer cube, and an $L^n$-estimate for $Dg-\Id$ on the inner cube.  After the affine change of variables
$$
y=S_Q(x)=\frac{x-x_0}{r},
$$
all three estimates become independent of $x_0$ and $r$.  By absorbing the harmless numerical factors in the two moduli, we obtain both \eqref{eq:preliminary-uniform} and \eqref{eq:preliminary-Ln}.

Finally, the uniform estimate \eqref{eq:preliminary-uniform} tends to zero with $\delta$. In fact, Reshetnyak's pole-separation lemma~\cite[Chapter~4, Lemma~2.6]{R1994}, applied with the three concentric cubes $Q\subset\subset\frac32Q\subset\subset2Q$, shows that $\varphi$ is finite on $\frac32Q$ once $\delta_{\mathrm{pre}}(n)$ is sufficiently small.

For completeness, we show that the normalizer is necessarily sense preserving once
$\delta_{\mathrm{pre}}$ is small. Suppose instead that $\varphi$ reverses orientation.
Since $g=\varphi^{-1}\circ f$ is finite on $2Q$, the map $\varphi^{-1}$ is smooth on
a neighborhood of the compact set $f(\overline Q)$. Hence the Sobolev chain rule gives
$$
Dg(x)=D\varphi^{-1}(f(x))Df(x) \quad\text{for a.e. }x\in Q.
$$
As $f$ is sense preserving and $\varphi^{-1}$ reverses orientation, we have
$$
J_g(x)\le 0 \quad\text{for a.e. }x\in Q.$$
Moreover, for every matrix $A\in\R^{n\times n}$ with $\det A\le0$, one has
\begin{equation}\label{eq:negative-determinant-distance}
 |A-\Id|\ge1.
\end{equation}
Indeed, if $\|A-\Id\|_{\mathrm{op}}<1$, then
$$
 \Id+t(A-\Id)
$$
is invertible for every $t\in[0,1]$ by the Neumann-series criterion, and its determinant therefore has the constant positive sign of $\det\Id$, contradicting $\det A\le0$.  Since the Frobenius norm dominates the operator norm, \eqref{eq:negative-determinant-distance} follows. Now applying \eqref{eq:negative-determinant-distance} to $A=Dg(x)$ gives
$$
 \fint_Q|Dg-\Id|^n\,dx\ge1,
$$
which contradicts \eqref{eq:preliminary-Ln} up to decreasing $\delta_{\mathrm{pre}}$ so that $\omega_1(\delta_{\mathrm{pre}})\ll 1$.
\end{proof}

The proposition above cites Reshetnyak's completed preliminary normalization, including the orthogonal condition \eqref{eq:preliminary-projection} and the qualitative derivative estimate \eqref{eq:preliminary-Ln}.  The orthogonal condition itself is selected by a finite-dimensional argument.  As this selection is not very transparent in the original proof, we now reconstruct it in a self-contained manner. 

We remark that, the following lemma recovers only the finite-dimensional modulation step; it does not give the compactness argument that leads to \eqref{eq:preliminary-Ln}. Its structure is analogous to the finite-dimensional extremal selection in Figalli–Zhang~\cite[Lemma~4.1]{FZ2022}, although the functional employed here is specifically adapted to the M\"obius moment map.

\begin{lemma}\label{lem:moment-modulation}
Let $\Qzero=(-\frac12,\frac12)^n$ and  $\zeta\in(0,1)$.  There exist constants
$$
\rho=\rho(n,\zeta)>0, \qquad \eta=\eta(n,\zeta)>0, \qquad C=C(n,\zeta)>0
$$
with the following property.  If $h:2\Qzero\to\R^n$ is continuous and
\begin{equation}\label{eq:h-close-id}
\|h-\operatorname{id}\|_{L^\infty(2\Qzero)}\le\eta,
\end{equation}
then there is a neighborhood $\mathscr U_{\Mob}=\mathscr U_{\Mob}(n,\zeta)$ of the identity map in $\Mob^+(n)$ and a unique transformation
$\Theta_h\in\mathscr U_{\Mob} $
such that
\begin{equation}\label{eq:Theta-h-orthogonal}
\Pi_{\zeta,\Qzero}(\Theta_h\circ h-\operatorname{id})=0.
\end{equation}
Moreover, $\Theta_h$ has no pole in $3\Qzero$ and
\begin{equation}\label{eq:modulation-C2}
\|\Theta_h-\operatorname{id}\|_{C^2(3\Qzero)}
\le C\|h-\operatorname{id}\|_{L^\infty(2\Qzero)}.
\end{equation}
\end{lemma}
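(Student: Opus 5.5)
We will realize the statement by the inverse function theorem, phrased through an explicit local chart of $\Mob^+(n)$ whose tangent space at the identity is $\Kconf$. Fix the basis $X_1,\ldots,X_N$ from \eqref{eq:unit-projection}. For $a\in\R^N$ small, let $\Psi(a)\in\Mob^+(n)$ be given by
$$
\Psi(a):=\exp\Bigl(\sum_{\alpha}a_\alpha X_\alpha\Bigr),
$$
the time-one flow of the conformal Killing field $\sum_\alpha a_\alpha X_\alpha$. These flows are M\"obius because $\Kconf$ is the Lie algebra of $\Mob^+(n)$. Alternatively, one can use the explicit product of a translation, a rotation $e^{A}$, a dilation $e^\lambda$, and a special conformal transformation $x\mapsto (x-|x|^2c')/(1-2c'\cdot x+|c'|^2|x|^2)$; the only requirement is $\partial_a\Psi(a)|_{a=0}=(X_1,\ldots,X_N)$. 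For $|a|\le\rho_1(n)$, the poles of $\Psi(a)$ stay outside $4\Qzero$, since a pole lies at distance $\gtrsim 1/|c'|$. Consequently $(a,x)\mapsto\Psi(a)(x)$ is smooth, hence real analytic, on $\overline{B_{\rho_1}}\times\overline{3\Qzero}$, and we have
$$
\|\Psi(a)-\Psi(b)\|_{C^2(3\Qzero)}\le C|a-b|,\qquad \|\Psi(a)-\operatorname{id}\|_{C^2(3\Qzero)}\le C|a|.
$$

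Next we define the moment map
$$
F(a,h):=\bigl(\langle \Psi(a)\circ h-\operatorname{id},X_\alpha\rangle_\zeta\bigr)_{\alpha=1}^N\in\R^N,
$$
where the pairing is the normalized integral over $\zeta\Qzero$. This is well defined once $\eta<\tfrac12$, since then $h(\zeta\Qzero)\subset 2\Qzero\subset 3\Qzero$. By construction, \eqref{eq:Theta-h-orthogonal} holds if and only if $F(a,h)=0$. At $h=\operatorname{id}$ we have
$$
F(a,\operatorname{id})=a+O(|a|^2),
$$
because $\langle X_\alpha,X_\beta\rangle_\zeta=\delta_{\alpha\beta}$. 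Hence $D_aF(0,\operatorname{id})=I_N$.

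For general $h$, with $\|h-\operatorname{id}\|_\infty=:\epsilon\le\eta$, we use two estimates.

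First, $|F(a,h)-F(a,\operatorname{id})|\le C\epsilon$. This follows from $\|D\Psi(a)\|_{L^\infty(3\Qzero)}\le C$ and the mean value theorem on the convex set $3\Qzero$, so that $|\Psi(a)(h(y))-\Psi(a)(y)|\le C\epsilon$.

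Second, the $a$-derivatives are close:
$$
|D_aF(a,h)-D_aF(a,\operatorname{id})|\le C\epsilon,
$$
with a Lipschitz-in-$a$ bound as well. This uses the $C^1$ bound on $\partial_a\Psi$ in $x$. The point to note is that we never differentiate $h$, which is merely continuous: all derivatives fall on $\Psi$, evaluated at $h(y)$.

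Combining these, for $|a|\le\rho$ and $\epsilon\le\eta$ with $\rho,\eta$ small, we get
$$
\|D_aF(a,h)-I_N\|\le C(\rho+\eta)\le\tfrac12.
$$
Hence $a\mapsto a-F(a,h)$ is a $\tfrac12$-contraction on $\overline{B_\rho}$. It maps this ball into itself provided $|F(0,h)|\le C\epsilon\le\rho/2$, that is, $\eta\le\rho/(2C)$. The Banach fixed point theorem then gives a unique zero $a_h\in B_\rho$, with
$$
|a_h|\le 2|F(0,h)|\le C\epsilon.
$$

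It remains to define $\Theta_h$ and read off the estimates. Set $\Theta_h:=\Psi(a_h)$ and $\mathscr U_{\Mob}:=\Psi(B_\rho)$. The map $\Psi$ is injective on small balls by the inverse function theorem, since $D\Psi(0)$ is injective into the finite-dimensional Lie algebra. So $\mathscr U_{\Mob}$ is a neighborhood of the identity in $\Mob^+(n)$, and uniqueness of $a_h$ transfers to uniqueness of $\Theta_h$. The absence of poles in $3\Qzero$ and the bound \eqref{eq:modulation-C2} follow from $|a_h|\le C\epsilon$ and the $C^2$ Lipschitz bound on $\Psi$ established above.

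The main obstacle is a clean uniform estimate, in $a$, for the $C^2$ norms of $\Psi(a)$ and $\partial_a\Psi(a)$ on $3\Qzero$. I would settle this by compactness and smoothness of the explicit formula on $\overline{B_{\rho_1}}\times\overline{4\Qzero}$, rather than by computing constants.
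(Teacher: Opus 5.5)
Your proof is correct and follows the paper's structure. You use the same exponential chart $\Psi$ with $\partial_{a_\alpha}\Psi(0)=X_\alpha$ and poles kept outside $4\Qzero$, and the same moment map with $D_a\mathcal N(0;\operatorname{id})=I_N$. You also use the same two perturbative bounds, $|\mathcal N(0;h)|\le C\|h-\operatorname{id}\|_{L^\infty(2\Qzero)}$ and smallness of $\sup_{|a|\le\rho}\|D_a\mathcal N(a;h)-I_N\|_{\mathrm{op}}$, obtained by letting all derivatives fall on $\Psi$.

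The only difference is the closing step. You solve $F(a,h)=0$ by Banach's fixed point theorem for $a\mapsto a-F(a,h)$, which gives existence, uniqueness in $\overline{B_\rho(0)}$, and $|a_h|\le 2|F(0,h)|$ in one stroke. The paper instead minimizes $\frac12|\mathcal N(a;h)|^2$ over $\overline{B_\rho(0)}$ and shows the minimizer is interior, so invertibility of $D_a\mathcal N$ forces $\mathcal N(a_h;h)=0$. It then reads off uniqueness and $|a_h|\le\frac43|\mathcal N(0;h)|$ from the lower Lipschitz bound $|\mathcal N(a;h)-\mathcal N(b;h)|\ge\frac34|a-b|$. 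That variational selection is chosen to echo the Figalli--Zhang extremal selection, but it is equivalent to your quantitative inverse-function argument.
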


\begin{proof}
Let
$$
N:=\dim\Kconf=\frac{(n+1)(n+2)}2.
$$
Choose an orthonormal basis $X_1,\ldots,X_N$ of $\Kconf$ for the scalar product
$$
\langle X,Y\rangle_\zeta =\fint_{\zeta\Qzero}X(x)\cdot Y(x)\,dx.
$$
Since the sense-preserving M\"obius transformations form a finite-dimensional Lie group, the Lie algebra of $\Mob^+(n)$, identified through infinitesimal vector fields on
$\R^n$, is exactly $\Kconf$. Thus, after choosing the basis
$X_1,\ldots,X_N$, the exponential map gives local coordinates at the identity
whose coordinate derivatives at the origin are the fields $X_\alpha$.
Namely, there is a smooth coordinate chart
$$
\Psi:B_{2\rho}(0)\subset\R^N\to\Mob^+(n)
$$
for some $\rho>0$, such that
$$
\Psi(0)=\operatorname{id}, \qquad \left.\frac{\partial}{\partial a_\alpha}\Psi(a)(x)\right|_{a=0} =X_\alpha(x) \quad \text{ for } \ 1\le\alpha\le N.
$$
Then, up to decreasing $\rho$ if necessary, every $\Psi(a)$ with $|a|\le2\rho$ is finite on $4\Qzero$.  

Set
$\mathscr U_{\Mob}:=\Psi(B_\rho(0)).$
The map
$$
(a,x)\mapsto\Psi(a)(x)
$$
is smooth on $\overline{B_{2\rho}(0)}\times4\overline{\Qzero}$.  We also require $0<\eta\ll 1/2$ so that \eqref{eq:h-close-id} implies $
h(2\Qzero)\subset3\Qzero.$
Then all compositions below are well defined.

For a continuous map $h$ satisfying \eqref{eq:h-close-id}, define the moment map
$$
\mathcal N(a;h)
:=
\Pi_{\zeta,\Qzero}
\bigl(\Psi(a)\circ h-\operatorname{id}\bigr)
\in\Kconf.
$$
In the basis $X_1,\ldots,X_N$, its coordinates are
\begin{equation}\label{eq:moment-coordinates}
\mathcal N_\beta(a;h)
=
\fint_{\zeta\Qzero}
\bigl(\Psi(a)(h(x))-x\bigr)\cdot X_\beta(x)\,dx.
\end{equation}
At $(a,h)=(0,\operatorname{id})$,
$$
\mathcal N(0;\operatorname{id})=0.
$$
Differentiating \eqref{eq:moment-coordinates} at this point gives
$$
\frac{\partial\mathcal N_\beta}{\partial a_\alpha}
(0;\operatorname{id})
=
\fint_{\zeta\Qzero}X_\alpha(x)\cdot X_\beta(x)\,dx
=\delta_{\alpha\beta}.
$$
Thus
\begin{equation}
D_a\mathcal N(0;\operatorname{id})=I_N.
\end{equation}

The derivatives of $\Psi$ are uniformly continuous on the compact set introduced above.  Hence, after decreasing $\rho$ and then choosing $\eta>0$ sufficiently small, every $h$ satisfying \eqref{eq:h-close-id} also satisfies
\begin{equation}\label{eq:moment-derivative-close}
\sup_{|a|\le\rho}
\|D_a\mathcal N(a;h)-I_N\|_{\mathrm{op}}
\le\frac14.
\end{equation}
The same compactness gives
\begin{equation}\label{eq:moment-at-zero}
|\mathcal N(0;h)|
\le C_0\|h-\operatorname{id}\|_{L^\infty(2\Qzero)}
\end{equation}
for a constant $C_0=C_0(n,\zeta)$.

For $a,b\in\overline{B_\rho(0)}$, the line segment joining $a$ to $b$ lies in the same ball.  Then by the fundamental theorem of calculus and \eqref{eq:moment-derivative-close},
\begin{align}
|\mathcal N(a;h)-\mathcal N(b;h)|
&\ge |a-b| - \int_0^1
\|D_a\mathcal N(b+t(a-b);h)-I_N\|_{\mathrm{op}}
|a-b|\,dt\notag\\
&\ge\frac34|a-b|.
\label{eq:moment-lower-lipschitz}
\end{align}
In particular, $a\mapsto\mathcal N(a;h)$ is injective on $\overline{B_\rho(0)}$.

Now consider 
$$\mathcal J_h(a):=\frac12|\mathcal N(a;h)|^2.$$
This functional attains its minimum on the compact ball $\overline{B_\rho(0)}$.  For $|a|=\rho$, \eqref{eq:moment-lower-lipschitz} and \eqref{eq:moment-at-zero} yield
$$
|\mathcal N(a;h)|
\ge\frac34\rho-|\mathcal N(0;h)|.
$$
Choose $\eta>0$ even smaller so that $C_0\eta\le\rho/8$.  Then
$$
|\mathcal N(a;h)|\ge\frac58\rho
\quad\text{when }|a|=\rho,
\qquad
|\mathcal N(0;h)|\le\frac18\rho.
$$
Therefore, every minimizer $a_h$ lies in the open ball $B_\rho(0)$.  At such a minimizer,
$$
0=D\mathcal J_h(a_h)
=D_a\mathcal N(a_h;h)^T\mathcal N(a_h;h).
$$
According to \eqref{eq:moment-derivative-close}, the matrix $D_a\mathcal N(a_h;h)$ is invertible.  Hence $\mathcal N(a_h;h)=0,$
and the uniqueness follows from \eqref{eq:moment-lower-lipschitz}. This yields \eqref{eq:Theta-h-orthogonal}. 

Moreover, by taking $b=0$ there and using \eqref{eq:moment-at-zero}, we obtain
$$
|a_h|
\le\frac43|\mathcal N(0;h)|
\le C\|h-\operatorname{id}\|_{L^\infty(2\Qzero)}.
$$
Set $\Theta_h:=\Psi(a_h)$.  For every multi-index $\gamma$ with $|\gamma|\le2$, the fundamental theorem of calculus in the parameter variable gives
$$
 D_x^\gamma\Theta_h-D_x^\gamma\operatorname{id}
 =\int_0^1
 D_aD_x^\gamma\Psi(ta_h)[a_h]\,dt.
$$
Observe that the derivatives $D_aD_x^\gamma\Psi$ are bounded on the compact set $\overline{B_\rho(0)}\times3\overline{\Qzero}$.  Together with the bound for $|a_h|$, this proves \eqref{eq:modulation-C2}.  By construction of the chart, every $\Psi(a)$ with $|a|\le2\rho$ is finite on $4\Qzero$, and therefore $\Theta_h$ is pole-free on $3\Qzero$.
\end{proof}

\begin{remark}
For an arbitrary cube $Q$, one obtains the corresponding construction by conjugating the previous procedure with the affine transformation $S_Q$ defined in \eqref{eq:defn-SQ}. The scale covariance of $\Pi_{\zeta,Q}$, established in \eqref{eq:scaled-projection}, implies that the modulated mapping satisfies the prescribed orthogonal condition on $Q$. The lemma thus provides a quantitative refinement of the finite-dimensional correction step appearing in the proof of Reshetnyak’s  normalization lemma~\cite[Chapter~4, Lemma~2.12]{R1994}.  

In the proof of \Cref{thm:local-main} we continue to use \Cref{prop:preliminary-gauge} as the combined source of the orthogonal condition, the pole margin, and the qualitative $W^{1,n}$-smallness; \Cref{lem:moment-modulation} explains and independently verifies the selection of the conformal-Killing gauge.
\end{remark}

\subsection{The relative bounded-specific-oscillation gain}
In this subsection, we present a self-contained proof of the higher integrability results stated in \cite[Section 5, Chapter 3]{R1994}.

For a measurable field $H:R\to\R^d$ and $1<q<\infty$, we define its normalized $L^q$-mean by
\begin{equation}
\avnorm{H}{q}{R}
:=
\left(\fint_R|H|^q\,dx\right)^{1/q}.
\end{equation}
The following comparison estimates for M\"obius transformations are used  in the fixed concentric configuration
$$
R\subset\subset\frac32R\subset\subset2R.
$$

\begin{lemma} \label{lem:mobius-nested-derivatives}
Let $1<\kappa<2$ and $1\le q<\infty$.  There exists $C=C(n,q,\kappa)$ such that the following holds.  Let $R$ be a cube and let $\varphi$ be a M\"obius transformation with no pole in $\kappa R$.  Then
\begin{equation}\label{eq:mobius-mean-comparability}
 C^{-1}\avnorm{D\varphi}{q}{R}  \le \|D\varphi(x)\|_{\mathrm{op}} \le C \avnorm{D\varphi}{q}{R} \qquad \text{ for } \ x\in R,
\end{equation}
and
\begin{equation}\label{eq:mobius-D-lipschitz}
 \|D\varphi(x)-D\varphi(y)\|_{\mathrm{op}}  \le C \avnorm{D\varphi}{q}{R}\frac{|x-y|}{\ell(R)}
 \qquad   \text{ for } \ x,y\in R.
\end{equation}
\end{lemma}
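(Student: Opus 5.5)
The plan is to reduce everything to the explicit normal form of a M\"obius transformation and then use that the pole stays a definite distance, comparable to $\ell(R)$, away from $R$.

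\emph{Normal form.} By the classical structure theorem (see \cite[Section~3.3]{GMP2017}, \cite[Chapter~2]{R1994}), every $\varphi\in\Mob(n)$ is of one of two kinds.
\begin{enumerate}[label=\textup{(\alph*)},leftmargin=2.3em]
\item A similarity $\varphi(x)=b+\lambda Ox$ with $O\in O(n)$ and $\lambda>0$, when $\varphi(\infty)=\infty$.
\item A map of the form
$$
\varphi(x)=b+\lambda O\,\frac{x-a}{|x-a|^2},
$$
where $a\in\R^n$ is the pole.
\end{enumerate}
In case (a), $D\varphi\equiv\lambda O$ is constant. Then both \eqref{eq:mobius-mean-comparability} (with $C=\sqrt n$, comparing operator and Frobenius norms) and \eqref{eq:mobius-D-lipschitz} (whose left side is zero) are trivial.

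\emph{Case (b): comparability of the conformal factor.} Here
$$
D\varphi(x)=\frac{\lambda}{|x-a|^2}\,O\Bigl(\Id-2\frac{(x-a)(x-a)^T}{|x-a|^2}\Bigr),
$$
which is a scalar multiple of an orthogonal matrix. Hence $\|D\varphi(x)\|_{\mathrm{op}}=\lambda|x-a|^{-2}$ and $|D\varphi(x)|=\sqrt n\,\lambda|x-a|^{-2}$.

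The hypothesis that there is no pole in $\kappa R$ gives $a\notin\kappa R$. Since $R$ and $\kappa R$ are concentric cubes, this yields
$$
\dist(a,R)\ge\frac{(\kappa-1)\ell(R)}{2}=:c_\kappa\ell(R).
$$
For $x,y\in R$ we have $|x-a|\le|y-a|+\sqrt n\,\ell(R)$. Combined with $|y-a|\ge c_\kappa\ell(R)$, this gives
$$
|x-a|\le\bigl(1+\sqrt n/c_\kappa\bigr)|y-a|.
$$
Therefore $\|D\varphi(x)\|_{\mathrm{op}}$ and $\|D\varphi(y)\|_{\mathrm{op}}$ are comparable with a constant $C(n,\kappa)$. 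Since the $L^q$-mean $\avnorm{D\varphi}{q}{R}$ lies between $\sqrt n\inf_R\|D\varphi\|_{\mathrm{op}}$ and $\sqrt n\sup_R\|D\varphi\|_{\mathrm{op}}$, this proves \eqref{eq:mobius-mean-comparability}. The dependence on $q$ is in fact vacuous.

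\emph{Case (b): the Lipschitz bound.} Differentiate the formula for $D\varphi$ once more. Each entry of $D^2\varphi(z)$ is a sum of terms of the form $\lambda|z-a|^{-3}$ times bounded rational expressions in $(z-a)/|z-a|$, so
$$
|D^2\varphi(z)|\le C(n)\,\lambda|z-a|^{-3}.
$$
For $z\in R$ we use $|z-a|\ge c_\kappa\ell(R)$ on one factor, which gives
$$
|D^2\varphi(z)|\le\frac{C(n,\kappa)}{\ell(R)}\,\|D\varphi(z)\|_{\mathrm{op}}\le\frac{C}{\ell(R)}\,\avnorm{D\varphi}{q}{R},
$$
where the last step is the comparability just proved. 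Since $R$ is convex, the segment $[x,y]$ lies in $R$. The mean value inequality along this segment then gives \eqref{eq:mobius-D-lipschitz}.

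The only point requiring care is the geometric step: converting "no pole in $\kappa R$" into the quantitative lower bound $\dist(a,R)\gtrsim(\kappa-1)\ell(R)$. I would also state the normal form cleanly, including orientation-reversing maps; this is harmless, since $O\in O(n)$ is allowed. Everything else is routine calculus.
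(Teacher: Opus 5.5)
Your proposal is correct and matches the paper's proof essentially step for step. It uses the same split into the similarity case and the normal form $b+\lambda O\frac{x-a}{|x-a|^2}$, the same bound $\dist(a,R)\ge\frac{\kappa-1}{2}\ell(R)$ from $a\notin\kappa R$, the same triangle-inequality comparability of $|x-a|$ over $R$, and the same estimate $|D^2\varphi(z)|\le C(n)\lambda|z-a|^{-3}$ combined with the mean value inequality (your remark that the constant depends only on $n$ and $\kappa$ is also accurate).
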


\begin{proof}
If $\varphi(\infty)=\infty$, then
$$
 \varphi(x)=\mathbf{b}+\lambda Ox,  \qquad \mathbf{b}\in\R^n,  \quad \lambda>0,  \quad O\in O(n).
$$
In this case
$$
 D\varphi(x)=\lambda O,  \qquad  D^2\varphi(x)=0  \qquad  \text{ for any }\  x\in\R^n,
$$
and therefore
$$
 \|D\varphi(x)\|_{\mathrm{op}}=\lambda,  \qquad  \avnorm{D\varphi}{q}{R}=\sqrt n\,\lambda.
$$
Thus \eqref{eq:mobius-mean-comparability} holds with constants depending only on $n$, and the left-hand side of \eqref{eq:mobius-D-lipschitz} is zero.  

Suppose now that the pole $\mathbf{a}=\varphi^{-1}(\infty)$ is finite.  Then
\begin{equation}\label{eq:mobius-explicit-form}
 \varphi(x)=\mathbf{b}+\lambda O\frac{x-\mathbf{a}}{|x-\mathbf{a}|^2}
\end{equation}
for some $\mathbf{b}\in\R^n$, $\lambda>0$, and $O\in O(n)$. If $\mathbf{e}_x=\frac{x-\mathbf a}{|x-\mathbf a|},$
then differentiation of \eqref{eq:mobius-explicit-form} gives
\begin{equation*}
 D\varphi(x)  =\frac{\lambda}{|x-\mathbf{a}|^2}O(I_n-2\,\mathbf{e}_x\otimes \mathbf{e}_x).
\end{equation*}
 Hence
\begin{equation}\label{eq:mobius-D-norm}
 \|D\varphi(x)\|_{\mathrm{op}} = \frac{\lambda}{|x-\mathbf{a}|^2}.
\end{equation}
Additionally, another differentiation gives
\begin{equation}\label{eq:mobius-D2-bound}
 |D^2\varphi(x)| \le C(n)\frac{\lambda}{|x-\mathbf{a}|^3}.
\end{equation}

As $\varphi$ has no pole in $\kappa R$ for $\kappa>1$, we have $\mathbf{a}\notin\kappa R$ and hence
\begin{equation}\label{eq:lower-bound-dist}
     \dist(\mathbf{a},R) \ge \frac{\kappa-1}{2}\ell(R).
\end{equation}
 
Consequently, for any $x,y \in R$, the triangle inequality yields
$$
 |x-\mathbf{a}| \le |y-\mathbf{a}| + \operatorname{diam}(R) \le \left(1 + \frac{2\sqrt{n}}{\kappa - 1}\right)|y-\mathbf{a}|.
$$
Hence, the quantity $|x-\mathbf{a}|$, for $x \in R$, is uniformly comparable to any one of its values, with a multiplicative constant depending only on $n$ and $\kappa$. Thus, \eqref{eq:mobius-D-norm} implies \eqref{eq:mobius-mean-comparability}.

Moreover, the fundamental theorem of calculus, \eqref{eq:mobius-D2-bound}  and \eqref{eq:lower-bound-dist} give
$$
 \|D\varphi(x)-D\varphi(y)\|_{\mathrm{op}}
 \le |x-y|\sup_{z\in R}|D^2\varphi(z)|\le C(n,\kappa)\avnorm{D\varphi}{q}{R}\frac{|x-y|}{\ell(R)},
$$
which is \eqref{eq:mobius-D-lipschitz}.
\end{proof}

Now we have the following M\"obius comparison estimates.
\begin{lemma} \label{lem:mobius-comparison}
Let $1<q<\infty$.  There are constants
$$
 a_0=a_0(n)>0,  \qquad  C=C(n,q,\,\zeta_0)>0
$$
so that the following holds:  Let $R$ be a cube, $f:2R\to\R^n$ be continuous and belong to $W^{1,n}(2R;\R^n)$,   $\varphi$ be a M\"obius transformation finite on $\frac32R$, and set
$$
 g:=\varphi^{-1}\circ f=\operatorname{id}+u.
$$
Assume that $g$ is defined on $2R$ and
\begin{equation}\label{eq:comparison-hypotheses}
 \|u\|_{L^\infty(2R)}
 \le a_0\ell(R),
 \qquad
 \Pi_{\zeta_0,R}u=0.
\end{equation}
If $Df-D\varphi\in L^q(R)$, then $Dg-I_n\in L^q(R)$ and
\begin{equation}\label{eq:comparison-forward}
 \|Dg-I_n\|_{L^q(R)}
 \le
 C \avnorm{D\varphi}{q}{R}^{-1}
 \|Df-D\varphi\|_{L^q(R)}.
\end{equation}
Conversely, if $Dg-I_n\in L^q(R)$, then $Df-D\varphi\in L^q(R)$ and
\begin{equation}\label{eq:comparison-reverse}
 \|Df-D\varphi\|_{L^q(R)}
 \le
 C \avnorm{D\varphi}{q}{R}
 \|Dg-I_n\|_{L^q(R)}.
\end{equation}
\end{lemma}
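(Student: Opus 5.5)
The plan is to use the pointwise chain rule and to control the M\"obius derivatives by \Cref{lem:mobius-nested-derivatives}. Since $f=\varphi\circ g$ and $\varphi$ is smooth off its pole, we first check that $g(R)$ stays away from the pole region. By the hypothesis $\|u\|_{L^\infty(2R)}\le a_0\ell(R)$, for $x\in R$ the point $g(x)$ lies within $a_0\ell(R)$ of $x$. Hence, if $a_0(n)$ is small (say $a_0<1/8$), then $g(R)\subset \frac54 R$. Since $\varphi$ is finite on $\frac32R$, it is smooth on a neighborhood of $\overline{\tfrac54R}$, and we may apply \Cref{lem:mobius-nested-derivatives} with the cube $\frac54R$ and $\kappa=\frac{6}{5}$ (so that $\kappa\cdot\frac54R=\frac32R$). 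This gives, for all $y,z\in\frac54R$,
$$\|D\varphi(y)\|_{\mathrm{op}}\simeq \avnorm{D\varphi}{q}{R}=:m,\qquad \|D\varphi(y)-D\varphi(z)\|_{\mathrm{op}}\le C\,m\,\frac{|y-z|}{\ell(R)}.$$
The means over $R$ and over $\frac54R$ are comparable by the same pointwise comparability. Since $\varphi$ is conformal, $D\varphi(y)$ is a scalar multiple of an orthogonal matrix, so $\|D\varphi(y)^{-1}\|_{\mathrm{op}}\le C m^{-1}$ on $\frac54R$.

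Next we write the key identity. The chain rule for $f=\varphi\circ g$ holds a.e. on $R$: here $g=\varphi^{-1}\circ f$ with $\varphi^{-1}$ smooth near $f(\overline R)$, so $g\in W^{1,n}$, and the composition with a smooth outer map is legitimate. It gives $Df(x)=D\varphi(g(x))Dg(x)$, and therefore
$$Df(x)-D\varphi(x)=D\varphi(g(x))\bigl(Dg(x)-\Id\bigr)+\bigl(D\varphi(g(x))-D\varphi(x)\bigr).$$
By the Lipschitz bound, the last term is at most $C m |u(x)|/\ell(R)$ pointwise. Consequently
$$|Df-D\varphi|\le Cm|Dg-\Id|+Cm\frac{|u|}{\ell(R)}.$$
Inverting $D\varphi(g(x))$ gives, symmetrically,
$$|Dg-\Id|\le Cm^{-1}|Df-D\varphi|+C\frac{|u|}{\ell(R)}.$$

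It remains to remove the zeroth-order term $|u|/\ell(R)$, and this is the main obstacle. For the reverse estimate \eqref{eq:comparison-reverse}, we assume $Dg-\Id\in L^q(R)$. The normalization $\Pi_{\zeta_0,R}u=0$ forces in particular the vanishing of the average $\fint_{\zeta_0R}u$, because constants lie in $\Kconf$: projection zero means that $u$ is orthogonal to all constant fields on $\zeta_0 R$. Then \Cref{lem:inner-poincare} gives $\|u\|_{L^q(R)}\le C\ell(R)\|Du\|_{L^q(R)}$. Integrating the first pointwise inequality in $L^q(R)$ therefore yields \eqref{eq:comparison-reverse}.

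For the forward estimate \eqref{eq:comparison-forward}, we do not yet know that $Dg-\Id\in L^q$, so we argue by absorption. From the second pointwise inequality and the same Poincar\'e bound,
$$\|Du\|_{L^q(R)}\le Cm^{-1}\|Df-D\varphi\|_{L^q(R)}+C\ell(R)^{-1}\|u\|_{L^q(R)}.$$
The term $C\ell(R)^{-1}\|u\|_{L^q(R)}$ must be estimated without producing $C\|Du\|_{L^q(R)}$ with a constant that cannot be absorbed. Here the smallness of $a_0$ is used more carefully. The Lipschitz term comes from $|D\varphi(g(x))-D\varphi(x)|\le C m\,a_0$, since $|u|\le a_0\ell(R)$. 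Hence $Dg-\Id=D\varphi(g)^{-1}\bigl[(Df-D\varphi)-(D\varphi(g)-D\varphi)\bigr]$ is bounded pointwise by $Cm^{-1}|Df-D\varphi|+Ca_0$. This bound is finite, so $Dg-\Id\in L^q(R)$ (with $q$-integrability coming from the $L^q$ hypothesis plus a bounded term). Now that $\|Du\|_{L^q(R)}<\infty$, the inequality $\|u\|_{L^q(R)}\le C_P\ell(R)\|Du\|_{L^q(R)}$ is available. Combined with the pointwise bound $|D\varphi(g)-D\varphi|\le Cm\,\min(a_0,|u|/\ell(R))$, we split the Lipschitz term as $Cm\,a_0^{1/2}\,(|u|/\ell(R))^{1/2}$ using the geometric mean of the two bounds. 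Young's inequality then leaves, in the worst case, a factor $C a_0$ in front of $\|Du\|_{L^q}$ plus a lower-order remainder.

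If this interpolation does not close, the fallback is to use the Lipschitz estimate with the cube $\frac54R$ replaced by a small neighborhood whose scale is proportional to $a_0$. Either way, choosing $a_0(n)$ small absorbs the $\|Du\|$ term into the left-hand side and yields \eqref{eq:comparison-forward}. The constant depends on $n,q,\zeta_0$ through \Cref{lem:inner-poincare} and \Cref{lem:mobius-nested-derivatives}.
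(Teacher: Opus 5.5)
You handle the reverse estimate \eqref{eq:comparison-reverse} correctly, and your argument coincides with the paper's: $g(R)\subset\frac54R$, then \Cref{lem:mobius-nested-derivatives} on $\frac54R$ with $\kappa=\frac65$, the chain-rule identity, and \Cref{lem:inner-poincare} via $\fint_{\zeta_0R}u=0$. Your qualitative bound $|Dg-\Id|\le Cm^{-1}|Df-D\varphi|+Ca_0$ (with $m=\avnorm{D\varphi}{q}{R}$) is also fine, and it gives $Dg-\Id\in L^q(R)$.

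The forward estimate \eqref{eq:comparison-forward}, however, is not proved. Write $Du=D\varphi(g)^{-1}(Df-D\varphi)+M$ with $M:=D\varphi(g)^{-1}D\varphi-\Id$. You only use $|M|\le C_1|u|/\ell(R)$, where $C_1=C_1(n)$ comes from $D^2\varphi$ and is not small, since the pole may lie at distance comparable to $\ell(R)$. This causes three failures:
\begin{itemize}
\item Absorbing after \Cref{lem:inner-poincare} would need $C_1C_P<1$, and nothing provides that.
\item The interpolation through $\min\{a_0,|u|/\ell(R)\}$ and Young's inequality only yields $\|Du\|_{L^q(R)}\le Cm^{-1}\|Df-D\varphi\|_{L^q(R)}+c(a_0)|R|^{1/q}$. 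The additive remainder is not controlled by $\|Df-D\varphi\|_{L^q(R)}$.
\item The fallback cannot help, because $|D^2\varphi|$ can be of size $m/\ell(R)$ on all of $R$, however small a neighborhood you take.
\end{itemize}
The test case $f=\varphi+c$ shows what is at stake (take $\varphi$ pole-free near $\overline{2R}$ and $c\in\R^n$ small). There $Df=D\varphi$, so the forward estimate asserts that $\Pi_{\zeta_0,R}u=0$ forces $g=\varphi^{-1}\circ(\varphi+c)$ to be the identity. That is a rigidity statement, and it must exploit the conformal structure of $M$, not merely its size.

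The missing idea is exactly that structure. Here $\Id+M=\mu O$ with $\mu>0$ and $O\in SO(n)$. By the computations in \eqref{eq:dev-rotation} and \eqref{eq:B-exact},
$|\dev\sym M|=\mu|\dev\sym O|\le\frac{\mu}{2}|O-\Id|^2\le\frac12|M|^2\le\frac12C_1^2a_0|u|/\ell(R)$.
So the first-order part of $M$ is invisible to $\Edev$, and pointwise
$|\Edev u|\le Cm^{-1}|Df-D\varphi|+\frac12C_1^2a_0|u|/\ell(R)$.
Now apply the projected Korn inequality of \Cref{thm:projected-korn}; this is legitimate because $u\in W^{1,q}(R)$ by your qualitative bound. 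It uses the full condition $\Pi_{\zeta_0,R}u=0$ rather than just the vanishing mean, and gives $\ell(R)^{-1}\|u\|_{L^q(R)}+\|Du\|_{L^q(R)}\le C_K\|\Edev u\|_{L^q(R)}$. The $u$-term on the right now carries the factor $a_0$, so it is absorbed once $a_0\le (C_KC_1^2)^{-1}$.

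This route makes $a_0$ depend on $q$ (and $\zeta_0$) as well. That is enough for the finitely many exponents used in \Cref{prop:specialized-bso-gain}, but weaker than the stated $a_0=a_0(n)$. The paper itself does not prove the forward direction; it quotes Reshetnyak's comparison lemma \cite[Chapter~4, Lemma~2.7]{R1994}.
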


\begin{proof}
We first prove the reverse estimate \eqref{eq:comparison-reverse}.  Assume that $Dg-I_n\in L^q(R)$.  Write $\ell=\ell(R)$.  By choosing $a_0$ sufficiently small, the uniform bound in \eqref{eq:comparison-hypotheses} implies
$$
 g(R)\subset\frac 5 4 R\subset\subset\frac 3 2 R.
$$
Then the pole of $\varphi$ is outside $\frac 3 2 R$, and we can apply \Cref{lem:mobius-nested-derivatives} first to the cube $\frac 5 4 R$, whose $\frac 6 5$-dilate is $\frac 3 2 R$.  

We claim that
$$
    \avnorm{D\varphi}{q}{\frac54R} \le C(n,q)\avnorm{D\varphi}{q}{R}.
$$
Indeed, according to \eqref{eq:mobius-mean-comparability}, both normalized means are comparable to the value of $\|D\varphi\|_{\mathrm{op}}$ at any fixed point of $R$.  Since $x,\,g(x)\in\frac 5 4 R$, we therefore obtain
\begin{equation}\label{eq:Dphi-g-comparable}
 \sup_{x\in R}\|D\varphi(g(x))\|_{\mathrm{op}}
 \le C(n,\,q) \avnorm{D\varphi}{q}{R}.
\end{equation}
Moreover, since $g=\operatorname{id}+u$, \eqref{eq:mobius-D-lipschitz} yields that, for each $x\in R$,
\begin{equation}
 \|D\varphi(g(x))-D\varphi(x)\|_{\mathrm{op}}
 \le C(n) \avnorm{D\varphi}{q}{R}\frac{|x-g(x)|}{\ell}\le  C(n,q) \avnorm{D\varphi}{q}{R}\frac{|u(x)|}{\ell}.
\end{equation}
In addition, the chain rule applied to $f=\varphi\circ g$ gives, almost everywhere in $R$,
\begin{equation}\label{eq:comparison-chain-rule}
 Df-D\varphi
 =D\varphi(g)(Dg-I_n)+D\varphi(g)-D\varphi.
\end{equation}
Thus, combining \eqref{eq:Dphi-g-comparable}--\eqref{eq:comparison-chain-rule} implies
\begin{align*}
 \|Df-D\varphi\|_{L^q(R)}
 &\le C(n,q) \avnorm{D\varphi}{q}{R}
 \left( \|Dg-I_n\|_{L^q(R)} +\ell^{-1}\|u\|_{L^q(R)}
 \right).
\end{align*}
Since translations belong to $\Kconf$, the condition $\Pi_{\zeta_0,R}u=0$ implies
$$
 \fint_{\zeta_0R}u\,dx=0.
$$
Thus Lemma~\ref{lem:inner-poincare} gives
$$
 \ell^{-1}\|u\|_{L^q(R)}  \le C(n,q,\zeta_0)\|Du\|_{L^q(R)}.
$$
This proves \eqref{eq:comparison-reverse} as $Du=Dg-\Id$.

On a separate note, \eqref{eq:comparison-forward} is a
  variant of Reshetnyak's comparison lemma \cite[Chapter~4, Lemma~2.7]{R1994}, with the fixed dilation parameters
$$
 k_1=\frac32,  \qquad  k_2=2,  \qquad \zeta=\zeta_0.
$$
Reshetnyak's estimate gives
$$
\|Dg-\Id\|_{L^q(R)}\le   C(n,q,\zeta_0) |R|^{1/q}\|D\varphi\|_{L^q(R)}^{-1} \|Df-D\varphi\|_{L^q(R)}.
$$
Since
$$
\avnorm{D\varphi}{q}{R} =|R|^{-1/q}\|D\varphi\|_{L^q(R)},
$$
this is precisely \eqref{eq:comparison-forward}.
\end{proof}

\begin{definition}[Admissible comparison family]\label{def:comparison-family}
Fix $1<q<\infty$.  For every cube $R\subset\R^n$, let
$$
 \mathscr S(R)\subset C(R;\R^d)
$$
be a family of nonzero continuous vector fields.  We call $\mathscr S$ an \emph{admissible $L^q$-comparison family} if there are constants
$$
 0<\alpha_1\le1\le\alpha_2<\infty,
 \qquad
 1\le\alpha_3<\infty,
$$
independent of $R$, such that the following properties hold.
\begin{enumerate}[label=\textup{(S\arabic*)},leftmargin=2.7em]
\item If $R'\subset R$ is a cube and $\phi\in\mathscr S(R)$, then $\phi|_{R'}\in\mathscr S(R')$.
\item For every $\phi\in\mathscr S(R)$ and every $x\in R$,
\begin{equation}\label{eq:comparison-pointwise-size}
 \alpha_1\avnorm{\phi}{q}{R}
 \le |\phi(x)|
 \le\alpha_2\avnorm{\phi}{q}{R}.
\end{equation}
\item For every $\phi,\psi\in\mathscr S(R)$ and every $x\in R$,
\begin{equation*}
 |\phi(x)-\psi(x)|
 \le\alpha_3\avnorm{\phi-\psi}{q}{R}.
\end{equation*}
\end{enumerate}
\end{definition}

\begin{definition}[Relative specific oscillation]
Let $U\subset\R^n$ be open, $F:U\to\R^d$ be a measurable vector field, and $\mathscr S$ be an admissible $L^q$-comparison family.  We say that $F$ has \emph{relative $L^q$-specific oscillation at most $w$} if, for every cube $R\subset\subset U$, there is a vector field $\phi_R\in\mathscr S(R)$ such that
\begin{equation}\label{eq:relative-oscillation-def}
 \avnorm{F-\phi_R}{q}{R}
 \le w\,\avnorm{\phi_R}{q}{R}.
\end{equation}
The vector field $\phi_R$ is called a comparison field for $F$ on $R$.
\end{definition}

The following dyadic stopping lemma is a covering result needed in the self-improvement argument.  We record its proof so as to avoid invoking Calder\'on--Zygmund decomposition. 
Recall that a dyadic cube $R$ is of the form
$$
R=2^{-k}([0,1]^n+m), \qquad k\in\mathbb Z,\quad m\in\mathbb Z^n.
$$
A \emph{dyadic child} of $R$ is any of the $2^n$ smaller subcubes
obtained by bisecting each side of $R$.

\begin{lemma}\label{lem:weighted-dyadic-stopping}
Let $R_0$ be a cube, and let $u,v\in L^1(R_0)$ be nonnegative.  Assume that
\begin{equation}\label{eq:stopping-global-strict}
 \int_{R_0}u\,dx<\int_{R_0}v\,dx
\end{equation}
and that there exists $D\ge1$ such that, whenever $R'$ is a dyadic child of a dyadic subcube $R\subset R_0$,
\begin{equation}\label{eq:dyadic-weight-doubling}
 \int_Rv\,dx\le D\int_{R'}v\,dx.
\end{equation}
Then there is an at most countable family of pairwise interior-disjoint dyadic subcubes $\{R_j\}$ of $R_0$ such that
\begin{equation}\label{eq:stopping-two-sided}
 \int_{R_j}v\,dx  \le\int_{R_j}u\,dx  \le D\int_{R_j}v\,dx
\end{equation}
for every $j$, and
\begin{equation}\label{eq:outside-stopping}
 u(x)\le v(x)
\end{equation}
for almost every $x\in R_0\setminus\bigcup_jR_j$.
\end{lemma}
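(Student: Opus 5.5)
The plan is a Calderón--Zygmund-type stopping-time argument on the dyadic subcubes of $R_0$, where the comparison is between $u$ and $v$ rather than against a constant level. Call a dyadic subcube $R\subseteq R_0$ \emph{bad} if $\int_R u\,dx>\int_R v\,dx$ and \emph{good} otherwise. By \eqref{eq:stopping-global-strict}, $R_0$ itself is good. Let $\{R_j\}$ be the family of maximal bad dyadic subcubes, i.e. bad cubes $R$ such that every dyadic ancestor $\widetilde R$ with $R\subsetneq\widetilde R\subseteq R_0$ is good. There are countably many dyadic cubes, so the family is at most countable. Two dyadic cubes are either nested or interior-disjoint, and maximality excludes strict nesting, so the family is pairwise interior-disjoint. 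The lower bound in \eqref{eq:stopping-two-sided} is immediate, since a bad cube satisfies $\int_{R_j}u>\int_{R_j}v$.

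For the upper bound, let $R_j^+$ be the dyadic parent of $R_j$. It lies in $R_0$ because $R_j\neq R_0$, as $R_0$ is good. By maximality $R_j^+$ is good, so $\int_{R_j^+}u\le\int_{R_j^+}v$. Since $u\ge0$ and $R_j\subset R_j^+$,
$$
\int_{R_j}u\,dx\le\int_{R_j^+}u\,dx\le\int_{R_j^+}v\,dx\le D\int_{R_j}v\,dx,
$$
where the last step is \eqref{eq:dyadic-weight-doubling} applied with $R=R_j^+$ and its child $R'=R_j$.

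For \eqref{eq:outside-stopping}, take $x\in R_0\setminus\bigcup_jR_j$, avoiding the null set of dyadic boundaries. Every dyadic cube $R\ni x$ with $R\subseteq R_0$ is then good: otherwise it would be bad, hence contained in a maximal bad cube, which exists because the chain of ancestors up to $R_0$ is finite. So $\int_R u\le\int_R v$ for every such $R$, i.e. $\fint_R (v-u)\,dx\ge0$. By the Lebesgue differentiation theorem along the dyadic cubes shrinking to $x$, valid for $v-u\in L^1$ since these cubes have bounded eccentricity, we get $v(x)-u(x)\ge0$ for almost every such $x$.

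The main point needing care is the last step: stating the dyadic Lebesgue differentiation theorem correctly, and making sure the family is well defined, i.e. that every bad cube lies in a maximal one. The latter holds because each dyadic subcube has only finitely many ancestors inside $R_0$, so no infinite ascending chains occur. Note also that \eqref{eq:dyadic-weight-doubling} is used only on the parent--child pair $(R_j^+,R_j)$.
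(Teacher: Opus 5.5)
Your proof is correct and follows the paper's argument essentially step for step. Both use maximal dyadic stopping cubes, the parent-cube estimate combined with \eqref{eq:dyadic-weight-doubling}, and dyadic Lebesgue differentiation off the stopping cubes. The only difference is that the paper stops when $\int_R u\,dx\ge\int_R v\,dx$ rather than with your strict inequality, which changes nothing in either the two-sided bound or the almost-everywhere conclusion.
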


\begin{proof}
Select the maximal dyadic subcubes $R_j\subset R_0$ for which
$$
 \int_{R_j}u\,dx\ge\int_{R_j}v\,dx.
$$
The strict inequality \eqref{eq:stopping-global-strict} ensures that $R_0$ itself is not selected.  Observe that the maximal dyadic cubes have pairwise disjoint interiors.

Let $\widetilde R_j$ be the dyadic parent of $R_j$.  By maximality of $R_j$, we have
$$
 \int_{\widetilde R_j}u\,dx
 <\int_{\widetilde R_j}v\,dx.
$$
Consequently, using $R_j\subset\widetilde R_j$ and \eqref{eq:dyadic-weight-doubling},
$$
 \int_{R_j}u\,dx
 \le\int_{\widetilde R_j}u\,dx
 <\int_{\widetilde R_j}v\,dx
 \le D\int_{R_j}v\,dx.
$$
This proves \eqref{eq:stopping-two-sided}.

If $x$ does not belong to any selected cube, then every dyadic cube in the nested chain shrinking to $x$ fails the stopping inequality.  The dyadic differentiation theorem therefore gives
$$
 u(x)\le v(x)
$$
for almost every such $x$.  This proves \eqref{eq:outside-stopping}.
\end{proof}

The next theorem is the relative power self-improvement used in the local stability argument.  Its exponent range and comparison-field form correspond to Reshetnyak's general theorem \cite[Chapter~3, Theorem~5.2]{R1994} and to the estimates \cite[Chapter~4, equations~(3.3)--(3.5)]{R1994}, which form a John--Nirenberg-type inequality for BMO functions; see also \cite[Chapter 7.1.2]{G2004}.  We include a complete dyadic proof in the present notation.

\begin{theorem}\label{thm:relative-gurov}
Let $1<q<\infty$, let $\mathscr S$ be an admissible $L^q$-comparison family with constants $\alpha_1,\alpha_2,\alpha_3$, and let $F:U\to\R^d$ have relative $L^q$-specific oscillation at most $w$.  
Fix a comparison field $\phi_R\in\mathscr S(R)$ satisfying \eqref{eq:relative-oscillation-def} for every cube $R\subset\subset U$.  For each   such cube $R$, define
\begin{equation}\label{eq:defn-ERt}
 E_R(t) :=\{x\in R:|F(x)-\phi_R(x)|\ge t|\phi_R(x)|\},
 \qquad t>0.
\end{equation}
There exist constants
$$
 w_0>0,  \qquad c_0>0, \qquad C_0<\infty,
$$
depending only on $n,q,\alpha_1,\alpha_2,\alpha_3$, such that, for $0<w\le w_0$, 
\begin{equation}\label{eq:relative-tail-decay}
 |E_R(t)| \le \frac{C_0}{w^q(1+t)^{c_0/w}\,\avnorm{\phi_R}{q}{R}^q} \int_R|F-\phi_R|^q\,dx
\end{equation}
whenever
\begin{equation}\label{eq:tail-threshold}
 t\ge C_0w.
\end{equation}

Moreover, fix a cube $R_0\subset\subset U$.  If comparison fields $\phi_S\in\mathscr S(S)$ satisfying \eqref{eq:relative-oscillation-def} are prescribed only for $R_0$ and its dyadic subcubes $S\subset R_0$, then \eqref{eq:relative-tail-decay} holds with $R=R_0$.  In particular, no comparison field outside this dyadic family is needed.
\end{theorem}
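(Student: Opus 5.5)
We prove \eqref{eq:relative-tail-decay} by a John--Nirenberg-type iteration built on the dyadic stopping \Cref{lem:weighted-dyadic-stopping}. Only $R_0$ and its dyadic subcubes are used, which gives the final assertion. Fix $R=R_0$ and write $\phi=\phi_R$, $\Lambda:=\avnorm{\phi}{q}{R}$. By (S2), $|\phi|\simeq\Lambda$ on $R$, so $E_R(t)$ is comparable to $\{|F-\phi|\ge t\Lambda\}$ after rescaling $t$ by $\alpha_1^{\pm1}$. The first step is a \emph{one-level estimate}. Take $u=|F-\phi|^q$ and $v=\lambda^q|\phi|^q$ with a threshold $\lambda\ge C_0w$. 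Since $\avnorm{F-\phi}{q}{R}\le w\Lambda$ and $|\phi|\ge\alpha_1\Lambda$, the strict inequality \eqref{eq:stopping-global-strict} holds once $\lambda>w/\alpha_1$. The doubling hypothesis \eqref{eq:dyadic-weight-doubling} holds with $D=2^n(\alpha_2/\alpha_1)^q$, again by (S2) together with (S1). The lemma then produces disjoint cubes $R_j$ with $\int_{R_j}|F-\phi|^q\approx\lambda^q\int_{R_j}|\phi|^q$, and $|F-\phi|\le\lambda|\phi|$ off $\bigcup R_j$. Summing the lower bound gives
$$\sum_j|R_j|\le \frac{C}{\lambda^q\Lambda^q}\int_R|F-\phi|^q\,dx\le C\Big(\frac{w}{\lambda}\Big)^q|R|.$$

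The second step compares $\phi$ with $\phi_{R_j}$ on each stopping cube. By (S3) and the triangle inequality,
$$|\phi-\phi_{R_j}|\le\alpha_3\avnorm{\phi-\phi_{R_j}}{q}{R_j}\le\alpha_3\big(\avnorm{F-\phi}{q}{R_j}+\avnorm{F-\phi_{R_j}}{q}{R_j}\big).$$
Here the first term is at most $CD^{1/q}\lambda\,\avnorm{\phi}{q}{R_j}$ by \eqref{eq:stopping-two-sided}, and the second is at most $w\,\avnorm{\phi_{R_j}}{q}{R_j}$. With $\lambda,w$ small this yields $|\phi-\phi_{R_j}|\le C\lambda|\phi|$ on $R_j$, and $|\phi_{R_j}|\simeq|\phi|$ there, with constants depending only on the data. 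Consequently, for $x\in R_j$ with $|F-\phi|\ge t|\phi|$ we get $|F-\phi_{R_j}|\ge (t-C\lambda)|\phi_{R_j}|/(1+C\lambda)$. Hence
$$E_R(t)\subset\bigcup_j E_{R_j}(t')\quad\text{for } t>\lambda,\qquad t' \text{ given by } 1+t'=\frac{1+t}{1+C\lambda}\ \text{(up to harmless constants)}.$$
The multiplicative form in $1+t$ is the key to getting decay in powers of $1+t$, rather than exponential decay in $t$.

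The third step iterates, choosing $\lambda=Aw$ with $A$ large so that $C(w/\lambda)^q=CA^{-q}\le\frac12$. After $k$ generations the surviving measure is at most $2^{-k}|R|$. Meanwhile $1+t$ only needs to exceed $(1+CAw)^k$, so the number of admissible generations is $k\gtrsim\log(1+t)/(CAw)$. This yields $|E_R(t)|\lesssim(1+t)^{-c_0/w}$. To obtain the stated prefactor $\int_R|F-\phi_R|^q/(w^q\Lambda^q)$, we keep the first generation's bound as written, with $\sum_j|R_j|\le C\lambda^{-q}\Lambda^{-q}\int_R|F-\phi|^q$, and apply the geometric decay only inside the $R_j$. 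The threshold \eqref{eq:tail-threshold} ensures $t>\lambda$, so that the first stopping step is legitimate.

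The main obstacle is the second step. One must verify that the perturbation of comparison fields is multiplicative, with $|\phi_{R_j}|\simeq|\phi|$ uniformly and an error of size $O(\lambda)$ relative to $|\phi|$, so that the level sets compose as $(1+t)\mapsto(1+t)/(1+C\lambda)$. One must also track that the constants in (S2)--(S3) do not degrade along generations. This is where (S1) is needed: it guarantees that the restricted fields $\phi|_{R_j}$ remain admissible with the same constants.
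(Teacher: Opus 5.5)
Your proposal is correct and follows essentially the same route as the paper. Both arguments use the dyadic stopping lemma with threshold a fixed multiple of $w$. Both then apply the (S1)--(S3) comparison to get $|\phi_{R_j}-\phi_{R_0}|\lesssim w\,|\phi_{R_j}|$ on the stopping cubes, and both use the multiplicative level shift $1+t\mapsto(1+t)/(1+Cw)$, which the paper writes as $t_1=(1-\eta_w)t-\eta_w$.

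The only difference is bookkeeping. The paper packages the iteration in the normalized supremum $P(t)$, proves $P(t)\le\frac14P(t_1)$, and closes with the Chebyshev bound $P(t)\le(\alpha_1t)^{-q}$. You instead iterate the stopping generations directly and take the $w^{-q}$ prefactor from the first generation; the two are equivalent.
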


\begin{proof}
Fix the comparison field $\phi_R$ chosen in the theorem.
For $t>0$, define  
\begin{equation}\label{eq:defn-Pt}
P(t):=\sup_{R\subset\subset U} \frac{\avnorm{\phi_R}{q}{R}^q|E_R(t)|}
 {\displaystyle\int_R|F-\phi_R|^q\,dx}.
\end{equation}
If the denominator vanishes, then $F=\phi_R$ almost everywhere on $R$ and $E_R(t)$ is null; thus the corresponding quotient is defined to be zero and the conclusion of the theorem holds trivially. Thus, we may assume that the denominator is strictly positive and  $P(t)$ is well defined. 

\medskip
\noindent{\bf Step 1: An Estimate on $P$.} 
By \eqref{eq:comparison-pointwise-size} and \eqref{eq:defn-ERt}, for each $R\subset\subset U$ on $E_R(t)$ we have
$$
 |F-\phi_R|^q \ge t^q|\phi_R|^q  \ge(\alpha_1t)^q\avnorm{\phi_R}{q}{R}^q.
$$
Therefore,
$$\int_R|F-\phi_R|^q\,dx\ge (\alpha_1t)^q\avnorm{\phi_R}{q}{R}^q |E_R(t)|$$
and hence \eqref{eq:defn-Pt} gives
\begin{equation}\label{eq:P-chebyshev}
 P(t)\le    (\alpha_1t)^{-q}  \qquad \text{ for any }\ t>0.
\end{equation}

\medskip
\noindent{\bf Step 2: A decomposition $\{R_j\}$ of an arbitrary cube $R_0$.} 
Fix a cube $R_0\subset\subset U$, write
$$
 \phi_0:=\phi_{R_0}, \qquad
 f_0:=F-\phi_0, \qquad  m_0:=\avnorm{\phi_0}{q}{R_0},
$$
and choose a number $k>1$, to be fixed below.  We verify the conditions of \Cref{lem:weighted-dyadic-stopping} on $R_0$ are satisfied by
$$
 u:=|f_0|^q,
 \qquad
 v:=(kw)^q|\phi_0|^q.
$$
In fact, the global strict inequality \eqref{eq:stopping-global-strict} follows from \eqref{eq:relative-oscillation-def} since $k>1$. Thus it suffices to show \eqref{eq:dyadic-weight-doubling}.  If $R'$ is a dyadic child of $R$, then property \textup{(S1)} of \Cref{def:comparison-family} allows us to regard $\phi_0|_R$ as an element of $\mathscr S(R)$, and \eqref{eq:comparison-pointwise-size} gives
\begin{equation*}
 \int_{R'}|\phi_0|^q\,dx
 \ge  \alpha_1^q\avnorm{\phi_0}{q}{R}^q|R'| =2^{-n}\alpha_1^q
 \int_R|\phi_0|^q\,dx.
\end{equation*}
Thus \eqref{eq:dyadic-weight-doubling} holds with
\begin{equation*}
 D:=2^n\alpha_1^{-q}.
\end{equation*}
Thus we can apply \Cref{lem:weighted-dyadic-stopping} to $R_0$. 

Let $\{R_j\}$ be the resulting stopping cubes obtained by \Cref{lem:weighted-dyadic-stopping}.  Then \eqref{eq:stopping-two-sided} gives
\begin{equation}\label{eq:stopping-e0}
 (kw)^q\int_{R_j}|\phi_0|^q\,dx
 \le\int_{R_j}|f_0|^q\,dx
 \le D(kw)^q\int_{R_j}|\phi_0|^q\,dx.
\end{equation}
For each $j$, let $\phi_j:=\phi_{R_j}$ be a comparison field for $F$ on $R_j$, and write
$$
 f_j:=F-\phi_j, \qquad m_j:=\avnorm{\phi_j}{q}{R_j},
 \qquad  m_{0,j}:=\avnorm{\phi_0}{q}{R_j}.
$$
From \eqref{eq:stopping-e0},
\begin{equation}\label{eq:e0-local-mean}
 \avnorm{f_0}{q}{R_j}
 \le D^{1/q}kw\,m_{0,j}.
\end{equation}
Moreover, the triangle inequality, \eqref{eq:relative-oscillation-def}, and \eqref{eq:e0-local-mean} yield
\begin{align*}
 m_{0,j} &\le m_j+\avnorm{\phi_0-\phi_j}{q}{R_j}\\
 &\le m_j+\avnorm{f_0}{q}{R_j}+\avnorm{f_j}{q}{R_j} \le(1+w)m_j+D^{1/q}kw\,m_{0,j}.
\end{align*}
Choose $0<w_0\ll 1$ small, depending on $k$,  so that
\begin{equation*}
 w_0\le1, \qquad
 D^{1/q}kw_0\le\frac12.
\end{equation*}
Then
\begin{equation}\label{eq:m0j-mj}
 m_{0,j}\le4m_j.
\end{equation}
Now Property \textup{(S3)} of \Cref{def:comparison-family}, together with the triangle inequality and \eqref{eq:e0-local-mean}--\eqref{eq:m0j-mj}, gives, for every $x\in R_j$,
\begin{align*}
 |\phi_j(x)-\phi_0(x)|
 &\le\alpha_3\avnorm{\phi_j-\phi_0}{q}{R_j}\\
 &\le\alpha_3\bigl(\avnorm{f_j}{q}{R_j}+\avnorm{f_0}{q}{R_j}\bigr)\\
 &\le\alpha_3\bigl(wm_j+D^{1/q}k w m_{0,j}\bigr)\\
 &\le A_0w\,m_j,
\end{align*}
where
\begin{equation}\label{eq:defn-A0}
 A_0:=\alpha_3\bigl(1+4D^{1/q}k\bigr).
\end{equation}
By the lower bound in \eqref{eq:comparison-pointwise-size}, since $m_j:=\avnorm{\phi_j}{q}{R_j}$, we have
\begin{equation}\label{eq:relative-comparison-fields}
 |\phi_j(x)-\phi_0(x)| \le\eta_w|\phi_j(x)|, \qquad  \eta_w:=\frac{A_0}{\alpha_1}w.
\end{equation}
One may further decrease $w_0$ (depending on $k$) so that $\eta_w\le\frac 1 6$ whenever $w\le w_0$.

\medskip
\noindent{\bf Step 3: One iteration step.} 
Let $t>0$ and set
\begin{equation*}
 t_1:=(1-\eta_w)t-\eta_w.
\end{equation*}
If $x\in E_{R_0}(t)\cap R_j$, then \eqref{eq:defn-ERt} and \eqref{eq:relative-comparison-fields} gives
\begin{align*}
 |F(x)-\phi_j(x)|
 &\ge |F(x)-\phi_0(x)|-|\phi_j(x)-\phi_0(x)|\\
 &\ge t|\phi_0(x)|-\eta_w|\phi_j(x)|\\
 &\ge \bigl((1-\eta_w)t-\eta_w\bigr)|\phi_j(x)|\\
 &= t_1|\phi_j(x)|.
\end{align*}
Therefore, whenever $t_1>0$,
\begin{equation}\label{eq:tail-inclusion}
 E_{R_0}(t)\cap R_j\subset E_{R_j}(t_1).
\end{equation}
On the other hand, outside the stopping cubes, \eqref{eq:outside-stopping} gives
$$
 |F-\phi_0|^q=|f_0|^q\le(kw)^q|\phi_0|^q.
$$
Consequently, if $t>kw$, then $E_{R_0}(t)$ is contained, up to a null set, in $\bigcup_jR_j$.  Thus, according to  \eqref{eq:tail-inclusion}, the definition of $P(t_1)$ in \eqref{eq:defn-Pt} , and \eqref{eq:relative-oscillation-def}, we obtain
\begin{equation}\label{eq:tail-before-sum}
|E_{R_0}(t)|\le \sum_j|E_{R_j}(t_1)| \le P(t_1)\sum_j
m_j^{-q}\int_{R_j}|f_j|^q\,dx \le P(t_1)w^q\sum_j|R_j|.
\end{equation}
The lower stopping inequality \eqref{eq:stopping-two-sided} on $u=|f_0|^q$ and $v=(kw)^q |\phi_0|^q$, together with  the pointwise lower bound in Property  \textup{(S2)} of \Cref{def:comparison-family}, for $\phi_0$ on the original cube $R_0$ give
\begin{equation*}
 \int_{R_0}|f_0|^q\,dx \ge(kw)^q\sum_j\int_{R_j}|\phi_0|^q\,dx \ge(kw)^q\alpha_1^qm_0^q\sum_j|R_j|.
\end{equation*}
Substitution in \eqref{eq:tail-before-sum} yields
$$
 |E_{R_0}(t)| \le  \frac{P(t_1)}{\alpha_1^qk^qm_0^q}  \int_{R_0}|f_0|^q\,dx.
$$
Since $R_0\subset\subset U$ was arbitrary,
\begin{equation*}
 P(t)\le\frac1{\alpha_1^qk^q}P(t_1)
\end{equation*}
whenever $t>kw$ and $t_1>0$.  
Choose
\begin{equation*}
 k:=\frac{4^{1/q}}{\alpha_1}.
\end{equation*}
Then
\begin{equation}\label{eq:P-quarter-recurrence}
 P(t)\le\frac14P\bigl((1-\eta_w)t-\eta_w\bigr).
\end{equation}
By the definition  of $A_0$ \eqref{eq:defn-A0} and the inequalities $\alpha_1\le1\le\alpha_3$, one has $k\le2A_0/\alpha_1$ and therefore the definition of $\eta_w$ in \eqref{eq:relative-comparison-fields} yields $kw\le2\eta_w$.  
Set
$$
 u_0:=3\eta_w.
$$
Then $u_0>kw$ and
$$
 (1-\eta_w)u_0-\eta_w
 =2\eta_w-3\eta_w^2>0.
$$
Consequently, \eqref{eq:P-quarter-recurrence} is valid whenever $t\ge u_0$.

\medskip
\noindent{\bf Step 4: Iteration on $t$.} 
Set
$$
 T(t):=(1-\eta_w)t-\eta_w 
$$
so that the identity
\begin{equation}\label{eq:iteration-T}
 1+T(t)=(1-\eta_w)(1+t)
\end{equation}
allows an explicit iteration.  Define
$$
 1+u_m:=(1+u_0)(1-\eta_w)^{-m} \qquad \text{ for any }\  m\ge 0.
$$
If $u_m\le t<u_{m+1}$, then by \eqref{eq:iteration-T}
$$T^{(m)}(t)=\underbrace{T \circ\cdots\circ T}_{\text{$m$ times}}(t)\ge u_0=3\eta_w$$
and $T^{(m+1)}(t)<u_0$. Iterating \eqref{eq:P-quarter-recurrence} $m$ times and then applying \eqref{eq:P-chebyshev} gives
\begin{equation}\label{eq:P-after-iteration}
 P(t) \le4^{-m}P(T^{(m)}(t)) \le C(\alpha_1,\,q)\eta_w^{-q}4^{-m}.
\end{equation}

Let
$$
 \beta_w:=\frac{\log4}{-\log(1-\eta_w)}.
$$
Since $t<u_{m+1}$,
$$
 1+t<1+u_{m+1}=(1+u_0)(1-\eta_w)^{-(m+1)}.
$$
Raising this inequality to the power $-\beta_w$ and using
$ (1-\eta_w)^{\beta_w}=\frac1 4,$
we obtain
$$
 4^{-m}\le4(1+u_0)^{\beta_w}(1+t)^{-\beta_w}.
$$
Since $u_0=3\eta_w$ and $\eta_w\le\frac16$, the quantity
$(1+u_0)^{\beta_w}$ is bounded by an absolute constant: indeed, note that
$$
 \beta_w\log(1+3\eta_w)= \frac{\log4}{-\log(1-\eta_w)}\log(1+3\eta_w) \le \frac{\log4}{\eta_w}\,3\eta_w =3\log4,
$$
where we used 
$$\log(1+3\eta_w)\le3\eta_w \quad \text{ and }\quad  -\log(1-\eta_w)\ge\eta_w.$$ 
Furthermore, since $0<\eta_w<\frac 1 6$,
$$
 -\log(1-\eta_w)\le2\eta_w,
$$
so
$$
 \beta_w\ge\frac{\log4}{2\eta_w}
 =\frac{c_0}{w},
 \qquad
 c_0:=\frac{\alpha_1\log4}{2A_0}.
$$
Together with \eqref{eq:P-after-iteration} and $\eta_w=(A_0/\alpha_1)w$, this proves
$$
 P(t)\le C_0w^{-q}(1+t)^{-c_0/w}
$$
for $t\ge u_0=3\eta_w$.  By the definition of $P(t)$ in \eqref{eq:defn-Pt}, this is exactly \eqref{eq:relative-tail-decay}, up to increasing $C_0$ so that \eqref{eq:tail-threshold} implies $t\ge3\eta_w$.

\medskip
\noindent{\bf Step 5: The final comparison.} 
We finally verify the last claim in the theorem.  Fix a  cube $R\subset\subset U$ and define
$$
 P_R(t) :=\sup_{S} \frac{\avnorm{\phi_S}{q}{S}^q|E_S(t)|} {\displaystyle\int_S|F-\phi_S|^q\,dx},
$$
where the supremum is taken only over the dyadic subcubes $S\subset R$, and where a zero denominator again contributes the value zero.  

The proof of \eqref{eq:P-chebyshev} uses only Property \textup{(S2)} of Definition~\ref{def:comparison-family}, i.e. \eqref{eq:comparison-pointwise-size}, on the cube $S$, so it gives
$$
 P_R(t)\le(\alpha_1 t)^{-q}.
$$
Starting from any dyadic subcube $S\subset R$, every stopping cube selected by \Cref{lem:weighted-dyadic-stopping} is a dyadic descendant of $S$ and hence of $R$.  The comparison field attached to each selected cube is therefore among the prescribed fields.  Repeating lines \eqref{eq:stopping-e0}--\eqref{eq:tail-before-sum} with $S$ in place of $R_0$ gives
$$
 P_R(t)\le\frac14P_R((1-\eta_w)t-\eta_w)
$$
under the same restrictions on $t$ and $w$.  The explicit iteration \eqref{eq:P-after-iteration} now uses only $P_R$.  Taking $S=R$ proves the claimed tail estimate on the terminal cube without any hypothesis outside its dyadic descendants.
\end{proof}

Now we state the following self-improvement property. 
\begin{corollary}\label{cor:relative-bso-power}
Under the assumptions of \Cref{thm:relative-gurov}, let $s>q$.  Then there are constants $c_1>0$ and $C_{\rm self}<\infty$, depending only on $n,q,s,\alpha_1,\alpha_2,\alpha_3$, such that, whenever
\begin{equation}\label{eq:s-range-gurov}
 0<w\le w_0,
 \qquad
 s\le\frac{c_1}{w},
\end{equation}
one has
\begin{equation}\label{eq:power-self-improvement}
 \int_R|F-\phi_R|^s\,dx
 \le C_{\rm self} w^{s-q}\avnorm{\phi_R}{q}{R}^{s-q}
 \int_R|F-\phi_R|^q\,dx.
\end{equation}
\end{corollary}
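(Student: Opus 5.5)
Write $m:=\avnorm{\phi_R}{q}{R}$ and use the layer-cake formula for $|F-\phi_R|^s$. The natural variable is the relative ratio $|F-\phi_R|/|\phi_R|$, because the tail estimate \eqref{eq:relative-tail-decay} is phrased in terms of it. By Property \textup{(S2)}, $|\phi_R|\le\alpha_2 m$ pointwise on $R$. Hence
\[
\int_R|F-\phi_R|^s\,dx\le(\alpha_2m)^s\int_R\Bigl(\frac{|F-\phi_R|}{|\phi_R|}\Bigr)^s dx
=(\alpha_2m)^s\, s\int_0^\infty t^{s-1}|E_R(t)|\,dt .
\]
Split the $t$-integral at the threshold $t_0:=C_0w$ from \eqref{eq:tail-threshold}.

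For the small range $t\le t_0$, I do not use the tail bound. Instead I estimate directly: on the set where the ratio is at most $t_0$,
\[
\Bigl(\frac{|F-\phi_R|}{|\phi_R|}\Bigr)^s\le t_0^{s-q}\Bigl(\frac{|F-\phi_R|}{|\phi_R|}\Bigr)^q .
\]
Combined with $|\phi_R|\ge\alpha_1m$, this part contributes at most
\[
C\,(C_0w)^{s-q}\,m^{s-q}\int_R|F-\phi_R|^q\,dx .
\]
This already has the desired form. Equivalently, one can split $R$ into the set where the ratio is at most $t_0$ and its complement $E_R(t_0)$, and only apply layer-cake on $E_R(t_0)$.

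For the large range $t\ge t_0$, I insert \eqref{eq:relative-tail-decay}. This gives the bound
\[
C\,\frac{s}{w^q}\,m^{s-q}\int_R|F-\phi_R|^q\,dx\int_{t_0}^\infty t^{s-1}(1+t)^{-c_0/w}\,dt .
\]
Everything therefore reduces to the one-dimensional estimate
\[
\int_{C_0w}^\infty t^{s-1}(1+t)^{-c_0/w}\,dt\le C\,w^{s}.
\]
Multiplied by $w^{-q}$, this produces exactly the factor $w^{s-q}$.

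I expect this integral estimate to be the main obstacle, since the constant must stay uniform as $w\to0$ with $s$ fixed. I will choose $c_1\le c_0/(2s)$, or simply $c_1$ small, so that $\beta:=c_0/w\ge 2s+2$. On $[0,1]$ I use $(1+t)^{-\beta}\le e^{-\beta t/2}$, which gives
\[
\int_0^1 t^{s-1}e^{-\beta t/2}\,dt\le\Gamma(s)(2/\beta)^s\le C(s)\,w^s .
\]
On $[1,\infty)$ I use $t^{s-1}\le(1+t)^{s-1}$, which gives
\[
\int_1^\infty(1+t)^{s-1-\beta}\,dt=\frac{2^{s-\beta}}{\beta-s}\le 2^{-\beta/2}\le C(s)\,w^{s},
\]
because $2^{-c_0/(2w)}$ decays faster than any power of $w$.

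Putting the two ranges together yields \eqref{eq:power-self-improvement}. The constant $C_{\rm self}$ depends on $s,q,\alpha_1,\alpha_2,C_0,c_0$, hence only on the parameters listed in the statement. The restriction $s\le c_1/w$ makes the requirement $\beta\ge 2s+2$ automatic once $w\le w_0$ is small. If needed, I reduce $w_0$ further depending on $s$; this is permitted since $w_0$ may depend on $s$ through $c_1$.
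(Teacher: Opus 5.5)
Your proof is correct and is essentially the paper's argument: layer-cake for the ratio $|F-\phi_R|/|\phi_R|$, a Chebyshev-type bound below the threshold $C_0w$, the tail estimate \eqref{eq:relative-tail-decay} above it, and a final factor $(\alpha_2 m)^s$; the only cosmetic difference is that you bound $\int_0^\infty t^{s-1}(1+t)^{-c_0/w}\,dt$ by splitting at $t=1$ rather than through the paper's Beta-function substitution. A minor bookkeeping remark: $c_1\le c_0/(2s)$ yields $\beta\ge 2s^2$ rather than $\beta\ge 2s+2$, which still suffices for both of your integral bounds since $s>1$ (alternatively take $c_1\le c_0/4$ as the paper does), and no further reduction of $w_0$ is needed.
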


\begin{proof}
Recall that $E_R(t)=\{x\in R\colon |F(x)-\phi_R(x)|\ge t|\phi_R(x)|\}$. 
Write
$$
  \psi:=F-\phi_R,
 \qquad m:=\avnorm{\phi_R}{q}{R},
 \qquad \mathcal F(x):=\frac{|\psi(x)|}{|\phi_R(x)|}.
$$
The lower bound in \eqref{eq:comparison-pointwise-size} ensures that $|\phi_R(x)|\ge\alpha_1m>0$ on $R$.  By the layer-cake formula,
\begin{equation}\label{eq:integral-R}
 \int_R\mathcal F^s\,dx =s\int_0^\infty t^{s-1}|E_R(t)|\,dt.
\end{equation}
Let $t_w:=C_0w$, where $C_0$ is the constant in \Cref{thm:relative-gurov}.

For $0<t<t_w$, Chebyshev's inequality and $|\phi_R|\ge\alpha_1m$ give
$$
 |E_R(t)| \le \frac1{(\alpha_1tm)^q}  \int_R|\psi|^q\,dx.
$$
Therefore, as $t_w=C_0w$, 
\begin{align}
 s\int_0^{t_w}t^{s-1}|E_R(t)|\,dt
 &\le \frac{s}{\alpha_1^qm^q} \int_R|\psi|^q\,dx \int_0^{t_w}t^{s-q-1}\,dt\notag\\
 &\le C(C_0, \alpha_1, s, q) w^{s-q}m^{-q} \int_R|\psi|^q\,dx.
 \label{eq:low-tail-integral}
\end{align}
For $t\ge t_w$, \eqref{eq:relative-tail-decay} gives
\begin{align}
 s\int_{t_w}^{\infty}t^{s-1}|E_R(t)|\,dt
 &\le
 \frac{C(C_0, s)}{w^qm^q}
 \int_R|\psi|^q\,dx \int_0^\infty t^{s-1}(1+t)^{-c_0/w}\,dt.
 \label{eq:high-tail-start}
\end{align}
Choose $c_1\le c_0/4$ with $c_0$ given by Theorem~\ref{thm:relative-gurov}, and put 
$$ b:=\frac{c_0}{w}.$$
Then \eqref{eq:s-range-gurov} implies $b\ge 4 s$.  Since $s>q>1$, it follows that
\begin{equation}\label{eq:lower-b}
     b-s-1\ge\frac b2>0.
\end{equation}

Now in the last integral in \eqref{eq:high-tail-start}, applying the change of variables
$$
  t=\frac{\tau}{1-\tau},  \qquad
  1+t=\frac{1}{1-\tau},\qquad
 dt=(1-\tau)^{-2}\,d\tau,
$$
it follows that
\begin{align*}
 \int_0^\infty t^{s-1}(1+t)^{-b}\,dt=\int_0^1\tau^{s-1}(1-\tau)^{b-s-1}\,d\tau:=\mathcal B(s,b-s).
\end{align*}
The elementary inequality $1-t\le e^{-t}$ for $0\le t<1$ together with \eqref{eq:lower-b} gives
\begin{align*}
 \mathcal B(s,b-s)
 &\le\int_0^1 t^{s-1}e^{-(b-s-1)t}\,dt \le\int_0^\infty t^{s-1}e^{-(b-s-1)t}\,dt\\
 &=\Gamma(s)(b-s-1)^{-s} \le 2^s\Gamma(s)b^{-s} 
 \le C(c_0,s) w^s,
\end{align*}
where the Gamma function
$$\Gamma(s)=\int_0^\infty t^{s-1}e^{-t}\,dt=(b-s-1)^s\int_0^\infty t^{s-1}e^{-(b-s-1)t}\,dt $$
is finite since $s>0$.
Substitution in \eqref{eq:high-tail-start}, together with \eqref{eq:integral-R} and \eqref{eq:low-tail-integral}, yields
\begin{equation}\label{eq:ratio-power-bound}
 \int_R\mathcal F^s\,dx =  s\int_0^\infty t^{s-1}|E_R(t)|\,dt   \le C(c_0, C_0, s, q,\alpha_1)w^{s-q}m^{-q}  \int_R|\psi|^q\,dx.
\end{equation}
Finally, the upper bound in \eqref{eq:comparison-pointwise-size} gives
$$
 |\psi|^s=\mathcal F^s|\phi_R|^s  \le\alpha_2^sm^s\mathcal F^s.
$$
Multiplying \eqref{eq:ratio-power-bound} by $\alpha_2^sm^s$ proves \eqref{eq:power-self-improvement}.
\end{proof}

We next show that the gradients of M\"obius transformations are comparison fields. 
\begin{lemma}\label{lem:mobius-comparison-family}
Fix $1<q<\infty$.  For a cube $R$, let $\mathscr S_q(R)$ be the family of fields $D\psi|_R$, where $\psi$ is a M\"obius transformation having no pole in $\frac32R$.  Then $\mathscr S_q$ is an admissible $L^q$-comparison family.  Its constants depend only on $n$ and $q$.
\end{lemma}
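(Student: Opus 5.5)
The plan is to check the three properties of \Cref{def:comparison-family} one at a time. Throughout, $\phi=D\psi|_R$ with $\psi$ a M\"obius transformation having no pole in $\frac32R$, and $d=n^2$ with the Frobenius norm.

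\textbf{(S1).} Let $R'\subset R$ be a subcube. The cube $\frac32R'$ extends beyond $R'$ by $\ell(R')/4\le\ell(R)/4$ on each side, and $\frac32R$ extends beyond $R$ by exactly $\ell(R)/4$. Hence $\frac32R'\subset\frac32R$, so $\psi$ has no pole in $\frac32R'$ and $D\psi|_{R'}\in\mathscr S_q(R')$.

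\textbf{(S2).} Apply \Cref{lem:mobius-nested-derivatives} with $\kappa=\frac32$. It gives $\|D\psi(x)\|_{\mathrm{op}}\simeq\avnorm{D\psi}{q}{R}$ on $R$. The bound $\|A\|_{\mathrm{op}}\le|A|\le\sqrt n\|A\|_{\mathrm{op}}$ converts this into \eqref{eq:comparison-pointwise-size} with constants depending only on $n,q$. This also shows the fields are nonzero.

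\textbf{(S3): reduction.} This is the real content, since $D\psi_1-D\psi_2$ is not itself a M\"obius derivative. The inequality is invariant under the affine normalization $S_R$ and under composing both maps on the left with a common similarity, which multiplies both sides by the same factor. So I take $R=\Qzero$ and argue by contradiction. Suppose there are pairs $(\psi_j,\chi_j)$ with
$$
\sup_{\Qzero}|D\psi_j-D\chi_j|>j\,\avnorm{D\psi_j-D\chi_j}{q}{\Qzero}.
$$
Order each pair so that $\avnorm{D\psi_j}{q}{\Qzero}\ge\avnorm{D\chi_j}{q}{\Qzero}$, and normalize by a common similarity so that $\avnorm{D\psi_j}{q}{\Qzero}=1$ and $\psi_j(0)=0$.

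\textbf{(S3): compactness.} By \eqref{eq:mobius-explicit-form}, the poles lie outside $\frac32\Qzero$, at distance at least $\frac14$ from $\Qzero$. Then \eqref{eq:mobius-mean-comparability} and the analogous bounds $|D^k\psi|\le C_k\lambda|x-\mathbf a|^{-k-1}$ give uniform $C^k(\overline{\Qzero})$ bounds for the normalized $D\psi_j$ and $D\chi_j$. I then pass to a subsequence along which $(\psi_j,\chi_j)\to(\psi,\chi)$ in $\Mob(n)$, with all derivatives converging on $\overline{\Qzero}$. Two cases arise.
\begin{itemize}
\item If $\varepsilon_j:=\sup_{\Qzero}|D\psi_j-D\chi_j|$ stays bounded below, the limit satisfies $\avnorm{D\psi-D\chi}{q}{\Qzero}=0$ but $\sup_{\Qzero}|D\psi-D\chi|>0$. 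This contradicts continuity.
\item If $\varepsilon_j\to0$, then $D\psi=D\chi$. Hence $\chi=\psi+\text{const}$, and the constant vanishes because M\"obius maps agreeing up to translation on an open set with equal derivative differ by a translation, which we absorb by the normalization. Here I use the finite-dimensional Lie group structure. In a chart $\Psi$ near $\psi$ as in the proof of \Cref{lem:moment-modulation}, write $\chi_j=\Psi(b_j)$ and $\psi_j=\Psi(a_j)$. Then
$$
D\psi_j-D\chi_j=L(a_j-b_j)+o(|a_j-b_j|)
$$
in $C^0(\overline{\Qzero})$, where $L:v\mapsto D_x\partial_a\Psi(\cdot)v$ is linear from $\R^N$ into $C(\overline{\Qzero};\R^{n\times n})$.
\end{itemize}

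\textbf{(S3): the expected obstacle.} The main difficulty is that $L$ fails to be injective: its kernel is exactly the translation directions, since $DX=0$ forces $X$ to be constant among fields in $\Kconf$. I handle this by quotienting the chart by translations, that is, by fixing $\psi_j(0)=\chi_j(0)=0$ via left translation, which does not change derivatives. On the quotient $L$ is injective with finite-dimensional range. On that range, the sup norm and the $L^q_{\mathrm{av}}$ norm are equivalent, so the $o(|a_j-b_j|)$ error is absorbed and the ratio stays bounded, a contradiction. This yields $\alpha_3=\alpha_3(n,q)$. Combined with (S1) and (S2), this completes the verification.
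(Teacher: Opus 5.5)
Your proposal is correct. For (S1) and (S2) it matches the paper's proof; the paper uses the exact identity $|L|=\sqrt n\,\|L\|_{\mathrm{op}}$ for conformal linear $L$ instead of the two-sided Frobenius bound, which changes nothing.

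For (S3) you take a genuinely different route. The paper does not argue by compactness. It invokes Reshetnyak's difference estimate \cite[Chapter~4, Lemma~2.5]{R1994} for the nested pair $R\subset\subset\frac32R$. That estimate bounds $|D\psi_1(x)-D\psi_2(x)|$ pointwise on $R$ by $C(n)$ times the $L^1$-mean of $|D\psi_1-D\psi_2|$ over $R$, and H\"older's inequality finishes.

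You instead prove such an estimate from scratch, in three steps:
\begin{enumerate}
\item normalize by similarities and by separate left translations;
\item use compactness of the normalized family in the finite-dimensional group $\Mob(n)$;
\item linearize, observing that the only degeneracy of $v\mapsto D_x\partial_a\Psi\,v$ is the translation directions (since $DX\equiv0$ on an open set forces $X\in\Kconf$ to be constant), and remove it by imposing $\psi_j(0)=\chi_j(0)=0$.
\end{enumerate}
The paper's route takes two lines and even gives the $L^1$ version with a constant depending only on $n$, at the price of an external citation. Yours is self-contained and uses only the Lie-group structure already exploited in \Cref{lem:moment-modulation}. The cost is a non-explicit constant; the argument also works verbatim for $q=1$.

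A few routine points should be stated rather than assumed.
\begin{itemize}
\item In a contradiction sequence, $\varepsilon_j\le2\alpha_2$, so $\avnorm{D\psi_j-D\chi_j}{q}{\Qzero}<2\alpha_2/j$. Hence $\avnorm{D\chi_j}{q}{\Qzero}\to1$ and $\chi_j$ cannot degenerate. This is what justifies passing to a limit of $\chi_j$ in $\Mob(n)$, after translating $\chi_j$ separately.
\item Precompactness in $\Mob(n)$ must allow the pole to escape to $\infty$, in which case the limit is a similarity and the explicit form \eqref{eq:mobius-explicit-form} is not enough. It follows, for example, from boundedness of the Lorentz matrices via the finite-frame argument of \Cref{prop:finite-frame-matrix-control}. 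The limit's pole still lies outside the open cube $\frac32\Qzero$.
\item Unless your chart is adapted to the slice $\{\phi:\phi(0)=0\}$, the differences $a_j-b_j$ lie in its tangent space at $\psi$ (a complement of $\ker L$) only up to an $o(|a_j-b_j|)$ error. This is harmless, but it is what actually makes the ``quotient'' step work.
\end{itemize}
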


\begin{proof}
We verify the three conditions in \Cref{def:comparison-family}.

Let $R'\subset R$ be a cube and let $D\psi|_R\in\mathscr S_q(R)$.  Note that the inclusion $R'\subset R$ gives
$$
 |x_{R'}-x_R|_\infty
 \le\frac{\ell(R)-\ell(R')}{2},
$$
where $|\cdot|_\infty$ denotes the $\ell^\infty$-norm. 
Hence, for $x\in\frac32R'$, the triangle inequality yields
\begin{align*}
 |x-x_R|_\infty
 &\le |x-x_{R'}|_\infty+|x_{R'}-x_R|_\infty\\
 &<\frac 3 4\ell(R')+\frac{\ell(R)-\ell(R')}{2}\\
 &=\frac 1 2\ell(R)+\frac14\ell(R')
 \le\frac 3 4\ell(R).
\end{align*}
Therefore $\frac3 2R'\subset\frac 3 2 R$.  As the pole of $\psi$ is outside $\frac3 2 R$, it is also outside $\frac 3 2 R'$. Thus, $D\psi|_{R'}\in\mathscr S_q(R')$.  This proves Property \textup{(S1)} of \Cref{def:comparison-family}.

Applying \Cref{lem:mobius-nested-derivatives} with $\kappa=\frac32$,  we conclude that, for every $x\in R$,
$$
 C(n)^{-1}\avnorm{D\psi}{q}{R}  \le \|D\psi(x)\|_{\mathrm{op}} \le C(n) \avnorm{D\psi}{q}{R}.
$$
For any conformal linear map $L:\R^n\to\R^n$,
$$
 |L|=\sqrt n\,\|L\|_{\mathrm{op}}.
$$
Since the derivative of a M\"obius transformation is conformal at every finite point, multiplication by $\sqrt n$ converts the preceding estimate into
$$
 \alpha_1\avnorm{D\psi}{q}{R}
 \le |D\psi(x)|
 \le \alpha_2\avnorm{D\psi}{q}{R},
$$
where $\alpha_1,\alpha_2$ depend only on $n$ and $q$.  This is Property \textup{(S2)} of \Cref{def:comparison-family}.

Let $D\psi_1|_R,D\psi_2|_R\in\mathscr S_q(R)$. Applied to the nested cubes $R\subset\subset \frac32R$, Reshetnyak's difference estimate \cite[Chapter~4, Lemma~2.5]{R1994} gives
$$
 |D\psi_1(x)-D\psi_2(x)|
 \le \frac{C(n)}{|R|}\int_R|D\psi_1-D\psi_2|\,dy
 \qquad \text{for any} \ x\in R.
$$
H\"older's inequality yields
$$
 \frac1{|R|}\int_R|D\psi_1-D\psi_2|\,dy
 \le
 \left(\fint_R|D\psi_1-D\psi_2|^q\,dy\right)^{1/q}.
$$
Consequently
$$
 |D\psi_1(x)-D\psi_2(x)|
 \le C(n,q)\avnorm{D\psi_1-D\psi_2}{q}{R},
$$
which is Property  \textup{(S3)} of \Cref{def:comparison-family}.
\end{proof}

\begin{proposition}\label{prop:specialized-bso-gain}
Fix $p>n$.  There exist
$$
\epsilon=\epsilon(n,p)\in(0,1),
\qquad
\delta_{\mathrm{gain}}=\delta_{\mathrm{gain}}(n,p)>0,
$$
and a nondecreasing function
$$
\nu:[0,\delta_{\mathrm{gain}}]\to[0,\infty),
\qquad
\lim_{t\to 0}\nu(t)=0,
$$
such that the following holds.  Under the assumptions of \Cref{prop:preliminary-gauge}, if $0<\delta\le\delta_{\mathrm{gain}}$ and $u=g-\operatorname{id}$, then
\begin{equation}\label{eq:specialized-gain}
\int_Q|Du|^{p+\epsilon}\,dx
\le
\nu(\delta)
\int_Q|Du|^p\,dx.
\end{equation}
In particular, both integrals in \eqref{eq:specialized-gain} are finite.
\end{proposition}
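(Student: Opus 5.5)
The plan is to apply \Cref{thm:relative-gurov} and \Cref{cor:relative-bso-power} to $F:=Df$ on $U:=2Q$. For comparison fields we use the admissible family $\mathscr S_q$ of \Cref{lem:mobius-comparison-family} with $q:=p$. The result is then transferred from $Df-D\varphi$ to $Du=Dg-\Id$ through the two estimates of \Cref{lem:mobius-comparison}.

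The first and main step is to show that $Df$ has relative $L^p$-specific oscillation at most $w=w(\delta)$, with $w(\delta)\to0$ as $\delta\to0$, on every dyadic subcube $S\subset R_0:=Q$. By the last clause of \Cref{thm:relative-gurov}, only these cubes matter. Since $2S\subset 2Q\subset\subset\Omega$, \Cref{prop:preliminary-gauge} applies on $S$ and produces a M\"obius map $\varphi_S$ with no pole in $\frac32S$, together with $g_S=\varphi_S^{-1}\circ f$, $\Pi_{\zeta_0,S}(g_S-\operatorname{id})=0$, $\|g_S-\operatorname{id}\|_{L^\infty(2S)}\le \ell(S)\omega_0(\delta)$ and $\avnorm{Dg_S-\Id}{n}{S}\le\omega_1(\delta)$. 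Take $\phi_S:=D\varphi_S|_S\in\mathscr S_p(S)$. By \eqref{eq:comparison-reverse}, once $\omega_0(\delta)\le a_0$,
\[
\avnorm{Df-D\varphi_S}{p}{S}\le C\avnorm{D\varphi_S}{p}{S}\,\avnorm{Dg_S-\Id}{p}{S}.
\]
So it remains to make $\avnorm{Dg_S-\Id}{p}{S}$ small, and here is the obstacle: \Cref{prop:preliminary-gauge} controls this quantity only in $L^n$, while $p>n$.

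To pass from $L^n$ to $L^p$, I would run the self-improvement argument first at exponent $q=n$. This is exactly the same scheme with $w_n:=C\omega_1(\delta)$, since \eqref{eq:comparison-reverse} holds for every $1<q<\infty$. \Cref{cor:relative-bso-power} with $q=n$ and $s=p$ is admissible once $p\le c_1/w_n$, which holds for small $\delta$. It gives, on each $S$,
\[
\avnorm{Df-D\varphi_S}{p}{S}\le C\,w_n^{1-n/p}\avnorm{D\varphi_S}{n}{S}^{1-n/p}\avnorm{Df-D\varphi_S}{n}{S}^{n/p}\le C w_n\avnorm{D\varphi_S}{p}{S}.
\]
The last inequality uses that $L^n$- and $L^p$-means of $D\varphi_S$ are comparable by \eqref{eq:mobius-mean-comparability}. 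Hence $Df$ has relative $L^p$-specific oscillation at most $w:=Cw_n\to0$ on the dyadic subcubes of $Q$. Alternatively, one can simply take $w_n$ as the oscillation bound and work directly with $q=n$.

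Then apply \Cref{cor:relative-bso-power} on $R=Q$ with $q=p$ and $s=p+\epsilon$. Here $\epsilon\in(0,1)$ is fixed, and $\delta_{\rm gain}$ is small enough that $w\le w_0$ and $p+\epsilon\le c_1/w$. With $\phi_Q=D\varphi$, where $\varphi$ is the normalizer from the statement, this gives
\[
\int_Q|Df-D\varphi|^{p+\epsilon}\le C w^{\epsilon}\avnorm{D\varphi}{p}{Q}^{\epsilon}\int_Q|Df-D\varphi|^p.
\]
To finish, use the pointwise identity \eqref{eq:comparison-chain-rule} in reverse: $Dg-\Id=D\varphi(g)^{-1}(Df-D\varphi)+(D\varphi(g)^{-1}D\varphi-\Id)$. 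The first term is bounded by $\avnorm{D\varphi}{p}{Q}^{-1}|Df-D\varphi|$ via \eqref{eq:Dphi-g-comparable}. The second is bounded by $C|u|/\ell(Q)\le C\omega_0(\delta)$ via \eqref{eq:mobius-D-lipschitz}, and in $L^{p+\epsilon}$ it is controlled through Morrey/inner Poincar\'e (\Cref{lem:inner-poincare}, \Cref{lem:morrey-cube}). Indeed, $\|u\|_{L^\infty(Q)}\le C\ell^{1-n/p}\|Du\|_{L^p(Q)}$, so its $(p+\epsilon)$-th power integral is at most $C\avnorm{Du}{p}{Q}^{\epsilon}\int_Q|Du|^p\le C\omega_0(\delta)^\epsilon\int_Q|Du|^p$. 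Combining this with \eqref{eq:comparison-reverse} yields \eqref{eq:specialized-gain} with $\nu(\delta)=C\bigl(w(\delta)^\epsilon+\omega_0(\delta)^\epsilon\bigr)$, made nondecreasing by taking a supremum. Finiteness of both sides follows because every step bounds the $L^{p+\epsilon}$ norm by finite $L^n$ data.
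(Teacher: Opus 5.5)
Your argument is correct. Its first stage is the paper's: gauges $\varphi_S$ from \Cref{prop:preliminary-gauge} on the dyadic subcubes of $Q$ (with $\varphi_Q:=\varphi$), then \eqref{eq:comparison-reverse} at exponent $n$, then \Cref{cor:relative-bso-power} with $(q,s)=(n,p)$ to reach relative $L^p$-oscillation, then again with $(q,s)=(p,p+\epsilon)$. The routes differ only in transferring the bound from $Df-D\varphi$ back to $Du$.

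The paper uses the forward estimate \eqref{eq:comparison-forward}, which it quotes from Reshetnyak's comparison lemma rather than proving. It applies it at exponent $p$ to know $Du\in L^p(Q)$ and at $p+\epsilon$ for the gain, together with \eqref{eq:comparison-reverse} at $p$ and monotonicity of normalized means. This yields the clean modulus $\nu=Cw_p(\delta)^\epsilon$, which depends only on the oscillation parameter.

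You instead invert the chain-rule identity pointwise, so you never need \eqref{eq:comparison-forward}. The price is the additive term $C|u|/\ell(Q)$. Your observation that at the higher exponent this term carries its own smallness factor is what makes it work. You get $\nu=C(w^\epsilon+\omega_0^\epsilon)$, and the proof rests only on \Cref{lem:mobius-nested-derivatives} and Poincar\'e-type inequalities, which makes it more self-contained.

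Three small repairs are needed.
\begin{enumerate}
\item The bound $\|D\varphi(g(x))^{-1}\|_{\mathrm{op}}\le C\avnorm{D\varphi}{p}{Q}^{-1}$ needs the lower half of \eqref{eq:mobius-mean-comparability}, applied on $\frac54Q$ and then compared with $Q$. The estimate \eqref{eq:Dphi-g-comparable} you cite is only an upper bound.
\item In the lower-order term, the step $\avnorm{Du}{p}{Q}^\epsilon\le C\omega_0(\delta)^\epsilon$ is not supported by the gauge, which controls $u$ in $L^\infty$, not $Du$ in $L^p$. Use instead $\int_Q(|u|/\ell)^{p+\epsilon}\le\omega_0(\delta)^\epsilon\int_Q(|u|/\ell)^p\le C\omega_0(\delta)^\epsilon\int_Q|Du|^p$. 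The last inequality is \Cref{lem:inner-poincare}, which applies because $\fint_{\zeta_0Q}u\,dx=0$.
\item Drop the remark that one could simply work with $q=n$ throughout. With $q=n$, \Cref{cor:relative-bso-power} bounds $\int_Q|Df-D\varphi|^{p+\epsilon}$ by $\int_Q|Df-D\varphi|^n$. Hölder's inequality cannot convert that into $\nu(\delta)\int_Q|Du|^p$ when $Du$ is small. The relative estimate needed for absorption genuinely requires the second application at $q=p$.
\end{enumerate}
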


\begin{proof}
 Let $R\subset Q$ be any cube with sides parallel to those of $Q$.  Then
\begin{equation}\label{eq:double-subcube-inclusion}
2R\subset2Q.
\end{equation}
Indeed, if $x_R$ and $x_Q$ are the centers of $R$ and $Q$, the inclusion $R\subset Q$ gives
$$
|x_R-x_Q|_\infty
\le\frac{\ell(Q)-\ell(R)}2.
$$
For $x\in2R$,
$$
|x-x_Q|_\infty
\le |x-x_R|_\infty+|x_R-x_Q|_\infty
<\ell(R)+\frac{\ell(Q)-\ell(R)}2
\le\ell(Q),
$$
which is precisely \eqref{eq:double-subcube-inclusion}.

We construct the comparison map on each subcube, without extending $f$ beyond   $2Q$.  If $R=Q$, use the M\"obius map $\varphi_Q:=\varphi$ already supplied by \Cref{prop:preliminary-gauge}.  If $R\subsetneq Q$, then $\ell(R)<\ell(Q)$ and the preceding coordinate estimate is strict; hence
$$
 \overline{2R}\subset 2Q.
$$
Apply \Cref{prop:preliminary-gauge} to the restriction $f|_{2R}$, taking the open cube $2Q$ as the ambient domain.  In both cases we obtain a M\"obius transformation $\varphi_R$, finite on $\frac32R$, and a normalized map
$$
g_R:=\varphi_R^{-1}\circ f
$$
such that
$$
\left(\fint_R|Dg_R-\Id|^n\,dx\right)^{1/n}
\le\omega_1(\delta).
$$
Up to decreasing $\delta_{\mathrm{gain}}$, the uniform estimate in \Cref{prop:preliminary-gauge} is smaller than the constant $a_0$ in \Cref{lem:mobius-comparison}.  The reverse estimate \eqref{eq:comparison-reverse} with exponent $n$ gives
\begin{align}
\int_R|Df-D\varphi_R|^n\,dx
&\le C(n, \zeta_0)^n \avnorm{D\varphi_R}{n}{R}^n
\int_R|Dg_R-\Id|^n\,dx\notag\\
&\le \bigl(C(n, \zeta_0)\omega_1(\delta)\bigr)^n \int_R|D\varphi_R|^n\,dx.
\label{eq:relative-BSO}
\end{align}

We emphasize that, in the sequel, whenever we invoke \Cref{cor:relative-bso-power}, and consequently \Cref{thm:relative-gurov}, we rely only on the dyadic formulation of the latter with $R_0 = Q$. Hence, it suffices to construct comparison fields on $Q$ and on its dyadic subcubes; note that all such cubes have sides parallel to those of $Q$.

Set
$w(\delta):=C(n, \zeta_0)\omega_1(\delta).$
Then $w(\delta)\to0$ as $\delta\to 0$, and \eqref{eq:relative-BSO} holds uniformly on every subcube $R\subset Q$ whose sides are parallel to those of $Q$.  If $w(\delta)=0$, then $\omega_1(\delta)=0$ and \eqref{eq:preliminary-Ln} gives $Du=0$ almost everywhere on $Q$; both sides of \eqref{eq:specialized-gain} then vanish. Therefore, we may assume for the remainder of the proof that $w(\delta)>0$.

For each exponent $q\in(1,\infty)$, use the admissible family $\mathscr S_q$ supplied by \Cref{lem:mobius-comparison-family}.
Apply \Cref{cor:relative-bso-power} first with $q=n$ and $s=p.$ Since $w(\delta)\to0$, up to decreasing $\delta_{\mathrm{gain}}$ we have
$$
 p\le\frac{c_1(n,p)}{w(\delta)}.
$$
For the comparison map $\varphi=\varphi_Q$ on the cube $Q$, \eqref{eq:power-self-improvement} gives
\begin{equation}
 \int_Q|Df-D\varphi|^p\,dx \le  C_{\rm self}w(\delta)^{p-n}
 \avnorm{D\varphi}{n}{Q}^{p-n} \int_Q|Df-D\varphi|^n\,dx,
\end{equation}
where $C_{\rm self}$ depends only on $p$ and $n$.
This, together with the relative $L^n$ estimate \eqref{eq:relative-BSO} on $Q$ and H\"older's inequality,  implies
\begin{align*}
 \int_Q|Df-D\varphi|^p\,dx
 &\le C(n,p,\zeta_0)w(\delta)^p  \avnorm{D\varphi}{n}{Q}^p|Q|\\
 &\le C(n,p,\zeta_0)w(\delta)^p \avnorm{D\varphi}{p}{Q}^p|Q|\\
 &=C(n,p,\zeta_0)w(\delta)^p \int_Q|D\varphi|^p\,dx.
\end{align*}
Repeating this argument on every cube $R\subset Q$, with its own comparison map $\varphi_R$, shows that $Df$ has relative $L^p$ specific oscillation at most
\begin{equation}
 w_p(\delta):=C(n,p,\zeta_0)^{1/p}w(\delta)
\end{equation}
with respect to the admissible family $\mathscr S_p$.  In particular, for the comparison map $\varphi=\varphi_Q$,
$$
 Df-D\varphi\in L^p(Q).
$$
The forward comparison estimate \eqref{eq:comparison-forward}, at exponent $p$, then gives $Du\in L^p(Q).$

Choose  $\epsilon\in (0,1)$.  Decrease $\delta_{\mathrm{gain}}$ again so that
$$
 p+\epsilon \le\frac{c_1(n,p,p+\epsilon)}{w_p(\delta)}\quad \text{ and } \quad 
w_p(\delta)\le w_0=w_0(n,p,\alpha_1,\alpha_2,\alpha_3).
$$
Applying \Cref{cor:relative-bso-power} a second time, now with
$$
 q=p,  \qquad s=p+\epsilon,
$$
yields
\begin{equation}\label{eq:Lp-to-Lpe-error}
 \int_Q|Df-D\varphi|^{p+\epsilon}\,dx
 \le C'_{\rm self} w_p(\delta)^\epsilon
 \avnorm{D\varphi}{p}{Q}^\epsilon
 \int_Q|Df-D\varphi|^p\,dx,
\end{equation}
where $C'_{\rm self}$ depends on $n,p,\epsilon$.
This proves, in particular, that $Df-D\varphi\in L^{p+\epsilon}(Q)$.

Apply \eqref{eq:comparison-forward} with exponent $p+\epsilon$ and \eqref{eq:comparison-reverse} with exponent $p$.  We obtain
$$
 \|Du\|_{L^{p+\epsilon}(Q)}
 \le C(n,p, \epsilon,\zeta_0) \avnorm{D\varphi}{p+\epsilon}{Q}^{-1} \|Df-D\varphi\|_{L^{p+\epsilon}(Q)},
$$
and
$$
 \|Df-D\varphi\|_{L^p(Q)}
 \le C(n,p, \epsilon,\zeta_0) \avnorm{D\varphi}{p}{Q}
 \|Du\|_{L^p(Q)}.
$$
Substitution in \eqref{eq:Lp-to-Lpe-error} gives
\begin{align*}
 \|Du\|_{L^{p+\epsilon}(Q)}^{p+\epsilon}
 &\le C(n,p,\epsilon,\zeta_0) w_p(\delta)^\epsilon
 \left( \frac{\avnorm{D\varphi}{p}{Q}} {\avnorm{D\varphi}{p+\epsilon}{Q}} \right)^{p+\epsilon} \|Du\|_{L^p(Q)}^p.
\end{align*}
As normalized integral means are nondecreasing in their exponent, 
$$
 \avnorm{D\varphi}{p}{Q} \le \avnorm{D\varphi}{p+\epsilon}{Q}.
$$
Consequently,
$$
 \|Du\|_{L^{p+\epsilon}(Q)}^{p+\epsilon}
 \le C(n,p, \epsilon,\zeta_0) w_p(\delta)^\epsilon
 \|Du\|_{L^p(Q)}^p.
$$
Thus, the proposition follows with
$$
 \nu(\delta):=C(n,p, \epsilon,\zeta_0)w_p(\delta)^\epsilon.
$$
\end{proof}

\subsection{Some algebraic estimates on matrices}

For $F\in GL^+(n)$, define its outer distortion by
\begin{equation}\label{eq:outer-distortion-matrix}
K_O(F):=\frac{\|F\|_{\mathrm{op}}^n}{\det F}.
\end{equation}

\begin{lemma}\label{lem:singular-value-gap}
Let $F\in GL^+(n)$, and let
$$
0<s_1\le\cdots\le s_n=\lambda
$$
be the singular values of $F$.  Then
\begin{equation}\label{eq:matrix-log}
\sum_{i=1}^n(\lambda-s_i) \le\lambda\log K_O(F) \le\lambda\bigl(K_O(F)-1\bigr).
\end{equation}
\end{lemma}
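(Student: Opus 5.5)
The plan is to rewrite everything in terms of the singular values and then apply two elementary scalar inequalities. Since $\|F\|_{\mathrm{op}}=\lambda=s_n$ and $\det F=\prod_{i=1}^n s_i$ (because $\det F>0$), we have
$$
K_O(F)=\prod_{i=1}^n\frac{\lambda}{s_i},\qquad
\log K_O(F)=\sum_{i=1}^n\log\frac{\lambda}{s_i}.
$$
Each ratio satisfies $\lambda/s_i\ge1$, so every summand is nonnegative and $K_O(F)\ge1$.

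For the first inequality, we compare each term $\lambda-s_i$ with $\lambda\log(\lambda/s_i)$. Put $x_i:=s_i/\lambda\in(0,1]$. Then
$$
\lambda-s_i=\lambda(1-x_i),\qquad \lambda\log\frac{\lambda}{s_i}=-\lambda\log x_i .
$$
It therefore suffices to show $1-x\le-\log x$ for $x\in(0,1]$. This is the standard inequality $\log x\le x-1$, which follows from the concavity of $\log$: its graph lies below the tangent line at $x=1$. Multiplying by $\lambda>0$ and summing over $i$ gives $\sum_i(\lambda-s_i)\le\lambda\log K_O(F)$.

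For the second inequality, apply the same bound $\log y\le y-1$ with $y=K_O(F)\ge1$ to get $\log K_O(F)\le K_O(F)-1$, and multiply by $\lambda$.

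The argument involves no real obstacle. The only point requiring care is the first step: the operator norm must equal the largest singular value, and the determinant must equal the product of the singular values rather than merely its absolute value. The latter uses $F\in GL^+(n)$.
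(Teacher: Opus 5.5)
Your proposal is correct and follows essentially the same route as the paper: apply $1-t\le-\log t$ with $t=s_i/\lambda$, sum using $\prod_i s_i=\det F$ (valid since $\det F>0$), and finish with $\log t\le t-1$. Your remark that $\|F\|_{\mathrm{op}}=s_n$ and that $\det F$, not merely $|\det F|$, equals the product of the singular values is exactly the detail the paper uses implicitly.
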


\begin{proof}
For $0<t\le1$, the inequality $1-t\le-\log t$ gives
$$
\lambda-s_i=\lambda\left(1-\frac{s_i}{\lambda}\right)
\le\lambda\log\frac{\lambda}{s_i}.
$$
Summing in $i$ and using $\prod_i s_i=\det F$,
$$
\sum_{i=1}^n(\lambda-s_i)
\le\lambda\log\frac{\lambda^n}{\prod_i s_i}
=\lambda\log K_O(F).
$$
The second inequality follows from $\log t\le t-1$ for $t>0$.
\end{proof}

Define
\begin{equation}\label{eq:defn-b-matrix}
b(t):=\frac{t^2}{1+t} \qquad \text{ for }\  t\ge 0.
\end{equation}
Then
$$
b'(t)=1-\frac1{(1+t)^2},\qquad 0\le b'(t)<1.
$$
In particular, $b$ is $1$-Lipschitz.
The following proposition is a counterpart of \cite[Lemma 2.1]{FZ2022}. 

\begin{proposition}\label{prop:saturated-strain}
Let $A\in\R^{n\times n}$,  $F=\Id+A$, and  $\delta\ge0$.
Assume either
$$
F\in GL^+(n),\qquad K_O(F)\le1+\delta,
$$
or $F=0$. Then 
\begin{equation}\label{eq:saturated-strain}
|\dev(\sym A)| \le C(n)\left[ \delta(1+|A|)+\frac{|A|^2}{1+|A|}\right].
\end{equation}
\end{proposition}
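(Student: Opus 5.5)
The plan is to split into the regimes $|A|\ge c_n$ and $|A|\le c_n$ for a small dimensional constant $c_n$. The case $F=0$ is trivial: then $A=-\Id$, so $\dev(\sym A)=0$.

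\emph{Large $|A|$.} If $|A|\ge c_n$, then $\frac{|A|^2}{1+|A|}\ge \frac{c_n}{1+c_n}|A|$. Since $\dev\circ\sym$ is a contraction in the Frobenius norm, we have $|\dev(\sym A)|\le|A|\le C(n)\frac{|A|^2}{1+|A|}$, and there is nothing more to prove in this regime.

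\emph{Small $|A|$: reduction to polar form.} Now assume $|A|\le c_n$, so $F\in GL^+(n)$ with $K_O(F)\le 1+\delta$. Write the polar decomposition $F=RU$ with $R\in SO(n)$ and $U=\sqrt{F^TF}$ symmetric positive definite. The eigenvalues of $U$ are the singular values $s_1\le\dots\le s_n=\lambda$. By \Cref{lem:singular-value-gap}, $\sum_i(\lambda-s_i)\le\lambda\delta$, hence
\[
|\dev U|\le C(n)\lambda\delta,
\]
because $\dev U$ has eigenvalues $s_i-\bar s$ and $|s_i-\bar s|\le\lambda-s_1\le\lambda\delta$. Also $\lambda\le\|F\|_{\mathrm{op}}\le 1+|A|$. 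So $U=\mu\Id+E$ with $\mu=\tr U/n$ and $|E|\le C\delta(1+|A|)$.

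\emph{Small $|A|$: linearization.} It remains to compare $\sym A$ with $\sym(RU-\Id)$ to second order. Write $R=\Id+W$. Then
\[
A=RU-\Id=(\mu-1)\Id+\mu W+RE.
\]
Taking $\dev\sym$ removes the multiple of the identity, giving
\[
\dev\sym A=\mu\,\dev\sym W+\dev\sym(RE).
\]
The second term has norm at most $|E|\le C\delta(1+|A|)$. For the first term, use that $R$ is orthogonal: $R^TR=\Id$ gives $W+W^T=-W^TW$, so $|\sym W|\le\frac12|W|^2$. It therefore suffices to show $|W|\le C(|A|+\delta)$ when $|A|\le c_n$.

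\emph{Main obstacle: controlling the rotation.} The hardest step is this control of $W$ (equivalently $\mu-1$) by $|A|$. Comparing the two expressions for $F$ gives $\|U-\Id\|_{\mathrm{op}}\le\|F^TF-\Id\|_{\mathrm{op}}\le C|A|$ for small $A$. Then $W=(\Id+A)U^{-1}-\Id$ satisfies $|W|\le C|A|$ once $U$ is near $\Id$. This yields
\[
|\dev\sym W|\le C|A|^2\le C\frac{|A|^2}{1+|A|}\quad\text{since }|A|\le c_n,
\]
and $\mu\le1+C|A|$ is bounded. Combining the terms gives \eqref{eq:saturated-strain} with a constant depending only on $n$.
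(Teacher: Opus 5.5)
Your proof is correct, but it is organized differently from the paper's.

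\textbf{Your route.} You split at a threshold $|A|\sim c_n$.
\begin{itemize}
\item For $|A|\ge c_n$ the bound needs no hypothesis at all, since $|\dev(\sym A)|\le|A|\le\frac{1+c_n}{c_n}\frac{|A|^2}{1+|A|}$.
\item For $|A|\le c_n$ you linearize the polar decomposition. You center $U$ at its mean singular value and control the rotation $W=R-\Id$ directly by $|A|$. This uses $\|U-\Id\|_{\mathrm{op}}\le\|F^TF-\Id\|_{\mathrm{op}}$, then $W=(\Id+A)U^{-1}-\Id$, then $\sym W=-\tfrac12W^TW$.
\end{itemize}

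\textbf{The paper's route.} The paper runs a single argument with no threshold on $|A|$.
\begin{itemize}
\item It centers $U$ at the top singular value $\lambda$ and writes $A=RE+B$ with $B=\lambda R-\Id$.
\item It uses the exact identity $|B|^2=n(\lambda-1)^2+\lambda|R-\Id|^2$, together with a case analysis ($|B|\le1$; $|B|>1$ with $\lambda<2$ or $\lambda\ge2$), to get $|\dev(\sym B)|\lesssim\frac{|B|^2}{1+|B|}$.
\item It then transfers this to $\frac{|A|^2}{1+|A|}$ via the $1$-Lipschitz property of $t\mapsto t^2/(1+t)$ and $|A-B|=|E|$.
\end{itemize}

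\textbf{Comparison.} Your argument is more elementary. It makes transparent that the saturated remainder is designed so the distortion hypothesis is only needed near the identity. There, the rotation deviation is controlled by $|A|$ alone, independently of $\delta$. The paper's argument avoids perturbative inverse estimates for $U$ and treats all scales uniformly, at the cost of the case analysis in $B$ and $\lambda$. Both proofs use \Cref{lem:singular-value-gap} in the same way to produce the $\delta(1+|A|)$ term.
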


\begin{proof}
Assume first that $F\in GL^+(n)$.  Let
$$
F=RU, \qquad R\in SO(n), \qquad U=(F^TF)^{1/2},
$$
be the (right) polar decomposition of $F$; see \cite[Chapter 8]{H2008}.  Let
$$
0<s_1\le\cdots\le s_n=\lambda
$$
be the eigenvalues of $U$, and define
$$
E:=U-\lambda\Id,
\qquad
B:=\lambda R-\Id.
$$
Then
\begin{equation}\label{eq:A-split}
A=F-\Id=R(U-\lambda\Id)+(\lambda R-\Id)=RE+B.
\end{equation}
Since the eigenvalues of $E$ are $s_i-\lambda\le 0$,
\begin{equation}\label{eq:E-small}
|E|=\left(\sum_{i=1}^n(\lambda-s_i)^2\right)^{1/2}
\le\sum_{i=1}^n(\lambda-s_i)\le\lambda\delta
\end{equation}
by \eqref{eq:matrix-log}. 

Additionally, for $R\in SO(n)$,
\begin{equation}\label{eq:matrix-R-I}
(R-\Id)^T(R-\Id) =2\Id-R-R^T =2(\Id-\sym R).
\end{equation}
Therefore, by noting that
$$\dev(\sym R)=\dev(\sym R-\Id),$$
we obtain from the contraction property $|\dev M|\le|M|$ and \eqref{eq:matrix-R-I} that
\begin{equation}\label{eq:dev-rotation}
|\dev(\sym R)|=|\dev(\sym R-\Id)|
\le|\sym R-\Id| \le\frac12|R-\Id|^2.
\end{equation}
Moreover,
\begin{equation}\label{eq:B-exact}
|B|^2 =|\lambda R-\Id|^2 =\lambda^2|R|^2+|\Id|^2-2\lambda\tr R =n(\lambda-1)^2+\lambda|R-\Id|^2.
\end{equation}
We claim that
\begin{equation}\label{eq:rotation-saturated}
\lambda|R-\Id|^2 \le C_n b(|B|),
\end{equation}
where $b$ is defined in \eqref{eq:defn-b-matrix}. 
Indeed, if $|B|\le1$, then $b(|B|)\ge|B|^2/2$, and \eqref{eq:B-exact} gives
$$
\lambda|R-\Id|^2\le n(\lambda-1)^2+\lambda|R-\Id|^2
=|B|^2 \le2b(|B|).
$$
Assume $|B|>1$.  Then $b(|B|)\ge|B|/2$, and we have two subcases: If $\lambda<2$, then $|R-\Id|\le2\sqrt n$, and hence
$$
\lambda|R-\Id|^2\le 8n \le 8n|B| \le16n b(|B|).
$$
If $\lambda\ge2$, then \eqref{eq:B-exact} gives
$$
|B|\ge\sqrt n(\lambda-1) \ge\frac{\sqrt n}{2}\lambda,
$$
and consequently,
$$
\lambda|R-\Id|^2 \le4n\lambda \le 8\sqrt n|B| \le 16\sqrt n b(|B|).
$$
This proves \eqref{eq:rotation-saturated}.

Since $\dev(\sym B)=\lambda\dev(\sym R)$, \eqref{eq:dev-rotation} and \eqref{eq:rotation-saturated} imply
\begin{equation}\label{eq:dev-B}
|\dev(\sym B)|
\le C_n b(|B|).
\end{equation}
Then by the $1$-Lipschitz property of $b$ and \eqref{eq:A-split} with $R\in SO(n)$,
\begin{equation}\label{eq:b-transport}
b(|B|)
\le b(|A|)+\bigl||B|-|A|\bigr|
\le b(|A|)+|A-B|
=b(|A|)+|E|.
\end{equation}
Using the contraction property $|\dev M|\le|M|$ together with $R\in SO(n)$  again, the triangle inequality, \eqref{eq:A-split}, \eqref{eq:dev-B}, and \eqref{eq:b-transport} give
\begin{align}\label{eq:dev-sym-A} 
|\dev(\sym A)| &\le|\dev(\sym B)|+|\dev(\sym(RE))|\notag\\
&\le C(n) b(|B|)+|RE|\notag\\ &\le C(n) b(|A|)+C(n)|E|=C(n) \frac{|A|^2}{1+|A|} + C(n) \delta\lambda,
\end{align}
where we recall \eqref{eq:E-small}. Now since
$$
\lambda=\|F\|_{\mathrm{op}} \le\|\Id\|_{\mathrm{op}}+\|A\|_{\mathrm{op}} \le C(n)(1+|A|),
$$
then \eqref{eq:dev-sym-A} directly yields \eqref{eq:saturated-strain}.

If $F=0$, then $A=-\Id$ and $\dev(\sym A)=\dev(-\Id)=0$.  The last claim follows. 
\end{proof}

\begin{lemma} \label{lem:saturated-power}
Let $p\ge1$ and $0<\epsilon\le p$.  Then
\begin{equation}
\left(\frac{t^2}{1+t}\right)^p
\le t^{p+\epsilon}
\qquad\text{ for any} \ t\ge 0.
\end{equation}
\end{lemma}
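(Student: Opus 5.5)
The plan is a direct two-case comparison, splitting at $t=1$, after disposing of $t=0$. At $t=0$ both sides vanish, since $p\ge1$ and $p+\epsilon>0$. For $t>0$ both sides are positive, so the claim is equivalent to
$$
\frac{t^{2p}}{(1+t)^p}\le t^{p+\epsilon}
\quad\Longleftrightarrow\quad
t^{p-\epsilon}\le(1+t)^p,
$$
obtained by dividing through by $t^{p+\epsilon}$.

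\emph{Case $0<t\le1$.} The exponent satisfies $p-\epsilon\ge0$ because $\epsilon\le p$. Hence $t^{p-\epsilon}\le1$, with the convention $t^0=1$. On the other hand $(1+t)^p\ge1$, which gives the inequality.

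\emph{Case $t\ge1$.} Here it is cleaner to bound the left-hand side of the original inequality directly. Since $1+t\ge t$, we have $\bigl(t^2/(1+t)\bigr)^p\le t^{2p}/t^p=t^p$. Because $t\ge1$ and $\epsilon>0$, also $t^p\le t^{p+\epsilon}$, which closes this case.

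There is no genuine obstacle here. The only point needing attention is that the hypothesis $\epsilon\le p$ is exactly what makes the exponent $p-\epsilon$ nonnegative in the small-$t$ regime. The hypothesis $p\ge1$ is not used beyond positivity of the exponents; it is only relevant to how the lemma is later combined with \Cref{prop:saturated-strain}, where it replaces the small/large gradient splitting.
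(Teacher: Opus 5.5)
Your proposal is correct and follows essentially the same route as the paper, which splits at $t=1$. For $t\le1$ the paper uses $\bigl(t^2/(1+t)\bigr)^p\le t^{2p}\le t^{p+\epsilon}$ since $2p\ge p+\epsilon$, which is your rescaled inequality $t^{p-\epsilon}\le1\le(1+t)^p$, and for $t\ge1$ it uses $\bigl(t^2/(1+t)\bigr)^p\le t^p\le t^{p+\epsilon}$ exactly as you do.
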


\begin{proof}
If $0\le t\le1$, then since $2p\ge p+\epsilon$, we have
$$
\left(\frac{t^2}{1+t}\right)^p
\le t^{2p} \le t^{p+\epsilon}.
$$ 
If $t\ge 1$, then
$$
\left(\frac{t^2}{1+t}\right)^p
\le t^p \le t^{p+\epsilon}.
$$
Thus the lemma follows. 
\end{proof}

\subsection{Proof of the local stability theorem}
We first need an auxiliary lemma expressing the conformal invariance of outer distortion. 
\begin{lemma}\label{lem:outer-distortion-invariant}
Let $A\in\GL^+(n)$ and let $L=\lambda O$, where $\lambda>0$ and $O\in SO(n)$.  Then
$$
 K_O(LA)=K_O(A).
$$
\end{lemma}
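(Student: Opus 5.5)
The plan is to compute the numerator and the denominator of $K_O(LA)$ separately and observe that the factors of $\lambda$ cancel. First I would check that $LA\in\GL^+(n)$, so that $K_O(LA)$ is defined. Since $\det O=1$, multiplicativity of the determinant gives
$$
\det(LA)=\det(\lambda O)\det A=\lambda^n\det A>0 .
$$

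Next I would treat the operator norm. Because $O$ is orthogonal, $|O\eta|=|\eta|$ for every $\eta\in\R^n$. Hence, for every unit vector $\xi$,
$$
|LA\xi|=\lambda|OA\xi|=\lambda|A\xi| .
$$
Taking the supremum over $|\xi|=1$ yields $\|LA\|_{\mathrm{op}}=\lambda\|A\|_{\mathrm{op}}$.

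Combining the two computations, I would conclude
$$
K_O(LA)=\frac{\lambda^n\|A\|_{\mathrm{op}}^n}{\lambda^n\det A}=K_O(A).
$$
No step here is a real obstacle. The only point requiring care is the use of $O\in SO(n)$ rather than merely $O(n)$: it guarantees that the determinant stays positive, so $K_O(LA)$ makes sense within the convention fixed in \eqref{eq:outer-distortion-matrix}.
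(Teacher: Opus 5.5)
Your proposal is correct and follows the paper's own proof: both compute $\|LA\|_{\mathrm{op}}=\lambda\|A\|_{\mathrm{op}}$ and $\det(LA)=\lambda^n\det A$, and observe that the quotient defining $K_O$ is unchanged. Your explicit check that $LA\in\GL^+(n)$, so that $K_O(LA)$ is defined, is a small addition that the paper leaves implicit.
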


\begin{proof}
Since $A\in\GL^+(n)$ and $O\in SO(n)$,
$$
 \|LA\|_{\mathrm{op}}=\lambda\|A\|_{\mathrm{op}},
 \qquad
 \det(LA)=\lambda^n\det A.
$$
The quotient in \eqref{eq:outer-distortion-matrix} is unchanged.
\end{proof}

Now we are ready to show \Cref{thm:local-main}.
\begin{proof}[Proof of \Cref{thm:local-main}]
Let $Q=Q(x_0,r)$, let $f:2Q\to\R^n$ satisfy the assumptions of \Cref{thm:local-main}, and set $\delta=K-1$.  Choose $\delta_{\mathrm{loc}}(n,p)$ smaller than both $\delta_{\mathrm{pre}}(n)$ and $\delta_{\mathrm{gain}}(n,p)$.  Let $\varphi$ and
$$
g=\varphi^{-1}\circ f=\operatorname{id}+u
$$
be supplied by \Cref{prop:preliminary-gauge}, and let
$\zeta_0=\zeta_0(n)\in(0,1)$ be the number given there. Then
$\Pi_{\zeta_0,Q}u=0$, and \Cref{prop:specialized-bso-gain} gives an $\epsilon\in(0,1)$ and a modulus $\nu(\delta)\to0$ such that
\begin{equation}\label{eq:gain-for-local-proof}
\int_Q|Du|^{p+\epsilon}\,dx
\le\nu(\delta)\int_Q|Du|^p\,dx.
\end{equation}

We next derive a pointwise estimate for $\Edev u$.  The derivative of a sense-preserving M\"obius transformation at every finite point is a positive scalar multiple of a rotation.  Hence \Cref{lem:outer-distortion-invariant} applies to composition with $\varphi^{-1}$, and at almost every $x\in Q$, either $Df(x)=0$, in which case $Dg(x)=0$, or, by the Sobolev chain rule for $g=\varphi^{-1}\circ f$ with the invariance of $K_O$ under multiplication by a positive scalar times a rotation,
$$
Dg(x)\in GL^+(n), \qquad K_O(Dg(x))\le K=1+\delta.
$$
If $Dg(x)=0$, then $Du(x)=-\Id$ and $\Edev u(x)=0$.  In the second case, we apply \Cref{prop:saturated-strain} with $A=Du(x)$.  Thus, for almost every $x\in Q$,
\begin{equation}\label{eq:pointwise-local-strain}
|\Edev u(x)|\le C_n\left[ \delta(1+|Du(x)|)+\frac{|Du(x)|^2}{1+|Du(x)|} \right].
\end{equation}

Now the projected trace-free Korn inequality, i.e. \Cref{thm:projected-korn}, implies
$$
\|Du\|_{L^p(Q)}\le C\|\Edev u\|_{L^p(Q)}.
$$
Employing \eqref{eq:pointwise-local-strain}, we further obtain that
\begin{equation}\label{eq:pre-absorption}
\|Du\|_{L^p(Q)}
\le C\delta |Q|^{1/p} +C\delta\|Du\|_{L^p(Q)}
+C\left\|\frac{|Du|^2}{1+|Du|}\right\|_{L^p(Q)}.
\end{equation}

Moreover, by \Cref{lem:saturated-power} and \eqref{eq:gain-for-local-proof},
$$
\left\|\frac{|Du|^2}{1+|Du|}\right\|_{L^p(Q)}^p
\le\int_Q|Du|^{p+\epsilon}\,dx \le\nu(\delta)\int_Q|Du|^p\,dx.
$$
Hence, 
\begin{equation}\label{eq:remainder-absorbed}
\left\|\frac{|Du|^2}{1+|Du|}\right\|_{L^p(Q)} \le\nu(\delta)^{1/p}\|Du\|_{L^p(Q)}.
\end{equation}
Substituting \eqref{eq:remainder-absorbed} into \eqref{eq:pre-absorption},
\begin{equation}\label{eq:Du-LpQ}
\|Du\|_{L^p(Q)}\le C\delta |Q|^{1/p} +C\bigl(\delta+\nu(\delta)^{1/p}\bigr)
\|Du\|_{L^p(Q)}.
\end{equation}
Since $\nu(\delta)\to0$, we may decrease $\delta_{\mathrm{loc}}(n,p)$ so that, for any $0<\delta\le\delta_{\mathrm{loc}}$, 
$$
C\bigl(\delta+\nu(\delta)^{1/p}\bigr)
\le\frac 1 2.
$$
Thus it follows from both $g-\operatorname{id}=u$ and \eqref{eq:Du-LpQ} that
\begin{equation}\label{eq:local-derivative-final}
\left(\fint_Q|Dg-\Id|^p\,dx\right)^{1/p}
=\left(\fint_Q|Du|^p\,dx\right)^{1/p}
\le C(n,p)\delta.
\end{equation}

Since the constant vector fields belong to $\Kconf$, the equality $\Pi_{\zeta_0,Q}u=0$ implies
$$
\fint_{  \zeta_0Q}u\,dx=0.
$$
Then   \Cref{lem:inner-poincare} and \eqref{eq:local-derivative-final} give
\begin{equation}\label{eq:local-Lp-final}
\frac1 r\left(\fint_Q|g-\operatorname{id}|^p\,dx\right)^{1/p}
\le C(n,p)\delta.
\end{equation}

It remains to prove the uniform estimate.  Let
$$
u_Q:=\fint_Q u\,dx.
$$
The Morrey estimate in \Cref{lem:morrey-cube} gives
\begin{equation}\label{eq:local-morrey-step}
\|u-u_Q\|_{L^\infty(Q)}
\le C(n,p)r^{1-n/p}\|Du\|_{L^p(Q)}.
\end{equation}
Moreover, since the average of $u$ over $\zeta_0 Q   $ vanishes,
\begin{align*}
|u_Q|
&=\left|\fint_{ \zeta_0 Q}(u_Q-u(x))\,dx\right|\\
&\le |\zeta_0 Q |^{-1/p}
\|u-u_Q\|_{L^p(\zeta_0 Q)}\\
&\le C(n,p)|Q|^{-1/p} \|u-u_Q\|_{L^p(Q)}\\
&\le C(n,p)r^{1-n/p}\|Du\|_{L^p(Q)},
\end{align*}
where the last line uses the Poincar\'e inequality in \Cref{lem:cube-poincare} on $Q$.  Together with \eqref{eq:local-morrey-step} and \eqref{eq:local-derivative-final}, as $u=g-\operatorname{id}$, this yields
\begin{equation}\label{eq:local-Linfty-final}
\frac1 r\|g-\operatorname{id}\|_{L^\infty(Q)}
\le C(n,p)\delta.
\end{equation}
Now combining \eqref{eq:local-derivative-final}, \eqref{eq:local-Lp-final}, and \eqref{eq:local-Linfty-final}, we obtain the desired estimate.  The pole-free property of $\varphi$ on $\frac 3 2Q$ follows from \Cref{prop:preliminary-gauge}.
\end{proof}

\section{Whitney decompositions and the Lorentz representation}\label{sec:whitney-edge}

We now construct the local geometric framework used in both globalization arguments.  Both the John-domain argument  and the  weighted bounded-domain theorem use  the same Whitney decomposition, rooted at a largest cube.

Recall that, for a cube $Q=Q(x_Q,\ell(Q))$ and $\lambda>0$, the notation $\lambda Q$ denotes the concentric open cube of sidelength $\lambda\ell(Q)$.

\subsection{The Whitney decomposition}

\begin{definition}[Whitney cubes]\label{def:deep-whitney}
Let $\Omega\subsetneq\R^n$ be a domain.  A dyadic cube $Q\in\mathscr D_\xi$ is called \emph{admissible} if
\begin{equation}\label{eq:deep-admissible}
64\overline Q\subset\Omega.
\end{equation}
The family of maximal  admissible cubes is denoted by
$$
\mathscr W=\mathscr W(\Omega,\mathscr D_\xi),
$$
where $\mathscr D_\xi$ is defined in \Cref{sec:notation}. 
Maximality is taken with respect to inclusion among cubes of the fixed dyadic grid.
\end{definition}

Now we record the following geometric properties of a Whitney decomposition, where the proof is recorded in the Appendix; see also \cite[Appendix J]{G2004}.

\begin{lemma}\label{lem:deep-whitney}
Let $\Omega\subsetneq\R^n$ be a (bounded) domain and let $\mathscr W$ be as in \Cref{def:deep-whitney}.  Then the following assertions hold.

\begin{enumerate}[label=\textup{(W\arabic*)},leftmargin=3em]
\item The interiors of distinct cubes in $\mathscr W$ are disjoint, the closures of the cubes cover $\Omega$, the family $\mathscr W$ is locally finite in $\Omega$, and
\begin{equation}\label{eq:doubled-whitney-cover}
\Omega=\bigcup_{Q\in\mathscr W}2Q.
\end{equation}

\item There are dimensional constants $c_W,C_W>0$ such that
\begin{equation}\label{eq:ordinary-whitney-distance}
c_W\ell(Q)
\le \dist(Q,\partial\Omega)
\le C_W\ell(Q)
\qquad \text{ for any } \ Q\in\mathscr W.
\end{equation}

\item Define adjacency for two distinct cubes $P,Q\in \mathscr W$ by
\begin{equation}\label{eq:whitney-adjacency}
P\sim Q
\quad\Longleftrightarrow\quad
\interior(2P)\cap\interior(2Q)\ne\varnothing.
\end{equation}
If $P\sim Q$, then
\begin{equation}\label{eq:adjacent-size-comparability}
\frac 1 4 \le \frac{\ell(P)}{\ell(Q)} \le4.
\end{equation}

\item The graph with vertex set $\mathscr W$, in which two distinct vertices $P,Q\in\mathscr W$ are joined by an edge whenever \eqref{eq:whitney-adjacency} holds, has degree bounded by a number
\begin{equation}\label{eq:defn-Dw}
N_W=N_W(n)<\infty.
\end{equation}

\item If $P\sim Q$ and
$$
s_{P,Q}:=\min\{\ell(P),\ell(Q)\},
$$
then there is a point $a_{P,Q}\in\interior(2P)\cap\interior(2Q)$ such that, with
\begin{equation}\label{eq:bridge-radius}
r_{P,Q}:=\frac12s_{P,Q},
\end{equation}
one has
\begin{equation}\label{eq:bridge-ball}
\overline{B(a_{P,Q},4r_{P,Q})}
\subset
8P\cap8Q.
\end{equation}
\end{enumerate}
\end{lemma}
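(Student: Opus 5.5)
The plan is to prove everything from one elementary two-sided distance estimate for maximal admissible cubes. Admissibility \eqref{eq:deep-admissible} forces $\dist(Q,\partial\Omega)\ge 31.5\,\ell(Q)$. Maximality says the dyadic parent $Q^+$ satisfies $64\overline{Q^+}\not\subset\Omega$; since $64Q^+\subset 192Q$ (the center moves by at most $\ell(Q)/2\cdot\sqrt n$ in each coordinate, so in fact $64Q^+\subset 130Q$), we get $\dist(Q,\partial\Omega)\le C(n)\ell(Q)$. This gives (W2) with, say, $c_W=31$ and $C_W=C(n)$.

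For (W1), disjointness of interiors follows because two dyadic cubes are nested or interior-disjoint, and a strictly smaller cube cannot be maximal. For covering, fix $x\in\Omega$ and look at the dyadic cubes whose closures contain $x$. Those with $\ell\le \dist(x,\partial\Omega)/(64\sqrt n)$ are admissible, and those with large sidelength are not, because $\Omega$ is bounded (or $\Omega\neq\R^n$). Take the largest admissible one; all its ancestors are non-admissible, since admissibility is inherited downward, so it lies in $\mathscr W$. Local finiteness follows from (W2): cubes meeting a compact $K\subset\Omega$ have sidelength bounded below by $\dist(K,\partial\Omega)/C$ and lie in a bounded set. Then \eqref{eq:doubled-whitney-cover} follows from $\overline Q\subset 2Q$ and $2Q\subset 64\overline Q\subset\Omega$.

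For (W3), suppose $\interior 2P\cap\interior 2Q\ni y$ with $\ell(P)\ge\ell(Q)$. Using (W2), $\dist(y,\partial\Omega)\le \dist(Q,\partial\Omega)+\sqrt n\,\ell(Q)\le (C_W+\sqrt n)\ell(Q)$, while $\dist(y,\partial\Omega)\ge \dist(P,\partial\Omega)-\tfrac{\sqrt n}{2}\ell(P)\ge(31.5-\sqrt n/2)\ell(P)$. This only gives a ratio $C(n)$, not 4, so the constant $C_W$ must be sharpened in the relevant regime. The cleaner route is direct: if $\ell(P)\ge 8\ell(Q)$, then the parent $Q^+$ satisfies $64\overline{Q^+}\subset 64\overline P$ roughly, because $64Q^+$ has half-side $64\ell(Q)$ around a point within distance $\ell(P)+2\ell(Q)$ of $x_P$ in $\ell^\infty$. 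Hence $64Q^+\subset x_P+[-(\ell(P)+66\ell(Q))\,,\,\cdot\,]^n$. With $\ell(Q)\le\ell(P)/8$ this half-side is at most $\ell(P)(1+66/8)<32\ell(P)$, so it lies in $64\overline P\subset\Omega$. Then $Q^+$ would be admissible, contradicting maximality. This yields the ratio bound $\le 4$ for dyadic sizes (ratio a power of 2, and $<8$). \textbf{This coordinate computation is the step I expect to need the most care}: the inclusion must be checked with the $\ell^\infty$ norm and the factor 64 chosen so that it closes.

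(W4) follows from (W3) by volume counting. Neighbors of $Q$ have sidelength in $[\ell(Q)/4,4\ell(Q)]$, disjoint interiors, and lie in $C\,Q$, so there are at most $C(n)$ of them. For (W5), let $s=\min\{\ell(P),\ell(Q)\}$, say $s=\ell(Q)$. The closed cubes $\overline P,\overline Q$ are interior-disjoint dyadic cubes whose doubles overlap, so their $\ell^\infty$ distance is less than $(\ell(P)+\ell(Q))/2$. Choose $a_{P,Q}$ on the segment between the centers, at $\ell^\infty$-distance less than $\ell(Q)$ from $x_Q$ and less than $\ell(P)$ from $x_P$; such a point lies in both doubled interiors. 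The ball of radius $4r=2s$ around it then lies within $\ell^\infty$-distance $3\ell(Q)<4\ell(Q)$ of $x_Q$, and similarly for $P$ since $s\le\ell(P)$. This gives \eqref{eq:bridge-ball}.
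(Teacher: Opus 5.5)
Your proposal is correct and takes essentially the same route as the paper. Admissibility and the dyadic parent give (W2); the inclusion $64\overline{Q^+}\subset 64\overline P$ contradicts maximality and gives (W3), where you use the threshold $8$ plus dyadic ratios and the paper uses $4$ directly; and (W4)--(W5) follow from the same volume count and $\ell^\infty$ triangle inequality. Two simplifications: delete the abandoned first attempt at (W3) via (W2), and in (W5) note that any point of $\interior(2P)\cap\interior(2Q)$ works as $a_{P,Q}$, so the segment construction is unnecessary.
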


\begin{corollary} \label{cor:max-cube}
If $\Omega$ is bounded, then $\mathscr W$ contains a cube of maximal sidelength.
\end{corollary}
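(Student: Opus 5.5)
The plan is to reduce the corollary to the dyadic structure of the grid and to (W2). The key point is that only finitely many sidelengths can occur in $\mathscr W$ at the top end. The family is bounded above in size, and the possible sidelengths are all of the form $2^{-k}$, so the supremum of $\ell(Q)$ over $Q\in\mathscr W$ is actually attained.

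\emph{Step 1: nonemptiness.} Since $\Omega$ is a nonempty open set, pick $x\in\Omega$ and a ball $B(x,r)\subset\Omega$. For $k$ large, any dyadic cube $Q\in\mathscr D_\xi$ of sidelength $2^{-k}$ whose closure contains $x$ satisfies $64\overline Q\subset B(x,r)$, because $64\overline Q$ has diameter $64\sqrt n\,2^{-k}$. Such a $Q$ is admissible. Alternatively, one can simply invoke (W1): the closures of the cubes of $\mathscr W$ cover $\Omega\neq\varnothing$, so $\mathscr W\neq\varnothing$.

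\emph{Step 2: an upper bound on sidelengths.} Let $Q\in\mathscr W$. Then $64\overline Q\subset\Omega$, so $64\,\ell(Q)\le\diam(64\overline Q)\le\diam\Omega<\infty$, using boundedness of $\Omega$. Hence
\[
\ell(Q)\le \diam\Omega/64 \qquad\text{for every } Q\in\mathscr W.
\]
The same conclusion also follows from (W2), since $\dist(Q,\partial\Omega)\ge c_W\ell(Q)$ and the distance to the boundary is bounded by $\diam\Omega$; either route works.

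\emph{Step 3: attainment.} Every $\ell(Q)$ with $Q\in\mathscr W$ lies in the discrete set $\{2^{-k}:k\in\mathbb Z\}$. By Steps 1 and 2, the set $\{\ell(Q):Q\in\mathscr W\}$ is a nonempty subset of $\{2^{-k}\}$ bounded above by $\diam\Omega/64$. In the discrete set $\{2^{-k}\}$, any nonempty subset bounded above has a maximum: the admissible exponents $k$ are bounded below, so some smallest $k$ occurs. Any $Q_\ast\in\mathscr W$ realizing this maximum is the desired cube.

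There is no real obstacle here. The only care needed is to note that boundedness is essential, since for unbounded $\Omega$ the sidelengths could be unbounded, and to use discreteness of the dyadic sidelengths rather than a compactness argument.
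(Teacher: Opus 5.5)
Your argument is correct and matches the paper's proof: the sidelengths in $\mathscr W$ are dyadic and bounded above, and a nonempty bounded-above subset of $\{2^k:k\in\mathbb Z\}$ has a largest element. You obtain the bound $\ell(Q)\le\diam\Omega/64$ directly from $64\overline Q\subset\Omega$ where the paper cites the Whitney lemma, and your check that $\mathscr W\neq\varnothing$ via (W1) fills in a point the paper leaves implicit.
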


\begin{proof}
According to Lemma~\ref{lem:deep-whitney}, the sidelengths of the cubes in $\mathscr W$ are dyadic and are bounded above by a number depending on $\diam(\Omega)$.  Then a nonempty subset of $\{2^k:k\in\mathbb Z\}$ that is bounded above has a largest element. This proves the corollary. 
\end{proof}

\subsection{Local estimates on Whitney cubes}
Fix $p>n$ and assume henceforth that
\begin{equation}\label{eq:small-dist}
0<\delta:=K-1\le\delta_{\mathrm{loc}}(n,p).
\end{equation} 
For every $Q\in\mathscr W$, the admissible property gives
$$
16Q\subset64Q\subset\Omega.
$$
The restriction of a nonconstant quasiregular map to $16Q$ is nonconstant by the openness theorem recalled in \Cref{sec:QC}.  We may therefore apply \Cref{thm:local-main} with the cube $8Q$.

\begin{proposition} \label{prop:whitney-local-gauges}
Under the assumption \eqref{eq:small-dist}, for every $Q\in\mathscr W$, there is a sense-preserving M\"obius transformation $\varphi_Q$, finite on $12Q$, such that
$$
g_Q:=\varphi_Q^{-1}\circ f
$$
is finite on $8Q$ and
\begin{equation}\label{eq:whitney-local-uniform}
\|g_Q-\operatorname{id}\|_{L^\infty(8Q)}
\le C_{\mathrm{loc}}'(n,p)\delta\ell(Q),
\end{equation}
\begin{equation}\label{eq:whitney-local-derivative}
\left(
\fint_{8Q}|Dg_Q-\Id|^p\,dx
\right)^{1/p}
\le C_{\mathrm{loc}}'(n,p)\delta.
\end{equation}
The same choice also satisfies the scaled $L^p$ estimate
\begin{equation}
\frac1{\ell(Q)}
\left(
\fint_{8Q}|g_Q-\operatorname{id}|^p\,dx
\right)^{1/p}
\le C_{\mathrm{loc}}'(n,p)\delta.
\end{equation}
\end{proposition}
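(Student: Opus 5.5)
The plan is to apply \Cref{thm:local-main} directly to the cube $Q':=8Q$ and then translate the conclusions. First I check the hypotheses. The cube $2Q'=16Q$ satisfies $16\overline Q\subset 64\overline Q\subset\Omega$, because $Q$ is admissible in the sense of \eqref{eq:deep-admissible}; since $\overline{16Q}$ is compact, this gives $2Q'\subset\subset\Omega$. The restriction $f|_{16Q}$ is a nonconstant sense-preserving $K$-quasiregular map by the openness–discreteness theorem recalled in \Cref{sec:QC}. Finally, \eqref{eq:small-dist} is exactly the smallness hypothesis of \Cref{thm:local-main}.

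\Cref{thm:local-main} then supplies a sense-preserving M\"obius transformation $\varphi_Q:=\varphi_{Q'}$ that is finite on $\frac32Q'=12Q$. Setting $g_Q:=\varphi_Q^{-1}\circ f$, we get $g_Q=\operatorname{id}+u_{Q'}$ on $Q'=8Q$, so in particular $g_Q$ is finite on $8Q$. With $r=\ell(Q')=8\ell(Q)$, the theorem also gives
$$
\Bigl(\fint_{8Q}|Dg_Q-\Id|^p\Bigr)^{1/p}+\frac1{8\ell(Q)}\Bigl(\fint_{8Q}|g_Q-\operatorname{id}|^p\Bigr)^{1/p}+\frac1{8\ell(Q)}\|g_Q-\operatorname{id}\|_{L^\infty(8Q)}\le C_{\mathrm{loc}}\delta.
$$

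Each of the three claimed estimates follows from this single inequality. Multiplying by $8$ gives the uniform bound \eqref{eq:whitney-local-uniform}, the derivative bound \eqref{eq:whitney-local-derivative}, and the scaled $L^p$ bound, all with $C_{\mathrm{loc}}':=8C_{\mathrm{loc}}(n,p)$.

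There is essentially no obstacle here. The only points needing care are bookkeeping: the dilation factors ($\frac32\cdot 8=12$ and $2\cdot 8=16$), and the compact containment of $16Q$ in $\Omega$. The latter is guaranteed by the margin built into \eqref{eq:deep-admissible}.
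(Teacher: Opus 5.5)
Your proposal is correct and follows the paper's own proof: apply \Cref{thm:local-main} to the cube $8Q$, whose double $16Q$ is compactly contained in $\Omega$ by the admissibility condition $64\overline Q\subset\Omega$ and whose $\frac32$-dilate is $12Q$. Then absorb the factor $8=\ell(8Q)/\ell(Q)$ into $C_{\mathrm{loc}}'(n,p)$.
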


\begin{proof}
Apply \Cref{thm:local-main} with $8Q$ in place of $Q$, whose double is $16Q\subset\subset\Omega$, and its $3/2$-dilate is $12Q$.  Since $\ell(8Q)=8\ell(Q)$, the factors of $8$ then can be absorbed into the constant $C_{\mathrm{loc}}'(n,p)$. This gives the proposition. 
\end{proof}

Now we study the estimates between neighboring cubes. Let us first record the elementary surjectivity statement used to pass from information about $g_Q$ to information about a transition M\"obius map on a target ball.

\begin{lemma} \label{lem:brouwer-image}
Let $a\in\R^n$, let $R>0$, and let
$$
h:\overline{B(a,R)}\to\R^n
$$
be continuous.  Assume that
$$
\sup_{x\in\overline{B(a,R)}}|h(x)-x|
\le\epsilon
\qquad \text{ for some }\ 0<\epsilon<R.
$$
Then
\begin{equation}\label{eq:brouwer-image-inclusion}
B(a,R-\epsilon)
\subset
h\bigl(\overline{B(a,R)}\bigr).
\end{equation}
\end{lemma}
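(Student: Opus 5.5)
The inclusion \eqref{eq:brouwer-image-inclusion} will follow from Brouwer's fixed point theorem applied to a suitably translated map. Fix a target point $y\in B(a,R-\epsilon)$; we must produce some $x\in\overline{B(a,R)}$ with $h(x)=y$. Rewrite this as a fixed-point equation by introducing the map
$$
T_y(x):=x-h(x)+y,\qquad x\in\overline{B(a,R)}.
$$
A point $x$ with $T_y(x)=x$ is precisely a solution of $h(x)=y$. So the task reduces to showing that $T_y$ sends the closed ball into itself.

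The map $T_y$ is continuous because $h$ is. For the self-map property, take $x\in\overline{B(a,R)}$. By the hypothesis $|h(x)-x|\le\epsilon$,
$$
|T_y(x)-a|\le|y-a|+|x-h(x)|<(R-\epsilon)+\epsilon=R.
$$
Hence $T_y$ maps $\overline{B(a,R)}$ into $B(a,R)\subset\overline{B(a,R)}$. The closed ball is homeomorphic to the closed unit ball, so Brouwer's theorem gives a fixed point $x_y$. Then $h(x_y)=y$, and since $y$ was arbitrary we obtain \eqref{eq:brouwer-image-inclusion}.

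There is no real obstacle here. The only points needing care are that the strict inequality $|y-a|<R-\epsilon$ keeps the image inside the ball, and that the supremum hypothesis holds on the closed ball, which is the compact convex set Brouwer requires. The assumption $\epsilon<R$ serves only to make $B(a,R-\epsilon)$ nonempty.
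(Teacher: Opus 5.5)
Your proof is correct and is essentially the same as the paper's. Both define the translated map $T_y(x)=y+x-h(x)$, use the triangle inequality to show that it sends $\overline{B(a,R)}$ into $B(a,R)$, and apply Brouwer's fixed-point theorem to obtain $x$ with $h(x)=y$.
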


\begin{proof}
Fix $y\in B(a,R-\epsilon)$ and define
$$
T_y(x):=y+x-h(x)
\qquad \text{ for } \ x\in\overline{B(a,R)}.
$$
For such $x$,
$$
|T_y(x)-a|
\le |y-a|+|x-h(x)|
<R-\epsilon+\epsilon
=R.
$$
Thus $T_y$ maps the closed ball continuously into itself. Then  Brouwer's fixed-point theorem gives $x\in\overline{B(a,R)}$ such that $T_y(x)=x$.  This equality is equivalent to $h(x)=y$, proving \eqref{eq:brouwer-image-inclusion}.
\end{proof}

\begin{proposition} \label{prop:bridge-synchronization}
Assume \eqref{eq:small-dist}. There are constants
$$
\delta_{\mathrm{br}}=\delta_{\mathrm{br}}(n,p)>0,
\qquad
C_{\mathrm{br}}=C_{\mathrm{br}}(n,p)<\infty
$$
with the following property.  Let $P,Q\in\mathscr W$ be adjacent, let $a=a_{P,Q}$ and $r=r_{P,Q}$ be as in (W5) of \Cref{lem:deep-whitney}, and assume
$$
0<\delta\le\delta_{\mathrm{br}}.
$$
Define the transition M\"obius transformation
\begin{equation}\label{eq:defn-ThetaPQ}
\Theta_{P,Q}:=\varphi_P^{-1}\circ\varphi_Q.
\end{equation}
Then $\Theta_{P,Q}$ has no pole in $B(a,3r)$ and
\begin{equation}\label{eq:transition-bridge-estimate}
\sup_{y\in B(a,3r)}
|\Theta_{P,Q}(y)-y|
\le C_{\mathrm{br}}\delta r.
\end{equation}
\end{proposition}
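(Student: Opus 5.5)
We compare the two normalized maps on the common bridge ball. By (W5) of \Cref{lem:deep-whitney}, $\overline{B(a,4r)}\subset 8P\cap 8Q$, so \Cref{prop:whitney-local-gauges} applies to both $g_P=\varphi_P^{-1}\circ f$ and $g_Q=\varphi_Q^{-1}\circ f$ on this ball. It gives
\[
|g_P-\operatorname{id}|\le C'_{\mathrm{loc}}\delta\ell(P),
\qquad
|g_Q-\operatorname{id}|\le C'_{\mathrm{loc}}\delta\ell(Q)
\quad\text{on } B(a,4r).
\]
Since $\ell(P),\ell(Q)\le 4s_{P,Q}=8r$ by \eqref{eq:adjacent-size-comparability}, both errors are at most $\epsilon:=8C'_{\mathrm{loc}}\delta r$. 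Wherever everything is finite we have the identity
\[
g_P=\varphi_P^{-1}\circ\varphi_Q\circ g_Q=\Theta_{P,Q}\circ g_Q .
\]

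The first step is to identify a large set on which $\Theta_{P,Q}$ is controlled. Apply \Cref{lem:brouwer-image} to $h=g_Q$, which is continuous and finite on $8Q$, on the closed ball $\overline{B(a,R)}$ with $R$ slightly less than $4r$. Once $\delta_{\mathrm{br}}$ is chosen so that $\epsilon<r/2$, this yields
\[
B(a,3.5r)\subset g_Q\bigl(\overline{B(a,R)}\bigr).
\]
Take $y\in B(a,3.5r)$ and write $y=g_Q(x)$ with $x\in\overline{B(a,R)}\subset 8P\cap 8Q$. Then
\[
\Theta_{P,Q}(y)=\varphi_P^{-1}(\varphi_Q(y))=\varphi_P^{-1}(f(x))=g_P(x).
\]
This value is finite because $g_P$ is finite on $8P$. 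Here $\varphi_Q(y)=f(x)$ holds because $g_Q(x)=y$ is finite; this is just the definition of $g_Q$, once we note that $\varphi_Q$ is a bijection of $\whR$. Consequently $\Theta_{P,Q}$ takes finite values on all of $B(a,3.5r)$, so it has no pole in $B(a,3r)$.

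The estimate then follows from the triangle inequality:
\[
|\Theta_{P,Q}(y)-y|=|g_P(x)-g_Q(x)|\le |g_P(x)-x|+|x-g_Q(x)|\le 2\epsilon=16C'_{\mathrm{loc}}\delta r .
\]
This gives \eqref{eq:transition-bridge-estimate} with $C_{\mathrm{br}}=16C'_{\mathrm{loc}}$, and we take $\delta_{\mathrm{br}}\le\delta_{\mathrm{loc}}$ small enough that $16C'_{\mathrm{loc}}\delta_{\mathrm{br}}<1/2$.

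The main point needing care is the composition bookkeeping on $\whR$: we must make sure that $\Theta_{P,Q}\circ g_Q$ really equals $g_P$ pointwise, and not merely almost everywhere. Both maps are continuous and finite on the ball, and each $\varphi$ is a homeomorphism of $\whR$, so the identity holds everywhere. The surjectivity supplied by Brouwer then transfers the bound from points of the form $g_Q(x)$ to every $y$ in the smaller ball. That transfer is exactly why \Cref{lem:brouwer-image} was recorded.
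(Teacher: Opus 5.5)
Your proposal is correct and follows essentially the same route as the paper. Both arguments bound $g_P$ and $g_Q$ uniformly on the bridge ball via \eqref{eq:adjacent-size-comparability}, apply \Cref{lem:brouwer-image} to $g_Q$ to write each $y$ as $g_Q(x)$, use the identity $g_P=\Theta_{P,Q}\circ g_Q$ in $\whR$ to rule out poles, and finish with the triangle inequality. Taking $R$ slightly below $4r$ to reach $B(a,3.5r)$ and making the constants explicit are only cosmetic differences.
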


\begin{proof}
According to \eqref{eq:bridge-ball}, $\overline{B(a,4r)}\subset8P\cap8 Q.$
Then the adjacent-size comparability and \eqref{eq:bridge-radius} give
$$
\ell(P)+\ell(Q)\le C(n)r.
$$
The uniform local estimates \eqref{eq:whitney-local-uniform} therefore imply
\begin{equation}\label{eq:g-local-uniform}
|g_P(x)-x|+|g_Q(x)-x|
\le\eta r
\qquad \text{ for any } \ x\in\overline{B(a,4r)},
\end{equation}
where $\eta:=C(n,p)\delta.$
We may decrease $\delta_{\mathrm{br}}$ so that $\eta\le 1$.

Applying \Cref{lem:brouwer-image} to $h=g_Q$, $R=4r$, and $\epsilon=\eta r$,  since $\eta\le 1$, we get
$$ B(a,3r)\subset B(a,(4-\eta)r)
\subset g_Q\bigl(\overline{B(a,4r)}\bigr).
$$
Fix $y\in B(a,3r)$. Then by applying \Cref{lem:brouwer-image} to $g_Q$,  there is $x\in\overline{B(a,4r)}$ such that
$$
g_Q(x)=y.
$$
On the common  region $8P\cap8Q$, the definitions of the local mappings together with \eqref{eq:defn-ThetaPQ} give,
first as an identity of maps into $\widehat{\R}^{\,n}$,
$$
g_P =\varphi_P^{-1}\circ f
= (\varphi_P^{-1}\circ\varphi_Q)\circ(\varphi_Q^{-1}\circ f)
= \Theta_{P,Q}\circ g_Q.
$$
Notice that, the identity
$$
g_P=\Theta_{P,Q}\circ g_Q
$$
should be  first understood as an identity with values in $\widehat{\mathbb R}^{\,n}$.
Then since both $g_Q(x)=y$ and $g_P(x)$ are finite, the point $y$ is not a pole of $\Theta_{P,Q}$, and hence the equality holds in $\mathbb R^n$ at $y$. 

Additionally, according to \eqref{eq:g-local-uniform}
\begin{align*}
|\Theta_{P,Q}(y)-y|
&=|g_P(x)-g_Q(x)|\\
&\le |g_P(x)-x|+|g_Q(x)-x| \le\eta r.
\end{align*}
This proves \eqref{eq:transition-bridge-estimate}, up to absorbing the multiplicative constant in $C_{\mathrm{br}}$.
\end{proof}

\begin{corollary} \label{cor:normalized-bridge-data}
Let $P,Q$ be adjacent and define
\begin{equation}
E_{P,Q}:=S_Q\circ\Theta_{P,Q}\circ S_Q^{-1},
\end{equation}
where $S_Q$ is  the affine normalization of $Q$ defined in \Cref{sec:notation}. 
Then there is a Euclidean ball $B(b_{P,Q},\rho_{P,Q})$
such that
\begin{equation}\label{eq:defn-rho}
|b_{P,Q}|\le\sqrt n, \qquad \frac 3 8 \le\rho_{P,Q}\le\frac 3 2,
\end{equation}
and
\begin{equation}\label{eq:normalized-edge-closeness}
\sup_{z\in B(b_{P,Q},\rho_{P,Q})} |E_{P,Q}(z)-z|\le C(n,p)\delta.
\end{equation}
In particular, the centers and radii of the  normalized bridge balls range over a compact family depending only on $n$.
\end{corollary}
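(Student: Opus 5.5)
The plan is to obtain the normalized ball as the image under $S_Q$ of the bridge ball $B(a,3r)$ from \Cref{prop:bridge-synchronization}. We set
$$
b_{P,Q}:=S_Q(a_{P,Q}),\qquad \rho_{P,Q}:=\frac{3r_{P,Q}}{\ell(Q)}.
$$
Since $S_Q$ is a translation followed by a dilation by $\ell(Q)^{-1}$, it maps $B(a,3r)$ exactly onto $B(b_{P,Q},\rho_{P,Q})$.

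Next we check the bounds in \eqref{eq:defn-rho}. By (W5) of \Cref{lem:deep-whitney}, $a_{P,Q}\in\interior(2Q)$, so $|a-x_Q|_\infty<\ell(Q)$. Hence $|b_{P,Q}|_\infty<1$, which gives $|b_{P,Q}|<\sqrt n$. For the radius, \eqref{eq:bridge-radius} gives $r=\frac12\min\{\ell(P),\ell(Q)\}$. By \eqref{eq:adjacent-size-comparability}, $\min\{\ell(P),\ell(Q)\}/\ell(Q)\in[\frac14,1]$. Therefore $\rho_{P,Q}=\frac32\cdot\min\{\ell(P),\ell(Q)\}/\ell(Q)\in[\frac38,\frac32]$.

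For \eqref{eq:normalized-edge-closeness}, take $z\in B(b_{P,Q},\rho_{P,Q})$ and put $y:=S_Q^{-1}(z)\in B(a,3r)$. \Cref{prop:bridge-synchronization} says $\Theta_{P,Q}$ has no pole there, so $E_{P,Q}(z)$ is finite. It equals $(\Theta_{P,Q}(y)-x_Q)/\ell(Q)$, and therefore
$$
|E_{P,Q}(z)-z|=\frac{|\Theta_{P,Q}(y)-y|}{\ell(Q)}\le \frac{C_{\mathrm{br}}\delta r}{\ell(Q)}\le \frac{C_{\mathrm{br}}}{2}\delta,
$$
using $r\le\ell(Q)/2$. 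This requires $\delta\le\delta_{\mathrm{br}}$, which is implicit in the use of \Cref{prop:bridge-synchronization}; the constant then depends only on $n,p$.

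The compactness claim follows directly: the pairs $(b_{P,Q},\rho_{P,Q})$ lie in $\overline{B(0,\sqrt n)}\times[\frac38,\frac32]$, which depends only on $n$. I do not expect a genuine obstacle. The only care needed is the scaling bookkeeping, namely that conjugation by $S_Q$ divides displacements by $\ell(Q)$ while preserving the pole-free property.
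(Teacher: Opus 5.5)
Your proposal is correct and follows essentially the same route as the paper: the same choice $b_{P,Q}=S_Q(a_{P,Q})$, $\rho_{P,Q}=3r_{P,Q}/\ell(Q)$, the same use of $a_{P,Q}\in\interior(2Q)$ and adjacent-size comparability for \eqref{eq:defn-rho}, and the same rescaling of \eqref{eq:transition-bridge-estimate} by $S_Q$. Your version also spells out the displacement identity $E_{P,Q}(z)-z=(\Theta_{P,Q}(y)-y)/\ell(Q)$ and the implicit restriction $\delta\le\delta_{\mathrm{br}}$, which the paper leaves tacit.
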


\begin{proof}
Take
$$
b_{P,Q}:=S_Q(a_{P,Q}), \qquad \rho_{P,Q}:=\frac{3r_{P,Q}}{\ell(Q)}.
$$
Since $a_{P,Q}\in2Q$, $|b_{P,Q}|_\infty<1,$
and hence $|b_{P,Q}|\le\sqrt n$.  Moreover,
$$
\frac 1 4 \le\frac{s_{P,Q}}{\ell(Q)}\le1
$$
by adjacent-size comparability.  Since $r_{P,Q}=s_{P,Q}/2$,
$$
\frac 3 8 \le \rho_{P,Q} \le\frac 3 2.
$$
Scaling \eqref{eq:transition-bridge-estimate} by $S_Q$ gives \eqref{eq:normalized-edge-closeness}.
\end{proof}

The next subsection converts \eqref{eq:normalized-edge-closeness} into quantitative $C^1$ control on arbitrary fixed enlargements and into a near-identity estimate for the Lorentz matrix of $E_{P,Q}$.  Those conclusions are finite-dimensional consequences of the M\"obius group structure; no affine approximation of a general M\"obius transformation is used.

\subsection{The Lorentz representation of the M\"obius group}\label{sec:lorentz}

We equip
$$
\R^{n+2}=\R^{n+1}\times\R
$$
with the Lorentz bilinear form
\begin{equation}\label{eq:lorentz-product}
\langle (z,t),(w,s)\rangle_{\mathrm L}
:=z\cdot w-ts.
\end{equation}
Let
$$
\mathsf J:=\operatorname{diag}(I_{n+1},-1).
$$
Then we have
$$
\langle X,Y\rangle_{\mathrm L}=X\cdot \mathsf JY
\qquad \text{ for any }\ X,Y\in\R^{n+2}.
$$
The future null cone (or future light cone) is
$$
\mathscr C_+ :=\bigl\{(z,t)\in\R^{n+1}\times\R: |z|=t>0\bigr\}.
$$
We write
$$
O^{\uparrow}(n+1,1)
:=\bigl\{M\in\GL(n+2,\R): M^T\mathsf JM=\mathsf J, \ M(\mathscr C_+)=\mathscr C_+ \bigr\}.
$$
The arrow indicates preservation of the future component of the null cone.  All matrix norms in this section are Euclidean operator norms.

For $\xi\in\Sn$, put
$$
v_\xi:=(\xi,1)\in\mathscr C_+.
$$
If $M\in O^{\uparrow}(n+1,1)$, then $Mv_\xi\in\mathscr C_+$.  Consequently, there are unique quantities
$$
M^\#(\xi)\in\Sn,
\qquad
\alpha_M(\xi)>0,
$$
such that
\begin{equation}\label{eq:projective-definition}
M v_\xi=\alpha_M(\xi)
\bigl(M^\#(\xi),1\bigr).
\end{equation}
The map $M^\#:\Sn\to\Sn$ is called the \emph{projective action} of $M$.

Indeed, one can write the projective action explicitly.  Decompose
\begin{equation}\label{eq:lorentz-block}
M=
\begin{pmatrix}
A&b\\
c^T&d
\end{pmatrix},
\end{equation}
where
$$
A\in\R^{(n+1)\times(n+1)}, \qquad b,c\in\R^{n+1}, \qquad d\in\R.
$$
Then \eqref{eq:projective-definition} indeed gives
\begin{equation}\label{eq:projective-formula}
M^\#(\xi)
=\frac{A\xi+b}{c\cdot\xi+d} \quad \text{ and } 
\quad \alpha_M(\xi)=c\cdot\xi+d  \ \text{ for } \, \xi\in\Sn. 
\end{equation}
The denominator is positive once we choose $M$ so that it preserves $\mathscr C_+$. Indeed, the next proposition gives this Lorentz representation of M\"obius transformations. 
\begin{proposition}\label{prop:lorentz-realization}
For every $T\in\Mob(n)$, there exists a unique matrix
$$
\mathcal R(T)\in O^{\uparrow}(n+1,1)
$$
such that
\begin{equation}\label{eq:hat-sharp}
\widehat T=(\mathcal R(T))^\#;
\end{equation}
recall \eqref{eq:hat-operation}. 
The map $T\mapsto\mathcal R(T)$ is an isomorphism of topological groups.  In particular,
\begin{equation}\label{eq:lorentz-homomorphism}
\mathcal R(T_1\circ T_2) = \mathcal R(T_1)\mathcal R(T_2)
\qquad \text{ for any } \ T_1,T_2\in\Mob(n).
\end{equation}
In particular,
$$
\mathcal R(\operatorname{id})=I_{n+2}, \qquad \mathcal R(T^{-1})=\mathcal R(T)^{-1}.
$$
\end{proposition}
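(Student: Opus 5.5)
We construct $\mathcal R$ on generators and verify the intertwining identity there. By definition, $\Mob(n)$ is generated by reflections in spheres and hyperplanes. Equivalently, it is generated by four families: translations $x\mapsto x+\mathbf b$, dilations $x\mapsto\lambda x$, orthogonal maps $x\mapsto Ox$ with $O\in O(n)$, and the inversion $\iota(x)=x/|x|^2$. Conjugating by $\sigma$ turns each into an explicit map of $\Sn$.

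For the orthogonal maps the lift is simply $\operatorname{diag}(O,1,1)$. For the inversion, $\widehat\iota(\xi',\xi_{n+1})=(\xi',-\xi_{n+1})$, so its lift is $\operatorname{diag}(I_n,-1,1)$. For the dilation, $\widehat{\lambda\cdot}$ is a boost in the $(\xi_{n+1},t)$-plane with rapidity $\log\lambda$. For the translation, the lift is the parabolic (null-rotation) matrix fixing the null vector $(\mathbf e_{n+1},1)$, which corresponds to $\infty$. In each case one checks directly that
\[
M^T\mathsf JM=\mathsf J,
\]
that $M$ preserves $\mathscr C_+$ (equivalently, its $(n+2,n+2)$ entry is positive), and that \eqref{eq:projective-formula} reproduces $\widehat T$. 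The standard computation here uses the identity $\langle v_{\sigma(x)},v_{\sigma(y)}\rangle_{\mathrm L}\propto|x-y|^2$.

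Next we check that $\#$ is a homomorphism from $O^\uparrow(n+1,1)$ into maps of $\Sn$. From \eqref{eq:projective-definition},
\[
M_1M_2v_\xi=\alpha_{M_2}(\xi)\,\alpha_{M_1}\bigl(M_2^\#\xi\bigr)\bigl(M_1^\#M_2^\#\xi,1\bigr),
\]
so $(M_1M_2)^\#=M_1^\#M_2^\#$. Consequently the subgroup $G\subset O^\uparrow(n+1,1)$ generated by the lifts maps onto $\{\widehat T\}$.

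The key step is injectivity of $\#$. Suppose $M^\#=\mathrm{id}$. Then $Mv_\xi=\alpha(\xi)v_\xi$ for every $\xi$. Since the $v_\xi$ span $\R^{n+2}$ and any $n+2$ of them in general position are linearly independent, linearity forces $\alpha$ to be a constant $c>0$. Then $M^T\mathsf JM=\mathsf J$ gives $c^2=1$, so $M=I_{n+2}$. Hence $\#$ is injective, and uniqueness in \eqref{eq:hat-sharp} follows. Setting $\mathcal R(T)$ to be the unique $M\in G$ with $M^\#=\widehat T$ gives a well-defined map, and \eqref{eq:lorentz-homomorphism} follows from injectivity together with the multiplicativity of $\#$. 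The formulas $\mathcal R(\mathrm{id})=I_{n+2}$ and $\mathcal R(T^{-1})=\mathcal R(T)^{-1}$ are then immediate.

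We expect surjectivity $G=O^\uparrow(n+1,1)$ to be the main obstacle. Our approach is an Iwasawa-type reduction. Given $M\in O^\uparrow(n+1,1)$, the vector $Me_{n+2}$ is future timelike, and a boost (a dilation conjugated by a rotation of $\Sn$) maps it back to $e_{n+2}$. The resulting matrix fixes $e_{n+2}$ and hence lies in $O(n+1)\times\{1\}$. Finally, $O(n+1)$ acting on $\Sn$ is generated by rotations fixing $\mathbf e_{n+1}$, which come from $O(n)$, together with stereographic conjugates of inversions and translations.

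Continuity of $\mathcal R$ and of $\mathcal R^{-1}$ follows from the explicit rational formula \eqref{eq:projective-formula}, taking the topology of uniform convergence on $\Sn$. Continuity of the inverse uses the fact that $M$ is recovered linearly from its values on $n+2$ fixed null vectors $v_{\xi_i}$, with the scalar factors $\alpha_M(\xi_i)$ determined by the Lorentz normalization.
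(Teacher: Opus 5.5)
Your argument is correct, but it takes a genuinely different route from the paper. The paper constructs nothing. It invokes the projective Lorentz representation theorem \cite[Corollary~3.7.4]{GMP2017} (equivalently Reshetnyak's), which attaches to each $T$ the pair $\{M,-M\}$ inducing $\widehat T$. It then picks the sign that preserves $\mathscr C_+$ and reads off the homomorphism property from uniqueness; surjectivity onto $O^{\uparrow}(n+1,1)$ and bicontinuity are imported wholesale from the citation.

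You instead build $\mathcal R$ from explicit lifts of the four generating families, prove multiplicativity of $\#$, and prove directly that $\#$ is injective on $O^{\uparrow}(n+1,1)$. That injectivity is exactly the fact hidden in the paper's assertion that the inducing class is precisely $\{\pm M\}$, and your proof makes the sign normalization transparent. Your continuity argument for $\mathcal R$ is the same finite-frame mechanism the paper later uses in \Cref{prop:finite-frame-matrix-control}. The citation buys the paper brevity; your route buys self-containedness, at the price of a surjectivity step that you should state more carefully.

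Two points need tightening.
\begin{enumerate}
\item The lift $\operatorname{diag}(I_n,-1,1)$ of $\iota$ commutes with $O(n)\times\{1\}$, so these two alone generate only $O(n)\times O(1)$. What you need is an inversion centred off the origin, e.g.\ in $\partial B(\mathbf e_1,\sqrt2)$. It acts on $\Sn$ as the orthogonal reflection exchanging $\mathbf e_1$ and $\mathbf e_{n+1}$. Together with the stabilizer $O(n)$ of $\mathbf e_{n+1}$, this yields a subgroup acting transitively on $\Sn$, hence all of $O(n+1)$. Only once $O(n+1)\times\{1\}\subset G$ is known do boosts in arbitrary directions lie in $G$, so that your decomposition $M=B^{-1}\operatorname{diag}(O',1)$ closes.
\item The ``general position'' remark in the injectivity step should be made concrete. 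Choose $n+2$ affinely independent points of $\Sn$, so that their vectors $v_\xi$ form a basis of $\R^{n+2}$. Then take one further point whose lift has all coordinates nonzero in that basis; this forces all the eigenvalues $\alpha(\xi_i)$ to coincide.
\end{enumerate}
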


\begin{proof}
We apply the projective Lorentz representation theorem in Gehring--Martin--Palka~\cite[Corollary~3.7.4]{GMP2017}, which is equivalent to  Reshetnyak~\cite[Chapter~2, Theorem~1.2]{R1994}.  That theorem associates with each M\"obius transformation $T$ a class of matrices, which is exactly the projective Lorentz class $[M]=\{M,-M\}$, inducing $\widehat T$.  Observe that, exactly one of $M$ and $-M$ maps the future cone $\mathscr C_+$ to itself, and we define $\mathcal R(T)$ to be that representative. This proves existence and uniqueness.  

The matrix product $\mathcal R(T_1)\mathcal R(T_2)$ preserves $\mathscr C_+$ and induces the sphere map
$$
\widehat T_1\circ\widehat T_2
= \widehat{T_1\circ T_2}.
$$
Uniqueness of the future-preserving representative gives \eqref{eq:lorentz-homomorphism}.  Continuity of $T\mapsto\mathcal R(T)$ and its inverse is also part of the cited projective representation theorem.
\end{proof}

\begin{lemma}\label{lem:matrix-norm-bound}
Let $M\in O^{\uparrow}(n+1,1)$.  Then
\begin{equation}\label{eq:inverse-norm-lorentz}
\|M^{-1}\|_{\mathrm{op}}=\|M\|_{\mathrm{op}},
\end{equation}
and, for every $\xi\in\Sn$,
\begin{equation}\label{eq:alpha-bounds}
\|M\|_{\mathrm{op}}^{-1} \le \alpha_M(\xi) \le \|M\|_{\mathrm{op}}.
\end{equation}
In particular, $\|M\|_{\mathrm{op}}\ge1$.
\end{lemma}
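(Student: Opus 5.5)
The plan is to derive everything from the defining relation $M^T\mathsf JM=\mathsf J$ together with elementary Lorentzian geometry of the null cone. For \eqref{eq:inverse-norm-lorentz}, the relation gives $M^{-1}=\mathsf JM^T\mathsf J$. Since $\mathsf J$ is orthogonal, $\|M^{-1}\|_{\mathrm{op}}=\|M^T\|_{\mathrm{op}}=\|M\|_{\mathrm{op}}$. The bound $\|M\|_{\mathrm{op}}\ge1$ then follows from $1=\|I\|_{\mathrm{op}}\le\|M\|_{\mathrm{op}}\|M^{-1}\|_{\mathrm{op}}=\|M\|_{\mathrm{op}}^2$. Alternatively it can be read off from \eqref{eq:alpha-bounds} once that is proved.

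For the upper bound in \eqref{eq:alpha-bounds}, I use \eqref{eq:projective-definition} and take Euclidean norms. Since $|v_\xi|=\sqrt2$ and $|(M^\#(\xi),1)|=\sqrt2$, we get
$$
\sqrt2\,\alpha_M(\xi)=|Mv_\xi|\le\|M\|_{\mathrm{op}}\sqrt2,
$$
which gives $\alpha_M(\xi)\le\|M\|_{\mathrm{op}}$ directly.

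For the lower bound, I apply the same reasoning to $M^{-1}$, which also lies in $O^{\uparrow}(n+1,1)$ because the group is closed under inversion and $M^{-1}(\mathscr C_+)=\mathscr C_+$. Put $\eta:=M^\#(\xi)$. Then $M^{-1}v_\eta=\alpha_M(\xi)^{-1}v_\xi$. Comparing with \eqref{eq:projective-definition} for $M^{-1}$ shows
$$
\alpha_{M^{-1}}(\eta)=\alpha_M(\xi)^{-1}.
$$
The upper bound applied to $M^{-1}$ gives $\alpha_M(\xi)^{-1}\le\|M^{-1}\|_{\mathrm{op}}=\|M\|_{\mathrm{op}}$, which is the lower bound.

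The only point needing care is justifying that $M^{-1}$ preserves $\mathscr C_+$ and the uniqueness of the representation \eqref{eq:projective-definition}. Uniqueness is immediate because the last coordinate of $Mv_\xi$ is positive on $\mathscr C_+$ and equals $\alpha_M(\xi)$. Preservation by $M^{-1}$ follows since $M$ maps $\mathscr C_+$ onto $\mathscr C_+$ by definition. Beyond this I expect no real obstacle; the argument is elementary.
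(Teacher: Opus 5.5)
Your proof is correct and follows the paper's argument essentially step for step: the identity $M^{-1}=\mathsf JM^T\mathsf J$ gives the norm equality, the computation $|Mv_\xi|=\sqrt2\,\alpha_M(\xi)$ gives the upper bound, and $1\le\|M\|_{\mathrm{op}}\|M^{-1}\|_{\mathrm{op}}=\|M\|_{\mathrm{op}}^2$ gives the final claim. The only cosmetic difference is in the lower bound: the paper writes directly $\sqrt2=|M^{-1}Mv_\xi|\le\|M^{-1}\|_{\mathrm{op}}\sqrt2\,\alpha_M(\xi)$, which is the same inequality as yours but avoids having to check that $M^{-1}\in O^{\uparrow}(n+1,1)$.
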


\begin{proof}
The Lorentz identity $M^T\mathsf JM=\mathsf J$ implies
$$
M^{-1}=\mathsf JM^T\mathsf J.
$$
Since $\mathsf J$ is Euclidean orthogonal,
$$
\|M^{-1}\|_{\mathrm{op}} = \|M^T\|_{\mathrm{op}}
= \|M\|_{\mathrm{op}},
$$
which proves \eqref{eq:inverse-norm-lorentz}.

Since $Mv_\xi$ lies in the future null cone, \eqref{eq:projective-definition} together with $\xi, M^\#(\xi)\in \mathbb S^n$ gives
$$
|Mv_\xi|_{\R^{n+2}} = \sqrt2\,\alpha_M(\xi),
\qquad |v_\xi|_{\R^{n+2}}=\sqrt2.
$$
Hence
$$
\sqrt2\,\alpha_M(\xi)
= |Mv_\xi|
\le \|M\|_{\mathrm{op}}|v_\xi|
= \sqrt2\,\|M\|_{\mathrm{op}},
$$
which gives the upper bound in \eqref{eq:alpha-bounds}.  Conversely,
$$
\sqrt2
=|v_\xi|
=|M^{-1}Mv_\xi|
\le
\|M^{-1}\|_{\mathrm{op}}\sqrt2\,\alpha_M(\xi),
$$
and then \eqref{eq:inverse-norm-lorentz} gives the lower bound.  Finally,
$$
1=\|I_{n+2}\|_{\mathrm{op}}
\le \|M\|_{\mathrm{op}}\|M^{-1}\|_{\mathrm{op}}
= \|M\|_{\mathrm{op}}^2,
$$
which yields the last inequality. 
\end{proof}

Set the open half-space
$$
\mathcal U_M:=\{\xi\in\R^{n+1}:c\cdot\xi+d>0\}.
$$
Then it follows from the lower bound in \eqref{eq:alpha-bounds} that $$\overline{\mathbb B}^{\,n+1}\subset\mathcal U_M.$$
We denote the canonical rational extension \eqref{eq:projective-formula} of $M$ by $\widetilde M^{\#}$, that is
\begin{equation}\label{eq:canonical-extension}
\widetilde M^{\#}(\xi) :=\frac{A\xi+b}{c\cdot\xi+d}
 \qquad \text{ for any} \ \xi\in\mathcal U_M,
\end{equation}
and $M^{\#}=\widetilde M^{\#}|_{\Sn}$.  Throughout this subsection, $DM^{\#}$ and $D^2M^{\#}$ mean the ambient derivatives of $\widetilde M^{\#}$ evaluated on $\Sn$.  Restricting $DM^{\#}$ to tangent vectors gives the differential of the sphere action.  The norm $\|M^{\#}\|_{C^1(\Sn)}$ below uses this ambient first derivative restricted to $\Sn$. Moreover, define 
$$\|DM^\#\|_{L^\infty(\mathbb S^n)}=\sup_{\xi\in \Sn}\sup_{\substack{h\in T_\xi\Sn\\ |h|=1}} |DM^\#(\xi) h|,$$
and analogously define  the tangential second derivative. 

\begin{lemma}\label{lem:projective-derivatives}
Let $M\in O^{\uparrow}(n+1,1)$.  Then
\begin{equation}\label{eq:projective-C1}
\|DM^\#\|_{L^\infty(\Sn)}
\le
2\|M\|_{\mathrm{op}}^2
\end{equation}
and
\begin{equation}\label{eq:projective-C2}
\|D^2M^\#\|_{L^\infty(\Sn)}
\le 4\|M\|_{\mathrm{op}}^4.
\end{equation}
Moreover, letting
$$
 \overline{\mathbb B}^{\,n+1}
 :=\{\xi\in\R^{n+1}:|\xi|\le1\},
$$
the canonical rational extension satisfies
\begin{equation}\label{eq:projective-ambient-C2}
 \|D^2\widetilde M^{\#}\|_{L^\infty(\overline{\mathbb B}^{\,n+1})}
 \le
 5\|M\|_{\mathrm{op}}^6.
\end{equation}
\end{lemma}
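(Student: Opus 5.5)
The plan is to differentiate the quotient formula \eqref{eq:canonical-extension} directly, writing $\alpha(\xi):=c\cdot\xi+d$ and $N(\xi):=A\xi+b$, so that $\widetilde M^{\#}=N/\alpha$. Two inputs control the size of everything. First, every block of $M$ in \eqref{eq:lorentz-block} has operator norm at most $\|M\|_{\mathrm{op}}$, so $\|A\|_{\mathrm{op}},|b|,|c|,|d|\le\|M\|_{\mathrm{op}}$. Second, the denominator must be bounded below.

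On $\Sn$ the lower bound on $\alpha$ is \eqref{eq:alpha-bounds}: $\alpha=\alpha_M\ge\|M\|_{\mathrm{op}}^{-1}$. On the closed ball I will use the Lorentz structure. Apply $M^{-1}=\mathsf JM^T\mathsf J$ to the timelike unit vector $e_{n+2}$. The last column of $M^{-1}$ is $\mathsf JM^T\mathsf Je_{n+2}=(-c,d)$, and since $M^{-1}$ preserves the form, $d^2-|c|^2=1$. Future-preservation gives $d>0$, so $d\ge|c|$. For $|\xi|\le1$ this yields
$$
\alpha(\xi)\ge d-|c|=\frac{1}{d+|c|}\ge\frac1{2\|M\|_{\mathrm{op}}}.
$$
I expect that step to be the main obstacle: the constant $5$ in \eqref{eq:projective-ambient-C2}, as opposed to something like $8$, depends on sharpening this bound. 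To do so I will use $d+|c|\le\sqrt2\,|(c,d)|\le\sqrt2\|M\|_{\mathrm{op}}$, which gives $\alpha\ge(\sqrt2\|M\|_{\mathrm{op}})^{-1}$ on the ball.

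For the derivatives, the quotient rule gives
$$
D\widetilde M^{\#}h=\frac{Ah}{\alpha}-\frac{(c\cdot h)N}{\alpha^2}=\frac1\alpha\bigl(Ah-(c\cdot h)\widetilde M^{\#}\bigr).
$$
On $\Sn$ we have $|\widetilde M^{\#}|=1$, so $|DM^\#h|\le\alpha^{-1}(\|A\|+|c|)|h|\le 2\|M\|_{\mathrm{op}}^2|h|$, which is \eqref{eq:projective-C1}. The tangential restriction only helps here, but I will not need it.

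Differentiating once more gives
$$
D^2\widetilde M^{\#}[h,k]=-\frac{(c\cdot k)}{\alpha}D\widetilde M^{\#}h-\frac{(c\cdot h)}{\alpha}D\widetilde M^{\#}k.
$$
Hence $|D^2|\le 2|c|\alpha^{-1}\sup|D\widetilde M^{\#}|$. On $\Sn$ this yields $2\|M\|_{\mathrm{op}}\cdot\|M\|_{\mathrm{op}}\cdot 2\|M\|_{\mathrm{op}}^2=4\|M\|_{\mathrm{op}}^4$. For the tangential second derivative on $\Sn$ I will use the same ambient bound, with any extra second fundamental form term absorbed into the $\alpha^{-1}$ factor. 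If necessary I will instead bound the ambient Hessian restricted to tangent vectors, which is what the definition stated before the lemma uses.

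On the ball I first need $|\widetilde M^{\#}|\le|N|/\alpha\le 2\|M\|_{\mathrm{op}}\cdot\sqrt2\|M\|_{\mathrm{op}}$, using $|N|\le\|A\|+|b|$ and the ball lower bound for $\alpha$. This gives $|D\widetilde M^{\#}|\lesssim\|M\|_{\mathrm{op}}^3$ and then $|D^2\widetilde M^{\#}|\lesssim\|M\|_{\mathrm{op}}^{5}$. Since $\|M\|_{\mathrm{op}}\ge1$, any such power is at most $\|M\|_{\mathrm{op}}^6$. The constant $5$ will then be reached by tracking the factors $\sqrt2$ above carefully, with the $\|M\|_{\mathrm{op}}^6$ exponent giving some slack.
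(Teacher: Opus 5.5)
Your estimates on $\Sn$ are correct and match the paper's computation. The identity $D^2\widetilde M^{\#}[h,k]=-\alpha^{-1}\bigl[(c\cdot k)\,D\widetilde M^{\#}h+(c\cdot h)\,D\widetilde M^{\#}k\bigr]$ is a tidy repackaging of the paper's two-term formula. Under the paper's convention, $D^2M^{\#}$ is the ambient Hessian of $\widetilde M^{\#}$ restricted to tangent vectors. So no second-fundamental-form term arises, and you should drop the unjustified remark about ``absorbing'' such a term.

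The gap is in \eqref{eq:projective-ambient-C2}: your bounds on the ball do not give the constant $5$, and your exponent count is off by one. With $\alpha^{-1}\le\sqrt2\|M\|_{\mathrm{op}}$ and $|\widetilde M^{\#}|\le 2\sqrt2\|M\|_{\mathrm{op}}^2$, the term $\alpha^{-1}|c|\,|\widetilde M^{\#}|$ is of order $\|M\|_{\mathrm{op}}^4$, not $\|M\|_{\mathrm{op}}^3$. Your formula then yields
\[
|D^2\widetilde M^{\#}[h,k]|\le\Bigl(\frac{2|c|\,\|A\|_{\mathrm{op}}}{\alpha^2}+\frac{2|c|^2|N|}{\alpha^3}\Bigr)|h|\,|k|\le\bigl(4\|M\|_{\mathrm{op}}^4+8\sqrt2\,\|M\|_{\mathrm{op}}^6\bigr)|h|\,|k|,
\]
which is a constant of about $15$ in front of $\|M\|_{\mathrm{op}}^6$. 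There is no slack here. As long as you use $\alpha\ge(\sqrt2\|M\|_{\mathrm{op}})^{-1}$, no tracking of $\sqrt2$'s recovers $5$: even the optimal bound on $|N|$ only gives $8\|M\|_{\mathrm{op}}^4$. Downstream only the power of $\|M\|_{\mathrm{op}}$ matters, but as a proof of the stated inequality this step fails.

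The culprit is that lower bound, which is weaker than what you already have. Since $\alpha$ is affine, its minimum over $\overline{\mathbb B}^{\,n+1}$ is attained on $\Sn$. Hence \eqref{eq:alpha-bounds} gives $\alpha\ge\|M\|_{\mathrm{op}}^{-1}$ on the whole ball. In your own notation, $d+|c|=\max_{\Sn}\alpha\le\|M\|_{\mathrm{op}}$, so $d-|c|=(d+|c|)^{-1}\ge\|M\|_{\mathrm{op}}^{-1}$. The Cauchy--Schwarz step $d+|c|\le\sqrt2\,|(c,d)|$ throws away exactly the factor you need.

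You also need $|N(\xi)|\le|M(\xi,1)|\le\sqrt2\|M\|_{\mathrm{op}}$ rather than $\|A\|_{\mathrm{op}}+|b|\le2\|M\|_{\mathrm{op}}$; the latter only gives $6$. With both bounds, the bracket above is at most $2\|M\|_{\mathrm{op}}^4+2\sqrt2\|M\|_{\mathrm{op}}^6\le5\|M\|_{\mathrm{op}}^6$, which is the paper's argument.

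Alternatively, push your Lorentz idea one step further. On the ball, $|N(\xi)|^2-\alpha(\xi)^2=\langle M(\xi,1),M(\xi,1)\rangle_{\mathrm L}=|\xi|^2-1\le0$, so $|\widetilde M^{\#}|\le1$ there. Your sphere computation then carries over verbatim and gives even $4\|M\|_{\mathrm{op}}^4$.
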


\begin{proof}
Write
\begin{equation}\label{eq:defn-P-alpha}
    P_M(\xi):=A\xi+b,\qquad \alpha_M(\xi):=c\cdot\xi+d.
\end{equation}
As $|M^\#(\xi)|=1$, \eqref{eq:projective-formula} gives 
\begin{equation}\label{eq:P-alpha-equality}
|P_M(\xi)|=\alpha_M(\xi).
\end{equation}
Let $h\in T_\xi\Sn$.  Differentiating \eqref{eq:projective-formula} in $h$-direction yields,
\begin{equation}\label{eq:first-projective-derivative}
DM^\#(\xi)h
=\frac{Ah}{\alpha_M(\xi)}
-\frac{P_M(\xi)(c\cdot h)}{\alpha_M(\xi)^2}.
\end{equation}
Since
\begin{equation}\label{eq:upper-operator-bound}
\|A\|_{\mathrm{op}}\le\|M\|_{\mathrm{op}},
\qquad |c|\le\|M\|_{\mathrm{op}},    
\end{equation}
\eqref{eq:P-alpha-equality} and \eqref{eq:alpha-bounds} imply
$$
|DM^\#(\xi)h|
\le
\frac{2\|M\|_{\mathrm{op}}}{\alpha_M(\xi)}|h|
\le
2\|M\|_{\mathrm{op}}^2|h|.
$$
This proves \eqref{eq:projective-C1}.

Moreover, for tangent vectors $h,k\in T_\xi\Sn$, an additional differentiation gives
$$
D^2M^\#(\xi)[h,k]
=-\frac{Ah(c\cdot k)+Ak(c\cdot h)}{\alpha_M(\xi)^2} +2\frac{P_M(\xi)(c\cdot h)(c\cdot k)}{\alpha_M(\xi)^3}.
$$
Then applying \eqref{eq:P-alpha-equality}, we conclude that
$$
|D^2M^\#(\xi)[h,k]|
\le
4\frac{\|M\|_{\mathrm{op}}^2}{\alpha_M(\xi)^2}|h| |k|,
$$
and the lower bound in \eqref{eq:alpha-bounds} further gives
$$
\|D^2M^\#\|_{L^\infty(\Sn)}
\le
4\|M\|_{\mathrm{op}}^4,
$$
which proves \eqref{eq:projective-C2}.

It remains to prove \eqref{eq:projective-ambient-C2}.  Recall $\|M\|_{\mathrm{op}}\ge 1$ by \Cref{lem:matrix-norm-bound}. 
Since $\alpha_M$ is affine, its minimum on the closed unit ball is attained on the unit sphere.  Hence \eqref{eq:alpha-bounds} implies
\begin{equation}\label{eq:alpha-unit-ball-lower}
 \alpha_M(\xi)\ge \|M\|_{\mathrm{op}}^{-1}
 \qquad \text{ for any } \ |\xi|\le 1.
\end{equation}
Also,
\begin{equation}\label{eq:P-unit-ball-bound}
 |P_M(\xi)|  \le |M(\xi,1)|  \le \sqrt2 \|M\|_{\mathrm{op}} \qquad \text{ for any } \ |\xi|\le 1.
\end{equation}
Note that, the ambient second-derivative formula is
$$
D^2\widetilde M^\#(\xi)[h,k]
= -\frac{Ah(c\cdot k)+Ak(c\cdot h)}{\alpha_M(\xi)^2}
+2\frac{P_M(\xi)(c\cdot h)(c\cdot k)}{\alpha_M(\xi)^3}.
$$
Employing \eqref{eq:alpha-unit-ball-lower}, \eqref{eq:upper-operator-bound} and \eqref{eq:P-unit-ball-bound}, we conclude that, for $|\xi|\le1$,
\begin{align*}
 |D^2\widetilde M^\#(\xi)[h,k]|
 &\le  2\|M\|_{\mathrm{op}}^4|h|\,|k|
 +2\sqrt2 \|M\|_{\mathrm{op}}^6|h|\,|k|\\
 &\le5\|M\|_{\mathrm{op}}^6|h|\,|k|.
\end{align*}
This gives \eqref{eq:projective-ambient-C2}.
\end{proof}

\begin{lemma} \label{lem:projective-difference}
Let $M,N\in O^{\uparrow}(n+1,1)$, and set
$$
L:=\max\{1,\|M\|_{\mathrm{op}},\|N\|_{\mathrm{op}}\}.
$$
Then
\begin{equation}\label{eq:projective-difference}
\|M^\#-N^\#\|_{C^1(\Sn)} \le C\|M-N\|_{\mathrm{op}}L^3,
\end{equation}
where $C>0$ is an absolute constant.
\end{lemma}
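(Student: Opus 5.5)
We bound the $C^0$ and $C^1$ parts of $\|M^\#-N^\#\|_{C^1(\Sn)}$ separately, using the explicit projective formula \eqref{eq:projective-formula}. Write both matrices in the block form \eqref{eq:lorentz-block}, with $P_M,\alpha_M$ and $P_N,\alpha_N$ as in \eqref{eq:defn-P-alpha}. Set $\Delta:=\|M-N\|_{\mathrm{op}}$. For $\xi\in\Sn$, the vector $(\xi,1)$ has Euclidean length $\sqrt2$. Since $(P_M(\xi)-P_N(\xi),\alpha_M(\xi)-\alpha_N(\xi))=(M-N)(\xi,1)$, we obtain
$$
|P_M(\xi)-P_N(\xi)|\le\sqrt2\,\Delta,\qquad |\alpha_M(\xi)-\alpha_N(\xi)|\le\sqrt2\,\Delta.
$$
Likewise the blocks satisfy $\|A_M-A_N\|_{\mathrm{op}}\le\Delta$ and $|c_M-c_N|\le\Delta$.

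For the $C^0$ part, we write
$$
M^\#-N^\#=\frac{P_M-P_N}{\alpha_M}+P_N\,\frac{\alpha_N-\alpha_M}{\alpha_M\alpha_N}.
$$
By \eqref{eq:alpha-bounds} we have $\alpha_M^{-1},\alpha_N^{-1}\le L$, and by \eqref{eq:P-alpha-equality} we have $|P_N|=\alpha_N$. Hence the second term is bounded by $\sqrt2\,\Delta/\alpha_M\le\sqrt2\,\Delta L$, and so $|M^\#-N^\#|\le 2\sqrt2\,\Delta L\le C\Delta L^3$.

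For the $C^1$ part, we use the derivative formula \eqref{eq:first-projective-derivative},
$$
DM^\#(\xi)h=\frac{A_Mh}{\alpha_M}-\frac{P_M(c_M\cdot h)}{\alpha_M^2},
$$
and take its difference with the corresponding formula for $N$. The term $A_Mh/\alpha_M-A_Nh/\alpha_N$ splits into $(A_M-A_N)h/\alpha_M$, bounded by $\Delta L|h|$, plus $A_Nh(\alpha_N-\alpha_M)/(\alpha_M\alpha_N)$, bounded by $\sqrt2\,\Delta L^3|h|$. For the second term, telescope:
$$
\frac{P_Mc_M^T}{\alpha_M^2}-\frac{P_Nc_N^T}{\alpha_N^2}
=\frac{(P_M-P_N)c_M^T}{\alpha_M^2}+\frac{P_N(c_M-c_N)^T}{\alpha_M^2}+P_Nc_N^T\Bigl(\frac1{\alpha_M^2}-\frac1{\alpha_N^2}\Bigr).
$$
Each piece is estimated with $|c|\le L$, $|P_N|=\alpha_N\le L$, and $\alpha^{-1}\le L$:
\begin{itemize}
\item the first piece is bounded by $\sqrt2\,\Delta\cdot L\cdot L^2$;
\item the second piece is bounded by $\alpha_N\Delta/\alpha_M^2\le\Delta L^3$;
\item for the third, use $\alpha_M^{-2}-\alpha_N^{-2}=(\alpha_N-\alpha_M)(\alpha_N+\alpha_M)/(\alpha_M^2\alpha_N^2)$.
\end{itemize}

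The third piece is the only delicate one, since a naive bound produces a higher power of $L$. Multiplying by $|P_N||c_N|=\alpha_N|c_N|$, it is bounded by
$$
\sqrt2\,\Delta\,L\,\frac{\alpha_N+\alpha_M}{\alpha_M^2\alpha_N}
=\sqrt2\,\Delta L\Bigl(\frac1{\alpha_M^2}+\frac1{\alpha_M\alpha_N}\Bigr)\le 2\sqrt2\,\Delta L^3.
$$
The key is to keep the factor $|P_N|=\alpha_N$ to cancel one power of $\alpha_N$, rather than bounding $|P_N|$ by $L$.

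Collecting all pieces gives $\|DM^\#-DN^\#\|_{L^\infty(\Sn)}\le C\Delta L^3$ with an absolute constant $C$. Adding the $C^0$ bound, and using $L\ge1$, proves \eqref{eq:projective-difference}. The same computation works for ambient directions $h$ if the $C^1$ norm is read ambiently, since the bounds above never used that $h$ is tangent.
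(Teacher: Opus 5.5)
Your proof is correct and follows essentially the same route as the paper: the same two-term splitting $M^\#-N^\#=(P_M-P_N)/\alpha_M+P_N(\alpha_M^{-1}-\alpha_N^{-1})$ for the $C^0$ part, then a telescoping of the derivative formula. For the second derivative term, the paper writes $P_M(c_M\cdot h)/\alpha_M^2=M^\#(c_M\cdot h)/\alpha_M$, uses $|M^\#|=1$, and reuses the $C^0$ estimate; your direct telescoping with $\alpha^{-2}$, keeping $|P_N|=\alpha_N$, is equivalent bookkeeping of the same cancellation.
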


\begin{proof}
Write the  block matrix decompositions of $M$ and $N$ as in \eqref{eq:lorentz-block}, and set
$$
\Delta:=\|M-N\|_{\mathrm{op}}.
$$
For every $\xi\in\Sn$, following the notation \eqref{eq:defn-P-alpha}
\begin{equation}\label{eq:block-difference}
|P_M(\xi)-P_N(\xi)|
+
|\alpha_M(\xi)-\alpha_N(\xi)|
\le
C\Delta.
\end{equation}
By \eqref{eq:projective-formula},
$$
M^\#-N^\#
=\frac{P_M-P_N}{\alpha_M}
+P_N\left(\frac1{\alpha_M}-\frac1{\alpha_N}\right).
$$
Since $|P_N|=\alpha_N$, \eqref{eq:alpha-bounds} and
\eqref{eq:block-difference} yield
\begin{equation}\label{eq:C0-projective-difference}
\|M^\#-N^\#\|_{L^\infty(\Sn)} \le 4L\Delta,
\end{equation}
which proves the zeroth-order part of
\eqref{eq:projective-difference}.

Moreover, the derivative formula \eqref{eq:first-projective-derivative} can be written as
\begin{equation}\label{eq:derivative-tensor-form}
DM^\# = \frac{A_M}{\alpha_M} - \frac{M^\#\otimes c_M}{\alpha_M},  
\end{equation}
which should be understood as the following mapping: 
$$h\mapsto \frac{A_M h}{\alpha_M}  - \frac{M^\#  (c_M\cdot h)}{\alpha_M}.$$
For the first term in \eqref{eq:derivative-tensor-form},  it follows from \eqref{eq:alpha-bounds}, \eqref{eq:block-difference} and \eqref{eq:C0-projective-difference} that
$$
\left\|\frac{A_M}{\alpha_M} - \frac{A_N}{\alpha_N}
\right\| \le \frac{\|A_M-A_N\|}{\alpha_M}
+ \|A_N\| \frac{|\alpha_M-\alpha_N|}{\alpha_M\alpha_N} \le C L^3\Delta.
$$
For the second term, by inserting and subtracting twice, we   get
\begin{equation*}
 \left\|\frac{M^\#\otimes c_M}{\alpha_M} -\frac{N^\#\otimes c_N}{\alpha_N}\right\|
\le \frac{|M^\#-N^\#|\,|c_M|}{\alpha_M}
+ \frac{|c_M-c_N|}{\alpha_M}
+ |c_N| \left|\frac1{\alpha_M}-\frac1{\alpha_N}\right|.
\end{equation*}
Now applying \eqref{eq:alpha-bounds}, \eqref{eq:block-difference} and \eqref{eq:C0-projective-difference} again, the right-hand side is bounded by $C\Delta L^3$.  Together with \eqref{eq:derivative-tensor-form}, this proves the first derivative part of \eqref{eq:projective-difference}. 
\end{proof}

\subsection{A finite  frame and quantitative matrix control}

For $x\in\R^n$, define its Euclidean null lift by
\begin{equation}\label{eq:euclidean-null-lift}
\mathcal V(x) := \left( x, \frac{|x|^2-1}{2},  \frac{|x|^2+1}{2} \right)\in\R^{n+2}.
\end{equation}
Notice that, the first $(n+1)$-coordinates in \eqref{eq:euclidean-null-lift} form the space-like component in \eqref{eq:lorentz-product}, while the last coordinate is time-like.  Then \eqref{eq:stereographic} gives
\begin{equation}\label{eq:null-lift-stereographic}
\mathcal V(x) = \frac{1+|x|^2}{2} \bigl(\sigma(x),1\bigr).
\end{equation}
In addition, a direct calculation gives the key identity
\begin{equation}\label{eq:null-distance-identity}
\langle\mathcal V(x),\mathcal V(y)\rangle_{\mathrm L}
=-\frac12|x-y|^2 \qquad \text{ for any } \ x,y\in\R^n.
\end{equation}

Fix $t_0:=\frac 1 4$ and the $n+2$ points
\begin{equation}\label{eq:finite-frame-points}
x_0:=0, \qquad x_i:=t_0\mathbf{e}_i\quad \text{ for } \ 1\le i\le n, \qquad x_{n+1}:=2t_0\mathbf{e}_1.
\end{equation}
All these points lie in $B(0,1)$.

\begin{lemma}\label{lem:null-frame-basis}
The vectors
$$
\mathcal V(x_0),\ldots,\mathcal V(x_{n+1})
$$
form a basis of $\R^{n+2}$.
\end{lemma}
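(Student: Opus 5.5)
Since there are exactly $n+2$ vectors in $\R^{n+2}$, it suffices to show linear independence, and I would do this with the Lorentz Gram matrix rather than by computing a determinant of coordinates. Suppose $\sum_{j=0}^{n+1}\lambda_j\mathcal V(x_j)=0$. Pairing with each $\mathcal V(x_k)$ in the Lorentz form and using the identity \eqref{eq:null-distance-identity} gives the linear system
$$
\sum_{j=0}^{n+1}\lambda_j|x_j-x_k|^2=0\qquad\text{for }0\le k\le n+1 .
$$
So it is enough to show that the squared-distance matrix $G=(|x_j-x_k|^2)_{j,k}$ of the frame \eqref{eq:finite-frame-points} is nonsingular. (Conversely, nonsingularity of $G$ is also necessary, but we only need one direction.)

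The standard route is a reduction to affine geometry. Write $|x_j-x_k|^2=|x_j|^2+|x_k|^2-2x_j\cdot x_k$, and split $\lambda$-relations accordingly. Concretely, I would first exploit the map $\mathcal V$ directly: the coordinates of $\sum_j\lambda_j\mathcal V(x_j)$ are $\sum_j\lambda_jx_j$, $\frac12\sum_j\lambda_j(|x_j|^2-1)$, and $\frac12\sum_j\lambda_j(|x_j|^2+1)$. Adding and subtracting the last two, vanishing is equivalent to
$$
\sum_j\lambda_j=0,\qquad \sum_j\lambda_jx_j=0,\qquad \sum_j\lambda_j|x_j|^2=0 .
$$
This is actually simpler than the Gram approach, so I would switch to it: linear independence of the $\mathcal V(x_j)$ is equivalent to the statement that the only affine dependence among the $x_j$ that also annihilates the function $|x|^2$ is trivial, i.e. the $n+2$ vectors $(1,x_j,|x_j|^2)\in\R^{n+2}$ are independent.

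Now plug in the frame. Since $x_0,\dots,x_n$ are affinely independent ($x_0=0$ and $x_i=t_0\mathbf e_i$), the relations $\sum\lambda_j=0$, $\sum\lambda_jx_j=0$ determine $\lambda_0,\dots,\lambda_n$ in terms of $\lambda_{n+1}$: from the $\mathbf e_1$-component, $t_0\lambda_1+2t_0\lambda_{n+1}=0$, so $\lambda_1=-2\lambda_{n+1}$; for $2\le i\le n$, $\lambda_i=0$; then $\lambda_0=-\lambda_1-\lambda_{n+1}=\lambda_{n+1}$. The quadratic relation becomes $\lambda_1t_0^2+\lambda_{n+1}(2t_0)^2=(-2+4)t_0^2\lambda_{n+1}=2t_0^2\lambda_{n+1}=0$, forcing $\lambda_{n+1}=0$ and hence all $\lambda_j=0$. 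Geometrically, $x_0,x_1,x_{n+1}$ are collinear, and the strict convexity of $|x|^2$ along that line is what breaks the only affine relation. There is no serious obstacle; the only care needed is to keep the sign conventions of $\mathcal V$ straight when separating the last two coordinates.
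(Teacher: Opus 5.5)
Your proof is correct and, once you drop the Gram-matrix detour, it is essentially the paper's argument: the paper also extracts $\sum_j a_j=0$ from the difference of the last two coordinates, $\sum_j a_j x_j=0$ from the first $n$, and $\sum_j a_j|x_j|^2=0$ from their sum, and then solves the same small system to force every coefficient to vanish. The Lorentz Gram-matrix preamble is never used and can simply be omitted.
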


\begin{proof}
Suppose
$$
a_0\mathcal V(x_0)
+ \sum_{i=1}^na_i\mathcal V(x_i)
+ a_{n+1}\mathcal V(x_{n+1}) =0.
$$

Apply \eqref{eq:euclidean-null-lift}. The first $n$-coordinates give
$$
a_i=0\quad \text{ for } \ 2\le i\le n, \qquad  a_1+2a_{n+1}=0.
$$
Moreover, for every $x\in \mathbb R^n$, the last coordinate of $\mathcal V(x)$ minus its $(n+1)$-st coordinate equals $1$.  Hence
\begin{equation}\label{eq:frame-coefficient-sum}
a_0+a_1+a_{n+1}=0.
\end{equation}
The sum of the last two coordinates of $\mathcal V(x)$ equals $|x|^2$.  Therefore
$$
t_0^2a_1+4t_0^2a_{n+1}=0.
$$
Together with $a_1+2a_{n+1}=0$, this gives
$$
a_{n+1}=a_1=0.
$$
Thus,  \eqref{eq:frame-coefficient-sum}  yields $a_0=0$, which concludes the lemma.
\end{proof}

\begin{proposition} \label{prop:finite-frame-matrix-control}
There exist constants
$$
\eta_{\mathrm{fr}}=\eta_{\mathrm{fr}}(n)>0,
\qquad
C_{\mathrm{fr}}=C_{\mathrm{fr}}(n)>0,
$$
such that the following holds.  Let $T\in\Mob(n)$ be finite on $B(0,1)$, and suppose that
\begin{equation}\label{eq:unit-ball-Mobius-closeness}
\sup_{x\in B(0,1)}|T(x)-x| \le \eta \le\eta_{\mathrm{fr}}.
\end{equation}
Then
\begin{equation}\label{eq:matrix-near-identity}
\|\mathcal R(T)-I_{n+2}\|_{\mathrm{op}} \le C_{\mathrm{fr}}\eta,
\end{equation}
where $\mathcal R(T)$ is the representation of $T$ given by \Cref{prop:lorentz-realization}. 
\end{proposition}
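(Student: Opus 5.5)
The argument uses the null frame $\mathcal V(x_0),\ldots,\mathcal V(x_{n+1})$ of \Cref{lem:null-frame-basis} to recover $M:=\mathcal R(T)$ from its values on finitely many null vectors. The starting identity is the following. Since $T$ is finite at each $x_i$, \eqref{eq:null-lift-stereographic} combined with \eqref{eq:projective-definition} and \eqref{eq:hat-sharp} gives
$$
M\mathcal V(x_i)=\lambda_i\,\mathcal V(T(x_i)),\qquad \lambda_i>0 .
$$
Indeed, $M\mathcal V(x_i)$ is a positive multiple of $(\sigma(T(x_i)),1)$, and this is in turn a positive multiple of $\mathcal V(T(x_i))$. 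Put $y_i:=T(x_i)$, so that $|y_i-x_i|\le\eta$.

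The first step is to show that every scalar factor satisfies $|\lambda_i-1|\le C\eta$. Apply \eqref{eq:null-distance-identity} to the Lorentz invariance $\langle MX,MY\rangle_{\mathrm L}=\langle X,Y\rangle_{\mathrm L}$. For $i\ne j$ this yields
$$
\lambda_i\lambda_j\,|y_i-y_j|^2=|x_i-x_j|^2 .
$$
The frame points are pairwise separated by at least $t_0/\sqrt{2}$ or so, so each ratio $|x_i-x_j|^2/|y_i-y_j|^2$ equals $1+O(\eta)$ once $\eta_{\mathrm{fr}}$ is small. Taking logarithms, $\mu_i:=\log\lambda_i$ satisfies $\mu_i+\mu_j=O(\eta)$ for all $i\ne j$. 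Because there are at least three points, $\mu_i=\tfrac12\bigl[(\mu_i+\mu_j)+(\mu_i+\mu_k)-(\mu_j+\mu_k)\bigr]$ for distinct $i,j,k$, so each $\mu_i$ is $O(\eta)$. This is the step I expect to be the main obstacle. A priori $M$ could rescale the null rays in a way that is not detected by the projective action alone. The triangle trick with pairwise Lorentz products removes that ambiguity, and it depends on the frame points being distinct and lying a fixed distance apart.

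Next, compare columns in the frame basis. Let $V$ be the $(n+2)\times(n+2)$ matrix with columns $\mathcal V(x_i)$, and let $W$ be the matrix with columns $\lambda_i\mathcal V(y_i)$. Then $MV=W$, so $M=WV^{-1}$. Now $\mathcal V$ is smooth, with derivative bounded on $B(0,2)$, and $y_i\in B(0,2)$. Together with $|\lambda_i-1|\le C\eta$ this gives
$$
|\lambda_i\mathcal V(y_i)-\mathcal V(x_i)|\le C(n)\eta ,
$$
and hence $\|W-V\|_{\mathrm{op}}\le C(n)\eta$. Since $V$ is a fixed invertible matrix depending only on $n$ by \Cref{lem:null-frame-basis}, we obtain
$$
\|M-I_{n+2}\|_{\mathrm{op}}=\|(W-V)V^{-1}\|_{\mathrm{op}}\le \|V^{-1}\|_{\mathrm{op}}\,C(n)\eta=C_{\mathrm{fr}}\eta .
$$

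The remaining requirement is a small technical check. The hypothesis \eqref{eq:unit-ball-Mobius-closeness} is stated on the open ball, and it applies directly because all the frame points lie in $B(0,1)$. The smallness of $\eta_{\mathrm{fr}}$ is used only to make the distance ratios $1+O(\eta)$ and to make the logarithms legitimate.
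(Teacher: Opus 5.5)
Your proof is correct and follows the paper's argument essentially step for step. Both use the null-lift identity $M\mathcal V(x_i)=\lambda_i\mathcal V(T(x_i))$, Lorentz invariance to get $\lambda_i\lambda_j=|x_i-x_j|^2/|T(x_i)-T(x_j)|^2$, a three-index trick to isolate each $\lambda_i$ (you take logarithms where the paper writes $\lambda_i^2=(\lambda_i\lambda_j)(\lambda_i\lambda_k)/(\lambda_j\lambda_k)$), and finally inversion against the fixed frame basis of \Cref{lem:null-frame-basis}, which your identity $M=WV^{-1}$ makes explicit.
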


\begin{proof}
Set $M:=\mathcal R(T).$
By \eqref{eq:null-lift-stereographic} and the definition of $\mathcal R(T)$, for each point $x_i$ in \eqref{eq:finite-frame-points} there exists $\lambda_i>0$ such that
\begin{equation}\label{eq:lift-scaling}
M\mathcal V(x_i)
=
\lambda_i\mathcal V(T(x_i)).
\end{equation}
Then Lorentz invariance and \eqref{eq:null-distance-identity} imply, for $i\ne j$,
\begin{equation}\label{eq:lambda-products}
\lambda_i\lambda_j
=
\frac{|x_i-x_j|^2}{|T(x_i)-T(x_j)|^2}.
\end{equation}

Let
$$
d_0:=\min_{i\ne j}|x_i-x_j|>0.
$$
By \eqref{eq:unit-ball-Mobius-closeness} and the triangle inequality,
$$
\bigl||T(x_i)-T(x_j)|-|x_i-x_j|\bigr|\le 2 \eta.
$$
After decreasing $\eta_{\mathrm{fr}}$, we may assume $2\eta\le d_0/2$.  Hence,
$$
\frac{d_0}{2} \le |T(x_i)-T(x_j)| \le 3 \qquad \text{ for } \ i \ne j,
$$
and then \eqref{eq:lambda-products} gives
\begin{equation}\label{eq:lambda-product-close}
|\lambda_i\lambda_j-1| \le C(n)\eta
\qquad \text{ for } \ i \ne j.
\end{equation}

Up to further decreasing $\eta_{\mathrm{fr}}$, \eqref{eq:lambda-product-close} gives
$$
\frac 1 2\le \lambda_i\lambda_j\le \frac32
\qquad \text{ for any} \ i\ne j.
$$
For fixed  $i$, choose distinct $j,k\ne i$. Then
$$
\lambda_i^2=\frac{(\lambda_i\lambda_j)(\lambda_i\lambda_k)}{\lambda_j\lambda_k}.
$$
Employing that $|ab-1|\le |a-1|\,|b|+|b-1|$ and the above-mentioned upper bounds, we obtain
$$
|\lambda_i^2-1|\le C(n)\eta.
$$
Since $\lambda_i>0$, decreasing $\eta_{\mathrm{fr}}$ once more gives
\begin{equation}\label{eq:lambda-close}
|\lambda_i-1|\le C(n)\eta
\qquad \text{ for } \ 0\le i\le n+1.
\end{equation}

The points $T(x_i)$ belong to $B(0,2)$ when $\eta\le1$.  Since $\mathcal V$ is Lipschitz on $\overline{B(0,2)}$, \eqref{eq:lift-scaling}, \eqref{eq:unit-ball-Mobius-closeness}, and \eqref{eq:lambda-close} give
$$
|(M-I_{n+2})\mathcal V(x_i)| \le |\lambda_i-1|\,|\mathcal V(T(x_i))| + |\mathcal V(T(x_i))-\mathcal V(x_i)| \le C(n)\eta.
$$
By \Cref{lem:null-frame-basis}, the vectors $\mathcal V(x_i)$ form a fixed basis, and the norm equivalence in this basis gives \eqref{eq:matrix-near-identity}.
\end{proof}

The preceding proposition is stable under similarities whose parameters range in a compact set.

\begin{corollary}\label{cor:lorentz}
Assume \eqref{eq:small-dist}. 
Then there are constants
$$
\delta_{\mathcal L}=\delta_{\mathcal L}(n,p)>0,
\qquad
C_{\mathcal L}=C_{\mathcal L}(n,p)>0,
$$
such that, whenever $P,Q\in\mathscr W$ are adjacent and $0<\delta\le\delta_{\mathcal L}$,
\begin{equation}\label{eq:edge-Lorentz-matrix}
\|\mathcal R(E_{P,Q})-I_{n+2}\|_{\mathrm{op}}
\le
C_{\mathcal L}\delta.
\end{equation}
\end{corollary}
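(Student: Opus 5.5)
The plan is to reduce \eqref{eq:edge-Lorentz-matrix} to \Cref{prop:finite-frame-matrix-control} by an affine renormalization of the bridge ball from \Cref{cor:normalized-bridge-data}. Let $b=b_{P,Q}$ and $\rho=\rho_{P,Q}$, so that $|b|\le\sqrt n$ and $\frac38\le\rho\le\frac32$. Define the similarity $A_{P,Q}(z):=b+\rho z$, which maps $B(0,1)$ onto $B(b,\rho)$. Then set
$$
T_{P,Q}:=A_{P,Q}^{-1}\circ E_{P,Q}\circ A_{P,Q}.
$$
First, $T_{P,Q}$ is finite on $B(0,1)$. Indeed, by \Cref{prop:bridge-synchronization}, $\Theta_{P,Q}$ has no pole in $B(a_{P,Q},3r_{P,Q})$. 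The normalization $S_Q$ carries this ball exactly onto $B(b,\rho)$, by the choice $\rho=3r_{P,Q}/\ell(Q)$. Hence $E_{P,Q}$ is finite on $B(b,\rho)$, and since similarities fix $\infty$, $T_{P,Q}$ is finite on $B(0,1)$.

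Second, we get uniform closeness. For $x\in B(0,1)$, \eqref{eq:normalized-edge-closeness} gives
$$
|T_{P,Q}(x)-x|=\rho^{-1}\,\bigl|E_{P,Q}(A_{P,Q}x)-A_{P,Q}x\bigr|\le \tfrac83\,C(n,p)\,\delta .
$$
We choose $\delta_{\mathcal L}\le\min\{\delta_{\mathrm{loc}},\delta_{\mathrm{br}}\}$, small enough that $\tfrac83 C(n,p)\delta_{\mathcal L}\le\eta_{\mathrm{fr}}(n)$. Then \Cref{prop:finite-frame-matrix-control} yields $\|\mathcal R(T_{P,Q})-I_{n+2}\|_{\mathrm{op}}\le C(n,p)\delta$.

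Third, we transfer the bound back to $E_{P,Q}$. By the homomorphism property \eqref{eq:lorentz-homomorphism}, with $\mathcal R(A^{-1})=\mathcal R(A)^{-1}$, we have
$$
\mathcal R(E_{P,Q})-I_{n+2}=\mathcal R(A_{P,Q})\bigl(\mathcal R(T_{P,Q})-I_{n+2}\bigr)\mathcal R(A_{P,Q})^{-1}.
$$
By \eqref{eq:inverse-norm-lorentz}, its operator norm is at most $\|\mathcal R(A_{P,Q})\|_{\mathrm{op}}^2\,C(n,p)\delta$. It therefore suffices to bound $\|\mathcal R(A)\|_{\mathrm{op}}$ uniformly over the family of similarities $z\mapsto b+\rho z$ with $|b|\le\sqrt n$ and $\rho\in[\frac38,\frac32]$. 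This family is the continuous image of a compact parameter set in $\Mob(n)$. Since $\mathcal R$ is continuous by \Cref{prop:lorentz-realization}, $\{\mathcal R(A)\}$ is compact and hence bounded in norm by a constant $C(n)$.

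The only step that needs care is this last one, namely making sure the compactness argument uses the topology in which \Cref{prop:lorentz-realization} asserts continuity. If one prefers to avoid that, a concrete alternative is to compute $\mathcal R$ for dilations and translations directly via the null lift \eqref{eq:euclidean-null-lift}. A translation $x\mapsto x+b$ acts on $\mathcal V(x)$ linearly, with entries polynomial in $b$ and the scaling factor $1$. A dilation $x\mapsto\rho x$ acts linearly after rescaling by a factor that is a positive function of $\rho$, using the $\lambda_i$ normalization as in \eqref{eq:lift-scaling}. Either way the entries are polynomially bounded in $|b|,\rho,\rho^{-1}$, which gives the uniform bound and completes \eqref{eq:edge-Lorentz-matrix}.
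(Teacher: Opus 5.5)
Your proposal is correct and follows the paper's proof essentially verbatim. Your $A_{P,Q}$ is the inverse of the paper's similarity $L_{b,\rho}(z)=(z-b)/\rho$, so $T_{P,Q}$ coincides with the paper's $\widetilde E$, and transferring the bound back via \eqref{eq:lorentz-homomorphism}, using compactness of the similarity family and continuity of $\mathcal R$, is exactly the paper's argument. The only differences are cosmetic: you check explicitly that $S_Q$ maps $B(a_{P,Q},3r_{P,Q})$ onto $B(b,\rho)$, and you control $\|\mathcal R(A_{P,Q})^{-1}\|_{\mathrm{op}}$ through \eqref{eq:inverse-norm-lorentz}, whereas the paper bounds both norms directly by compactness.
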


\begin{proof}
Let $B(b,\rho)$ be the normalized bridge ball given by \Cref{cor:normalized-bridge-data}.  Define the similarity
$$
L_{b,\rho}(z):=\frac{z-b}{\rho}.
$$
Then
$$
\widetilde E := L_{b,\rho}\circ E_{P,Q}\circ L_{b,\rho}^{-1}
$$
is finite on $B(0,1)$, and \eqref{eq:normalized-edge-closeness} gives
$$
\sup_{B(0,1)}|\widetilde E(z)-z|
\le
\frac{C(n,p)}{\rho}\delta
\le
C(n,p)\delta,
$$
since $\rho\ge3/8$ by \eqref{eq:defn-rho}.  For sufficiently small $\delta$, \Cref{prop:finite-frame-matrix-control} together with \eqref{eq:small-dist} implies
\begin{equation}\label{eq:tilde-edge-matrix}
\|\mathcal R(\widetilde E)-I_{n+2}\|_{\mathrm{op}}
\le
C(n,p)\delta.
\end{equation}

By \eqref{eq:defn-rho}, the parameters satisfy
$$
|b|\le\sqrt n,
\qquad
\frac38\le\rho\le\frac32.
$$
Hence the similarities $L_{b,\rho}$ and their inverses form a compact subset of $\Mob(n)$.  Then continuity of the finite-dimensional representation gives
\begin{equation}\label{eq:normalization-norm}
\|\mathcal R(L_{b,\rho})\|_{\mathrm{op}}
+\|\mathcal R(L_{b,\rho})^{-1}\|_{\mathrm{op}} \le C(n).
\end{equation}

Since
$$
E_{P,Q} = L_{b,\rho}^{-1}\circ\widetilde E\circ L_{b,\rho},
$$
\eqref{eq:lorentz-homomorphism} yields
$$
\mathcal R(E_{P,Q})-I_{n+2} = \mathcal R(L_{b,\rho})^{-1}
\bigl(\mathcal R(\widetilde E)-I_{n+2}\bigr)
\mathcal R(L_{b,\rho}).
$$
Thus, the estimate \eqref{eq:edge-Lorentz-matrix} follows from \eqref{eq:tilde-edge-matrix} and \eqref{eq:normalization-norm}.
\end{proof}

\subsection{Quantitative M\"obius propagation}

The next proposition turns local uniform closeness of M\"obius transformations into Euclidean $C^1$-closeness on an arbitrary fixed larger ball.  In particular, it also proves that there is no pole on that ball. 

\begin{proposition}\label{prop:Mobius-propagation}
Let $H\ge1$.  There exist constants
$$
\eta_H=\eta_H(n,H)>0,
\qquad C_H=C_H(n,H)>0,
$$
such that the following holds.  Let $a\in\R^n$, $r>0$, and let $T\in\Mob(n)$ be finite on $B(a,r)$.  Suppose that,
\begin{equation}
\sup_{x\in B(a,r)}|T(x)-x| \le\eta r
\qquad \text{ for some } \ 0<\eta\le\eta_H,
\end{equation}
then $T$ is finite on $B(a,Hr)$ and
\begin{equation}\label{eq:propagation-conclusion}
\sup_{x\in B(a,Hr)}
\left( \frac{|T(x)-x|}{r} +|DT(x)-I_n|
\right)\le C_H\eta.
\end{equation}
\end{proposition}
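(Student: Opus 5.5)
First, I normalize. Set $L(z):=a+rz$ and $\widetilde T:=L^{-1}\circ T\circ L$. Then $\widetilde T$ is finite on $B(0,1)$ and satisfies $\sup_{B(0,1)}|\widetilde T(z)-z|\le\eta$. The conclusion \eqref{eq:propagation-conclusion} is invariant under this conjugation: $|T(x)-x|/r=|\widetilde T(z)-z|$ and $DT(x)=D\widetilde T(z)$. So it suffices to treat $a=0$, $r=1$ and prove finiteness and the $C^1$ bound on $B(0,H)$. Taking $\eta_H\le\eta_{\mathrm{fr}}$, \Cref{prop:finite-frame-matrix-control} gives $M:=\mathcal R(\widetilde T)$ with $\|M-I_{n+2}\|_{\mathrm{op}}\le C_{\mathrm{fr}}\eta$. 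From here the plan is to read off all Euclidean information on $B(0,H)$ from $M$, using the null lift $\mathcal V$ of \eqref{eq:euclidean-null-lift} instead of the sphere.

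Second, I establish pole-freeness together with an explicit formula. For $x\in\R^n$, the vector $M\mathcal V(x)$ is a future null vector. Writing $X=(X',X_{n+1},X_{n+2})$, the null lift gives
$$
\widetilde T(x)=\frac{X'}{X_{n+2}-X_{n+1}},
$$
and $\widetilde T(x)=\infty$ exactly when $X_{n+2}-X_{n+1}=0$. Let $\mathbf n:=(0,\dots,0,-1,1)$, so that $\mathbf n\cdot\mathcal V(x)=1$ for all $x$. Then
$$
X_{n+2}-X_{n+1}=\mathbf n\cdot M\mathcal V(x)=1+\mathbf n\cdot(M-I)\mathcal V(x).
$$
On $\overline{B(0,H)}$ we have $|\mathcal V(x)|\le C(H)$, hence $|X_{n+2}-X_{n+1}-1|\le C(n,H)\eta$. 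After shrinking $\eta_H$, the denominator lies in $[1/2,3/2]$, so there is no pole on $B(0,H)$.

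Third, I derive the estimates. The numerator is $X'=x+\Pi'(M-I)\mathcal V(x)$, where $\Pi'$ is projection to the first $n$ coordinates. Therefore $\widetilde T(x)-x$ is a quotient of smooth functions whose numerator $\Pi'(M-I)\mathcal V(x)-x\,\mathbf n\cdot(M-I)\mathcal V(x)$ is $O(\eta)$ in $C^1(\overline{B(0,H)})$, because $\mathcal V$ and $D\mathcal V$ are bounded by $C(H)$ there. The denominator is bounded below by $1/2$ and its derivative is $O(\eta)$. Differentiating the quotient then gives
$$
|\widetilde T(x)-x|+|D\widetilde T(x)-I_n|\le C(n,H)\eta\qquad\text{on } B(0,H),
$$
which is \eqref{eq:propagation-conclusion} after undoing the scaling.

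The main obstacle is the first step, namely having the matrix control \eqref{eq:matrix-near-identity} from uniform closeness on the unit ball. That is exactly \Cref{prop:finite-frame-matrix-control}, since its frame points lie in $B(0,1)$. The remaining care is bookkeeping: the identity $\mathcal R(L^{-1}TL)=\mathcal R(L)^{-1}\mathcal R(T)\mathcal R(L)$ is not needed, since we apply the proposition directly to $\widetilde T$. We also have to check the sign convention that $\mathcal R$ maps $\mathcal V(x)$ to a positive multiple of $\mathcal V(\widetilde T(x))$. This holds because the representative preserves $\mathscr C_+$, and it is what makes the denominator positive rather than merely nonzero.
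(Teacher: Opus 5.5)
Your argument is correct. The first step (normalization plus \Cref{prop:finite-frame-matrix-control}) is the same as the paper's, but afterwards you take a genuinely different route.

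The paper passes to the sphere. It applies \Cref{lem:projective-difference} with $N=I_{n+2}$ to get $\|\widehat{\widetilde T}-\operatorname{id}_{\Sn}\|_{C^1(\Sn)}\le C\eta$. It excludes poles because the compact set $\sigma(\overline{B(0,H)})$ stays a fixed distance $d_H$ from $\mathbf{e}_{n+1}$. It then pulls back through a smooth ambient extension $\Sigma^{-1}$ of $\sigma^{-1}$ on a tube $\mathcal N_H$ around that set. This requires checking that the chord from $\sigma(x)$ to $\widehat{\widetilde T}(\sigma(x))$ stays in the tube, and using $C^2$ bounds on $\Sigma^{-1}$ there.

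You instead work in the affine slice $\{\mathbf n\cdot X=1\}$ of the null cone, which the Euclidean null lift parametrizes. There $M=\mathcal R(\widetilde T)$ acts as the explicit linear-fractional map $x\mapsto \Pi' M\mathcal V(x)/(\mathbf n\cdot M\mathcal V(x))$, with polynomial numerator and denominator. Pole exclusion reduces to $|\mathbf n\cdot M\mathcal V(x)-1|\le C(1+H^2)\eta$ on $\overline{B(0,H)}$. The $C^1$ bound is then a quotient-rule computation, and your numerator identity $\Pi'(M-I)\mathcal V(x)-x\,\mathbf n\cdot(M-I)\mathcal V(x)$ checks out.

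What each route buys:
\begin{itemize}
\item Your route is shorter. It avoids \Cref{lem:projective-difference}, the tube $\mathcal N_H$ and $\Sigma^{-1}$ entirely. It also gives explicit polynomial dependence of $\eta_H$ and $C_H$ on $H$, rather than constants obtained by compactness.
\item The paper's route stays inside the sphere-level machinery of \Cref{lem:projective-derivatives} and \Cref{lem:projective-difference}. The paper needs those tools anyway for the compactified estimates in \Cref{sec:weighted}.
\end{itemize}

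Your closing remark about positivity is harmless but not needed. The quotient is invariant under any nonzero rescaling of $M\mathcal V(x)$, and the near-identity bound already forces the denominator into $[1/2,3/2]$.
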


\begin{proof}
Conjugating $T$ by the similarity
$$
L_{a,r}(x):=\frac{x-a}{r},
$$
we reduce to the case where $a=0$ and $r=1$.  The normalized map
$$
\widetilde T
:=
L_{a,r}\circ T\circ L_{a,r}^{-1}
$$
satisfies
$$
\sup_{B(0,1)}|\widetilde T(x)-x| \le\eta.
$$
By \Cref{prop:finite-frame-matrix-control},
\begin{equation}
\|\mathcal R(\widetilde T)-I_{n+2}\|_{\mathrm{op}}
\le C(n)\eta.
\end{equation}
Up to decreasing $\eta_H$, the matrix norm of $\mathcal R(\widetilde T)$ is at most $2$.  Applying \Cref{lem:projective-difference} with $M=\mathcal R(\widetilde T)$ and $N=I_{n+2}$, we obtain
\begin{equation}\label{eq:sphere-near-identity}
\|\widehat{\widetilde T}-\operatorname{id}_{\Sn}\|_{C^1(\Sn)} \le C(n)\eta.
\end{equation}

Let
$$
\mathscr K_H:=\sigma(\overline{B(0,H)})\subset\Sn.
$$
This compact set does not contain the north pole $\mathbf{e}_{n+1}=\sigma(\infty)$.  Put
$$
d_H:=\dist(\mathscr K_H,\mathbf{e}_{n+1})>0.
$$
If $\eta_H$ is chosen so that the right-hand side of \eqref{eq:sphere-near-identity} is at most $d_H/4$, then
$$
\dist(\widehat{\widetilde T}(\mathscr K_H),\mathbf{e}_{n+1}) \ge \frac{3d_H}{4}.
$$
Hence $\widetilde T$ has no pole in $\overline{B(0,H)}$.

Let
$$
\mathcal N_H := \bigl\{\xi\in\overline{\mathbb B}^{\,n+1}: \dist(\xi,\mathscr K_H)\le d_H/2\bigr\}.
$$
This is a compact subset of the open half-space $\{\xi_{n+1}<1\}$.  Indeed, for $\xi\in\mathcal N_H$, the triangle inequality gives
$$
|\xi-\mathbf{e}_{n+1}|\ge d_H/2.
$$
Note that, the formula
$$
\Sigma^{-1}(\xi) :=\frac{\xi'}{1-\xi_{n+1}}
$$
defines a smooth ambient extension of $\sigma^{-1}$ to that half-space.  Consequently,
\begin{equation}\label{eq:inverse-stereo-tube-C2}
\|D\Sigma^{-1}\|_{L^\infty(\mathcal N_H)}
+\|D^2\Sigma^{-1}\|_{L^\infty(\mathcal N_H)} \le C(n,H).
\end{equation}
For $x\in\overline{B(0,H)}$, put
$$
u(x):=\widehat{\widetilde T}(\sigma(x)),
\qquad
v(x):=\sigma(x).
$$
Then $v(x)\in\mathscr K_H$, and \eqref{eq:sphere-near-identity} gives
$$
|u(x)-v(x)|\le d_H/4
$$
when $\eta_H$ is sufficiently small.  Since the closed unit ball is convex, every point of the line segment joining $u(x)$ to $v(x)$ belongs to $\overline{\mathbb B}^{\,n+1}$, with its distance from $\mathscr K_H$  bounded from above by  $|u(x)-v(x)|$.  Hence the entire segment is contained in $\mathcal N_H$.

On $B(0,H)$,
$$
\widetilde T=\Sigma^{-1}\circ u, \qquad \operatorname{id}_{\R^n}=\Sigma^{-1}\circ v.
$$
Then the fundamental theorem of calculus, \eqref{eq:inverse-stereo-tube-C2}, and \eqref{eq:sphere-near-identity} give
$$
\sup_{B(0,H)}|\widetilde T(x)-x| \le C(n,H)\eta.
$$
For the derivative, write $\widetilde T=\Sigma^{-1}\circ u$ and $\operatorname{id}=\Sigma^{-1}\circ v$.  Then
\begin{align*}
 D\widetilde T-I_n = D\Sigma^{-1}(u)(Du-Dv) +\bigl[D\Sigma^{-1}(u)-D\Sigma^{-1}(v)\bigr]Dv.
\end{align*}
The first term is bounded by \eqref{eq:inverse-stereo-tube-C2} times the $C^1$-difference in \eqref{eq:sphere-near-identity}.  
Indeed, since $u=\widehat{\widetilde T}\circ\sigma$ and $v=\sigma$, one has
$$
Du-Dv=\bigl(D\widehat{\widetilde T}(\sigma(x))-I_{n+1}\bigr)D\sigma(x).
$$
Notice that, the map $D\sigma$ is bounded on $\overline{B(0,H)}$, and \eqref{eq:sphere-near-identity} controls $D\widehat{\widetilde T}-I$ on tangent vectors.
Hence
$$\|Du-Dv\|_{L^\infty(B(0,H))}\le C(n,H)\eta,
$$
and the desired bound follows. 
For the second term, the line segment from $u(x)$ to $v(x)$ lies in $\mathcal N_H$, so the fundamental theorem of calculus and the second-order derivative bound in \eqref{eq:inverse-stereo-tube-C2} give
$$
 |D\Sigma^{-1}(u)-D\Sigma^{-1}(v)| \le C(n,H)|u-v|.
$$
Since $Dv=D\sigma$ is bounded on $\overline{B(0,H)}$ and $|u-v|\le C(n)\eta$ by \eqref{eq:sphere-near-identity}, both terms are bounded by $C(n,H)\eta$.  Hence
$$
\sup_{B(0,H)}|D\widetilde T-I_n| \le C(n,H)\eta.
$$
This proves \eqref{eq:propagation-conclusion} in normalized coordinates.  Conjugation by $L_{a,r}$ gives the stated factors in the original coordinates.
\end{proof}

We  now complete the estimate on neighboring local normalizers.

\begin{corollary} \label{cor:edge-propagation}
Let $H\ge1$.  There exist constants
$$
\delta_{\mathrm{edge}}=\delta_{\mathrm{edge}}(n,p,H)>0,
\qquad
C_{\mathrm{edge}}=C_{\mathrm{edge}}(n,p,H)>0,
$$
such that the following holds.  Let $P,Q\in\mathscr W$ be adjacent and let
$$
\Theta_{P,Q}=\varphi_P^{-1}\circ\varphi_Q.
$$
If $0<\delta\le\delta_{\mathrm{edge}}$, then $\Theta_{P,Q}$ is finite on $B(x_Q,H\ell(Q))$
and
\begin{equation}\label{eq:edge-propagated-C1}
\sup_{y\in B(x_Q,H\ell(Q))}
\left(
\frac{|\Theta_{P,Q}(y)-y|}{\ell(Q)}
+
|D\Theta_{P,Q}(y)-I_n|
\right)
\le
C_{\mathrm{edge}}\delta.
\end{equation}
Equivalently, the normalized transition $E_{P,Q}$ is finite on $B(0,H)$ and satisfies
\begin{equation}\label{eq:normalized-edge-C1}
\sup_{z\in B(0,H)}
\bigl(
|E_{P,Q}(z)-z|
+
|DE_{P,Q}(z)-I_n|
\bigr)
\le
C_{\mathrm{edge}}\delta.
\end{equation}
\end{corollary}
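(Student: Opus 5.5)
The plan is to combine the bridge estimate of \Cref{prop:bridge-synchronization} with the propagation result \Cref{prop:Mobius-propagation}, and then rescale.

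\textbf{Step 1 (bridge ball).} Let $a=a_{P,Q}$ and $r=r_{P,Q}$. By \Cref{prop:bridge-synchronization}, for $\delta\le\delta_{\mathrm{br}}$ the map $\Theta_{P,Q}$ is finite on $B(a,3r)$ and satisfies
$$
\sup_{B(a,3r)}|\Theta_{P,Q}(y)-y|\le C_{\mathrm{br}}\delta r.
$$
Set $r':=3r$ and $\eta:=C_{\mathrm{br}}\delta/3$. Then $|\Theta_{P,Q}(y)-y|\le\eta r'$ on $B(a,r')$.

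\textbf{Step 2 (enlarged ball containing $B(x_Q,H\ell(Q))$).} Since $a\in 2Q$, we have $|a-x_Q|\le\sqrt n\,\ell(Q)$. By the adjacent-size comparability (W3), $r\ge\ell(Q)/8$, so $\ell(Q)\le\frac{8}{3}r'$. Choose
$$
H':=\frac{8}{3}\bigl(H+\sqrt n\bigr),
$$
which depends only on $n$ and $H$. Then
$$
B(x_Q,H\ell(Q))\subset B\bigl(a,(H+\sqrt n)\ell(Q)\bigr)\subset B(a,H'r').
$$

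\textbf{Step 3 (propagation).} Apply \Cref{prop:Mobius-propagation} with the parameter $H'$ on the ball $B(a,r')$. This requires $\eta\le\eta_{H'}$, which holds once $\delta_{\mathrm{edge}}\le\min\{\delta_{\mathrm{br}},\,3\eta_{H'}/C_{\mathrm{br}},\,\delta_{\mathrm{loc}}\}$. The proposition then gives that $\Theta_{P,Q}$ is finite on $B(a,H'r')$, and that on this ball
$$
\frac{|\Theta_{P,Q}(y)-y|}{r'}+|D\Theta_{P,Q}(y)-I_n|\le C_{H'}\eta.
$$
Since $r'=3r\le\frac32\ell(Q)$ (because $s_{P,Q}\le\ell(Q)$), we may replace $1/r'$ by $1/\ell(Q)$ at the cost of a factor $\frac32$. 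Restricting to $B(x_Q,H\ell(Q))$ then yields \eqref{eq:edge-propagated-C1} with $C_{\mathrm{edge}}=C(n,p,H)$.

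\textbf{Step 4 (normalized form).} Recall $E_{P,Q}=S_Q\circ\Theta_{P,Q}\circ S_Q^{-1}$, where $S_Q$ maps $B(x_Q,H\ell(Q))$ onto $B(0,H)$. Writing $z=S_Q(y)$, we get
$$
E_{P,Q}(z)-z=\frac{\Theta_{P,Q}(y)-y}{\ell(Q)},\qquad DE_{P,Q}(z)=D\Theta_{P,Q}(y).
$$
Hence \eqref{eq:normalized-edge-C1} is the same statement as \eqref{eq:edge-propagated-C1}.

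The only real subtlety is Step 2. We must check that the enlargement factor $H'$ depends only on $n$ and $H$, and this is exactly where the Whitney size comparability (W3) and the location of the bridge point inside $2Q$ are used. Everything else is bookkeeping of constants.
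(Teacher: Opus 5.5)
Your proof is correct and follows the paper's argument. The paper applies \Cref{prop:Mobius-propagation} to the normalized transition $E_{P,Q}$ on the normalized bridge ball $B(b,\rho)$ of \Cref{cor:normalized-bridge-data}, using the same enlargement factor $\frac83(H+\sqrt n)$, and rescales by $S_Q^{-1}$ at the end; you apply it to $\Theta_{P,Q}$ on $B(a,3r)$ in original coordinates and rescale afterwards, and your checks $\ell(Q)\le\frac83 r'$ and $r'\le\frac32\ell(Q)$ are exactly the comparability encoded in $\frac38\le\rho\le\frac32$.
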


\begin{proof}
Let $B(b,\rho)$ be the normalized bridge ball from \Cref{cor:normalized-bridge-data}, where its parameters satisfy
$$
|b|\le\sqrt n,
\qquad
\frac38\le\rho\le\frac32.
$$
Choose
$$
H_0:=\frac{8}{3}(H+\sqrt n).
$$
Then
$$
B(0,H)\subset B(b,H_0\rho).
$$
Indeed, if $|z|\le H$, then
$$
|z-b|\le H+\sqrt n\le H_0\rho,
$$
as $\rho\ge3/8$.

Now the normalized bridge estimate gives
$$
\sup_{B(b,\rho)}|E_{P,Q}-\operatorname{id}|
\le C(n,p)\delta.
$$
After dividing by $\rho\ge 3/8$, the smallness hypothesis in \Cref{prop:Mobius-propagation} is satisfied for $\delta\le\delta_{\mathrm{edge}}(n,p,H)$.  That proposition, applied with enlargement factor $H_0$, yields \eqref{eq:normalized-edge-C1}.  Rescaling by $S_Q^{-1}$ gives \eqref{eq:edge-propagated-C1}.
\end{proof}

\begin{remark}
We record separately the two conclusions used later:
\begin{enumerate}[label=\textup{(\roman*)},leftmargin=2.2em]
\item Formula \eqref{eq:edge-propagated-C1} is the Euclidean input for the Boman-chain argument on a John domain;
\item Formula  \eqref{eq:edge-Lorentz-matrix} is the matrix input for the Whitney-chain argument on a general domain.
\end{enumerate}
\end{remark}

\section{The John-domain globalization}\label{sec:john}

Throughout this section, $\Omega\subset\R^n$ is a bounded $J$-John domain with John center $x_\ast$, in the sense of \Cref{def:john}.  We use the Whitney decomposition $\mathscr W$ from \Cref{def:deep-whitney}.  Up to translating the dyadic grid, if necessary, we may assume that the John center $x_\ast$ lies on no dyadic hyperplane.  Since the closures of the maximal cubes cover $\Omega$ and their interiors are disjoint, there is then a unique root cube $Q_\ast\in\mathscr W$ satisfying
\begin{equation}
 x_\ast\in Q_\ast.
\end{equation}
For each $Q\in\mathscr W$, the local M\"obius normalizer $\varphi_Q$ and the local  normalized transformation
$$
 g_Q=\varphi_Q^{-1}\circ f
$$
are those chosen in \Cref{prop:whitney-local-gauges}.

\subsection{Estimates on Boman chains}

The first lemma extracts from each John curve a simple Whitney chain.  The order in which the cubes are met by the curve is retained; this will be needed in the subsequent estimate.  It is a quantitative,  ordered   Boman-chain construction for John domains; see the original version by Bojarski in \cite[Lemma~3.1]{B1988} and also \cite[Theorem 9.3]{HK2000}. We postpone its proof to  the Appendix. 

\begin{lemma}\label{lem:john-whitney-chains}
There exist constants
$$
 \Lambda_J=\Lambda_J(n,J)\ge1,
 \qquad
 C_{\mathrm{suf}}=C_{\mathrm{suf}}(n,J)\ge1
$$
with the following property.  For every  cube $Q\in\mathscr W$, there is a finite sequence of distinct cubes
\begin{equation}\label{eq:john-chain}
 Q_0=Q_\ast,Q_1,\ldots,Q_m=Q
\end{equation}
such that
$$
 Q_{j-1}\sim Q_j  \qquad \text{ for any } \ 1\le j\le m,
$$
\begin{equation}\label{eq:john-terminal-shadow}
 \overline{2Q}\subset \Lambda_JQ_j
 \qquad \text{ for any } \  0\le j\le m,
\end{equation}
and
\begin{equation}\label{eq:john-suffix-sum}
 \sum_{k=j}^{m}\ell(Q_k) \le C_{\mathrm{suf}}\ell(Q_j)
 \qquad\text{ for any } \ 0\le j\le m.
\end{equation}
The integer $m=m(Q)$ may depend on $Q$.
\end{lemma}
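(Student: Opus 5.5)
The plan is to build the chain from a John curve joining the terminal cube to the John centre, record the cubes it meets in order, and use the John condition to bound their sizes from below along the curve.

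\emph{Setup.} Fix $Q\in\mathscr W$ and take $x=x_Q$. Let $\gamma:[0,L]\to\Omega$ be the John curve from $x_Q$ to $x_\ast$, so that $t\le J\dist(\gamma(t),\partial\Omega)$. Each point $\gamma(t)$ lies in $\overline R$ for some $R\in\mathscr W$, by (W1). By (W2), such an $R$ satisfies
\[
\ell(R)\simeq\dist(\gamma(t),\partial\Omega)\ge t/J,
\]
up to dimensional constants; the two-sided comparison holds because $\gamma(t)\in\overline R$ and $\diam R\lesssim\ell(R)$. We also have $\ell(R)\gtrsim\dist(x_Q,\partial\Omega)-t$ by the Lipschitz property of the distance function, and $\dist(x_Q,\partial\Omega)\simeq\ell(Q)$.

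\emph{Extracting the chain.} Let $Q_m=Q$ and march along $\gamma$ starting from $Q$. At each step, take the last parameter at which $\gamma$ lies in $\overline{2R}$ for the current cube $R$. Just after that time, $\gamma$ enters the interior of $2R'$ for some other cube $R'$ with $\interior(2R)\cap\interior(2R')\ne\varnothing$; this uses (W1) and the fact that a point on the boundary of $2R$ lies in some open doubled cube. Set $R'$ as the next cube and continue until the current cube contains $x_\ast$, that is, until it is $Q_\ast$. Because each new cube is chosen by its last-exit time, the exit parameters strictly increase, so the cubes are distinct; finiteness follows from local finiteness on the compact set $\gamma([0,L])$. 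Reversing the order gives $Q_0=Q_\ast,\dots,Q_m=Q$ with consecutive cubes adjacent.

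\emph{Shadow property \eqref{eq:john-terminal-shadow}.} Let $Q_j$ meet $\gamma$ at parameter $t_j$. Then
\[
|\gamma(t_j)-x_Q|\le t_j\le J\dist(\gamma(t_j),\partial\Omega)\le C(n,J)\,\ell(Q_j),
\]
using (W2) applied to $Q_j$ (its doubled cube contains $\gamma(t_j)$). Also $\ell(Q)\lesssim\dist(x_Q,\partial\Omega)\le t_j+\dist(\gamma(t_j),\partial\Omega)\lesssim\ell(Q_j)$. Combining these two bounds gives $\overline{2Q}\subset\Lambda_JQ_j$ for a suitable $\Lambda_J$.

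\emph{Suffix sum \eqref{eq:john-suffix-sum}.} This is the main obstacle. For each $k\ge j$, the cube $Q_k$ is met at a parameter $t_k\le t_j$, where $t_j$ is comparable to $\ell(Q_j)$ whenever $t_j$ is large. More precisely, every $Q_k$ with $k\ge j$ lies within distance $C\ell(Q_j)$ of $\gamma(t_j)$, and hence inside $\Lambda' Q_j$.

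First I will try to handle the sizes by dyadic scales. The sizes satisfy $\ell(Q_k)\gtrsim\max(t_k/J,\ \ell(Q))$. Fix a dyadic scale $2^{-i}\ell(Q_j)$. Cubes of that size are disjoint and lie in $\Lambda'Q_j$, and they are met on a parameter interval of length $\lesssim J 2^{-i}\ell(Q_j)$, because the John condition forces $t\lesssim J\ell$ at those times. A curve piece of length $\lambda$ can meet only $C(n)(1+\lambda/s)$ disjoint Whitney cubes of size $s$, since its $s$-neighbourhood has volume comparable to $s^{n-1}(\lambda+s)$. Hence each scale contributes $C(n,J)$ cubes. Summing $C(n,J)\,2^{-i}\ell(Q_j)$ over $i\ge0$ gives \eqref{eq:john-suffix-sum}. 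Larger scales cannot occur, because $\ell(Q_k)\lesssim\ell(Q_j)$ for all $k\ge j$ by the containment established above.

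The delicate point is to make sure the last-exit selection does not revisit scales unboundedly. If this counting fails, I would instead use the last-exit times to argue that each cube counted at a given scale corresponds to a disjoint parameter subinterval of length comparable to its size.
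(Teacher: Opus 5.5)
Your proposal is correct and follows the paper's argument. Your shadow bound coincides with the paper's, and so does your suffix sum: the ordering of encounter times along $\gamma$ caps the sizes in the suffix by $C\ell(Q_j)$, and each dyadic scale contributes a bounded number of cubes because a cube of size $s$ is met at a time $\lesssim Js$ and hence lies within $C(n,J)s$ of $x_Q$. Your last-exit march is a streamlined replacement for the paper's finite-cover walk followed by last-occurrence loop erasure.

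Two small points. First, the march may halt at a cube $R\neq Q_\ast$ with $x_\ast\in\overline{2R}$, since its exit time is already $L$; in that case simply append $Q_\ast$, which is adjacent to $R$ and not yet used because its own exit time is $L$. Second, the worry in your last paragraph is unfounded: the per-scale count uses only that the chain cubes are distinct, not any further property of the selection rule.
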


For each terminal cube $Q\in\mathscr W$, fix one chain supplied by \Cref{lem:john-whitney-chains}.  Define its depth by
$ N(Q):=m(Q),$
and define the measurable function
\begin{equation}\label{eq:defn-Nx}
 N(x):=N(Q)  \quad\text{for almost every }x\in Q,
 \qquad Q\in\mathscr W.
\end{equation}
The interiors of the Whitney cubes are disjoint, so this definition is unambiguous almost everywhere.

Now we introduce the maximal operator and Bojarski's dilation estimate. 
For a locally integrable function $h$ on $\R^n$, define the uncentered cube maximal function by
\begin{equation*}
 \mathcal Mh(x)
 :=\sup_{R\ni x}\fint_R|h(y)|\,dy,
\end{equation*}
where the supremum is taken over all axis-parallel cubes $R$ containing $x$. We record the following well-known Hardy--Littlewood estimate. 

\begin{lemma}[{\cite[Chapter 2.1.1]{G2004}}] \label{lem:HL-maximal}
For every $1<s<\infty$, there is a constant $C_M=C_M(n,s)$ such that
$$
 \|\mathcal Mh\|_{L^s(\R^n)} \le C_M(n,s)\|h\|_{L^s(\R^n)}.
$$
Moreover, if $s=q'=q/(q-1)$ with $q\ge2$, then
\begin{equation}\label{eq:HL-qprime-bound}
 C_M(n,q')\le C(n)q.
\end{equation}
\end{lemma}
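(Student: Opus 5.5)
The plan is the standard route: the weak $(1,1)$ bound, Marcinkiewicz interpolation with $L^\infty$, and then a careful look at how the constant behaves as $s\downarrow1$. First, every uncentered cube $R\ni x$ lies inside the centered cube $3R'$ for some cube $R'$ centered at $x$ with $\ell(R')\le 2\ell(R)$. So $\mathcal Mh$ is pointwise bounded by $C(n)$ times the centered cube maximal function, and it suffices to treat either one. The $L^\infty$ bound $\|\mathcal Mh\|_\infty\le\|h\|_\infty$ is immediate.

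For the weak type $(1,1)$ estimate I would use the Vitali covering lemma. Take a compact subset $K$ of $\{\mathcal Mh>\lambda\}$. Each $x\in K$ lies in a cube $R_x$ with $\int_{R_x}|h|>\lambda|R_x|$. Extract a finite subcover and then a disjoint subfamily whose $3$-dilates still cover $K$. This gives
$$|\{\mathcal Mh>\lambda\}|\le \frac{3^n}{\lambda}\|h\|_{L^1}.$$
To get a bound with explicit constants, split $h=h_1+h_2$ with $h_1:=h\chi_{\{|h|>\lambda/2\}}$. Then $\mathcal Mh_2\le\lambda/2$, so
$$\{\mathcal Mh>\lambda\}\subset\{\mathcal Mh_1>\lambda/2\}.$$
Applying the weak bound to $h_1$ gives
$$|\{\mathcal Mh>\lambda\}|\le \frac{2\cdot3^n}{\lambda}\int_{\{|h|>\lambda/2\}}|h|\,dy.$$

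Next use the layer-cake formula and Fubini:
$$\|\mathcal Mh\|_s^s=s\int_0^\infty\lambda^{s-1}|\{\mathcal Mh>\lambda\}|\,d\lambda\le 2\cdot3^n s\int|h(y)|\int_0^{2|h(y)|}\lambda^{s-2}\,d\lambda\,dy.$$
The inner integral equals $(2|h(y)|)^{s-1}/(s-1)$. Hence $\|\mathcal Mh\|_s\le C(n)\bigl(\tfrac{s}{s-1}\bigr)^{1/s}\|h\|_s$.

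The step that needs care is the final bookkeeping, which gives the sharp blow-up rate \eqref{eq:HL-qprime-bound}. For $s=q'$ we have $s/(s-1)=q$. Since $q'\in(1,2]$, the factor $q^{1/q'}$ is at most $q$. The remaining powers $2^{(s-1)/s}$ and $(2\cdot 3^n)^{1/s}$ are bounded by $C(n)$ uniformly for $s\in(1,2]$. This yields $C_M(n,q')\le C(n)q$, with the constant independent of $q\ge2$.
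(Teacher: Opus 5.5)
Your argument is correct. It is the standard proof from the source the paper cites (Grafakos, Chapter 2.1.1): the Vitali covering lemma gives the weak-$(1,1)$ bound with constant $3^n$, and interpolating by hand against the trivial $L^\infty$ bound gives $C_M(n,s)\le C(n)\,(s/(s-1))^{1/s}$, hence $C_M(n,q')\le C(n)\,q^{1/q'}\le C(n)\,q$. The paper itself gives no proof beyond this citation, and your opening reduction to the centered maximal function is unnecessary, since you run the covering argument directly on uncentered cubes.
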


The following estimate is due to Bojarski, and we record its proof for completeness. This is the only maximal-function estimate needed in the John-domain argument.

\begin{lemma}[{\cite[Lemma~4.2]{B1988}}]\label{lem:Bojarski-dilation}
Let $1<q<\infty$, let $\Lambda\ge1$, let $\{R_\alpha\}_{\alpha\in A}$ be a finite family of axis-parallel cubes, and let $a_\alpha\ge0$.  Then
\begin{equation}\label{eq:Bojarski-dilation}
 \left\|
  \sum_{\alpha\in A}a_\alpha\mathbf 1_{\Lambda R_\alpha}
 \right\|_{L^q(\R^n)}
 \le
 \Lambda^n C_M(n,q')
 \left\|
  \sum_{\alpha\in A}a_\alpha\mathbf 1_{R_\alpha}
 \right\|_{L^q(\R^n)},
\end{equation}
where $q'=q/(q-1)$.  The same conclusion holds for a countable family whenever the right-hand side is finite.
\end{lemma}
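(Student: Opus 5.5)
Write $F:=\sum_\alpha a_\alpha\mathbf 1_{\Lambda R_\alpha}$ and $G:=\sum_\alpha a_\alpha\mathbf 1_{R_\alpha}$. The plan is to argue by $L^q$--$L^{q'}$ duality. Since $F\ge0$, we have
$$
\|F\|_{L^q(\R^n)}=\sup\Bigl\{\int_{\R^n}F\,h\,dx:\ h\ge0,\ \|h\|_{L^{q'}(\R^n)}\le1\Bigr\},
$$
so it is enough to bound $\int F h\,dx$ for a fixed nonnegative $h$ with $\|h\|_{L^{q'}}\le1$. For finitely many cubes, $F\in L^q$ automatically, so there is no issue with using the duality formula.

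Fix such an $h$ and expand by linearity:
$$
\int F h\,dx=\sum_\alpha a_\alpha\int_{\Lambda R_\alpha}h\,dx=\sum_\alpha a_\alpha\,\Lambda^n|R_\alpha|\fint_{\Lambda R_\alpha}h\,dx.
$$
The key pointwise observation is that for every $y\in R_\alpha$ the axis-parallel cube $\Lambda R_\alpha$ contains $y$, because $\Lambda\ge1$. Hence, by the definition of the uncentered maximal function,
$$
\fint_{\Lambda R_\alpha}h\,dx\le\mathcal Mh(y)\qquad\text{for every }y\in R_\alpha.
$$
Averaging this over $y\in R_\alpha$ gives $|R_\alpha|\fint_{\Lambda R_\alpha}h\le\int_{R_\alpha}\mathcal Mh\,dy$. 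Summing over $\alpha$ yields
$$
\int F h\,dx\le\Lambda^n\int_{\R^n}G\,\mathcal Mh\,dy.
$$

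Now apply H\"older's inequality and then \Cref{lem:HL-maximal} with $s=q'\in(1,\infty)$:
$$
\int F h\,dx\le\Lambda^n\|G\|_{L^q}\|\mathcal Mh\|_{L^{q'}}\le\Lambda^nC_M(n,q')\|G\|_{L^q}.
$$
Taking the supremum over $h$ gives \eqref{eq:Bojarski-dilation}.

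For a countable family with $\|G\|_{L^q}<\infty$, apply the finite estimate to each finite subfamily. The partial sums $F_N$ increase pointwise to $F$, and each satisfies $\|F_N\|_{L^q}\le\Lambda^nC_M(n,q')\|G\|_{L^q}$, since the finite partial sums $G_N\le G$. Monotone convergence then transfers the bound to $F$.

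No step here is a real obstacle. The only point needing care is that the maximal function is the uncentered cube version, so that $\Lambda R_\alpha\ni y$ is an admissible cube in its supremum. This is exactly why the constant is $\Lambda^n C_M$ with no further loss.
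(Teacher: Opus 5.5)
Your proof is correct and takes the same approach as the paper's proof. You test against nonnegative $h\in L^{q'}$, use the pointwise bound $\fint_{\Lambda R_\alpha}h\le\mathcal Mh(y)$ for $y\in R_\alpha$, which holds because the maximal function is uncentered, and then apply H\"older together with the Hardy--Littlewood bound. The countable case follows as in the paper, by applying the finite estimate to finite subfamilies and passing to the limit with monotone convergence.
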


\begin{proof}
The function on the left of \eqref{eq:Bojarski-dilation} is nonnegative.  By $L^q$--$L^{q'}$ duality, it suffices to test against a nonnegative function $\phi\in L^{q'}(\R^n)$ with $\|\phi\|_{L^{q'}}=1$.  Since $R_\alpha\subset\Lambda R_\alpha$, for every $x\in R_\alpha$,
$$
 \mathcal M\phi(x)
 \ge\fint_{\Lambda R_\alpha}\phi.
$$
Therefore
\begin{align*}
 \int_{\R^n}
 \left(\sum_{\alpha\in A}a_\alpha\mathbf 1_{\Lambda R_\alpha}\right)\phi
 &=\sum_{\alpha\in A}a_\alpha|\Lambda R_\alpha|
   \fint_{\Lambda R_\alpha}\phi\\
 &\le\Lambda^n\sum_{\alpha\in A}a_\alpha
   \int_{R_\alpha}\mathcal M\phi\\
 &=\Lambda^n\int_{\R^n}
   \left(\sum_{\alpha\in A}a_\alpha\mathbf 1_{R_\alpha}\right)
   \mathcal M\phi\\
 &\le\Lambda^n
 \left\|\sum_{\alpha\in A}a_\alpha\mathbf 1_{R_\alpha}\right\|_{L^q}
 \|\mathcal M\phi\|_{L^{q'}}.
\end{align*}
Apply \Cref{lem:HL-maximal} and take the supremum over $\phi$.  The countable-family statement follows by applying the finite result to increasing finite subfamilies and using monotone convergence.
\end{proof}

Now we can prove the exponential integrability of the chain depth in John domain. See also the proof of \cite[Theorem 2.1]{KR1997}. 
\begin{corollary} \label{cor:john-depth-exponential} Recall $N(x)$ defined in \eqref{eq:defn-Nx}. There are constants
$$
 \tau_J=\tau_J(n,J)>0,
 \qquad
 C_{\mathrm{depth}}=C_{\mathrm{depth}}(n,J)<\infty
$$
such that
\begin{equation}\label{eq:john-depth-exponential}
 \int_\Omega e^{\tau_JN(x)}\,dx
 \le C_{\mathrm{depth}}|\Omega|.
\end{equation}
\end{corollary}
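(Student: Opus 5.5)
The plan follows the sketch in the introduction: dominate $N+1$ pointwise by a sum of indicator functions of enlarged cubes, control its $L^q$ norms linearly in $q$ via \Cref{lem:Bojarski-dilation}, and sum the exponential series.

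\emph{Step 1 (pointwise bound).} Fix $x$ in the interior of some $Q\in\mathscr W$, and let $Q_0=Q_\ast,\ldots,Q_m=Q$ be the chain fixed for $Q$. By \eqref{eq:john-terminal-shadow}, $x\in\overline{2Q}\subset\Lambda_JQ_j$ for every $0\le j\le m$. The cubes $Q_j$ in the chain are distinct elements of $\mathscr W$, so
$$
 N(x)+1=m+1\le\sum_{R\in\mathscr W}\mathbf 1_{\Lambda_JR}(x)\qquad\text{for a.e. }x\in\Omega .
$$

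\emph{Step 2 ($L^q$ bound).} Apply \Cref{lem:Bojarski-dilation} to the countable family $\{R\}_{R\in\mathscr W}$ with all $a_R=1$ and $\Lambda=\Lambda_J$. The cubes of $\mathscr W$ have pairwise disjoint interiors and lie in $\Omega$, so the undilated sum satisfies $\sum_R\mathbf 1_R\le 1$ a.e. and is supported in $\Omega$. Its $L^q$ norm is therefore at most $|\Omega|^{1/q}<\infty$, and the countable version of the lemma applies. Together with \eqref{eq:HL-qprime-bound}, this gives for every $q\ge2$
$$
 \|N+1\|_{L^q(\Omega)}\le\Lambda_J^n\,C(n)\,q\,|\Omega|^{1/q},
 \qquad\text{so}\qquad
 \int_\Omega N^q\,dx\le (A q)^q|\Omega|,\quad A:=C(n)\Lambda_J^n .
$$

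\emph{Step 3 (summing the series).} Expand $e^{\tau N}=\sum_{k\ge0}\tau^kN^k/k!$. For $k\ge2$ use Step 2 with $q=k$. For $k=0,1$ use H\"older's inequality together with the $q=2$ bound, which gives $\int_\Omega N\,dx\le |\Omega|^{1/2}\|N\|_{L^2(\Omega)}\le 2A|\Omega|$. Since $k!\ge(k/e)^k$, the $k$-th term is bounded by $(eA\tau)^k|\Omega|$. Choosing $\tau_J:=1/(2eA)$, which depends only on $n$ and $J$, makes the series geometric, and
$$
 \int_\Omega e^{\tau_JN}\,dx\le C_{\mathrm{depth}}(n,J)\,|\Omega| .
$$
Interchanging the sum and the integral is justified by monotone convergence, since all terms are nonnegative.

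\emph{Main obstacle.} The delicate point is Step 2: the estimate needs the linear-in-$q$ growth of the maximal constant on $L^{q'}$ as $q'\downarrow1$, i.e.\ \eqref{eq:HL-qprime-bound}. This is exactly what converts a BMO-type $L^q$ bound into exponential integrability. The overlap in Step 1 is harmless precisely because the $Q_j$ are distinct and their dilates all cover $x$; no bounded-overlap property of the enlarged cubes is needed, since \Cref{lem:Bojarski-dilation} absorbs it.
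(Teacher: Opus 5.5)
Your proposal is correct and follows the paper's proof essentially step for step: the same pointwise domination $N+1\le\sum_{R\in\mathscr W}\mathbf 1_{\Lambda_JR}$ from \eqref{eq:john-terminal-shadow}, the same linear-in-$q$ $L^q$ bound via \Cref{lem:Bojarski-dilation} and \eqref{eq:HL-qprime-bound}, and the same summation of the exponential series using $k!\ge(k/e)^k$, with the $k=1$ term handled by H\"older. The only cosmetic difference is that you invoke the countable version of the dilation lemma directly, whereas the paper applies the finite version to subfamilies and passes to the limit by monotone convergence; this is exactly how the countable version is proved.
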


\begin{proof}
Fix $x$ in the interior of a terminal cube $Q$.  The chain cubes are pairwise disjoint, and \eqref{eq:john-terminal-shadow} gives
$$
 x\in Q\subset\Lambda_JQ_j  \qquad \text{ for any} \ 0\le j\le N(Q).
$$
Hence, it follows that
$$
 N(x)+1 \le\sum_{R\in\mathscr W}\mathbf 1_{\Lambda_JR}(x)
 \quad\text{for almost every }x\in\Omega.
$$
Let $q\ge2$.  Apply \Cref{lem:Bojarski-dilation} first to a finite subfamily of $\mathscr W$, with all coefficients equal to one.  Since the  interiors of the Whitney cubes are disjoint,
$$
 \left\|\sum_{R\in\mathscr F}\mathbf 1_R\right\|_{L^q(\R^n)}
 \le|\Omega|^{1/q}.
$$
Using \eqref{eq:HL-qprime-bound} and then monotone convergence as the finite subfamilies increase to $\mathscr W$, we obtain
\begin{equation}\label{eq:depth-Lq}
 \|N+1\|_{L^q(\Omega)}
 \le C(n,J)q|\Omega|^{1/q}
 \qquad \text{for any} \ q\ge2.
\end{equation}

Now for every integer $k\ge2$, \eqref{eq:depth-Lq} gives
$$
 \int_\Omega N(x)^k\,dx \le[C(n,J)k]^k|\Omega|.
$$
The case $k=1$  follows from the estimate with $q=2$. Indeed,
$$
\int_\Omega N(x)\,dx \le |\Omega|^{1/2}\|N\|_{L^2(\Omega)} \le C(n,J)|\Omega|.
$$
Applying
$$
 k!\ge\left(\frac{k}{e}\right)^k,
$$
we may choose $\tau_J>0$ so small that
$$
 C(n,J)\tau_J\le\frac1 {2e}.
$$
Then
\begin{align*}
 \frac1{|\Omega|}\int_\Omega e^{\tau_JN(x)}\,dx
 &=1+\sum_{k=1}^{\infty}
   \frac{\tau_J^k}{k!|\Omega|}
   \int_\Omega N(x)^k\,dx\\
 &\le C(n,J)+\sum_{k=2}^{\infty}2^{-k} \le C_{\mathrm{depth}}(n,J).
\end{align*}
This is exactly \eqref{eq:john-depth-exponential}.
\end{proof}

\subsection{Comparison on chains}

For a Boman chain \eqref{eq:john-chain}, define the edge transitions
\begin{equation}
 \Theta_j  :=\varphi_{Q_{j-1}}^{-1}\circ\varphi_{Q_j}
 \qquad \text{ for each } \ 1\le j\le m.
\end{equation}
Recall $C_{\mathrm{suf}}$ in \eqref{eq:john-suffix-sum}.

\begin{proposition} \label{prop:john-backward-orbit}
There exist constants
$$
 H_J=H_J(n,J)\ge 1, \qquad \delta_{\mathrm{orb}}=\delta_{\mathrm{orb}}(n,p,J)>0,
 \qquad C_{\mathrm{orb}}=C_{\mathrm{orb}}(n,p,J)<\infty
$$
with the following property.  Assume $0<\delta\le\delta_{\mathrm{orb}}$.  Let $Q_0,\ldots,Q_m$ be the fixed chain associated with a cube $Q=Q_m$, and let $x\in\overline Q$.  Define
\begin{equation}\label{eq:backward-orbit-definition}
 z_m:=g_{Q_m}(x),
 \qquad
 z_{j-1}:=\Theta_j(z_j)
 \quad(j=m,m-1,\ldots,1).
\end{equation}
Then every term in \eqref{eq:backward-orbit-definition} is finite and
\begin{equation}\label{eq:backward-orbit-sum-estimate}
 |z_j-x|
 \le C_{\mathrm{orb}}\delta
      \sum_{k=j}^{m}\ell(Q_k)
 \le C_{\mathrm{orb}}C_{\mathrm{suf}}\delta\ell(Q_j)
 \qquad \text{ for any } \ 0\le j\le m.
\end{equation}
Moreover,
\begin{equation}\label{eq:orbit-identification}
 z_j=\varphi_{Q_j}^{-1}(f(x))
 \qquad \text{ for any} \ 0\le j\le m,
\end{equation}
where the equality is interpreted in $\whR$ and the left-hand side proves that the value is finite.
\end{proposition}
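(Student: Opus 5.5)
We argue by backward induction on $j$, from $j=m$ down to $j=0$, proving simultaneously that $z_j$ is finite, that $z_j=\varphi_{Q_j}^{-1}(f(x))$, and that
$$
|z_j-x|\le C_{\mathrm{orb}}\delta\sum_{k=j}^m\ell(Q_k).
$$
The second inequality in \eqref{eq:backward-orbit-sum-estimate} then follows directly from \eqref{eq:john-suffix-sum}.

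\emph{Base case $j=m$.} Here $z_m=g_{Q_m}(x)=\varphi_{Q_m}^{-1}(f(x))$ holds by definition. Since $x\in\overline Q\subset 8Q$, \eqref{eq:whitney-local-uniform} gives $|z_m-x|\le C'_{\mathrm{loc}}\delta\ell(Q_m)$. This fixes the requirement $C_{\mathrm{orb}}\ge C'_{\mathrm{loc}}$.

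\emph{Inductive step.} Assume the claims hold at index $j$, where $1\le j\le m$.

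First, locate $z_j$. Since $x\in\overline Q\subset\overline{2Q}\subset\Lambda_JQ_j$ by \eqref{eq:john-terminal-shadow}, we have $|x-x_{Q_j}|\le \tfrac{\sqrt n}{2}\Lambda_J\ell(Q_j)$. Once $\delta\le\delta_{\mathrm{orb}}$ is chosen so small that $C_{\mathrm{orb}}C_{\mathrm{suf}}\delta\le1$, the inductive hypothesis gives $|z_j-x|\le\ell(Q_j)$. Hence $z_j\in B(x_{Q_j},H_J\ell(Q_j))$ with
$$
H_J:=\sqrt n\,\Lambda_J+1.
$$

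Second, apply \Cref{cor:edge-propagation} to the adjacent pair $(P,Q)=(Q_{j-1},Q_j)$ with $H=H_J$. It shows that $\Theta_j$ is finite on this ball and that
$$
|\Theta_j(z_j)-z_j|\le C_{\mathrm{edge}}(n,p,H_J)\,\delta\,\ell(Q_j).
$$
Thus $z_{j-1}$ is finite, and the triangle inequality gives
$$
|z_{j-1}-x|\le C_{\mathrm{orb}}\delta\sum_{k=j}^m\ell(Q_k)+C_{\mathrm{edge}}\delta\ell(Q_j).
$$
Choosing $C_{\mathrm{orb}}\ge C_{\mathrm{edge}}$ makes the right-hand side at most $C_{\mathrm{orb}}\delta\sum_{k=j-1}^m\ell(Q_k)$, since $\ell(Q_j)\le\sum_{k=j-1}^m\ell(Q_k)$.

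The key point is that the constants $C_{\mathrm{orb}}=\max\{C'_{\mathrm{loc}},C_{\mathrm{edge}}\}$ and $\delta_{\mathrm{orb}}$ depend only on $(n,p,J)$, through $H_J$, and not on $m$. This holds because the suffix sum \eqref{eq:john-suffix-sum} bounds the accumulated error $|z_j-x|$ by a fixed multiple of $\ell(Q_j)$, uniformly along the chain. The errors therefore never compound, even though $m$ is unbounded. This uniform confinement is the main obstacle, and the combination of \eqref{eq:john-terminal-shadow} with \eqref{eq:john-suffix-sum} handles it.

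\emph{Identification \eqref{eq:orbit-identification}.} In $\whR$ we have
$$
\Theta_j\circ\varphi_{Q_j}^{-1}=\varphi_{Q_{j-1}}^{-1}
$$
as an identity of M\"obius transformations. Applying it to $f(x)$ gives $z_{j-1}=\Theta_j(z_j)=\varphi_{Q_{j-1}}^{-1}(f(x))$, and the finiteness of $z_{j-1}$ established above shows this value is finite. This completes the induction. Finally, $\delta_{\mathrm{orb}}$ is taken as the minimum of $\delta_{\mathrm{loc}}$, $\delta_{\mathrm{edge}}(n,p,H_J)$, and $(C_{\mathrm{orb}}C_{\mathrm{suf}})^{-1}$.
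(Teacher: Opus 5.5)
Your overall strategy is the paper's: backward induction, placing $z_j$ in the pole-free ball $B(x_{Q_j},H_J\ell(Q_j))$ via \eqref{eq:john-terminal-shadow} and \eqref{eq:john-suffix-sum}, then applying \Cref{cor:edge-propagation} and the M\"obius identity. However, the inductive step for the estimate does not close as written.

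From $|z_j-x|\le C_{\mathrm{orb}}\delta\sum_{k=j}^m\ell(Q_k)$ and $|z_{j-1}-z_j|\le C_{\mathrm{edge}}\delta\ell(Q_j)$, reaching $C_{\mathrm{orb}}\delta\sum_{k=j-1}^m\ell(Q_k)$ is equivalent to $C_{\mathrm{edge}}\ell(Q_j)\le C_{\mathrm{orb}}\ell(Q_{j-1})$. The term $\ell(Q_j)$ is already used with coefficient $C_{\mathrm{orb}}$, so only the new term $\ell(Q_{j-1})$ can absorb the increment. With $C_{\mathrm{orb}}=\max\{C'_{\mathrm{loc}},C_{\mathrm{edge}}\}$ this fails whenever $\ell(Q_j)>\ell(Q_{j-1})$, which adjacent Whitney cubes permit.

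The inequality you cite, $\ell(Q_j)\le\sum_{k=j-1}^m\ell(Q_k)$, only gives $|z_{j-1}-x|\le (C_{\mathrm{orb}}+C_{\mathrm{edge}})\delta\sum_{k=j-1}^m\ell(Q_k)$. The constant then grows by $C_{\mathrm{edge}}$ per step and equals $C_{\mathrm{orb}}+(m-j)C_{\mathrm{edge}}$ at level $j$. That is exactly the compounding you say is avoided. It would also break your placement step, which needs $C_{\mathrm{orb}}C_{\mathrm{suf}}\delta\le 1$ with a constant independent of $m$.

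The repair is short; either of the following works.
\begin{enumerate}
\item Invoke \eqref{eq:adjacent-size-comparability}: since $Q_{j-1}\sim Q_j$, one has $\ell(Q_j)\le 4\ell(Q_{j-1})$. Then $C_{\mathrm{orb}}:=\max\{C'_{\mathrm{loc}},4C_{\mathrm{edge}}\}$ makes your single-constant induction close.
\item As the paper does, carry the sharper invariant $|z_j-x|\le C'_{\mathrm{loc}}\delta\ell(Q_m)+C_{\mathrm{edge}}\delta\sum_{k=j+1}^m\ell(Q_k)$. It propagates trivially, because each step adds exactly $C_{\mathrm{edge}}\delta\ell(Q_j)$, and it implies the stated bound with $C_{\mathrm{orb}}:=C'_{\mathrm{loc}}+C_{\mathrm{edge}}$.
\end{enumerate}
In both cases there is no circularity, because $H_J$ depends only on $(n,J)$. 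Fix $C_{\mathrm{edge}}(n,p,H_J)$ and $\delta_{\mathrm{edge}}$ first, then $C_{\mathrm{orb}}$, and finally $\delta_{\mathrm{orb}}$ with $C_{\mathrm{orb}}C_{\mathrm{suf}}\delta_{\mathrm{orb}}\le1$.

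The remaining steps are correct: your choice $H_J=\sqrt n\,\Lambda_J+1$ (in place of the paper's $2+\frac{\sqrt n}{2}\Lambda_J$), the placement of $z_j$, and the identification \eqref{eq:orbit-identification}. In the base case you should add that the essential-supremum bound \eqref{eq:whitney-local-uniform} holds at the point $x$ by continuity of $g_{Q}$.
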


\begin{proof}
Let $\Lambda_J$ and $C_{\mathrm{suf}}$ be the constants in \Cref{lem:john-whitney-chains}.  Choose
\begin{equation}\label{eq:HJ-choice}
 H_J:=2+\frac{\sqrt n}{2}\Lambda_J.
\end{equation}
Apply \Cref{cor:edge-propagation} with this fixed value of $H_J$,   and denote the resulting  constants by
$$
\delta_{\rm edge}=\delta_{\rm edge}(n,p,H_J)>0,\qquad 
 C_{\mathrm{edge}}=C_{\mathrm{edge}}(n,p,J).
$$
Recall $\delta_{\rm loc}>0$ in  \Cref{thm:local-main} and  $C_{\mathrm{loc}}$ in \Cref{prop:whitney-local-gauges},  and set
$$
 C_{\mathrm{base}}:=C_{\mathrm{loc}}'(n,p),
 \qquad  C_{\mathrm{step}}:=C_{\mathrm{edge}},
 \qquad C_{\mathrm{orb}}:= C_{\mathrm{base}}+C_{\mathrm{step}}.
$$

We choose $\delta_{\mathrm{orb}}>0$ so that 
\begin{equation}\label{eq:orbit-smallness-choice}
 \delta_{\mathrm{orb}}\le \min\{\delta_{\rm{loc}},\,\delta_{\rm edge}\}
\qquad C_{\mathrm{orb}}C_{\mathrm{suf}}\delta_{\mathrm{orb}}\le1.
\end{equation}

Since $x\in\overline Q\subset8Q$, the local estimate \eqref{eq:whitney-local-uniform}, together with continuity of $g_Q$, gives
$$
 |z_m-x| \le C_{\mathrm{base}}\delta\ell(Q_m).
$$
In particular, $z_m$ is finite.  We now proceed by descending induction.  Suppose $z_j,\ldots,z_m$ have been defined and that
\begin{equation}\label{eq:orbit-inductive-estimate}
 |z_j-x| \le  C_{\mathrm{base}}\delta\ell(Q_m)
 +C_{\mathrm{step}}\delta
  \sum_{k=j+1}^{m}\ell(Q_k),
\end{equation}
where the sum is empty when $j=m$.  In particular,
\begin{equation}\label{eq:orbit-inductive-coarse}
 |z_j-x|
 \le C_{\mathrm{orb}}\delta
      \sum_{k=j}^{m}\ell(Q_k).
\end{equation}
By \eqref{eq:john-terminal-shadow},
$x\in\overline Q\subset\overline{2Q} \subset\Lambda_JQ_j,$
and hence
$$
 |x-x_{Q_j}|
 \le\frac{\sqrt n}{2}\Lambda_J\ell(Q_j).
$$
Combining this with  \eqref{eq:john-suffix-sum}, \eqref{eq:HJ-choice}, \eqref{eq:orbit-smallness-choice} and \eqref{eq:orbit-inductive-coarse}, we arrive at
\begin{align*}
 |z_j-x_{Q_j}|
 &\le|z_j-x|+|x-x_{Q_j}|\\
 &\le C_{\mathrm{orb}}C_{\mathrm{suf}}\delta\ell(Q_j)
      +\frac{\sqrt n}{2}\Lambda_J\ell(Q_j)\\
 &\le(H_J-1)\ell(Q_j) <H_J\ell(Q_j).
\end{align*}
Thus $z_j$ lies in the pole-free ball from \Cref{cor:edge-propagation}, applied to the adjacent pair $Q_{j-1},Q_j$.  The value
$$
 z_{j-1}=\Theta_j(z_j)
$$
is finite and satisfies
$$
 |z_{j-1}-z_j|
 \le C_{\mathrm{step}}\delta\ell(Q_j).
$$
Therefore,
\begin{align*}
 |z_{j-1}-x| \le |z_j-x|+|z_{j-1}-z_j| \le C_{\mathrm{base}}\delta\ell(Q_m)
 +C_{\mathrm{step}}\delta
  \sum_{k=j}^{m}\ell(Q_k).
\end{align*}
This is exactly \eqref{eq:orbit-inductive-estimate} with $j-1$ in place of $j$.  The induction closes, and \eqref{eq:orbit-inductive-coarse}, together with \eqref{eq:john-suffix-sum}, proves \eqref{eq:backward-orbit-sum-estimate}.

The identity in \eqref{eq:orbit-identification} holds for $j=m$ by the definition $g_{Q_m}=\varphi^{-1}_{Q_m}\circ f$.  Now assuming it holds at level $j$, we obtain, first as an identity of M\"obius transformations on $\whR$,
\begin{equation*}
 z_{j-1} = \Theta_j(z_j)  = \varphi_{Q_{j-1}}^{-1} \circ \varphi_{Q_j}\bigl(\varphi_{Q_j}^{-1}(f(x))\bigr)
 = \varphi_{Q_{j-1}}^{-1}(f(x)).
\end{equation*}
Thus finiteness of $z_{j-1}$ established in the previous induction  implies that no pole arises in this step. This concludes the proposition. 
\end{proof}

As a consequence, we obtain the following uniform control of the root normalization. 
\begin{corollary}\label{cor:john-root-uniform}
Let
$\Phi:=\varphi_{Q_\ast}.$
Under the assumptions of \Cref{prop:john-backward-orbit}, the map
$$
 G:=\Phi^{-1}\circ f
$$
is finite throughout $\Omega$, and
\begin{equation}\label{eq:john-global-uniform}
 \|G-\operatorname{id}\|_{L^\infty(\Omega)}
 \le C(n,p,J)\delta\,\diam(\Omega).
\end{equation}
\end{corollary}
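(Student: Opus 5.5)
The plan is to read off the estimate from \Cref{prop:john-backward-orbit} at level $j=0$. Fix $x\in\Omega$. By \eqref{eq:doubled-whitney-cover}, or simply because the closures of the Whitney cubes cover $\Omega$ by (W1), there is $Q\in\mathscr W$ with $x\in\overline Q$. Take the fixed Boman chain $Q_0=Q_\ast,\ldots,Q_m=Q$ from \Cref{lem:john-whitney-chains}, and form the backward orbit $z_m,\ldots,z_0$ as in \eqref{eq:backward-orbit-definition}. The identification \eqref{eq:orbit-identification} at $j=0$ gives
$$
z_0=\varphi_{Q_0}^{-1}(f(x))=\Phi^{-1}(f(x))=G(x),
$$
and \Cref{prop:john-backward-orbit} asserts that this value is finite. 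Hence $G$ is finite at every point of $\Omega$. Continuity of $G$ as a map into $\whR$ is automatic, since $G$ is the composition of the continuous map $f$ with a M\"obius transformation.

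For the quantitative bound, I would apply \eqref{eq:backward-orbit-sum-estimate} with $j=0$:
$$
|G(x)-x|=|z_0-x|\le C_{\mathrm{orb}}C_{\mathrm{suf}}\,\delta\,\ell(Q_\ast).
$$
It then remains to compare $\ell(Q_\ast)$ with $\diam\Omega$. Since $64\overline{Q_\ast}\subset\Omega$, we get $\ell(Q_\ast)\le\sqrt n\,\ell(Q_\ast)=\diam Q_\ast\le\diam\Omega$; equivalently one can use (W2), because $\dist(Q_\ast,\partial\Omega)\ge c_W\ell(Q_\ast)$ and $Q_\ast\subset\Omega$. Taking the supremum over $x\in\Omega$ then yields \eqref{eq:john-global-uniform} with $C(n,p,J)=C_{\mathrm{orb}}C_{\mathrm{suf}}$.

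The only point that needs care is whether the estimate is uniform in the depth $m=m(Q)$. I expect no obstacle here, because all the depth-dependent summation has already been absorbed into \eqref{eq:john-suffix-sum} inside \Cref{prop:john-backward-orbit}. The constant $C_{\mathrm{suf}}$ depends only on $(n,J)$, so the bound at $j=0$ is independent of $m$ and of $Q$.
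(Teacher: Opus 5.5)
Your proposal is correct and follows the paper's proof essentially verbatim. Like the paper, you pick $Q\in\mathscr W$ with $x\in\overline Q$, apply \Cref{prop:john-backward-orbit} at $j=0$ to get $G(x)=z_0$ finite with $|G(x)-x|\le C\delta\,\ell(Q_\ast)$, and then use $Q_\ast\subset\Omega$ to bound $\ell(Q_\ast)\le\diam\Omega$.
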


\begin{proof}
Fix $x\in\Omega$.  By the covering property of the Whitney decomposition, choose $Q\in\mathscr W$ with $x\in\overline Q$.  Applying \Cref{prop:john-backward-orbit} at $j=0$ and using $Q_0=Q_\ast$ gives $ G(x)=z_0$
and
$$
 |G(x)-x|
 \le C\delta\sum_{k=0}^{m}\ell(Q_k)
 \le C\delta\ell(Q_\ast).
$$
Since $Q_\ast\subset\Omega$,
$$
 \ell(Q_\ast)\le\diam(\Omega).
$$
This proves \eqref{eq:john-global-uniform} and the finiteness of $G$ at every point.
\end{proof}

\subsection{Proof of \Cref{thm:john-main}}
We first introduce the following auxiliary lemma. 
\begin{lemma} \label{lem:near-identity-products}
Let $A_1,\ldots,A_m\in\R^{n\times n}$ and suppose
$$
 |A_j-I_n|\le a
 \qquad \text{ for any } \ 1\le j\le m
$$
for some $a\ge0$.  Then
\begin{equation}\label{eq:matrix-product-size}
 \|A_1\cdots A_m\|_{\mathrm{op}} \le e^{am}
\end{equation}
and
\begin{equation}\label{eq:matrix-multi}
 |A_1\cdots A_m-I_n|  \le am e^{am}.
\end{equation}
\end{lemma}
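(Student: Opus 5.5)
Both inequalities follow from writing $A_j=I_n+B_j$ with $|B_j|\le a$. Recall that $\|B\|_{\mathrm{op}}\le|B|$ and that both norms are submultiplicative: $|XY|\le\|X\|_{\mathrm{op}}|Y|$ and $|XY|\le|X|\,\|Y\|_{\mathrm{op}}$.

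For \eqref{eq:matrix-product-size}, we will use submultiplicativity of the operator norm:
$$
\|A_1\cdots A_m\|_{\mathrm{op}}\le\prod_{j}\bigl(1+\|B_j\|_{\mathrm{op}}\bigr)\le(1+a)^m\le e^{am},
$$
using $1+t\le e^t$.

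For \eqref{eq:matrix-multi}, we will telescope. Set $P_k:=A_1\cdots A_k$ with $P_0=I_n$. Then
$$
P_m-I_n=\sum_{k=1}^m(P_k-P_{k-1})=\sum_{k=1}^m P_{k-1}(A_k-I_n).
$$
Hence
$$
|P_m-I_n|\le\sum_{k=1}^m\|P_{k-1}\|_{\mathrm{op}}\,|A_k-I_n|\le\sum_{k=1}^m e^{a(k-1)}a\le am\,e^{am},
$$
where \eqref{eq:matrix-product-size} applied to the first $k-1$ factors bounds $\|P_{k-1}\|_{\mathrm{op}}$.

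The only point needing care is the mixed-norm inequality $|XY|\le\|X\|_{\mathrm{op}}|Y|$. It follows column by column: $|Xy_i|\le\|X\|_{\mathrm{op}}|y_i|$ for each column $y_i$ of $Y$, and summing the squares gives the Frobenius bound. With that in hand there is no real obstacle.
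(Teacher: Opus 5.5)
Your proposal is correct and follows the paper's proof: the paper bounds $\|A_j\|_{\mathrm{op}}\le 1+a$ to get $(1+a)^m\le e^{am}$, then uses the same telescoping identity $A_1\cdots A_m-I_n=\sum_{j=1}^m A_1\cdots A_{j-1}(A_j-I_n)$ with the mixed bound $|XY|\le\|X\|_{\mathrm{op}}|Y|$. Your column-by-column justification of that mixed inequality, which the paper uses without comment, is a welcome extra detail.
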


\begin{proof}
Since $\|A_j-I_n\|_{\mathrm{op}}\le|A_j-I_n|\le a$, one has
$$
 \|A_j\|_{\mathrm{op}}\le 1+a.
$$
Therefore, 
$$
 \|A_1\cdots A_m\|_{\mathrm{op}}  \le(1+a)^m  \le e^{am},
$$
which proves \eqref{eq:matrix-product-size}. 

Moreover, the telescoping identity
$$
 A_1\cdots A_m-I_n  =\sum_{j=1}^{m} A_1\cdots A_{j-1}(A_j-I_n)
$$
gives
\begin{align*}
 |A_1\cdots A_m-I_n|
 &\le\sum_{j=1}^{m} \|A_1\cdots A_{j-1}\|_{\mathrm{op}}|A_j-I_n|\\
 &\le a\sum_{j=1}^{m}(1+a)^{j-1} \le am e^{am}.
\end{align*}
This concludes \eqref{eq:matrix-multi}. 
\end{proof}

\begin{proof}[Proof of \Cref{thm:john-main}]
Put
$$
 \delta=K-1,
 \qquad
 \Phi=\varphi_{Q_\ast},
 \qquad
 G=\Phi^{-1}\circ f.
$$
Let $H_J$ be given by \eqref{eq:HJ-choice}, and  
$$
C_{\mathrm{edge}}=C_{\mathrm{edge}}(n,p,J)
$$
be the constant in \Cref{cor:edge-propagation} for this value of $H_J$.
Initially choose
$$
\delta_J(n,p,J)\le\delta_{\mathrm{orb}}(n,p,J).
$$
We impose one further restriction below in terms of the exponential-depth constant $\tau_J$.

By \Cref{cor:john-root-uniform}, $G$ is finite throughout $\Omega$ and satisfies \eqref{eq:john-global-uniform}.  Thus it remains to prove the derivative estimate.

Fix a   cube $Q=Q_m$ and its chain $Q_0,\ldots,Q_m$.  For $x\in Q$ outside the null set where one of the Sobolev derivatives fails to exist, let $z_j=z_j(x)$ be the backward orbit from \Cref{prop:john-backward-orbit} and set
\begin{equation}
 A_j(x):=D\Theta_j(z_j(x)) \qquad \text{ for any } \ 1\le j\le m.
\end{equation}
Recall the choice $H=H_J$, and then the orbit remains in the balls on which \eqref{eq:edge-propagated-C1} holds.  Consequently,
\begin{equation}
 |A_j(x)-I_n| \le C_{\mathrm{edge}}\delta.
\end{equation}
Moreover, for fixed $Q$, the chain has finite length.  By
\Cref{prop:john-backward-orbit}, every point of the orbit stays in a ball on
which the corresponding transition $\Theta_j$ is smooth and pole-free.
Starting from $g_Q\in W^{1,p}(Q;\mathbb R^n)$, repeated application of the Sobolev chain rule to 
$$
G=\Theta_1\circ\Theta_2\circ\cdots\circ\Theta_m\circ g_Q
\quad\text{in }W^{1,p}_{\mathrm{loc}}(Q;\mathbb R^n),
$$
yields that,  for almost every $x\in Q$,
\begin{align}\label{eq:john-chain-rule}
DG(x) &= D\Theta_1(z_1(x))\cdots D\Theta_m(z_m(x))Dg_Q(x)\notag \\
 &=A_1(x)\cdots A_m(x)Dg_Q(x).
\end{align}

Set
$$
 a=C_{\mathrm{edge}}\delta, \qquad  P_m(x)=A_1(x)\cdots A_m(x).
$$
Then \Cref{lem:near-identity-products} yields
$$
 \|P_m(x)\|_{\mathrm{op}} \le e^{aN(Q)},
 \qquad  |P_m(x)-I_n| \le aN(Q)e^{aN(Q)}.
$$
Employing
$$
 DG-I_n=P_m(Dg_Q-I_n)+(P_m-I_n),
$$
we obtain
\begin{align}
 |DG(x)-I_n|^p
 \le{}&2^{p-1}e^{paN(Q)}|Dg_Q(x)-I_n|^p\notag\\
 &+2^{p-1}a^pN(Q)^p e^{paN(Q)}.
 \label{eq:john-pointwise-derivative-bound}
\end{align}

Observe that, since $Q\subset8Q$ and $|8Q|=8^n|Q|$, the local estimate \eqref{eq:whitney-local-derivative} gives
\begin{equation}\label{eq:local-Q-derivative-integral}
 \int_Q|Dg_Q-I_n|^p\,dx  \le C(n,p)\delta^p|Q|.
\end{equation}
Now summing \eqref{eq:john-pointwise-derivative-bound} over the Whitney interiors and using \eqref{eq:local-Q-derivative-integral},
\begin{equation}\label{eq:john-global-derivative-before-depth}
\int_\Omega|DG-I_n|^p\,dx
 \le C(n,p,J)\left(\delta^p \int_\Omega e^{paN(x)}\,dx + \delta^p  \int_\Omega N(x)^p e^{paN(x)}\,dx\right).
\end{equation}

Let $\tau_J$ be the constant given by \Cref{cor:john-depth-exponential}.  Define
\begin{equation}
 \delta_J
 :=\min\left\{
 \delta_{\mathrm{orb}}(n,p,J),
 \frac{\tau_J}{2pC_{\mathrm{edge}}}
 \right\}.
\end{equation}
Thus $pC_{\mathrm{edge}}\delta\le\tau_J/2$ whenever $0<\delta\le\delta_J$.
Then
$$
 e^{paN(x)}\le e^{\tau_JN(x)/2}  \le e^{\tau_JN(x)}.
$$
Moreover,
$$
 N(x)^p e^{paN(x)}  \le C(p,\tau_J)e^{\tau_JN(x)},
$$
since
$ \sup_{t\ge0}t^p e^{-\tau_Jt/2}<\infty.$
Substituting these bounds into \eqref{eq:john-global-derivative-before-depth} and applying \eqref{eq:john-depth-exponential}, we obtain
\begin{equation}\label{eq:john-global-derivative}
 \int_\Omega|DG-I_n|^p\,dx
 \le C(n,p,J)\delta^p|\Omega|.
\end{equation}

For completeness, we show that $G$ belongs to $W^{1,p}(\Omega;\R^n)$.  Indeed, $G$ is finite throughout $\Omega$.  For every $x_0\in\Omega$, continuity of $f$ gives a neighborhood $U\subset\subset\Omega$ of $x_0$ such that $f(U)$ stays in a compact subset of the finite locus of $\Phi^{-1}$.  The   Sobolev chain rule therefore gives 
$$
G=\Phi^{-1}\circ f\in W^{1,n}_{\mathrm{loc}}(\Omega;\mathbb R^n).
$$
Formula \eqref{eq:john-chain-rule} identifies its weak derivative almost everywhere on each Whitney cube, hence almost everywhere in $\Omega$ since the union of Whitney cube boundaries has measure zero.
The estimate \eqref{eq:john-global-derivative}  gives $DG\in L^p(\Omega)$. Then the uniform bound \eqref{eq:john-global-uniform}, together with the boundedness of $\Omega$, upgrades $G$ to $W^{1,p}(\Omega)$. Now taking the $p$-th root of \eqref{eq:john-global-derivative} and combining it with \eqref{eq:john-global-uniform} proves \Cref{thm:john-main}.
\end{proof}

\section{The weighted theorem on bounded connected domains}\label{sec:weighted}

Throughout this section, $\Omega\subset\R^n$ is bounded, open, and connected.  We use   Whitney family $\mathscr W$ given by \Cref{lem:deep-whitney}, choose a cube $Q_\ast\in\mathscr W$ of maximal sidelength given by \Cref{cor:max-cube}, and write
$ \ell_\ast:=\ell(Q_\ast).$
Then
\begin{equation}\label{eq:bounded-side-bound}
\ell(Q)\le \ell_\ast \qquad \text{ for any } \ Q\in\mathscr W.
\end{equation}
The local normalizer $\varphi_Q$ and the local normalized function
$$
 g_Q=\varphi_Q^{-1}\circ f
$$
for $Q\in\mathscr W$ are those fixed in \Cref{prop:whitney-local-gauges}.  In this section we use the normalized Lorentz estimate \eqref{eq:edge-Lorentz-matrix}. 

We define the breadth-first Whitney tree as follows. 
\begin{definition} \label{def:breadth-first-tree}
Let $\mathscr G$ be the adjacency graph of $\mathscr W$, with root $Q_\ast$.  For $Q\in\mathscr W$, let
$$
 d_T(Q):=\dist_{\mathscr G}(Q,Q_\ast)
$$
be the graph distance from $Q_\ast$ to $Q$.  For every $Q\ne Q_\ast$, choose one adjacent cube $\operatorname{par}(Q)$ satisfying
$$
 d_T(\operatorname{par}(Q))=d_T(Q)-1.
$$
\end{definition}
We show that the edges
$$
 \{\operatorname{par}(Q),Q\},  \qquad Q\in\mathscr W\setminus\{Q_\ast\},
$$
form a rooted spanning tree $T$, and we call $d_T(Q)$
the \emph{depth} of $Q$.
\begin{lemma} 
The construction in \Cref{def:breadth-first-tree} gives a rooted spanning tree.  If $N_W=N_W(n)$ is the degree bound in \Cref{lem:deep-whitney}, then
\begin{equation}\label{eq:number-at-depth}
 \#\{Q\in\mathscr W:d_T(Q)=m\}
 \le N_W^m
 \qquad  \text{ for any }\ m\in\mathbb N_0.
\end{equation}
\end{lemma}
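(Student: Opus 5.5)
The plan is to use the connectedness of the adjacency graph $\mathscr G$ (stated before \Cref{thm:weighted-main}, and following from connectedness of $\Omega$ together with (W1) and \eqref{eq:doubled-whitney-cover}). This makes $d_T(Q)$ finite for every $Q\in\mathscr W$. The parent map is then well defined: if $d_T(Q)=m\ge1$, a shortest path from $Q_\ast$ to $Q$ has a penultimate vertex at distance exactly $m-1$, and that vertex is adjacent to $Q$. So the choice in \Cref{def:breadth-first-tree} is possible.

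\emph{Spanning tree.} Iterating $\operatorname{par}$ from any $Q$ strictly decreases the depth by one at each step. Hence after exactly $d_T(Q)$ steps we reach the unique vertex of depth $0$, namely $Q_\ast$. Therefore the subgraph $T$ with edges $\{\operatorname{par}(Q),Q\}$ is connected and contains every vertex.

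To see that $T$ is acyclic, I plan to orient each edge from the child to the parent. Every vertex other than $Q_\ast$ has out-degree exactly one, and $Q_\ast$ has out-degree zero. Suppose $T$ had a cycle, and let $Q$ be a vertex of maximal depth on it. Both neighbours of $Q$ on the cycle are joined to $Q$ by tree edges, and each such edge is either $\{\operatorname{par}(Q),Q\}$ or $\{Q,Q'\}$ with $\operatorname{par}(Q')=Q$. In the second case $d_T(Q')=d_T(Q)+1$, contradicting maximality. So both neighbours would equal $\operatorname{par}(Q)$, which is impossible in a cycle of length at least $3$ with distinct vertices.

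Alternatively, I can count edges on finite subtrees, or simply note that the unique parent chain gives a unique path to the root. Since $\mathscr W$ may be infinite, I would phrase this via the unique-path property rather than by counting edges. With $T$ rooted at $Q_\ast$, the tree distance then coincides with $d_T$, because the tree path to the root has length $d_T(Q)$ and cannot be shorter than the graph distance.

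\emph{Counting bound.} I will argue by induction on $m$. For $m=0$ the set is $\{Q_\ast\}$, which has $1=N_W^0$ element. For $m\ge1$, every $Q$ at depth $m$ is adjacent to its parent, which lies at depth $m-1$. So the depth-$m$ layer is contained in the union, over $P$ at depth $m-1$, of the neighbours of $P$, and each $P$ has at most $N_W$ neighbours by (W4). This gives $\#\{d_T=m\}\le N_W\cdot\#\{d_T=m-1\}\le N_W^m$. If $N_W=0$, the graph is a single vertex and the bound is trivial.

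The only mildly delicate point is the acyclicity argument in the possibly infinite graph, which the maximal-depth-on-a-cycle argument handles. The rest is routine.
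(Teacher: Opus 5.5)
Your proposal is correct and follows essentially the paper's route: the depth drops by one along each parent edge, which gives both acyclicity and a path to the root, and the degree bound (W4) contributes a factor $N_W$ per layer. Your maximal-depth-on-a-cycle argument and your layer-by-layer induction only spell out what the paper states more briefly (no cycles because depth decreases, and counting walks of length $m$ from $Q_\ast$). Your added checks, that parents exist by connectedness of the graph and that the tree distance agrees with the graph distance $d_T$, are also correct.
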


\begin{proof}
According to the construction, every nonroot vertex $Q$ has exactly one selected parent $\operatorname{par}(Q)$, and the depth decreases by one whenever one follows a parent edge $\{\operatorname{par}(Q),Q\}$.  Hence the selected graph contains no cycle.  Repeatedly following parents from a cube of depth $m$ reaches the root after exactly $m$ steps, so the selected graph is connected and contains every vertex.  It is therefore a spanning tree.

A vertex at depth $m$ is reached from the root by a walk of length $m$ in the original adjacency graph.  At each step there are at most $N_W$ choices.  The number of such walks is at most $N_W^m$, which proves \eqref{eq:number-at-depth}.
\end{proof}

For a nonroot cube $Q$, put
$$
 P:=\operatorname{par}(Q).
$$
Define the normalized similarity
\begin{equation}\label{eq:tree-CQ-definition}
 \mathfrak S_Q:=S_P\circ S_Q^{-1}
\end{equation}
and the normalized edge transition
\begin{equation}\label{eq:tree-EQ-definition}
 E_Q:=E_{P,Q}
 =S_Q\circ\varphi_P^{-1}\circ\varphi_Q\circ S_Q^{-1}.
\end{equation}

We next show the compactness of the inter-cube similarities.
\begin{lemma}
There is a constant $C_{\rm par}=C_{\rm par}(n)\ge1$ such that
\begin{equation}\label{eq:CQ-Lorentz-bound}
 \|\mathcal R(\mathfrak S_Q)\|_{\mathrm{op}}
 +\|\mathcal R(\mathfrak S_Q)^{-1}\|_{\mathrm{op}} \le C_{\rm par}
\end{equation}
for every nonroot cube $Q$.
\end{lemma}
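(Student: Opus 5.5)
We show that the similarities $\mathfrak S_Q$ range over a compact subset of $\Mob(n)$ and then invoke continuity of $\mathcal R$ from \Cref{prop:lorentz-realization}, as in the proof of \Cref{cor:lorentz}.

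\emph{Step 1: explicit form of $\mathfrak S_Q$.} Write $P=\operatorname{par}(Q)$. From \eqref{eq:defn-SQ},
$$
\mathfrak S_Q(y)=S_P\bigl(x_Q+\ell(Q)y\bigr)=\frac{\ell(Q)}{\ell(P)}\,y+\frac{x_Q-x_P}{\ell(P)}.
$$
So $\mathfrak S_Q$ is a dilation $y\mapsto \rho y+b$, with $\rho:=\ell(Q)/\ell(P)$ and $b:=(x_Q-x_P)/\ell(P)$.

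\emph{Step 2: compact range of the parameters.} Since $P\sim Q$, \eqref{eq:adjacent-size-comparability} gives $\rho\in[\tfrac14,4]$. Adjacency also gives $\interior(2P)\cap\interior(2Q)\neq\varnothing$. Taking a common point and using the $\ell^\infty$ triangle inequality,
$$
|x_Q-x_P|_\infty<\ell(P)+\ell(Q)\le 5\ell(P).
$$
Hence $|b|\le 5\sqrt n$. The parameter set
$$
\mathcal P:=\{(\rho,b):\ \rho\in[\tfrac14,4],\ |b|\le 5\sqrt n\}
$$
is compact and depends only on $n$.

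\emph{Step 3: transfer to the Lorentz side.} The map $(\rho,b)\mapsto(y\mapsto\rho y+b)$ is continuous from $\mathcal P$ into $\Mob(n)$, with the topology for which \Cref{prop:lorentz-realization} holds. The maps $T\mapsto\mathcal R(T)$ and inversion in $O^\uparrow(n+1,1)$ are continuous. Therefore the two functions
$$
(\rho,b)\mapsto\|\mathcal R(\mathfrak S)\|_{\mathrm{op}},\qquad (\rho,b)\mapsto\|\mathcal R(\mathfrak S)^{-1}\|_{\mathrm{op}}
$$
are continuous on the compact set $\mathcal P$, so they are bounded. This bound is $C_{\rm par}(n)$. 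Since $\|M\|_{\mathrm{op}}\ge1$ by \Cref{lem:matrix-norm-bound}, we may take $C_{\rm par}\ge1$.

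\emph{Main obstacle: the topology on $\Mob(n)$.} We need a topology in which the parameter map of Step 3 is continuous. If that is in doubt, we bypass it by factoring
$$
\mathfrak S_Q=\tau_b\circ\delta_\rho ,
$$
where $\tau_b$ is a translation and $\delta_\rho$ a dilation. We then write $\mathcal R(\tau_b)$ and $\mathcal R(\delta_\rho)$ explicitly via the null lift \eqref{eq:euclidean-null-lift}. Using the property $M\mathcal V(x)=\lambda\,\mathcal V(T(x))$ for $T$ a translation or a dilation, one obtains:

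\begin{itemize}
\item $\mathcal R(\tau_b)$ is a parabolic matrix whose entries are polynomial in $b$, of degree at most $2$.
\item $\mathcal R(\delta_\rho)$ is a boost with entries $\tfrac12(\rho\pm\rho^{-1})$ in the last two coordinates.
\end{itemize}

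Both are bounded on $\mathcal P$. The bound then follows from \eqref{eq:lorentz-homomorphism} and \eqref{eq:inverse-norm-lorentz}, which give the inverse the same norm.
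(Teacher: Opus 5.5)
Your proof is correct and follows the paper's argument. You write $\mathfrak S_Q$ explicitly as $y\mapsto\rho y+b$, use adjacency to get $\rho\in[\tfrac14,4]$ and $|b|\le 5\sqrt n$, and then combine compactness of this parameter set with continuity of $\mathcal R$ from \Cref{prop:lorentz-realization}.

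Your fallback is also correct: the translation gives a parabolic matrix with entries polynomial in $b$, the dilation gives a boost with entries $\tfrac12(\rho\pm\rho^{-1})$, and $\|M^{-1}\|_{\mathrm{op}}=\|M\|_{\mathrm{op}}$ handles the inverse. This version yields an explicit constant without appealing to the topology on $\Mob(n)$.
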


\begin{proof}
Writing $P=\operatorname{par}(Q)$, by the definition of $S_Q$ in \eqref{eq:defn-SQ}  one has
\begin{equation}
 \mathfrak S_Q(y)
 =a_Q+\lambda_Qy,
 \qquad
 a_Q:=\frac{x_Q-x_P}{\ell(P)},
 \qquad
 \lambda_Q:=\frac{\ell(Q)}{\ell(P)}.
\end{equation}
Since $P\sim Q$, \eqref{eq:adjacent-size-comparability} gives
$$
 \frac14\le\lambda_Q\le4.
$$
Moreover, the intersection of $2P$ and $2Q$ implies
$$
 |x_Q-x_P|_\infty<\ell(P)+\ell(Q)\le5\ell(P),
$$
and hence
$$
 |a_Q|\le5\sqrt n.
$$
Thus the similarities $\mathfrak S_Q$ belong to a compact subset of the similarity group.  Their inverses belong to another compact subset.  Then the continuity of the finite-dimensional representation $\mathcal R$, proved in \Cref{prop:lorentz-realization}, gives \eqref{eq:CQ-Lorentz-bound}.
\end{proof}

\subsection{The root-relative chain}
Set
\begin{equation}\label{eq:weighted-root-normalizer}
 \Phi:=\varphi_{Q_\ast}.
\end{equation}
For $Q\in\mathscr W$, define the M\"obius transformation
\begin{equation}\label{eq:HQ-definition}
 H_Q
 :=S_Q\circ\Phi^{-1}\circ\varphi_Q\circ S_Q^{-1}.
\end{equation}
Observe that, at the root of the tree, 
\begin{equation}
 H_{Q_\ast}=\operatorname{id}_{\whR}.
\end{equation}

In addition, for any $Q\ne Q_\ast$ and $P=\operatorname{par}(Q)$, we note that
\begin{equation}\label{eq:exact-tree-recurrence}
 H_Q
 =\mathfrak S_Q^{-1}\circ H_P\circ \mathfrak S_Q\circ E_Q.
\end{equation}
Indeed, according to \eqref{eq:tree-CQ-definition}, \eqref{eq:tree-EQ-definition}, and \eqref{eq:HQ-definition}, we compute
\begin{align*}
 \mathfrak S_Q^{-1}\circ H_P\circ \mathfrak S_Q\circ E_Q
 ={}&
 S_Q\circ S_P^{-1}
 \circ S_P\circ\Phi^{-1}\circ\varphi_P\circ S_P^{-1}
 \circ S_P\circ S_Q^{-1}
 \\
 &\circ S_Q\circ\varphi_P^{-1}\circ\varphi_Q\circ S_Q^{-1}
 \\
 ={}&
 S_Q\circ\Phi^{-1}\circ\varphi_Q\circ S_Q^{-1}
 =H_Q.
\end{align*}
Note that each equality comes from an identity of M\"obius transformations on $\whR$, so no Euclidean pole restriction is needed in this calculation. Thus we conclude \eqref{eq:exact-tree-recurrence}. 

\begin{proposition} \label{prop:root-relative-Lorentz} 
Assume \eqref{eq:small-dist}. 
There exist constants
$$
 \delta_{\mathrm{tree}}=\delta_{\mathrm{tree}}(n,p)>0,
 \qquad  A=A(n,p)>1,
 \qquad  C_{\rm{tree}}=C_{\rm{tree}}(n,p)>0,
$$
such that, whenever $0<\delta\le\delta_{\mathrm{tree}}$, one has
\begin{equation}\label{eq:HQ-matrix-size}
 \|\mathcal R(H_Q)\|_{\mathrm{op}}
 \le A^{d_T(Q)}
\end{equation}
and
\begin{equation}\label{eq:HQ-matrix-difference}
 \|\mathcal R(H_Q)-I_{n+2}\|_{\mathrm{op}}
 \le C_{\rm tree}\delta A^{d_T(Q)}
\end{equation}
for every $Q\in\mathscr W$.
\end{proposition}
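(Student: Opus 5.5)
The plan is an induction on the depth $d_T(Q)$, using the exact recurrence \eqref{eq:exact-tree-recurrence} together with the homomorphism property \eqref{eq:lorentz-homomorphism}. Applying $\mathcal R$ to \eqref{eq:exact-tree-recurrence} gives
$$
\mathcal R(H_Q)=\mathcal R(\mathfrak S_Q)^{-1}\,\mathcal R(H_P)\,\mathcal R(\mathfrak S_Q)\,\mathcal R(E_Q),\qquad P=\operatorname{par}(Q).
$$
The matrix factors are controlled by two earlier results. \Cref{cor:lorentz} gives $\|\mathcal R(E_Q)-I_{n+2}\|_{\mathrm{op}}\le C_{\mathcal L}\delta$ for $\delta\le\delta_{\mathcal L}$, and \eqref{eq:CQ-Lorentz-bound} bounds the conjugating similarities by $C_{\rm par}$. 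We take $\delta_{\mathrm{tree}}:=\min\{\delta_{\mathrm{loc}},\delta_{\mathcal L},1/C_{\mathcal L}\}$, so that $\|\mathcal R(E_Q)\|_{\mathrm{op}}\le 2$. The base case is $H_{Q_\ast}=\operatorname{id}$, for which $\mathcal R(H_{Q_\ast})=I_{n+2}$ and both inequalities hold trivially.

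\textbf{Size bound.} Submultiplicativity applied to the recurrence gives
$$
\|\mathcal R(H_Q)\|_{\mathrm{op}}\le C_{\rm par}^2\cdot 2\,\|\mathcal R(H_P)\|_{\mathrm{op}}.
$$
With $A\ge 2C_{\rm par}^2$, induction yields \eqref{eq:HQ-matrix-size}.

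\textbf{Difference bound.} Write $M_Q:=\mathcal R(H_Q)-I_{n+2}$ and $\mathcal C:=\mathcal R(\mathfrak S_Q)$. The key observation is that conjugation fixes the identity:
$$
\mathcal C^{-1}\mathcal R(H_P)\,\mathcal C-I_{n+2}=\mathcal C^{-1}M_P\,\mathcal C .
$$
Hence
$$
M_Q=\mathcal C^{-1}M_P\,\mathcal C\,\mathcal R(E_Q)+\bigl(\mathcal R(E_Q)-I_{n+2}\bigr),
$$
and therefore
$$
\|M_Q\|_{\mathrm{op}}\le 2C_{\rm par}^2\|M_P\|_{\mathrm{op}}+C_{\mathcal L}\delta .
$$
Assume inductively that $\|M_P\|_{\mathrm{op}}\le C_{\rm tree}\delta A^{d-1}$ with $d=d_T(Q)$. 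Then
$$
\|M_Q\|_{\mathrm{op}}\le \delta\bigl(2C_{\rm par}^2C_{\rm tree}A^{d-1}+C_{\mathcal L}\bigr).
$$
It suffices that $2C_{\rm par}^2/A+C_{\mathcal L}/(C_{\rm tree}A^{d})\le1$ for all $d\ge1$. This follows, for instance, from $A:=4C_{\rm par}^2$ and $C_{\rm tree}:=C_{\mathcal L}$, because then $2C_{\rm par}^2/A=1/2$ and $C_{\mathcal L}/(C_{\rm tree}A^d)\le 1/A\le1/2$.

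\textbf{Main point to get right.} The only genuine issue is ensuring that the $\delta$-smallness does not degrade along the tree. This is avoided because the $O(\delta)$ error is added fresh at each step and is only multiplied by the fixed factor $2C_{\rm par}^2$, never compounded with $\|M_P\|$ beyond linear order. All pole issues are irrelevant, since everything is computed at the level of the matrices in $O^{\uparrow}(n+1,1)$.
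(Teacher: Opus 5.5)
Your proposal is correct and follows the paper's proof step for step. It uses the same matrix recurrence $\mathcal R(H_Q)=\mathcal R(\mathfrak S_Q)^{-1}\mathcal R(H_P)\mathcal R(\mathfrak S_Q)\mathcal R(E_Q)$, the same splitting of $\mathcal R(H_Q)-I_{n+2}$ into a conjugated parent error plus the fresh edge error $\mathcal R(E_Q)-I_{n+2}$, and the same induction on depth. The only difference is the choice of constants: the paper takes $A=2C_{\rm par}^2(1+C_{\mathcal L}\delta_{\mathrm{tree}})$ and $C_{\rm tree}=2C_{\mathcal L}$, while your $A=4C_{\rm par}^2$ and $C_{\rm tree}=C_{\mathcal L}$ also close the induction, since $C_{\rm par}\ge1$ gives $A\ge4$.
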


\begin{proof}
Let
$$
 \mathbf H_Q:=\mathcal R(H_Q),
 \qquad \mathbf S_Q:=\mathcal R(\mathfrak S_Q),
 \qquad \mathbf E_Q:=\mathcal R(E_Q).
$$
Up to decreasing $\delta_{\mathrm{tree}}>0$ if necessary, we may assume that
$ C_{\mathcal L}(n,p)\delta_{\mathrm{tree}}\le 1,$
and
$$
 \delta_{\mathrm{tree}} \le\min\{\delta_{\mathrm{loc}}(n,p),\delta_{\mathcal L}(n,p)\}, 
$$
where $\delta_{\rm{loc}}>0$ is determined in Theorem~\ref{thm:local-main} and $\delta_{\mathcal L}>0$ is given by \Cref{cor:lorentz}. 

By \eqref{eq:edge-Lorentz-matrix},
\begin{equation}\label{eq:EQ-size}
 \|\mathbf E_Q-I_{n+2}\|_{\mathrm{op}} \le C_{\mathcal L}\delta \quad \text{ and }
 \quad \|\mathbf E_Q\|_{\mathrm{op}} \le 1+C_{\mathcal L}\delta.
\end{equation}
By \eqref{eq:exact-tree-recurrence} and the homomorphism property of $\mathcal R$ \eqref{eq:lorentz-homomorphism},
\begin{equation}\label{eq:matrix-tree-recurrence}
 \mathbf H_Q =\mathbf S_Q^{-1}\mathbf H_P\mathbf S_Q\mathbf E_Q.
\end{equation}
Let $C_{\rm par}$ be the constant in \eqref{eq:CQ-Lorentz-bound}, and set
\begin{equation}\label{eq:defn-A}
 A:=2 C_{\rm par}^2 (1+C_{\mathcal L}\delta_{\mathrm{tree}})>1.
\end{equation}
Since $\delta<\delta_{\rm tree}$,  then \eqref{eq:CQ-Lorentz-bound}, \eqref{eq:EQ-size}, and \eqref{eq:matrix-tree-recurrence} give
$$
 \|\mathbf H_Q\|_{\mathrm{op}}  \le\frac A2\|\mathbf H_P\|_{\mathrm{op}}.
$$
Starting from $\mathbf H_{Q_\ast}=I_{n+2}$, induction on the depth gives, as $d_T(Q)\ge 1$,
$$
 \|\mathbf H_Q\|_{\mathrm{op}} \le\left(\frac A2\right)^{d_T(Q)} \le A^{d_T(Q)},
$$
which is \eqref{eq:HQ-matrix-size}.

For the difference from the identity, rewrite \eqref{eq:matrix-tree-recurrence} as
\begin{equation*}
 \mathbf H_Q-I_{n+2} =\mathbf  S_Q^{-1}(\mathbf H_P-I_{n+2})\mathbf   S_Q\mathbf E_Q +(\mathbf E_Q-I_{n+2}).
\end{equation*}
Therefore, \eqref{eq:EQ-size} and \eqref{eq:defn-A} yield
\begin{equation}\label{eq:difference-recursive-bound}
 \|\mathbf H_Q-I_{n+2}\|_{\mathrm{op}}
 \le\frac A2\|\mathbf H_P-I_{n+2}\|_{\mathrm{op}}
 +C_{\mathcal L}\delta.
\end{equation}

We claim that \eqref{eq:HQ-matrix-difference} holds with
$$
 C_{\rm tree}:=2C_{\mathcal L},
$$
and prove it by induction. 
At the root, $H_{Q_\ast}=\operatorname{id}$, hence
$\mathcal R(H_{Q_\ast})-I_{n+2}=0$, so the estimate is immediate. Now suppose that $d_T(Q)=m\ge1$ and that the estimate holds for the parent. Then \eqref{eq:difference-recursive-bound}, together with the induction hypothesis, gives
\begin{align*}
 \|\mathbf H_Q-I_{n+2}\|_{\mathrm{op}}
 &\le \frac A2 C_{\rm tree}\delta A^{m-1}+C_{\mathcal L}\delta
 =\frac{C_{\rm tree}}{2}\delta A^m+C_{\mathcal L}\delta  \\
 &\le\frac{C_{\rm tree}}{2}\delta A^m  +\frac{C_{\rm tree}}{2}\delta A^m
 =C_{\rm tree}\delta A^m,
\end{align*}
where we used $A^m\ge1$ and $C_{\rm tree}=2C_{\mathcal L}$.  This implies the proposition.
\end{proof}

We now change from the earlier neighboring cube coordinates to the fixed root coordinates.  Define
\begin{equation}\label{eq:RQ-ZQ-definitions}
 R_Q:=S_{Q_\ast}\circ S_Q^{-1},
 \qquad
 Z_Q:=R_Q\circ H_Q.
\end{equation}
Observe that, at the root one has $R_{Q_\ast}=Z_{Q_\ast}=\operatorname{id}_{\whR}$.

\begin{corollary}
Under the assumptions of \Cref{prop:root-relative-Lorentz}, there exist
$$
 G_0=G_0(n,p)>1,
 \qquad
 C_Z=C_Z(n,p)>0
$$
such that, for $m=d_T(Q)$,
\begin{equation}\label{eq:RQ-ZQ-size}
 \|\mathcal R(R_Q)\|_{\mathrm{op}}
 +\|\mathcal R(Z_Q)\|_{\mathrm{op}}
 \le2G_0^m
\end{equation}
and
\begin{equation}\label{eq:ZQ-RQ-difference}
 \|\mathcal R(Z_Q)-\mathcal R(R_Q)\|_{\mathrm{op}}
 \le C_Z\delta G_0^m.
\end{equation}
\end{corollary}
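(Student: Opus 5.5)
The plan is to reduce both estimates to \Cref{prop:root-relative-Lorentz} via the homomorphism property \eqref{eq:lorentz-homomorphism}. Write $\mathbf R_Q:=\mathcal R(R_Q)$ and $\mathbf H_Q:=\mathcal R(H_Q)$. Since $Z_Q=R_Q\circ H_Q$, we have
$$
\mathcal R(Z_Q)=\mathbf R_Q\mathbf H_Q,\qquad \mathcal R(Z_Q)-\mathcal R(R_Q)=\mathbf R_Q(\mathbf H_Q-I_{n+2}).
$$
Once an exponential bound $\|\mathbf R_Q\|_{\mathrm{op}}\le B^{m}$ is available, with $m=d_T(Q)$ and $B=B(n)\ge1$, we get
$$
\|\mathcal R(Z_Q)\|_{\mathrm{op}}\le B^mA^m,\qquad \|\mathcal R(Z_Q)-\mathcal R(R_Q)\|_{\mathrm{op}}\le C_{\rm tree}\delta (AB)^m
$$
from \eqref{eq:HQ-matrix-size} and \eqref{eq:HQ-matrix-difference}. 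Taking $G_0:=AB>1$ and $C_Z:=C_{\rm tree}$, and noting $B^m\le G_0^m$, gives \eqref{eq:RQ-ZQ-size} and \eqref{eq:ZQ-RQ-difference}.

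The main step is therefore the bound on $\|\mathbf R_Q\|_{\mathrm{op}}$. Follow the parent chain $Q_\ast=P_0,P_1,\dots,P_m=Q$ in $T$, where $P_{k-1}=\operatorname{par}(P_k)$. The maps telescope:
$$
R_Q=S_{Q_\ast}\circ S_Q^{-1}=(S_{P_0}\circ S_{P_1}^{-1})\circ\cdots\circ(S_{P_{m-1}}\circ S_{P_m}^{-1})=\mathfrak S_{P_1}\circ\cdots\circ\mathfrak S_{P_m},
$$
by \eqref{eq:tree-CQ-definition}. Then \eqref{eq:lorentz-homomorphism} and \eqref{eq:CQ-Lorentz-bound} give $\|\mathbf R_Q\|_{\mathrm{op}}\le C_{\rm par}^m$, so we may take $B=C_{\rm par}$. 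At the root, $m=0$, $R_{Q_\ast}=Z_{Q_\ast}=\mathrm{id}$ and all bounds are trivial, with left sides $2\le 2G_0^0$ and $0$ respectively.

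The only subtlety, and the point to check carefully, is that the composition identities hold as identities in $\Mob(n)$ on $\whR$ rather than pointwise in $\R^n$. All maps here are similarities or Möbius maps, so no pole issues arise, and the representation $\mathcal R$ is an honest group homomorphism. A cruder alternative, bounding $\|\mathbf R_Q\|$ directly from the scale $\ell(Q)/\ell_\ast$ and the displacement $|x_Q-x_{Q_\ast}|/\ell_\ast$, would fail to be uniform because $\ell(Q)$ may be tiny. The telescoping over the tree avoids this, and the resulting exponential growth is absorbed by $\theta^{d_T(Q)}$ in the weight.
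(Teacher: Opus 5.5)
Your proposal is correct and is essentially the paper's proof. The paper writes $R_Q=R_P\circ\mathfrak S_Q$ and inducts on depth, of which your telescoping along the parent chain is the unrolled form, to get $\|\mathcal R(R_Q)\|_{\mathrm{op}}\le C_{\rm par}^{d_T(Q)}$. It then uses $\mathcal R(Z_Q)-\mathcal R(R_Q)=\mathcal R(R_Q)\bigl(\mathcal R(H_Q)-I_{n+2}\bigr)$ with the same choices $G_0=AC_{\rm par}$ and $C_Z=C_{\rm tree}$.
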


\begin{proof}
If $Q\ne Q_\ast$ has a parent $P$, then
$$
 R_Q
 =S_{Q_\ast}\circ S_Q^{-1}
 =S_{Q_\ast}\circ S_P^{-1}\circ S_P\circ S_Q^{-1}
 =R_P\circ \mathfrak S_Q.
$$
Hence \eqref{eq:CQ-Lorentz-bound} and the induction give
\begin{equation}\label{eq:RQ-size-alone}
 \|\mathcal R(R_Q)\|_{\mathrm{op}}
 \le C_{\rm par}^{d_T(Q)}.
\end{equation}
Since $Z_Q=R_Q\circ H_Q$, \eqref{eq:HQ-matrix-size} and \eqref{eq:RQ-size-alone} imply
\begin{equation}\label{eq:ZQ-size-alone}
 \|\mathcal R(Z_Q)\|_{\mathrm{op}}
 \le (AC_{\rm par})^{d_T(Q)}.
\end{equation}
Set
$$
 G_0:=AC_{\rm par}>1.
$$
Since $A>1$, combining both \eqref{eq:RQ-size-alone} and \eqref{eq:ZQ-size-alone} implies \eqref{eq:RQ-ZQ-size}.

Finally, as $Z_Q=R_Q\circ H_Q$,
\begin{align*}
 \mathcal R(Z_Q)-\mathcal R(R_Q)
 &=\mathcal R(R_Q)\bigl(\mathcal R(H_Q)-I_{n+2}\bigr).
\end{align*}
Applying \eqref{eq:RQ-size-alone} and \eqref{eq:HQ-matrix-difference}, we conclude 
$$
 \|\mathcal R(Z_Q)-\mathcal R(R_Q)\|_{\mathrm{op}}  \le C_{\rm tree}\delta (AC_{\rm par})^{d_T(Q)}.
$$
This is \eqref{eq:ZQ-RQ-difference} with $C_Z=C_{\rm tree}$.
\end{proof}

\subsection{The fixed-root estimate on one cube}

Recall that
$$
 \Qzero=\left(-\frac 1 2,\frac 1 2\right)^n.
$$
Thus $S_Q(Q)=\Qzero$ and $|\Qzero|=1$.
We next study the derivatives of $\sigma$, the inverse stereographic projection. 
\begin{lemma} \label{lem:sigma-derivatives}
There exists $C_\sigma=C_\sigma(n)>0$ such that
\begin{equation}\label{eq:sigma-C2}
 \|D\sigma\|_{L^\infty(\R^n)}+\|D^2\sigma\|_{L^\infty(\R^n)}
 \le C_\sigma.
\end{equation}
\end{lemma}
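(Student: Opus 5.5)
The plan is to compute the derivatives of $\sigma$ explicitly from \eqref{eq:stereographic} and show that each entry is a rational function in $x$ that stays bounded on $\R^n$, because the numerator grows no faster than the denominator. Set $\rho(x):=1+|x|^2\ge1$. The components are
$$
\sigma_i(x)=\frac{2x_i}{\rho(x)}\quad(1\le i\le n),\qquad \sigma_{n+1}(x)=1-\frac{2}{\rho(x)}.
$$

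For the first derivatives we use $\partial_j\rho=2x_j$. This gives
$$
\partial_j\sigma_i=\frac{2\delta_{ij}}{\rho}-\frac{4x_ix_j}{\rho^2},\qquad \partial_j\sigma_{n+1}=\frac{4x_j}{\rho^2}.
$$
Since $|x_ix_j|\le|x|^2\le\rho$ and $|x_j|\le\rho^{1/2}$, each entry is bounded in absolute value by an absolute constant. Hence $\|D\sigma\|_{L^\infty(\R^n)}\le C(n)$, where the dependence on $n$ only enters through the number of entries in the Frobenius norm.

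For the second derivatives we differentiate once more. Every term has one of two forms. The first is $x^\gamma/\rho^k$ with $k\ge2$ and $|\gamma|\le 2k-2$; such terms arise from $\partial(\delta_{ij}/\rho)$, $\partial(x_ix_j/\rho^2)$ and $\partial(x_j/\rho^2)$. The second is a constant multiple of $\delta$'s divided by $\rho^2$. Concretely, $\partial_k\partial_j\sigma_i$ is a combination of the terms
$$
\frac{x_k}{\rho^2},\qquad \frac{x_i}{\rho^2},\qquad \frac{x_j}{\rho^2},\qquad \frac{x_ix_jx_k}{\rho^3},
$$
and $\partial_k\partial_j\sigma_{n+1}$ is a combination of $\delta_{jk}/\rho^2$ and $x_jx_k/\rho^3$. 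Each of these is bounded by an absolute constant, using $|x|^m\le\rho^{m/2}$ together with $m/2\le k$.

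Summing over the finitely many indices gives \eqref{eq:sigma-C2} with $C_\sigma$ depending only on $n$. I expect no real obstacle here. The only care needed is to keep track of powers of $\rho$ so that every term has degree at most $0$ at infinity. Continuity together with this decay at infinity also gives the bound directly, since every term tends to $0$ or stays bounded as $|x|\to\infty$.
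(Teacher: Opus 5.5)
Your proposal is correct and follows essentially the same route as the paper: you compute the first and second partial derivatives of $\sigma$ explicitly in terms of $1+|x|^2$, then bound each term using $|x|^m\le(1+|x|^2)^{m/2}$ whenever $m/2$ does not exceed the power of the denominator. Your derivative formulas match the paper's, and passing from entrywise bounds to a matrix norm costs only a dimensional factor, exactly as in the paper.
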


\begin{proof}
Write
$$
 d(x):=1+|x|^2.
$$
For $1\le i,j,k\le n$, the first $n$-components of $\sigma$ satisfy
$$
 \partial_j\sigma_i(x) =\frac{2\delta_{ij}}{d(x)}  -\frac{4x_ix_j}{d(x)^2},
$$
while the last component satisfies
$$
 \partial_j\sigma_{n+1}(x) =\frac{4x_j}{d(x)^2}.
$$
Differentiating once more, we have
$$
 \partial_k\partial_j\sigma_i(x)
 =  -\frac{4\delta_{ij}x_k}{d(x)^2} -\frac{4(\delta_{ik}x_j+\delta_{jk}x_i)}{d(x)^2} +\frac{16x_ix_jx_k}{d(x)^3}, 
 $$
and
 $$
 \partial_k\partial_j\sigma_{n+1}(x)
 = \frac{4\delta_{jk}}{d(x)^2} -\frac{16x_jx_k}{d(x)^3}.
$$

Observe that, for every integer $m \ge 0$ and every $s \ge m/2$, one has
$$
 \sup_{x\in\R^n}\frac{|x|^m}{(1+|x|^2)^s}<\infty.
$$
Consequently, each term appearing in the derivative formulas above is uniformly bounded in $x$. Consequently, each term appearing in the derivative formulas above is uniformly bounded in $x$. Passing from componentwise bounds to Euclidean operator norms introduces only a factor depending on the dimension. This proves \eqref{eq:sigma-C2}.
\end{proof}

Now for $Q\in\mathscr W$, we define the normalized local map
\begin{equation}\label{eq:hQ-definition}
 h_Q :=S_Q\circ\varphi_Q^{-1}\circ f\circ S_Q^{-1}.
\end{equation}
It is finite on $8\Qzero$.  Set
\begin{equation}\label{eq:uQ-v-definitions}
 \mathsf U_Q:=\sigma\circ h_Q,
 \qquad \mathsf V:=\sigma|_{\Qzero}.
\end{equation}

\begin{lemma} 
Assume \eqref{eq:small-dist}. Then
\begin{equation}\label{eq:uQ-v-uniform}
 \|\mathsf U_Q-\mathsf V\|_{L^\infty(\Qzero)} \le C(n,p)\delta,
\end{equation}
\begin{equation}\label{eq:uQ-v-derivative}
 \|D(\mathsf U_Q-\mathsf V)\|_{L^p(\Qzero)} \le C(n,p)\delta,
\end{equation}
and
\begin{equation}\label{eq:uQ-derivative-bound}
 \|D\mathsf U_Q\|_{L^p(\Qzero)} +\|D\mathsf V\|_{L^\infty(\Qzero)} \le C(n,p).
\end{equation}
\end{lemma}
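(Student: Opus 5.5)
The plan is to reduce everything to the scaled local estimates of \Cref{prop:whitney-local-gauges} and the bounds on $\sigma$ from \Cref{lem:sigma-derivatives}. First, I rewrite $h_Q$ in normalized coordinates. Since $g_Q=\varphi_Q^{-1}\circ f$ is finite on $8Q$, the map $h_Q=S_Q\circ g_Q\circ S_Q^{-1}$ is finite on $8\Qzero$. Moreover,
$$
h_Q(y)-y=\frac{g_Q(x_Q+\ell(Q)y)-(x_Q+\ell(Q)y)}{\ell(Q)},\qquad Dh_Q(y)=Dg_Q(x_Q+\ell(Q)y).
$$
Hence \eqref{eq:whitney-local-uniform} gives $\|h_Q-\operatorname{id}\|_{L^\infty(8\Qzero)}\le C'_{\mathrm{loc}}\delta$. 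By change of variables, \eqref{eq:whitney-local-derivative} gives $\|Dh_Q-\Id\|_{L^p(8\Qzero)}\le C(n,p)\delta$, and therefore also on $\Qzero$, since $|\Qzero|=1$.

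For \eqref{eq:uQ-v-uniform}, I use that $\sigma$ is globally Lipschitz by \eqref{eq:sigma-C2}. Pointwise on $\Qzero$ this gives
$$
|\mathsf U_Q-\mathsf V|=|\sigma(h_Q(y))-\sigma(y)|\le C_\sigma|h_Q(y)-y|\le C\delta.
$$

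For \eqref{eq:uQ-v-derivative}, I apply the Sobolev chain rule. Since $\sigma$ is smooth with bounded derivatives and $h_Q\in W^{1,p}(\Qzero)$ is continuous, we have $D\mathsf U_Q=D\sigma(h_Q)Dh_Q$ almost everywhere. I then split
$$
D\mathsf U_Q-D\mathsf V=D\sigma(h_Q)(Dh_Q-\Id)+\bigl(D\sigma(h_Q)-D\sigma(\operatorname{id})\bigr).
$$
The first term is bounded in $L^p(\Qzero)$ by $C_\sigma\|Dh_Q-\Id\|_{L^p}\le C\delta$. The second term is bounded pointwise by $\|D^2\sigma\|_\infty|h_Q-\operatorname{id}|\le C\delta$, using the mean value theorem along the segment from $y$ to $h_Q(y)$; this is legitimate because the $C^2$ bound holds on all of $\R^n$. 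Since $|\Qzero|=1$, this pointwise bound gives the $L^p$ bound.

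Finally, for \eqref{eq:uQ-derivative-bound}, I note that $\|D\mathsf V\|_{L^\infty}\le C_\sigma$. In addition,
$$
\|D\mathsf U_Q\|_{L^p}\le\|D\mathsf V\|_{L^p(\Qzero)}+\|D(\mathsf U_Q-\mathsf V)\|_{L^p}\le C_\sigma+C\delta,
$$
and this is at most $C(n,p)$ because $\delta\le\delta_{\mathrm{loc}}$.

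No step is a real obstacle. The only care needed is to justify the chain rule: $h_Q$ is continuous and finite on $8\overline{\Qzero}$ and lies in $W^{1,p}$ there, and $\sigma$ is $C^2$ with bounded derivatives, so the chain rule applies. I also need to track that the scaling introduces no $\ell(Q)$ factors.
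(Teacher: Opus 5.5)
Your proof is correct and follows the paper's argument. You rescale the estimates of \Cref{prop:whitney-local-gauges} to $\Qzero$, use the global $C^2$ bounds on $\sigma$ from \Cref{lem:sigma-derivatives}, and split $D\mathsf U_Q-D\mathsf V$ via the chain rule. The only cosmetic difference is that you pair the factor $D\sigma(h_Q)-D\sigma(\operatorname{id})$ with $\Id$ rather than with $Dh_Q$. This lets you skip the intermediate bound $\|Dh_Q\|_{L^p(\Qzero)}\le C(n,p)$, which the paper's ordering of the split requires.
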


\begin{proof}
Let $y\in\Qzero$ and $x=S_Q^{-1}(y)\in Q$.  By \eqref{eq:whitney-local-uniform},
\begin{equation}\label{eq:hQ-uniform-normalized}
 |h_Q(y)-y|
 =\frac{|g_Q(x)-x|}{\ell(Q)}
 \le C(n,p)\delta.
\end{equation}
Thus \eqref{eq:sigma-C2} and \eqref{eq:hQ-uniform-normalized} give \eqref{eq:uQ-v-uniform}.

Also, by the change of variables $x=S_Q^{-1}(y)$ and \eqref{eq:whitney-local-derivative}, we get
\begin{align}
 \int_{\Qzero}|Dh_Q-I_n|^p\,dy
 &=\ell(Q)^{-n}\int_Q|Dg_Q-I_n|^p\,dx\notag\\
 &\le\ell(Q)^{-n}\int_{8Q}|Dg_Q-I_n|^p\,dx \le C(n,p)\delta^p.
 \label{eq:hQ-derivative-normalized}
\end{align}
The  chain rule yields
\begin{equation}
\label{eq:sigma-composition-difference}
D\mathsf U_Q-D\mathsf V = \bigl[D\sigma(h_Q)-D\sigma(\iota_{\mathrm E})\bigr]Dh_Q +D\sigma(\iota_{\mathrm E})(Dh_Q-I_n),
\end{equation}
where $\iota_{\mathrm E}(y):=y$ is the Euclidean identity map on $\Qzero$ and the notation $\mathsf V=\sigma\circ \iota_{\mathrm E}$ in \eqref{eq:uQ-v-definitions} is used for the sphere-valued map.  By \eqref{eq:hQ-derivative-normalized} together with the triangle inequality
$$
 \|Dh_Q\|_{L^p(\Qzero)} \le |I_n||\Qzero|^{1/p}+C\delta  \le C(n,p).
$$
Applying the Lipschitz bound for $D\sigma$ from \eqref{eq:sigma-C2}, together with \eqref{eq:hQ-uniform-normalized} and \eqref{eq:hQ-derivative-normalized}, to \eqref{eq:sigma-composition-difference}, we obtain \eqref{eq:uQ-v-derivative}.  

Finally,
$$
 \|D\mathsf U_Q\|_{L^p} \le\|D(\mathsf U_Q-\mathsf V)\|_{L^p}+\|D\mathsf V\|_{L^p},
$$
and $D\mathsf V=D\sigma|_{\Qzero}$ is uniformly bounded by \eqref{eq:sigma-C2}.  This proves \eqref{eq:uQ-derivative-bound}.
\end{proof}

Recall the definition of $\Phi$ in \eqref{eq:weighted-root-normalizer}. 
We introduce the following globally defined sphere-valued mappings:
\begin{equation}\label{eq:F-F0-global-definitions}
 F := \widehat{S_{Q_\ast}\circ\Phi^{-1}}\circ\sigma\circ f,
 \qquad F_0 := \sigma\circ S_{Q_\ast},
\end{equation}
where the first expression is to be understood on $\whR$. 
Equivalently, at any point where the composition $S_{Q_\ast}\circ\Phi^{-1}\circ f$ is finite, we have
$$
F = \sigma\circ S_{Q_\ast}\circ\Phi^{-1}\circ f.
$$
Consequently, the map $F$ is well defined even at points for which $\Phi^{-1}(f(x))=\infty$. Furthermore, the map $\sigma$ has a uniformly bounded first derivative by \Cref{lem:sigma-derivatives}, and the fixed-sphere action $\widehat{S_{Q_\ast}\circ\Phi^{-1}}$ is smooth on the compact sphere. Thus, by the Sobolev chain rule we obtain
$$
 F\in W^{1,n}_{\mathrm{loc}}(\Omega;\R^{n+1}).
$$
In particular, $DF$ below denotes a globally defined weak derivative; the cubewise formulas describe its restriction almost everywhere on each Whitney cube.

\begin{lemma}\label{lem:local-global-representation}
Let $Q\in\mathscr W$, $x\in Q$, and $y=S_Q(x)\in\Qzero$.  Put
$$
 \mathbf Z_Q:=\mathcal R(Z_Q), \qquad \mathbf R_Q:=\mathcal R(R_Q),
$$
and define
$$
 A_Q:=\mathbf Z_Q^{\#},  \qquad B_Q:=\mathbf R_Q^{\#}.
$$
Then
\begin{equation}\label{eq:local-F-representation}
 F(x)=A_Q(\mathsf U_Q(y)), \qquad F_0(x)=B_Q(\mathsf V(y)).
\end{equation}
\end{lemma}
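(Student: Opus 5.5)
The plan is to reduce both identities in \eqref{eq:local-F-representation} to identities of M\"obius transformations on $\whR$, and then transport them to the sphere through \eqref{eq:hat-sharp} and the homomorphism property \eqref{eq:lorentz-homomorphism}.

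\textbf{Second identity.} By definition $R_Q=S_{Q_\ast}\circ S_Q^{-1}$, so
$$
F_0(x)=\sigma(S_{Q_\ast}(x))=\sigma\bigl(R_Q(S_Q(x))\bigr)=\sigma(R_Q(y)).
$$
Since $y\in\Qzero$ is finite and $R_Q$ is a similarity, we may write $\sigma\circ R_Q=\widehat{R_Q}\circ\sigma$ on $\R^n$. Proposition~\ref{prop:lorentz-realization} gives $\widehat{R_Q}=\mathcal R(R_Q)^\#=B_Q$. Hence $F_0(x)=B_Q(\sigma(y))=B_Q(\mathsf V(y))$.

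\textbf{First identity.} First record the pointwise identity in $\whR$:
$$
S_{Q_\ast}\circ\Phi^{-1}\circ f\circ S_Q^{-1}=R_Q\circ H_Q\circ h_Q=Z_Q\circ h_Q .
$$
This follows by substituting \eqref{eq:HQ-definition} and \eqref{eq:hQ-definition}. The inner factors $S_Q^{-1}\circ S_Q$ and $\varphi_Q\circ\varphi_Q^{-1}$ cancel as M\"obius maps of $\whR$, since $f$ takes finite values.

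Now apply $\sigma$ and use the defining relation $\widehat M\circ\sigma=\sigma\circ M$, valid on all of $\whR$ with $\sigma(\infty)=\mathbf e_{n+1}$. This gives
$$
F(x)=\widehat{S_{Q_\ast}\circ\Phi^{-1}}\bigl(\sigma(f(x))\bigr)=\sigma\bigl(S_{Q_\ast}\Phi^{-1}f(x)\bigr)=\sigma\bigl(Z_Q(h_Q(y))\bigr)=\widehat{Z_Q}\bigl(\sigma(h_Q(y))\bigr).
$$
Here $h_Q(y)$ is finite because $h_Q$ is finite on $8\Qzero$. The last equality holds even if $Z_Q(h_Q(y))=\infty$, since it is an identity of maps $\whR\to\Sn$. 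Finally, $\widehat{Z_Q}=\mathcal R(Z_Q)^\#=A_Q$, and $\sigma\circ h_Q=\mathsf U_Q$.

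\textbf{Main obstacle.} The only delicate point is the possibility that $\Phi^{-1}(f(x))=\infty$, which is exactly where Euclidean composition would fail. I will handle this by never composing in $\R^n$. Every step is either a group identity in $\Mob(n)$ or the intertwining relation $\sigma\circ M=\widehat M\circ\sigma$ on the compactified space. This matches the definition of $F$ in \eqref{eq:F-F0-global-definitions} as a composition on $\whR$. I also need that $\widehat{\,\cdot\,}$ respects composition, which is immediate from \eqref{eq:hat-operation}.
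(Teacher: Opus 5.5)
Your proposal is correct and follows essentially the same route as the paper. Both arguments verify the identity $Z_Q\circ h_Q=S_{Q_\ast}\circ\Phi^{-1}\circ f\circ S_Q^{-1}$ in $\whR$ and the identity $R_Q(y)=S_{Q_\ast}(x)$, and then pass to the sphere via $\sigma\circ M=\widehat M\circ\sigma$ and \eqref{eq:hat-sharp}, handling a possible value $\infty$ by composing in $\whR$ before applying $\sigma$.
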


\begin{proof}
By \eqref{eq:RQ-ZQ-definitions}, \eqref{eq:HQ-definition}, and \eqref{eq:hQ-definition}, since $y=S_Q(x)$, 
\begin{align*}
 Z_Q\bigl(h_Q(y)\bigr)
 &=S_{Q_\ast}\circ S_Q^{-1} \circ S_Q\circ\Phi^{-1}\circ\varphi_Q\circ S_Q^{-1}
   \circ S_Q\circ\varphi_Q^{-1}\circ f\circ S_Q^{-1}(y) \\
 &=S_{Q_\ast}\circ\Phi^{-1}(f(x)).
\end{align*}
Then by recalling \eqref{eq:hat-sharp} in \Cref{prop:lorentz-realization}
$$
 \widehat{Z_Q}=(\mathcal R(Z_Q))^{\#} = (\mathbf Z_Q)^{\#}=A_Q,
$$
we get from \eqref{eq:F-F0-global-definitions} that
$$
F(x)=\sigma\circ Z_Q(h_Q(y))
=A_Q\bigl(\sigma(h_Q(y))\bigr)
=A_Q\bigl(\mathsf U_Q(y)\bigr),
$$
which gives the first identity in \eqref{eq:local-F-representation}.  Indeed, all identities before applying $\sigma$ are identities in  $\widehat{\R}^{\,n}$. Therefore they remain valid even if
$\Phi^{-1}(f(x))=\infty$.

For the second identity,
$$
 R_Q(y)=S_{Q_\ast}\circ S_Q^{-1}(y)=S_{Q_\ast}(x),
$$
and therefore
$$
 B_Q(\mathsf V(y)) =\widehat{R_Q}(\sigma(y)) =\sigma(S_{Q_\ast}(x)) =F_0(x).
$$
\end{proof}

\begin{proposition}\label{prop:fixed-root-cube-estimate}
There exist constants
$$
 G=G(n,p)>1, \qquad C_{\mathrm{cube}}=C_{\mathrm{cube}}(n,p)>0
$$
such that, whenever $0<\delta\le\delta_{\mathrm{tree}}$,
\begin{equation}\label{eq:fixed-root-cube-estimate}
 \fint_Q|F-F_0|^p\,dx +\ell(Q)^p\fint_Q|DF-DF_0|^p\,dx
 \le C_{\mathrm{cube}}(\delta G^{ d_T(Q)})^p
\end{equation}
for every $Q\in\mathscr W$.
\end{proposition}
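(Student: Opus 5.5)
The plan is to work entirely in the normalized coordinates $y=S_Q(x)\in\Qzero$ and use the representation of \Cref{lem:local-global-representation}:
$$F\circ S_Q^{-1}=A_Q\circ\mathsf U_Q,\qquad F_0\circ S_Q^{-1}=B_Q\circ\mathsf V,$$
with $A_Q=\mathbf Z_Q^\#$ and $B_Q=\mathbf R_Q^\#$. After the change of variables $x=S_Q^{-1}(y)$, the left side of \eqref{eq:fixed-root-cube-estimate} equals
$$\int_{\Qzero}\bigl|A_Q\circ\mathsf U_Q-B_Q\circ\mathsf V\bigr|^p\,dy+\int_{\Qzero}\bigl|D(A_Q\circ\mathsf U_Q)-D(B_Q\circ\mathsf V)\bigr|^p\,dy .$$
The factor $\ell(Q)^p$ exactly cancels the chain-rule factor $\ell(Q)^{-1}$ coming from $DS_Q$, so no scale appears.

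First I split the zeroth-order difference as
$$A_Q(\mathsf U_Q)-B_Q(\mathsf V)=\bigl[A_Q(\mathsf U_Q)-A_Q(\mathsf V)\bigr]+\bigl[A_Q(\mathsf V)-B_Q(\mathsf V)\bigr].$$
\begin{itemize}
\item For the first bracket, I use the fact that $\mathsf U_Q$ and $\mathsf V$ take values in $\Sn$. The mean value theorem along a great-circle arc, together with the tangential bound \eqref{eq:projective-C1}, gives a Lipschitz constant $2\|\mathbf Z_Q\|^2\le 8G_0^{2m}$ by \eqref{eq:RQ-ZQ-size}, where $m=d_T(Q)$. Combined with \eqref{eq:uQ-v-uniform}, this bracket is at most $C\delta G_0^{2m}$.
\item For the second bracket, \Cref{lem:projective-difference} with $L\le 2G_0^m$ and \eqref{eq:ZQ-RQ-difference} give $C\delta G_0^{m}\cdot G_0^{3m}$.
\end{itemize}
Hence the first integral is at most $C(\delta G_0^{4m})^p$.

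For the derivative I write
$$D(A_Q\circ\mathsf U_Q)-D(B_Q\circ\mathsf V)=\underbrace{DA_Q(\mathsf U_Q)\,D(\mathsf U_Q-\mathsf V)}_{\text{(a)}}+\underbrace{\bigl[DA_Q(\mathsf U_Q)-DA_Q(\mathsf V)\bigr]D\mathsf V}_{\text{(b)}}+\underbrace{\bigl[DA_Q(\mathsf V)-DB_Q(\mathsf V)\bigr]D\mathsf V}_{\text{(c)}} .$$
All three act on tangent vectors, since $\mathsf U_Q$ and $\mathsf V$ map into $\Sn$; here $D\mathsf U_Q$ and $D\mathsf V$ are tangential, and this needs a short remark for the Sobolev map $\mathsf U_Q$, which follows from $|\mathsf U_Q|\equiv1$.
\begin{itemize}
\item Term (a): bound it by \eqref{eq:projective-C1} and \eqref{eq:uQ-v-derivative}, giving $C\delta G_0^{2m}$ in $L^p$.
\item Term (c): bound it by the $C^1$ part of \Cref{lem:projective-difference} and \eqref{eq:ZQ-RQ-difference}, together with the boundedness of $D\mathsf V$ from \eqref{eq:uQ-derivative-bound}, giving $C\delta G_0^{4m}$.
\end{itemize}

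Term (b) is the delicate one. The derivative $DA_Q$ is evaluated at two different sphere points, applied to a tangent vector at $\mathsf V$. I will use the ambient extension $\widetilde{\mathbf Z_Q}^\#$ on the closed ball, whose second derivative is bounded by \eqref{eq:projective-ambient-C2} by $5\|\mathbf Z_Q\|^6\le CG_0^{6m}$. The segment between $\mathsf U_Q(y)$ and $\mathsf V(y)$ lies in $\overline{\mathbb B}^{\,n+1}$, so the ambient first derivatives differ by at most $CG_0^{6m}|\mathsf U_Q-\mathsf V|\le C\delta G_0^{6m}$. Working with ambient derivatives throughout, rather than tangential ones, avoids any mismatch of tangent spaces: the chain rule for $\widetilde{\mathbf Z}_Q^\#\circ\mathsf U_Q$ holds with the ambient derivative. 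With that convention, terms (a) and (c) should also use ambient first-derivative bounds. These follow from the explicit formula \eqref{eq:first-projective-derivative} together with \eqref{eq:alpha-unit-ball-lower}, giving $C\|M\|^4$, and the analogue of \Cref{lem:projective-difference} off the sphere costs at most a further power of $L$.

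Collecting the estimates, every term is bounded by $C\delta G_0^{km}$ for a fixed integer $k$ (at most $6$ or so). Setting $G:=G_0^{k}$ then gives \eqref{eq:fixed-root-cube-estimate}.

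The main obstacle I anticipate is the bookkeeping of ambient versus tangential derivatives on the sphere, together with justifying the Sobolev chain rule for $\widetilde{\mathbf Z}_Q^\#\circ\mathsf U_Q$. The latter is fine because $\widetilde{\mathbf Z}_Q^\#$ is smooth on a neighbourhood of $\overline{\mathbb B}^{\,n+1}$, and $\mathsf U_Q\in W^{1,p}$ is bounded. Since the polynomial losses in $\|\mathbf Z_Q\|$ are absorbed into $G$, the precise powers do not matter.
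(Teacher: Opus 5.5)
Your proposal is correct and follows essentially the same route as the paper. It uses the representation $F=A_Q\circ\mathsf U_Q$, $F_0=B_Q\circ\mathsf V$ after rescaling, the same two-term split of the zeroth-order difference, and a three-term chain-rule split of the derivative, with \eqref{eq:projective-ambient-C2} controlling the difference of $DA_Q$ at two sphere points and \Cref{lem:projective-difference} together with \eqref{eq:ZQ-RQ-difference} controlling $A_Q-B_Q$, ending with $G=G_0^6$. Your regrouping, which pairs the derivative difference with $D\mathsf V$ rather than $D\mathsf U_Q$, is cosmetic. Your ambient-versus-tangential caution is harmless, since the proof of \eqref{eq:projective-C1} never uses tangency of $h$, so the bound $2\|M\|_{\mathrm{op}}^2$ already holds for all ambient vectors at points of $\Sn$.
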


\begin{proof}
Fix $Q$ and write $m=d_T(Q)$ for simplicity.  Let $A_Q,B_Q,\mathsf U_Q,\mathsf V$ be as in \Cref{lem:local-global-representation}.  Let $\widetilde A_Q$ denote the canonical ambient rational extension of the projective sphere map $A_Q=(\mathcal R(Z_Q))^\#$; see \eqref{eq:canonical-extension}.  

Since $A_Q=(\mathcal R(Z_Q))^\#$ and $B_Q=(\mathcal R(R_Q))^\#$, \eqref{eq:RQ-ZQ-size} and \Cref{lem:projective-derivatives} give
\begin{equation}\label{eq:AQ-BQ-C1}
 \|DA_Q\|_{L^\infty(\Sn)} +\|DB_Q\|_{L^\infty(\Sn)} \le C(n,p)G_0^{2m},
\end{equation}
and
\begin{equation}\label{eq:AQ-ambient-C2}
 \|D^2\widetilde A_Q\|_{L^\infty(\overline{\mathbb B}^{\,n+1})}  \le C(n,p)G_0^{6m}.
\end{equation}
Moreover, by \eqref{eq:RQ-ZQ-size}, \eqref{eq:ZQ-RQ-difference}, and \Cref{lem:projective-difference},
\begin{equation}\label{eq:AQ-BQ-C1-difference}
 \|A_Q-B_Q\|_{C^1(\Sn)}  \le C(n,p)G_0^{3m} \|\mathcal R(Z_Q)-\mathcal R(R_Q)\|_{\rm op}\le C(n,p)\delta G_0^{4m}.
\end{equation}

Set
$ G:=G_0^6>1.$
For $\xi,\eta\in\Sn$, let $d_{\Sn}(\xi,\eta)$ be their intrinsic spherical distance.  Since
$$
 d_{\Sn}(\xi,\eta) =2\arcsin\!\left(\frac{|\xi-\eta|}{2}\right) \le\frac{\pi}{2}|\xi-\eta|,
$$
integrating $DA_Q$ along a minimizing geodesic gives
\begin{equation}\label{eq:sphere-map-lipschitz-chord}
 |A_Q(\xi)-A_Q(\eta)|
 \le  \frac{\pi}{2}\|DA_Q\|_{L^\infty(\Sn)}|\xi-\eta|.
\end{equation}

For the zeroth-order term in \eqref{eq:fixed-root-cube-estimate}, \eqref{eq:local-F-representation} and \eqref{eq:sphere-map-lipschitz-chord} give
\begin{align*}
 |F(x)-F_0(x)|
 &\le |A_Q(\mathsf U_Q(y))-A_Q(\mathsf V(y))|
 +|A_Q(\mathsf V(y))-B_Q(\mathsf V(y))|\\
 &\le
 \frac{\pi}{2}\|DA_Q\|_{L^\infty(\Sn)}
 |\mathsf U_Q(y)-\mathsf V(y)|
 +\|A_Q-B_Q\|_{L^\infty(\Sn)}.
\end{align*}
Thus, after changing variables and applying \eqref{eq:uQ-v-uniform}, \eqref{eq:AQ-BQ-C1}, and \eqref{eq:AQ-BQ-C1-difference}, we conclude
\begin{equation}\label{eq:normalized-map-difference-final}
 (\fint_Q|F-F_0|^p\,dx)^{\frac 1 p}=\|A_Q\circ \mathsf U_Q-B_Q\circ\mathsf V\|_{L^p(\Qzero)}
 \le C(n,p)\delta[ G_0^{2m}+ G_0^{4m}]\le C(n,p)\delta G^m.
\end{equation}

For the derivative term in \eqref{eq:fixed-root-cube-estimate}, the identity
\begin{align}
 D(A_Q\circ \mathsf U_Q)-D(B_Q\circ\mathsf V)
 ={}&  [DA_Q(\mathsf U_Q)-DA_Q(\mathsf V)]D\mathsf U_Q +DA_Q(\mathsf V)[D\mathsf U_Q-D\mathsf V]\notag\\
 &\quad +[DA_Q(\mathsf V)-DB_Q(\mathsf V)]D\mathsf V
 \label{eq:three-term-composition-identity}
\end{align}
holds almost everywhere on $\Qzero$.  Since $\mathsf U_Q(y),\mathsf V(y)\in\Sn$, the line segment joining them is contained in $\overline{\mathbb B}^{\,n+1}$.  Thus, as $\widetilde A_Q$ is an extension of $A_Q$, by the fundamental theorem of calculus we conclude
$$
 |DA_Q(\mathsf U_Q(y))-DA_Q(\mathsf V(y))|
 \le \|D^2\widetilde A_Q\|_{L^\infty(\overline{\mathbb B}^{\,n+1})} |\mathsf U_Q(y)-\mathsf V(y)|.
$$
Now applying \eqref{eq:AQ-ambient-C2}, \eqref{eq:uQ-v-uniform}, and \eqref{eq:uQ-derivative-bound}, the first term on the right of \eqref{eq:three-term-composition-identity} satisfies
$$
 \|[DA_Q(\mathsf U_Q)-DA_Q(\mathsf V)]D\mathsf U_Q\|_{L^p(\Qzero)}\le C(n,p) \delta G_0^{6m} \le C(n,p)\delta G^m.
$$
Moreover, by \eqref{eq:AQ-BQ-C1} and \eqref{eq:uQ-v-derivative}, the second term has the same bound:
$$\|DA_Q(\mathsf V)[D\mathsf U_Q-D\mathsf V]\|_{L^p(\Qzero)}\le C(n,p) \delta G_0^{2m} \le C(n,p)\delta G^m.$$
Additionally, by \eqref{eq:AQ-BQ-C1-difference} and the $L^\infty$-bound for $D\mathsf V$ in \eqref{eq:uQ-derivative-bound}
$$\|[DA_Q(\mathsf V)-DB_Q(\mathsf V)]D\mathsf V\|_{L^p(\Qzero)} \le C(n,p) \delta G_0^{4m}\le C(n,p)\delta G^m.$$  
Hence, we conclude that
\begin{equation}\label{eq:normalized-derivative-difference-final}
 \|D(A_Q\circ \mathsf U_Q-B_Q\circ\mathsf V)\|_{L^p(\Qzero)}
 \le C(n,p)\delta G^m.
\end{equation}
Since the change of variable $x=S_Q^{-1}(y)$ gives
$$
 \ell(Q)^p\fint_Q|DF-DF_0|^p\,dx
 =\int_{\Qzero}|D(A_Q\circ \mathsf U_Q-B_Q\circ\mathsf V)|^p\,dy,
$$
equations \eqref{eq:normalized-map-difference-final} and \eqref{eq:normalized-derivative-difference-final} prove \eqref{eq:fixed-root-cube-estimate}.
\end{proof}

\subsection{The depth-damped diameter measure}

We use the following notation for the  weighted Sobolev class.  For a finite Borel measure  $\nu$   on $\Omega$ that is absolutely continuous with respect to Lebesgue measure, define
\begin{equation}
 \mathcal W^{1,p}(\Omega,\nu;\R^N)
 := \left\{ U\in W^{1,1}_{\mathrm{loc}}(\Omega;\R^N)\colon \int_\Omega\bigl(|U|^p+|DU|^p\bigr)\,d\nu<\infty \right\}.
\end{equation}
We adopt this explicit definition since the weight constructed below is not required to satisfy any Muckenhoupt-type condition, and, in addition, an appropriate density theorem for smooth functions may not be available. For a length scale $L>0$, set
\begin{equation}
 \|U\|_{\mathcal W^{1,p}_L(\Omega,\nu)}^p := \int_\Omega\bigl(|U|^p+L^p|DU|^p\bigr)\,d\nu.
\end{equation}

Recall $N_W=N_W(n)$ defined in \eqref{eq:defn-Dw} and $G=G(n,p)$ in \Cref{prop:fixed-root-cube-estimate}. 
Choose
\begin{equation}\label{eq:theta-choice}
 0<\theta=\theta(n,p)
 \le\min\left\{\frac12,\frac{1}{2N_W(n)G(n,p)^p}\right\}.
\end{equation}
For almost every $x\in Q$ with $Q\in\mathscr W$, we define
\begin{equation}\label{eq:weight-definition-proof}
 w(x) :=\theta^{d_T(Q)} \left(\frac{\ell(Q)}{\ell_\ast}\right)^p,  \qquad  d\mu(x):=w(x)\,dx.
\end{equation}
As the cube boundaries form a null set, so the definition is unambiguous almost everywhere. 

\begin{proposition}\label{prop:weight-properties}
The measure defined in \eqref{eq:weight-definition-proof} satisfies
\begin{equation}\label{eq:weight-between-zero-one}
 0<w(x)\le1
 \quad\text{for almost every }x\in\Omega,
\end{equation}
\begin{equation}\label{eq:measure-Lebesgue-dominated}
 \mu(E)\le|E|
 \quad\text{for every measurable }E\subset\Omega,
\end{equation}
and
\begin{equation}\label{eq:measure-total-bounds}
 \ell_\ast^n
 \le\mu(\Omega)
 \le2\ell_\ast^n.
\end{equation}
Moreover, $\mu$ has full support in $\Omega$, and for every $Q\in\mathscr W$,
\begin{equation}\label{eq:measure-cube-mass}
 \mu(Q)
 =\theta^{d_T(Q)}
  \frac{\ell(Q)^{n+p}}{\ell_\ast^p}.
\end{equation}
\end{proposition}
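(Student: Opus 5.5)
Most of the claims follow directly from the definition \eqref{eq:weight-definition-proof}; only the upper bound in \eqref{eq:measure-total-bounds} needs a counting argument.

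\emph{Pointwise and cubewise properties.} For almost every $x\in\Omega$ we have $x\in Q$ for a unique $Q\in\mathscr W$, since the interiors are disjoint and the boundaries are null. Since $0<\theta\le\frac12$, $d_T(Q)\ge0$, and $0<\ell(Q)\le\ell_\ast$ by \eqref{eq:bounded-side-bound}, both factors of $w$ lie in $(0,1]$. This gives \eqref{eq:weight-between-zero-one}. Integrating $w\le1$ over $E$ gives \eqref{eq:measure-Lebesgue-dominated}. Because $w$ is constant on $Q$, we get $\mu(Q)=\theta^{d_T(Q)}\ell(Q)^p\ell_\ast^{-p}|Q|$, which is \eqref{eq:measure-cube-mass}.

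\emph{Full support and the lower mass bound.} Any nonempty open $U\subset\Omega$ has positive Lebesgue measure and is covered up to a null set by the Whitney cubes. Hence $|U\cap Q|>0$ for some $Q$, and then $\mu(U)\ge w|_Q\,|U\cap Q|>0$. The lower bound in \eqref{eq:measure-total-bounds} follows from $\mu(\Omega)\ge\mu(Q_\ast)=\ell_\ast^n$, because $d_T(Q_\ast)=0$.

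\emph{Upper mass bound.} This is the only step with content. I group the cubes by depth. Using $\ell(Q)\le\ell_\ast$, each cube satisfies $\mu(Q)\le\theta^{m}\ell_\ast^n$ when $d_T(Q)=m$. By \eqref{eq:number-at-depth} there are at most $N_W^m$ cubes at depth $m$. Summing over depths,
$$
\mu(\Omega)=\sum_{Q\in\mathscr W}\mu(Q)\le\ell_\ast^n\sum_{m\ge0}(N_W\theta)^m.
$$
By \eqref{eq:theta-choice} and $G\ge1$, we have $N_W\theta\le\frac12$, so the geometric series is at most $2$. This gives $\mu(\Omega)\le2\ell_\ast^n$.

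The one point to watch is that countable additivity over the Whitney family is legitimate. It is, because the cubes have disjoint interiors and their boundaries are Lebesgue-null, hence also $\mu$-null. The remaining freedom in the choice of $\theta$ (the factor $G^p$) is not needed here; it is reserved for the later summation against \eqref{eq:fixed-root-cube-estimate}.
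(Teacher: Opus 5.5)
Your proposal is correct and follows the paper's proof essentially step by step. It uses the pointwise bounds from $0<\theta<1$ and $\ell(Q)\le\ell_\ast$, the lower mass bound from $w\equiv1$ on $Q_\ast$, and the upper bound obtained by grouping cubes by depth via $|Q|\le\ell_\ast^n$, the count \eqref{eq:number-at-depth}, and $N_W\theta\le\frac12$ from \eqref{eq:theta-choice}. Your localized full-support argument and the remark on countable additivity are only cosmetic refinements of the same reasoning.
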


\begin{proof}
By \eqref{eq:bounded-side-bound}, $\ell(Q)\le\ell_\ast$.  Since $0<\theta<1$, then \eqref{eq:weight-definition-proof} gives \eqref{eq:weight-between-zero-one}, and \eqref{eq:measure-Lebesgue-dominated} follows directly by integration.

At the root, $d_T(Q_\ast)=0$ and $\ell(Q_\ast)=\ell_\ast$, so $w=1$ almost everywhere on $Q_\ast$.  Hence
$$
 \mu(\Omega)\ge\mu(Q_\ast)=|Q_\ast|=\ell_\ast^n.
$$
For the upper bound, we recall that $G>1$ and apply \eqref{eq:number-at-depth}, \eqref{eq:bounded-side-bound}, and \eqref{eq:theta-choice}  to conclude that, 
\begin{align*}
 \mu(\Omega)
 &=\sum_{Q\in\mathscr W}
   \theta^{d_T(Q)}
   \left(\frac{\ell(Q)}{\ell_\ast}\right)^p|Q|\\
 &\le\sum_{m=0}^{\infty}
   \sum_{d_T(Q)=m}\theta^m|Q|\\
 &\le\ell_\ast^n
   \sum_{m=0}^{\infty}(N_W\,\theta)^m\\
 &\le \ell_\ast^n
   \sum_{m=0}^{\infty} 2^{-m}\le 2\ell_\ast^n.
\end{align*}
This proves \eqref{eq:measure-total-bounds}.

The union of the Whitney cube boundaries has Lebesgue measure zero.  Thus, one gets $w(x)>0$ for almost every $x\in\Omega$. Hence $w>0$ almost everywhere in $\Omega$. If $U\subset\Omega$ is nonempty and relatively open, then
$|U|>0$. Since $w>0$ almost everywhere, 
$$\mu(U)=\int_U w\,dx>0.$$
Thus, $\mu$ has full support in $\Omega$.

Finally, the density is constant on the interior of each cube, and $|Q|=\ell(Q)^n$.  This gives \eqref{eq:measure-cube-mass}.
\end{proof}

\begin{proof}[Proof of \Cref{thm:weighted-main}]
Set
$$
\delta_\ast(n,p):=\delta_{\mathrm{tree}}(n,p),
$$
where $\delta_{\mathrm{tree}}$ is defined in
\Cref{prop:root-relative-Lorentz}, and choose $\theta$ as in
\eqref{eq:theta-choice}. Fix
$0<\delta=K-1\le\delta_\ast$, and let $\Phi$ be the root normalizer in \eqref{eq:weighted-root-normalizer}. Then both maps $F$ and $F_0$ are defined globally by \eqref{eq:F-F0-global-definitions}.

We show that the map $F$ belongs to $W^{1,p}_{\mathrm{loc}}(\Omega;\R^{n+1})$ for $p>n$.  Indeed, the global composition in \eqref{eq:F-F0-global-definitions} first gives $F\in W^{1,n}_{\mathrm{loc}}(\Omega;\R^{n+1})$. Moreover, observe that the smooth map $F_0$ belongs to $W^{1,p}_{\mathrm{loc}}(\Omega;\R^{n+1})$. On the other hand, on every Whitney cube,   \Cref{lem:local-global-representation} and \Cref{prop:fixed-root-cube-estimate} identify the weak derivative of $F$ and show that it is $p$-integrable there. As every compact subset of $\Omega$ meets only finitely many cubes of $\mathscr W$, we conclude that $DF\in L^p_{\mathrm{loc}}(\Omega)$, and the desired local Sobolev regularity follows.

Fix a cube $Q$ and set $m = d_T(Q)$. Since the density 
$$w=\theta^m\left(\frac{\ell(Q)}{\ell_\ast}\right)^p$$
is constant on $Q$, the zeroth-order term in \eqref{eq:fixed-root-cube-estimate} (with respect to the modified measure $\mu$) yields
\begin{align}
 \int_Q|F-F_0|^p\,d\mu
 &\le C_{\mathrm{cube}}\delta^p G^{pm}|Q| \theta^m  \left(\frac{\ell(Q)}{\ell_\ast}\right)^p\notag\\
 &\le C_{\mathrm{cube}}\delta^p (\theta G^p)^m|Q|.
 \label{eq:weighted-map-cube-sum}
\end{align}
For the derivative term, the scale in the weight cancels the local derivative scale as follows:
\begin{align}
 \ell_\ast^p\int_Q|DF-DF_0|^p\,d\mu
 &\le C_{\mathrm{cube}}\delta^p G^{pm}\ell_\ast^p\theta^m  \left(\frac{\ell(Q)}{\ell_\ast}\right)^p |Q|\ell(Q)^{-p}\notag\\
 &= C_{\mathrm{cube}}\delta^p  (\theta G^p)^m|Q|.
 \label{eq:weighted-derivative-cube-sum}
\end{align}
Summing \eqref{eq:weighted-map-cube-sum} and \eqref{eq:weighted-derivative-cube-sum} over $Q\in \mathscr W$, and using \eqref{eq:number-at-depth}, \eqref{eq:bounded-side-bound}, and \eqref{eq:theta-choice}, we obtain
\begin{align*}
 \int_\Omega  \left( |F-F_0|^p +\ell_\ast^p|DF-DF_0|^p\right)d\mu
 & \le C(n,p)\delta^p \sum_{m=0}^{\infty}
 \sum_{d_T(Q)=m} (\theta G^p)^m|Q|\\
 &\le C(n,p)\delta^p\ell_\ast^n \sum_{m=0}^{\infty}
 (N_W\, \theta G^p)^m\\
 &\le 2C(n,p)\delta^p\ell_\ast^n.
\end{align*}
By the lower bound in \eqref{eq:measure-total-bounds}, $\ell_\ast^n\le\mu(\Omega).$ Absorbing the factor $2$ into the constant proves the desired weighted stability estimate. Therefore, we conclude
$ F-F_0\in\mathcal W^{1,p}(\Omega,\mu;\R^{n+1})$
and
$$
\int_\Omega\left(|F-F_0|^p+\ell_\ast^p|DF-DF_0|^p
\right)d\mu\le C(n,p)\delta^p\mu(\Omega).
$$
All further statements of \Cref{thm:weighted-main} are already contained in \Cref{prop:weight-properties}.
\end{proof}

\appendix
\section{Proof of some lemmas}
\subsection{Proof of the classification of conformal Killing fields}
\begin{proof}[Proof of \Cref{prop:conformal-killing-fields}]
Assume first that \eqref{eq:ck-field} holds. The constant field $\mathbf a$ has zero derivative. Since $A^T=-A$, the field $x\mapsto Ax$ has zero symmetric gradient. The field $x\mapsto\lambda x$ satisfies
$$
\sym D(\lambda x)=\lambda\Id,
\qquad
\operatorname{div}(\lambda x)=n\lambda.
$$
For
$$
X_\mathbf{c}(x):=2(\mathbf{c}\cdot x)x-|x|^2\mathbf{c},
$$
we have
$$
\partial_j(X_\mathbf{c})_i=2\mathbf{c}_jx_i+2(\mathbf{c}\cdot x)\delta_{ij}-2x_j\mathbf{c}_i.
$$
Therefore
$$
\frac{\partial_j(X_\mathbf{c})_i+\partial_i(X_\mathbf{c})_j}{2} =2(\mathbf{c}\cdot x)\delta_{ij},
$$
while
$$
\operatorname{div}X_\mathbf{c} =2n(\mathbf{c}\cdot x).
$$
Hence $\Edev X_\mathbf{c}=0$. By the linearity of the operator, $\Edev u=0$.

Conversely, assume that $\Edev u=0$ in distributions. Write
$d:=\operatorname{div}u.$ Then the equation $\Edev u=0$ is equivalent to
\begin{equation}\label{eq:ck-system}
\partial_i u_j+\partial_j u_i
=\frac{2}{n}d\,\delta_{ij}
\quad\text{in }\mathscr D'(U)
\end{equation}
for $1\le i,j\le n$. 

Differentiate \eqref{eq:ck-system} with respect to $x_k$, add the resulting equation to the equation obtained by differentiating the pair $(i,k)$ with respect to $x_j$, and subtract the equation obtained by differentiating the pair $(j,k)$ with respect to $x_i$. Then the commutation of distributional derivatives gives
\begin{equation}\label{eq:second-u}
\partial_j\partial_k u_i = \frac1 n\left(\delta_{ij}\partial_kd+\delta_{ik}\partial_jd -\delta_{jk}\partial_id \right).
\end{equation}
Differentiate \eqref{eq:second-u} with respect to $x_i$ and sum over $i$. Since $d=\sum_i\partial_iu_i$, we obtain
\begin{align*}
\partial_j\partial_kd
&= \frac 1 n\sum_{i=1}^n \left( \delta_{ij}\partial_i\partial_kd +\delta_{ik}\partial_i\partial_jd -\delta_{jk}\partial_i\partial_id\right)\\
&= \frac 1 n \left(2\partial_j\partial_kd-\delta_{jk}\Delta d \right).
\end{align*}
Thus we arrive at
\begin{equation}\label{eq:hessian-d}
(n-2)\partial_j\partial_kd
=-\delta_{jk}\Delta d.
\end{equation}
If $j\ne k$, \eqref{eq:hessian-d} gives $\partial_j\partial_kd=0$. If $j=k$ and we sum over $j$, then
$$
(n-2)\Delta d=-n\Delta d.
$$
Since $n\ge3$, it follows that $\Delta d=0$. Returning to \eqref{eq:hessian-d}, we conclude that
$$
D^2d=0
\quad\text{in }\mathscr D'(U).
$$
By \Cref{lem:vanishing-hessian}\textup{(ii)}, there are $\alpha\in\R$ and $\beta\in\R^n$ such that
$$
d(x)=\alpha+\beta\cdot x.
$$

Now set
$$
\lambda:=\frac{\alpha}{n}, \qquad \mathbf{c}:=\frac{\beta}{2n},
\qquad
v(x):=u(x)-\lambda x-X_\mathbf{c}(x).
$$
Recall that $d=\operatorname{div} u$, and 
$$
\operatorname{div}\left(\lambda x + X_\mathbf{c}(x)\right)= n\lambda+2n(\mathbf{c}\cdot x)=\alpha+\beta\cdot x=d(x).
$$
Thus $\operatorname{div}v=0$.  Since $\partial_k d=\beta_k=2n \mathbf{c}_k$,  \eqref{eq:second-u} becomes
$$
 \partial_j\partial_k u_i
 =2\bigl(\delta_{ij}\mathbf{c}_k+\delta_{ik}\mathbf{c}_j-\delta_{jk}\mathbf{c}_i\bigr).
$$
On the other hand, a direct differentiation of
$$
 (X_\mathbf{c})_i=2(\mathbf{c}\cdot x)x_i-|x|^2\mathbf{c}_i
$$
gives
$$
 \partial_j\partial_k(X_\mathbf{c})_i
 =2\bigl(\delta_{ij}\mathbf{c}_k+\delta_{ik}\mathbf{c}_j-\delta_{jk}\mathbf{c}_i\bigr).
$$

Note that the field $x\mapsto\lambda x$ has zero Hessian.
Hence
$$
\partial_j\partial_k v_i=0 \qquad \text{ for any } \ 1\le i,j,k\le n,
$$
and therefore $D^2v=0$ in $\mathscr D'(U)$.
By \Cref{lem:vanishing-hessian}\textup{(iii)}, $v(x)=a+Bx$ for some $a\in\R^n$ and $B\in\R^{n\times n}$. Since $\Edev v=0$ and $\tr B=\operatorname{div}v=0$, we have
$\sym B=0.$
Thus $B^T=-B$, and \eqref{eq:ck-field} follows.
\end{proof}

\subsection{Proof of Whitney decomposition}
\begin{proof}[Proof of Lemma~\ref{lem:deep-whitney}]
Fix $x\in\Omega$. Since $\Omega$ is open, there exists $r_x>0$ such that $B(x,r_x)\subset\Omega$. Hence every sufficiently small dyadic cube $Q$ whose closure contains $x$ satisfies $64\overline Q\subset\Omega$.
Starting from one such cube and passing to its dyadic parents, the process must eventually stop. Indeed, choose $y\in\mathbb R^n\setminus\Omega$. If the sidelengths of the parents tend to infinity, then some sufficiently large parent has
$y\in 64\overline Q$, and so cannot be admissible. Consequently, there is a last cube in this chain for which the admissibility condition holds, and this cube is therefore maximal. From the arbitrariness of $x$, it follows that the closures of all maximal cubes form a covering of $\Omega$.

Moreover, two distinct maximal dyadic cubes cannot have overlapping interiors, since any two dyadic cubes with intersecting interiors must be nested. Then via the elementary inclusion $\overline{Q}\subset 2Q,$
we obtain formula \eqref{eq:doubled-whitney-cover}.

Let $Q\in\mathscr W$.  If $z\in Q$ and $y\in\R^n\setminus\Omega$, then \eqref{eq:deep-admissible} implies $y\notin64\overline Q$.  Applying the maximum norm $|x|_\infty:=\max_i|x_i|$, we obtain
$$
|y-z|\ge |y-z|_\infty \ge|y-x_Q|_\infty-|z-x_Q|_\infty
\ge 32\ell(Q)-\frac1 2 \ell(Q).
$$
Thus the lower bound in \eqref{eq:ordinary-whitney-distance} holds with $c_W=63/2$.

Let $Q^+$ be the dyadic parent of $Q$.  By the maximality,
$$
64\overline{Q^+}\not\subset\Omega.
$$
Choose $y\in64\overline{Q^+}\setminus\Omega$.  Since
$$
\ell(Q^+)=2\ell(Q),
\qquad
|x_{Q^+}-x_Q|_\infty=\frac12\ell(Q),
$$
we have
$$
|y-x_Q|
\le
\sqrt n\left(64+\frac12\right)\ell(Q).
$$
The segment joining $x_Q\in\Omega$ to $y\notin\Omega$ meets $\partial\Omega$.  Hence
$$
\dist(Q,\partial\Omega)
\le
\dist(x_Q,\partial\Omega)
\le
\sqrt n\left(64+\frac12\right)\ell(Q),
$$
which proves the upper bound in \eqref{eq:ordinary-whitney-distance}.

We also record local finiteness.  Let $E\subset\subset\Omega$, put
$$
 d_E:=\dist(E,\partial\Omega)>0,
$$
and fix $y_0\in\R^n\setminus\Omega$.  If $Q\in\mathscr W$ meets $E$, choose $x\in Q\cap E$.  Since $64\overline Q\subset\Omega$, one has $y_0\notin64\overline Q$, and therefore
$$
 |y_0-x|_\infty
 \ge |y_0-x_Q|_\infty-|x-x_Q|_\infty
 \ge\left(32-\frac12\right)\ell(Q).
$$
Thus $\ell(Q)$ is bounded above in terms of $E$ and $y_0$.  For the lower bound, choose $z\in\overline Q$ and $y\in\partial\Omega$ such that
$$
 |z-y|=\dist(\overline Q,\partial\Omega)=\dist(Q,\partial\Omega).
$$
Then
$$
 d_E
 \le\dist(x,\partial\Omega)
 \le |x-z|+|z-y|
 \le\bigl(\sqrt n+C_W\bigr)\ell(Q).
$$
Only finitely many dyadic generations can therefore occur among cubes meeting $E$.  At each fixed generation, only finitely many dyadic cubes meet the bounded set $E$.  Hence $\mathscr W$ is locally finite.

We prove \eqref{eq:adjacent-size-comparability}.  Suppose that $P\sim Q$ and, after interchanging the two cubes if necessary,
$$
L:=\ell(P)\ge4\ell(Q)=:4l.
$$
Let $Q^+$ be the parent of $Q$.  Since $2P$ and $2Q$ intersect,
\begin{equation}\label{eq:adjacent-center-distance}
|x_P-x_Q|_\infty<L+l.
\end{equation}
For every $y\in64\overline{Q^+}$,
\begin{align*}
|y-x_P|_\infty &\le|y-x_{Q^+}|_\infty +|x_{Q^+}-x_Q|_\infty
+|x_Q-x_P|_\infty\\
&\le 64l+\frac12l+L+l\\
&=L+\frac{131}{2}l \le \frac{139}{8}L <32L.
\end{align*}
Therefore
$$
64\overline{Q^+}\subset64\overline P\subset\Omega.
$$
This contradicts the maximality of $Q$.  Thus $\ell(P)<4\ell(Q)$.  Interchanging $P$ and $Q$ gives \eqref{eq:adjacent-size-comparability}.

We next prove the degree bound.  Fix $P\in\mathscr W$ and put $L=\ell(P)$.  If $Q\sim P$, then \eqref{eq:adjacent-size-comparability} gives
$$
\frac L4\le\ell(Q)\le4L,
$$
while \eqref{eq:adjacent-center-distance} gives
$$
|x_Q-x_P|_\infty<L+\ell(Q)\le5L.
$$
Consequently every such $Q$ is contained in the cube
$$
x_P+(-7L,7L)^n.
$$
The interiors of the neighboring cubes are disjoint and each has volume at least $(L/4)^n$.  Hence the number of neighbors is at most
$$
N_W(n):=56^n.
$$

For each connected component $\mathcal C$ of the adjacency graph, define
$$
U_{\mathcal C}:= \bigcup_{Q\in\mathcal C}\interior(2Q).
$$
The sets $U_{\mathcal C}$ are open.  Distinct graph components give disjoint sets, since an intersection would produce an edge.  Formula \eqref{eq:doubled-whitney-cover} shows that their union is $\Omega$.  Since $\Omega$ is connected, there is only one graph component.

Finally, let $P\sim Q$.  Choose
$$
a_{P,Q}\in\interior(2P)\cap\interior(2Q)
$$
and set $s=s_{P,Q}$.  If $x\in B(a_{P,Q},2s)$, then
$$
|x-x_P|_\infty \le |x-a_{P,Q}|+|a_{P,Q}-x_P|_\infty <2s+\ell(P) \le 3\ell(P),
$$
and
$$
|x-x_Q|_\infty <2s+\ell(Q)  \le 3\ell(Q).
$$
Since $a_{P,Q}\in\interior(2P)\cap\interior(2Q)$, the same estimates hold with non-strict inequalities for every $x\in\overline{B(a_{P,Q},2s)}$.  Since $8P$ and $8Q$ have respective half-sidelengths $4\ell(P)$ and $4\ell(Q)$, this proves
$$
\overline{B(a_{P,Q},2s)}\subset8P\cap8Q.
$$
Together with \eqref{eq:bridge-radius}, this is \eqref{eq:bridge-ball}.
\end{proof}

\subsection{Proof of the Boman chain construction in John domains}

\begin{proof}[Proof of \Cref{lem:john-whitney-chains}]
Fix $Q\in\mathscr W$.  If $Q=Q_\ast$, simply take the chain of length zero. Then it satisfies the three conclusions once $\Lambda_J\ge 2$ and $C_{\mathrm{suf}}\ge 1$.  Thus we may assume henceforth that $Q\ne Q_\ast$, and let $x_Q$ be its center.  

Choose an arclength-parametrized John curve
$$
 \gamma:[0,L]\to\Omega, \qquad  \gamma(0)=x_Q,
 \qquad  \gamma(L)=x_\ast,
$$
satisfying
\begin{equation}\label{eq:john-curve-inequality}
 t\le J\,\dist(\gamma(t),\partial\Omega) \qquad \text{ for any } \ 0\le t\le L.
\end{equation}

We first construct a finite sequence of Whitney cubes that follows the curve $\gamma$ in neighboring order. The sets
$$
 I_R:=\gamma^{-1}(2R),
 \qquad R\in\mathscr W,
$$
are relatively open in $[0,L]$ and cover that interval by \eqref{eq:doubled-whitney-cover}.  Since $x_Q\in Q\subset2Q$ and $x_\ast\in2Q_\ast$, there are numbers $a_0,a_\ast>0$ such that
$$
 [0,a_0]\subset I_Q, \qquad  [L-a_\ast,L]\subset I_{Q_\ast}.
$$
Choose a finite subcover of $[0,L]$ by sets of the form $I_R$, making sure that $I_Q$ and $I_{Q_\ast}$ are included.  Let $\lambda>0$ be a Lebesgue number for this finite cover.  Choose a partition
$$
 0=s_0<s_1<\cdots<s_M=L
$$
whose mesh is smaller than $\lambda$.  For each $0\le i<M$, choose a cube $T_i\in\mathscr W$ such that
$$
 \gamma([s_i,s_{i+1}])\subset2T_i.
$$
The finite sequence
$$
 Q,T_0,T_1,\ldots,T_{M-1},Q_\ast
$$
constitutes the required collection of Whitney cubes, with the property that the first two dilated cubes both contain $\gamma(0)$, the last two dilated cubes both contain $\gamma(L)$, and, for each $i$, the doubled cubes $2T_i$ and $2T_{i+1}$ both contain the point $\gamma(s_{i+1})$.  Up to deleting consecutive repetitions, we relabel this sequence as
\begin{equation}
 R_0=Q,R_1,\ldots,R_N=Q_\ast.
\end{equation}
We choose encounter times in neighboring order as follows.  Assign time $0$ to the initial cube $Q$, assign the left endpoint $s_i$ to $T_i$, and assign time $L$ to the final cube $Q_\ast$.  When consecutive repetitions are deleted, keep the earliest assigned time.  This gives
$$
 0\le t_0\le t_1\le\cdots\le t_N\le L.
$$
They satisfy
\begin{equation}\label{eq:raw-walk-encounter}
 \gamma(t_i)\in2R_i
 \qquad \text{ for any } \ 0\le i\le N
\end{equation}
and
\begin{equation}\label{eq:raw-walk-adjacency}
 R_i\sim R_{i+1}
 \qquad \text{ for any } \  0\le i<N.
\end{equation}

The sequence may contain repetitions, and we need to remove loops without destroying the neighboring order. Towards this, set
$$
 i_0:=\max\{i:R_i=R_0\}, \qquad  S_0:=R_0.
$$
If $i_k<N$, define
$$
 S_{k+1}:=R_{i_k+1}, \qquad  i_{k+1}:=\max\{i\ge i_k+1:R_i=S_{k+1}\}.
$$
The set defining $i_{k+1}$ is nonempty, since it contains the
index $i_k+1$. Moreover,
$$
i_{k+1}\ge i_k+1>i_k.
$$
Thus the indices $i_k$ strictly increase. Since they all belong to
the finite set $\{0,\ldots,N\}$, the construction terminates after
finitely many steps, say at $i_m$. Necessarily $i_m=N$; otherwise, if $i_m<N$, our construction would define extra $S_{m+1}$ and $i_{m+1}$. Consequently,
$$
S_m=R_{i_m}=R_N=Q_\ast.
$$

The selected cubes are pairwise distinct. Indeed, suppose that
$S_\ell=S_k$ for some $\ell>k$. Then $i_\ell>i_k$ and
$$
R_{i_\ell}=S_\ell=S_k.
$$
If $k=0$, this contradicts the definition of $i_0$ as the last
occurrence of $S_0=R_0$ in the original sequence. If $k\ge1$, then
$i_\ell\ge i_k+1\ge i_{k-1}+1$, contradicting the definition of
$i_k$ as the largest index $i\ge i_{k-1}+1$ for which
$R_i=S_k$.

Finally,
$$
S_k=R_{i_k},
\qquad
S_{k+1}=R_{i_k+1},
$$
and hence \eqref{eq:raw-walk-adjacency} gives
$S_k\sim S_{k+1}.$
Since the original encounter times $t_i$ form a nondecreasing sequence and the indices $i_k$ are strictly increasing, the corresponding new encounter times $\tau_k := t_{i_k}$ also form a nondecreasing sequence.  Consider the sequence of cubes $S_0,\ldots,S_m$ connecting the terminal cube $Q$ to the root cube $Q_\ast$. Define
$$
Q_j := S_{m-j},\qquad 0\le j\le m.
$$
By construction, $Q_0=Q_\ast$, $Q_m=Q$, and consecutive cubes in
the sequence $Q_0,\ldots,Q_m$ are adjacent. Furthermore, for every $0\le j\le m$, letting $k:=m-j$, one has the exact identity
\begin{equation}\label{eq:exact-cubes}
 \{Q_j,\ldots,Q_m\}
= \{S_k,S_{k-1},\ldots,S_0\}
= \{S_0,\ldots,S_k\}.     
\end{equation}
Since the encounter times are nondecreasing,
\begin{equation}\label{eq:increasing-tau}
    \tau_h\le\tau_k \qquad\text{whenever }0\le h\le k. 
\end{equation}

We next record the boundary-distance estimates used repeatedly below.  If $R\in\mathscr W$ and $z\in2R$, then
\begin{equation}\label{eq:doubled-cube-boundary-comparison}
 31\ell(R) \le\dist(z,\partial\Omega)
 \le C_2(n)\ell(R),
\end{equation}
where
$$
 C_2(n):=C_W+\frac{3\sqrt n}{2}.
$$
Indeed, for the lower bound, let $y\in\partial\Omega$.  Since $64\overline R\subset\Omega$, one has $y\notin64\overline R$ and therefore
$$
 |y-x_R|_\infty\ge32\ell(R).
$$
On the other hand, $z\in2R$ implies $|z-x_R|_\infty<\ell(R)$.  Hence
$$
 |y-z|\ge |y-z|_\infty\ge31\ell(R).
$$
Taking the infimum over $y\in\partial\Omega$ proves the lower bound.  For the upper bound, the Euclidean distance from $z\in2R$ to $R$ is at most $\sqrt n\,\ell(R)/2$. Thus \eqref{eq:ordinary-whitney-distance}, together with the triangle inequality, gives the stated estimate. 

Let $R=S_k$ be any retained cube and let $\tau_R=\tau_k$ be its encounter time.  By \eqref{eq:john-curve-inequality}, \eqref{eq:raw-walk-encounter}, and the upper estimate in \eqref{eq:doubled-cube-boundary-comparison},
\begin{equation}\label{eq:encounter-time-bound}
 \tau_R
 \le J\,\dist(\gamma(\tau_R),\partial\Omega)
 \le JC_2(n)\ell(R).
\end{equation}
Since $\gamma(0)=x_Q$ and $\gamma$ is parametrized by arclength,
\begin{equation} \label{eq:terminal-center-to-chain-center}
 |x_Q-x_R|\le |x_Q-\gamma(\tau_R)|+|\gamma(\tau_R)-x_R| \le \tau_R+\sqrt n\ell(R) \le C(n,J)\ell(R).
\end{equation}
Also,
$$
 c_W\ell(Q)\le\dist(x_Q,\partial\Omega) \le\dist(\gamma(\tau_R),\partial\Omega)+\tau_R \le (1+J)C_2(n)\ell(R).
$$
Hence
\begin{equation}\label{eq:terminal-scale-controlled}
 \ell(Q)\le C(n,J)\ell(R).
\end{equation}

Let $C_{\rm cen}=C_{\rm cen}(n,J)$ and
$C_{\rm sc}=C_{\rm sc}(n,J)$ be constants such that \eqref{eq:terminal-center-to-chain-center} and
\eqref{eq:terminal-scale-controlled} read
$$
|x_Q-x_R|\le C_{\rm cen}\ell(R),
\qquad \text{ and } \qquad
\ell(Q)\le C_{\rm sc}\ell(R).
$$
If $y\in2Q$, then $|y-x_Q|_\infty<\ell(Q). $
Consequently,
\begin{align*}
|y-x_R|_\infty
&\le |y-x_Q|_\infty+|x_Q-x_R|_\infty \\
&\le |y-x_Q|_\infty+|x_Q-x_R| \\
&< \ell(Q)+C_{\rm cen}\ell(R) \\
&\le (C_{\rm sc}+C_{\rm cen})\ell(R).
\end{align*}
Choose, for example,
$$
\Lambda_J:= 2\bigl(C_{\rm sc}+C_{\rm cen}+1\bigr).
$$
Since the half-sidelength of $\Lambda_J R$ is
$\Lambda_J\ell(R)/2$, the estimate above implies
$2Q\subset\Lambda_JR.$
This proves \eqref{eq:john-terminal-shadow}.

It remains to prove the  estimate \eqref{eq:john-suffix-sum}.  Fix $k\in\{0,\ldots,m\}$, set
$$
R:=S_k, \qquad \tau_R:=\tau_k,
$$
and define the index suffix
$$
\mathcal S_R:=\{S_0,\ldots,S_k\}. 
$$
By \eqref{eq:exact-cubes}, this family consists exactly of the cubes in the corresponding terminal segment of the reversed chain. Moreover, by \eqref{eq:increasing-tau}, every $S=S_h\in\mathcal S_R$ satisfies
$$
    \tau_S:=\tau_h\le\tau_k=\tau_R. 
$$
First, using \eqref{eq:encounter-time-bound} for $R$ and the $1$-Lipschitz property of the boundary distance,
\begin{align*}
 \dist(\gamma(\tau_S),\partial\Omega) \le\dist(\gamma(\tau_R),\partial\Omega) +|\tau_R-\tau_S| \le C_2(n)\ell(R)+\tau_R \le(1+J)C_2(n)\ell(R).
\end{align*}
The lower estimate in \eqref{eq:doubled-cube-boundary-comparison} therefore gives
\begin{equation}\label{eq:suffix-largest-scale}
 \ell(S)\le C(n,J)\ell(R).
\end{equation}

Since $\mathcal S_R$ is finite and nonempty, define
$$
L_R:=\max_{S\in\mathcal S_R}\ell(S).
$$
By \eqref{eq:suffix-largest-scale}, 
\begin{equation}\label{eq:C3}
    L_R\le C_3\ell(R)
\end{equation}
for some constant $C_3=C_3(n,J)$.
Recall that all cubes belong to the same dyadic grid. Hence every sidelength
occurring in $\mathcal S_R$ is of the form $2^{-h}L_R$ for some $h\in\mathbb N_0$.
For $h\in\mathbb N_0$, set
$$
\mathcal S_R^{(h)}
:= \{S\in\mathcal S_R:\ell(S)=2^{-h}L_R\}.  
$$
These families are pairwise disjoint and their union is
$\mathcal S_R$.

Fix $h\ge0$, let $r_h:=2^{-h}L_R,$
and let $S\in\mathcal S_R^{(h)}$. By  \eqref{eq:john-curve-inequality}, \eqref{eq:raw-walk-encounter}, and the upper bound in \eqref{eq:doubled-cube-boundary-comparison},
$$
\tau_S \le J\operatorname{dist}(\gamma(\tau_S),\partial\Omega)
\le JC_2(n)\ell(S) =JC_2(n)r_h.
$$
Since $\gamma(0)=x_Q$ and $\gamma$ is parametrized by arclength,
$$
|\gamma(\tau_S)-x_Q|\le\tau_S.
$$
Furthermore, since $\gamma(\tau_S)\in 2S$, it follows that
$$
|x_S-\gamma(\tau_S)| \le \sqrt{n}\,\ell(S)
= \sqrt{n}\,r_h.
$$
Combining these estimates, we obtain
$$
|x_S - x_Q|\le |x_S - \gamma(\tau_S)|+ |\gamma(\tau_S) - x_Q| \le C(n,J)\,r_h.
$$

If $y\in S$, then
$$
|y-x_Q|\le |y-x_S|+|x_S-x_Q|
\le \frac{\sqrt n}{2}r_h+C(n,J)r_h.
$$
Thus every cube in $\mathcal S_R^{(h)}$ is contained in a ball
$B(x_Q,C(n,J)r_h).$
The interiors of these cubes are pairwise disjoint, and every such cube has volume $r_h^n$. Hence
$$
\#\mathcal S_R^{(h)}\,r_h^n
\le |B(x_Q,C(n,J)r_h)| \le C(n,J)r_h^n.
$$
Consequently,
$$
\#\mathcal S_R^{(h)} \le N_J(n,J)  $$
for every $h\ge0$.
We now sum explicitly over the dyadic generations and apply \eqref{eq:C3} to get
\begin{align}
\sum_{S\in\mathcal S_R}\ell(S)
&=\sum_{h=0}^{\infty}\sum_{S\in\mathcal S_R^{(h)}}\ell(S)=\sum_{h=0}^{\infty}\#\mathcal S_R^{(h)}\,2^{-h}L_R \notag \\
&\le N_J(n,J)L_R\sum_{h=0}^{\infty}2^{-h}\notag \\
&=2N_J(n,J)L_R \le 2N_J(n,J)C_3(n,J)\ell(R).\label{eq:dyadic-generations}
\end{align}  
Finally, let $0\le j\le m$ and put $k=m-j$. Then
$Q_j=S_k$, and by \eqref{eq:exact-cubes},
$$
\{Q_j,\ldots,Q_m\} =\mathcal S_{S_k}.
$$
Applying \eqref{eq:dyadic-generations} with $R=S_k=Q_j$, we obtain
$$
\sum_{a=j}^{m}\ell(Q_a) =\sum_{S\in\mathcal S_{S_k}}\ell(S)
\le C_{\rm suf}(n,J)\ell(S_k) = C_{\rm suf}(n,J)\ell(Q_j),
$$
where
$$
C_{\rm suf}(n,J) := 2N_J(n,J)C_3(n,J).
$$
This is  precisely \eqref{eq:john-suffix-sum}.
\end{proof}


\begin{thebibliography}{99}

\bibitem{A1974}
L. V. Ahlfors, \emph{Conditions for Quasiconformal Deformations in Several Variables}. Contributions to Analysis: A Collection of Papers Dedicated to Lipman Bers,  Academic Press, 1974, pp. 19–25.

\bibitem{A1976}
L. V. Ahlfors, \emph{Quasiconformal deformations and mappings in $\mathbb R^n$}. J. Anal. Math. \textbf{30} (1976) 74--97.


\bibitem{B1988}
B.~Bojarski,
\emph{Remarks on Sobolev imbedding inequalities},
in: Complex Analysis, Joensuu 1987,
Lecture Notes in Math. \textbf{1351}, Springer, Berlin, 1988, pp.~52--68.

\bibitem{D2006}
S.~Dain,
\emph{Generalized Korn's inequality and conformal Killing vectors},
Calc. Var. Partial Differential Equations \textbf{25} (2006), 535--540.

\bibitem{DL1976}
G. Duvaut, J.-L. Lions, \emph{Inequalities in mechanics and physics}. Translated from the French by C. W. John. Grundlehren der Mathematischen Wissenschaften, Vol. 219. Springer-Verlag, Berlin-New York, 1976.

\bibitem{EG1992}
L. C. Evans, R. F. Gariepy,  \emph{Measure theory and fine properties of functions.} Studies in Advanced Mathematics. CRC Press, Boca Raton, FL, 1992. 

\bibitem{FZ2005}
D. Faraco and X. Zhong, \emph{Geometric rigidity of conformal matrices}, Ann. Sc. Norm. Super. Pisa Cl. Sci. (5)  \textbf{4} (2005), no. 4, 557--585.

\bibitem{FZ2022}
A.~Figalli and Y.~R.-Y. Zhang,
\emph{Sharp gradient stability for the Sobolev inequality},
Duke Math. J. \textbf{171} (2022), 2407--2459.

\bibitem{GMP2017}
F.~W. Gehring, G.~J. Martin, and B.~P. Palka,
\emph{An Introduction to the Theory of Higher-Dimensional Quasiconformal Mappings},
Math. Surveys Monogr. \textbf{216}, Amer. Math. Soc., Providence, RI, 2017.

\bibitem{G2004}
L. Grafakos,  \emph{Classical and modern Fourier analysis}. Pearson Education, Inc., Upper Saddle River, NJ, 2004. 

\bibitem{GLZ2025}
A. Guerra, X. Lamy,  K. Zemas, 
\emph{Sharp quantitative stability of the M\"obius group among sphere-valued maps in arbitrary dimension}.  
Trans. Amer. Math. Soc. \textbf{378} (2025), no. 2, 1235--1259.

\bibitem{GLZ2024}
A. Guerra, X. Lamy,  K. Zemas, 
\emph{Optimal quantitative stability of the M\"obius group of the sphere in all dimensions}, 	arXiv:2401.06593.

\bibitem{H2008}
N. J. Higham,  \emph{Functions of matrices.
Theory and computation}. Society for Industrial and Applied Mathematics (SIAM), Philadelphia, PA, 2008.

\bibitem{HK2000}
P. Haj{\l}asz, P. Koskela, 
\emph{Sobolev met Poincar\'e.}
Mem. Amer. Math. Soc. 145 (2000), no. 688, x+101 pp.

\bibitem{KR1997}
P. Koskela, S. Rohde, \emph{Hausdorff dimension and mean porosity},  Math. Ann. \textbf{309} (1997), 593--609.

\bibitem{LN2021}
P.~Lewintan and P.~Neff,
\emph{$L^p$-trace-free version of the generalized Korn inequality for incompatible tensor fields in arbitrary dimensions},
Z. Angew. Math. Phys. \textbf{72} (2021), Article 127.

\bibitem{L2013}
Z. Liu, \emph{Another proof of the Liouville theorem}. Ann. Acad. Sci. Fenn. Math. \textbf{38} (2013), no. 1, 327--340.

\bibitem{LZ2022}
S. Luckhaus, K. Zemas, \emph{Rigidity estimates for isometric and conformal maps from $\mathbb S^{n-1}$ to $\mathbb R^n$}. Invent. math. 230 (2022), no. 1, 375--461.

\bibitem{N2012}
J.~Ne\v{c}as,
\emph{Direct methods in the theory of elliptic equations},
Springer Monogr. Math., Springer, Heidelberg, 2012.

\bibitem{R2005}
K. Rajala, 
\emph{The local homeomorphism property of spatial quasiregular mappings with distortion close to one. }
Geom. Funct. Anal. 15 (2005), no. 5, 1100--1127.


\bibitem{R1970}
Yu.~G. Reshetnyak,
\emph{Estimates for certain differential operators with finite-dimensional kernel},
Siberian Math. J. \textbf{11} (1970), 315--326.


\bibitem{R1976}
Yu.~G. Reshetnyak,
\emph{Stability estimates in Liouville's theorem, and the $L_p$-integrability of the derivatives of quasiconformal mappings.}
Siberian Math. J. \textbf{17} (1976), no. 4, 653--674.

\bibitem{R19762}
Yu.~G. Reshetnyak,
\emph{Stability estimates in the class $W^1_p$ in Liouville's conformal mapping theorem for a closed domain.}
Siberian Math. J.  \textbf{17} (1976), no. 6, 1009--1018.

\bibitem{R1989}
Yu.~G. Reshetnyak,
\emph{Space Mappings with Bounded Distortion},
Transl. Math. Monogr. \textbf{73}, Amer. Math. Soc., Providence, RI, 1989.


\bibitem{R1994}
Yu.~G. Reshetnyak,
\emph{Stability Theorems in Geometry and Analysis},
Mathematics and Its Applications \textbf{304}, Kluwer Academic Publishers, Dordrecht, 1994.

\bibitem{R1993}
S.~Rickman,
\emph{Quasiregular Mappings},
Ergeb. Math. Grenzgeb. (3), vol.~26, Springer-Verlag, Berlin, 1993.

\bibitem{S1976}
V. I. Semenov, 
\emph{One-parameter groups of quasiconformal homeomorphisms in a Euclidean space}. 
Sibirsk. Mat. Z. \textbf{17} (1976), no. 1, 177--193, 240.

\bibitem{S1987}
V. I. Semenov,
\emph{Stability estimates for spatial quasiconformal mappings of a star domain.}
Sibirsk. Mat. Z. \textbf{28} (1987), no. 6, 102--118, 219.


\bibitem{S1988}
T.V. Sokolova, \emph{Structure of maps with bounded distortion which are nearly conformal}. Sibirsk. Mat. Z. \textbf{29} (1988), 508--509.

\end{thebibliography}
\end{document}